\definecolor{cyan(process)}{rgb}{0.0, 0.6, 1.0}
\definecolor{blue-violet}{rgb}{0.54, 0.17, 0.89}
\newcommand{\ku}{ \Bbbk}
\newcommand{\N}{\mathbb{N}}
\newcommand{\cA}{\mathcal{A}}
\newcommand{\cB}{\mathcal{B}}
\newcommand{\cC}{\mathcal{C}}
\newcommand{\cJ}{\mathcal{J}}
\newcommand{\Z}{\mathbb{Z}}
\newcommand{\I}{\mathbb I}
\newcommand{\bF}{\mathbf{F}}
\newcommand{\bL}{\mathbf{L}}
\newcommand{\bR}{\mathbf{R}}
\newcommand{\ad}{\operatorname{ad}}
\newcommand{\pt}{\operatorname{pt}}
\newcommand{\degf}[1]{\left|#1\right|}
\newcommand{\vect}{\operatorname{vec}}
\newcommand{\Irr}{\operatorname{Irr}}
\newcommand{\ot}{\otimes}
\newcommand{\fpdim}{\operatorname{FPdim}}
\newcommand{\uno}{\mathbf{1}}
\newcommand{\id}{\operatorname{id}}
\newcommand{\Hom}{\operatorname{Hom}}
\newcommand{\Aut}{\operatorname{Aut}}
 \newcommand{\trid}{\triangleright}
\newcommand{\fiz}{\triangleleft}
\newcommand{\tridb}{\blacktriangleright}
\newcommand{\fizb}{\blacktriangleleft}
\newcommand{\sgn}{\operatorname{Sgn}}
\newcommand{\stkout}[1]{\ifmmode\text{\sout{\ensuremath{#1}}}\else\sout{#1}\fi}
\newcommand{\objpentagonOne}{\left((A\ot \degf{C}\trid A')\ot \left((\degf{C}\fizb \degf{A'}) \degf{C'}\right)\trid A''\right) \ot \left(((\degf{C}\fizb \degf{A'})\degf{C'})\fizb \degf{A''}\right) \degf{C''}\trid A'''}
\newcommand{\objpentagonTwo}{\left(A\ot ((\degf{C}\trid A')\ot((\degf{C}\fizb \degf{A'}) \degf{C'})\trid A'' )\right)\ot ((((\degf{C}\fizb \degf{A'})\degf{C'})\fizb \degf{A''})\degf{C''})\trid A'''}
\newcommand{\objpentagonThree}{A\ot ((\degf{C}\trid A' \ot (\degf{C}\fizb \degf{A'})\degf{C'}\trid A'') \ot ((((\degf{C}\fizb\degf{A'})\degf{C'})\fizb\degf{A''})\degf{C'})\trid A''' )}
\newcommand{\objpentagonFour}{(A\ot \degf{C}\trid A')\ot (((\degf{C}\fizb \degf{A'})\degf{C'})\trid A''\ot ((((\degf{C}\fizb\degf{A'})\degf{C'})\fizb\degf{A''})\degf{C''})\trid A''')}
\newcommand{\objpentagonFive}{A\ot (\degf{C}\trid A' \ot ((\degf{C}\fizb \degf{A'})\degf{C'}\trid A''\ot ((((\degf{C}\fizb\degf{A'})\degf{C'})\fizb\degf{A''})\degf{C'})\trid A''' ))}
\newcommand{\objpentagonSix}{\left(A\ot ((\degf{C}\trid A')\ot(\degf{C}\fizb \degf{A'})\trid(\degf{C'}\trid A'') )\right)\ot ((((\degf{C}\fizb \degf{A'})\degf{C'})\fizb \degf{A''})\degf{C''})\trid A'''}
\newcommand{\objpentagonSeven}{A\ot \left(((\degf{C}\trid A')\ot(\degf{C}\fizb \degf{A'})\trid(\degf{C'}\trid A'') )\ot ((((\degf{C}\fizb \degf{A'})\degf{C'})\fizb \degf{A''})\degf{C''})\trid A'''\right)}
\newcommand{\objpentagonEight}{A\ot \left((\degf{C}\trid A')\ot((\degf{C}\fizb \degf{A'})\trid(\degf{C'}\trid A'') \ot ((((\degf{C}\fizb \degf{A'})\degf{C'})\fizb \degf{A''})\degf{C''})\trid A'''\right))}
\newcommand{\objpentagonNine}{A\ot (\degf{C}\trid A' \ot ((\degf{C}\fizb \degf{A'})\degf{C'}\trid A''\ot ((((\degf{C}\fizb\degf{A'})\degf{C'})\fizb\degf{A''}))\trid(\degf{C'}\trid A''' )))}
\newcommand{\objpentagonTen}{(A\ot \degf{C}\trid A')\ot (((\degf{C}\fizb \degf{A'})\degf{C'})\trid A'' \ot (((\degf{C}\fizb\degf{A'})\degf{C'})\fizb\degf{A''})\trid (\degf{C''}\trid A''') )}
\newcommand{\objpentagonEleven}{(A\ot \degf{C}\trid A')\ot(((\degf{C}\fizb \degf{A'})\degf{C'})\trid (A''\ot \degf{C''}\trid A'''))}
\newcommand{\objpentagonTwelve}{A\ot (\degf{C}\trid A'\ot(((\degf{C}\fizb \degf{A'})\degf{C'})\trid (A''\ot \degf{C''}\trid A''')))}
\newcommand{\objpentagonThirteen}{A\ot (\degf{C}\trid A'\ot(((\degf{C}\fizb \degf{A'}))\trid(\degf{C'}\trid (A''\ot \degf{C''}\trid A'''))))}
\newcommand{\objpentagonFourteen}{A\ot (\degf{C}\trid(A'\ot \degf{C'}\trid A'')\ot ((\degf{C}\fizb (\degf{A'}(\degf{C'}\tridb\degf{A''})))(\degf{C'}\fizb\degf{A''})\degf{C''})\trid A''')}
\newcommand{\objpentagonFifteen}{A\ot (\degf{C}\trid(A'\ot \degf{C'}\trid A'')\ot (\degf{C}\fizb (\degf{A'}(\degf{C'}\tridb\degf{A''})))\trid(((\degf{C'}\fizb\degf{A''})\degf{C''})\trid A'''))}
\newcommand{\objpentagonSixteen}{A\ot ((\degf{C}\trid A'\ot (\degf{C}\fizb \degf{A'})\trid(\degf{C'}\trid A''))\ot ((\degf{C}\fizb \degf{A'})\fizb(\degf{C'}\fizb \degf{A''}))\trid (((\degf{C'}\fizb \degf{A''})\degf{C''})\trid A'''))}
\newcommand{\objpentagonSeventeen}{A\ot \degf{C}\trid (A'\ot \degf{C'}\trid (A''\ot \degf{C''}\trid A'''))}
\newcommand{\objpentagonEighteen}{A\ot \degf{C}\trid (A'\ot (\degf{C'}\trid A''\ot(\degf{C'}\fizb\degf{A''})\trid(\degf{C''}\trid A''')))}
\newcommand{\objpentagonNineteen}{A\ot (\degf{C}\trid A' \ot (\degf{C}\fizb \degf{A'})\trid (\degf{C'}\trid A'' \ot (\degf{C'}\fizb \degf{A''})\trid(\degf{C''}\trid A''')))}
\newcommand{\objpentagonTwenty}{A\ot \degf{C}\trid((A'\ot \degf{C'}\trid A'')\ot((\degf{C'}\fizb \degf{A''})\degf{C''})\trid A''')}
\newcommand{\objpentagonTwentyOne}{A\ot \degf{C}\trid(A'\ot (\degf{C'}\trid A''\ot ((\degf{C'}\fizb\degf{A''})\degf{C''})\trid A'''))}
\newcommand{\objpentagonTwentyTwo}{A\ot (\degf{C}\trid A'\ot (\degf{C}\fizb\degf{A'})\trid(\degf{C'}\trid A''\ot ((\degf{C'}\fizb\degf{A''})\degf{C''})\trid A'''))}
\newcommand{\objpentagonTwentyThree}{A\ot (\degf{C}\trid A'\ot ((\degf{C}\fizb \degf{A'})\trid(\degf{C'}\trid A'')\ot ((\degf{C}\fizb \degf{A'})\fizb(\degf{C'}\fizb \degf{A''}))\trid (((\degf{C'}\fizb \degf{A''})\degf{C''})\trid A''')))}
\newcommand{\objpentagonTwentyFour}{A\ot (\degf{C}\trid A'\ot((\degf{C}\fizb\degf{A'})\trid(\degf{C'}\trid A'')\ot(\degf{C}\fizb(\degf{A'}(\degf{C'}\tridb\degf{A''})))\trid((\degf{C'}\fizb\degf{A''})\trid(\degf{C''}\trid A'''))))}
\newcommand{\objpentagonTwentyFive}{A\ot(\degf{C}\trid A'\ot ((\degf{C}\fizb\degf{A'})\trid(\degf{C'}\trid A'')\ot(((\degf{C}\fizb\degf{A'})\degf{C'})\fizb\degf{A''})\trid(\degf{C''}\trid A''')))}
\newcommand{\objpentagonTwentySix}{(A\ot \degf{C}\trid(A'\ot \degf{C'}\trid A''))\ot ((\degf{C}\fizb (\degf{A'}(\degf{C'}\tridb\degf{A''})))(\degf{C'}\fizb\degf{A''})\degf{C''})\trid A'''}
\newcommand{\objpentmapOneToTwo}{\alpha_{A,\degf{C}\trid A', (\degf{C}\fizb \degf{A'})\degf{C'}\trid A''}\ot\id_{((((\degf{C}\fizb \degf{A'})\degf{C'})\fizb \degf{A''})\degf{C''})\trid A'''}}
\newcommand{\objpentmapOneToFour}{\alpha_{A \ot \degf{C}\trid A', ((\degf{C}\fizb \degf{A'})\degf{C'})\trid A'', ((((\degf{C}\fizb \degf{A'})\degf{C'})\fizb\degf{A''})\degf{C''})\trid A'''}}
\newcommand{\objpentmapTwoToThree}{\alpha_{A, \degf{C}\trid A'\ot (\degf{C}\fizb \degf{A'})\degf{C'}\trid A'' ,(((\degf{C}\fizb\degf{A'})\degf{C'})\fizb \degf{A''})\degf{C''}\trid A'''}}
\newcommand{\objpentmapTwoToSix}{\id_A \ot \id_{\degf{C}\trid A'}\ot (\bL^2_{\degf{C}\fizb\degf{A'}, \degf{C'}})^{-1}_{A''}\ot \id_{((((\degf{C}\fizb \degf{A'})\degf{C'})\fizb \degf{A''})\degf{C''})\trid A'''}}
\newcommand{\objpentmapThreeToFive}{\id_A\ot\alpha_{\degf{C}\trid A', (\degf{C}\fizb \degf{A'})\degf{C'}\trid A'' ,((((\degf{C}\fizb \degf{A'})\degf{C'})\fizb \degf{A''})\degf{C''})\trid A'''}}
\newcommand{\objpentmapThreeToSeven}{\id_A \ot \id_{\degf{C}\trid A'}\ot (\bL^2_{\degf{C}\fizb\degf{A'}, \degf{C'}})^{-1}_{A''}\ot \id_{((((\degf{C}\fizb \degf{A'})\degf{C'})\fizb \degf{A''})\degf{C''})\trid A'''}}
\newcommand{\objpentmapFourToFive}{\alpha_{A, \degf{C}\trid A' ,\degf{C'}\trid A''\ot ((((\degf{C}\fizb \degf{A'})\degf{C'})\fizb\degf{A''})\degf{C''})\trid A'''}}
\newcommand{\objpentmapFourToTen}{\id_{A\ot \degf{C}\trid A'}\ot \id_{((\degf{C}\fizb\degf{A'})\degf{C'})\trid A''} \ot (\bL^2_{((\degf{C}\fizb\degf{A'})\degf{C'})\fizb\degf{A''},\degf{C''}})^{-1}_{A'''}}
\newcommand{\objpentmapFiveToEight}{\id_A \ot \id_{\degf{C}\trid A'}\ot (\bL_{\degf{C}\fizb\degf{A'},\degf{C'}}^2)^{-1}_{A''}\ot \id_{((((\degf{C}\fizb \degf{A'})\degf{C'})\fizb \degf{A''})\degf{C''})\trid A'''}}
\newcommand{\objpentmapFiveToNine}{\id_A \ot \id_{\degf{C}\trid A'} \ot \id_{((\degf{C}\fizb\degf{A'})\degf{C'})\trid A''} \ot (\bL^2_{((\degf{C}\fizb \degf{A'})\degf{C'})\fizb\degf{A''},\degf{C''}})^{-1}_{A'''}}
\newcommand{\objpentmapSixToSeven}{\alpha_{A,\degf{C}\trid A' \ot (\degf{C}\fizb\degf{A'})\trid(\degf{C'}\trid A'') ,((((\degf{C}\fizb \degf{A'})\degf{C'})\fizb \degf{A''})\degf{C''})\trid A'''}}
\newcommand{\objpentmapSixToTwentysix}{\id_A\ot (\gamma^{\degf{C}}_{A', \degf{C'}\trid A''})^{-1}\ot \id_{((((\degf{C}\fizb \degf{A'})\degf{C'})\fizb \degf{A''})\degf{C''})\trid A'''}}
\newcommand{\objpentmapSevenToEight}{\id_A \ot \alpha_{\degf{C}\trid A', (\degf{C}\fizb\degf{A'})\trid(\degf{C'}\trid A''), ((((\degf{C}\fizb \degf{A'})\degf{C'})\fizb \degf{A''})\degf{C''})\trid A'''}}
\newcommand{\objpentmapSevenToFourteen}{\id_A \ot (\gamma^{\degf{C}}_{A', \degf{C'}\trid A''})^{-1} \ot \id_{((((\degf{C}\fizb \degf{A'})\degf{C'})\fizb \degf{A''})\degf{C''})\trid A'''}}
\newcommand{\objpentmapSevenToSixteen}{\id_A \ot \id_{\degf{C}\trid A'} \ot \id_{(\degf{C} \fizb \degf{A'})\trid(\degf{C'}\trid A'')} \ot (\bL^2_{\degf{C}\fizb (\degf{A'}(\degf{C'}\tridb \degf{A''})),(\degf{C'}\fizb \degf{A''})\degf{C''}})^{-1}_{A'''}}
\newcommand{\objpentmapEightToTwentythree}{\id_A \ot \id_{\degf{C}\trid A'}\ot \id_{(\degf{C} \fizb \degf{A'})\trid(\degf{C'}\trid A'')} \ot (\bL^2_{\degf{C}\fizb (\degf{A'}(\degf{C'}\tridb \degf{A''})),(\degf{C'}\fizb\degf{A''})\degf{C''}})^{-1}_{A'''}}
\newcommand{\objpentmapEightToTwentyfive}{\id_A \ot \id_{\degf{C}\trid A'} \ot \id_{(\degf{C}\fizb \degf{A'})\trid(\degf{C'}\trid A'')} \ot (\bL^2_{(\degf{C}\fizb (\degf{A'}(\degf{C'}\tridb \degf{A''})))(\degf{C'}\fizb \degf{A''}), \degf{C''}})^{-1}_{A'''}}
\newcommand{\objpentmapNineToTwelve}{\id_A \ot \id_{\degf{C}\trid A'}\ot (\gamma^{(\degf{C}\fizb \degf{A'})\degf{C'}}_{A'',\degf{C''}\trid A'''})^{-1}}
\newcommand{\objpentmapNineToTwentyfive}{\id_A \ot \id_{\degf{C}\trid A'} \ot (\bL^2_{\degf{C}\fizb \degf{A'},\degf{C'}})^{-1}_{A''} \ot \id_{(((\degf{C}\fizb \degf{A'})\degf{C'})\fizb \degf{A''})\trid (\degf{C''}\trid A''')}}
\newcommand{\objpentmapTenToNine}{\alpha_{A, \degf{C}\trid A', ((\degf{C}\fizb \degf{A'})\degf{C'})\trid A''\ot (((\degf{C}\fizb \degf{A'})\degf{C'})\fizb \degf{A''})\trid (\degf{C''}\trid A''') }}
\newcommand{\objpentmapTenToEleven}{\id_{A\ot \degf{C}\trid A'}\ot (\gamma^{(\degf{C}\fizb \degf{A'})\degf{C'}}_{A'', \degf{C''}\trid A'''})^{-1}}
\newcommand{\objpentmapElevenToTwelve}{\alpha_{A, \degf{C}\trid A', ((\degf{C}\fizb \degf{A'})\degf{C'})\trid (A''\ot \degf{C''}\trid A''')}}
\newcommand{\objpentmapTwelveToThirdteen}{\id_A \ot \id_{\degf{C}\trid A'}\ot (\bL^2_{\degf{C}\fizb \degf{A'}, \degf{C'}})^{-1}_{A''\ot \degf{C''}\trid A'''}}
\newcommand{\objpentmapThirdteenToSeventeen}{\id_A \ot (\gamma^{\degf{C}}_{A', \degf{C'}\trid (A''\ot \degf{C''}\trid A''')})^{-1}}
\newcommand{\objpentmapFourteenToFifthteen}{\id_A \ot \id_{\degf{C}\trid (A' \ot \degf{C'}\trid A'')} \ot (\bL^2_{\degf{C}\fizb (\degf{A'}(\degf{C'}\tridb \degf{A''})),(\degf{C'}\fizb \degf{A''})\degf{C''}})^{-1}_{A'''}}
\newcommand{\objpentmapFifthteenToTwenty}{\id_A \ot (\gamma^{\degf{C}}_{A'\ot \degf{C'}\trid A'',((\degf{C'}\fizb \degf{A''})\degf{C''})\trid A'''})^{-1}}
\newcommand{\objpentmapSixteenToFifthteen}{\id_A \ot (\gamma^{\degf{C}}_{A',\degf{C'}\trid A''})^{-1}\ot \id_{(\degf{C}\fizb (\degf{A'} (\degf{C'}\tridb \degf{A''})))\trid (((\degf{C'}\fizb \degf{A''})\degf{C''})\trid A''')}}
\newcommand{\objpentmapSixteenToTwentythree}{\id_A \ot \alpha_{\degf{C}\trid A',(\degf{C}\fizb \degf{A'})\trid(\degf{C'}\trid A'') , (\degf{C}\fizb (\degf{A'} (\degf{C'}\tridb \degf{A''})))\trid (((\degf{C'}\fizb \degf{A''})\degf{C''})\trid A''') }}
\newcommand{\objpentmapEighteenToSeventeen}{\id_A \ot \degf{C}\trid (\id_{A'}\ot (\gamma^{\degf{C'}}_{A'',\degf{C''}\trid A'''})^{-1})}
\newcommand{\objpentmapNineteenToThirdteen}{\id_A \ot \id_{\degf{C}\trid A'} \ot (\degf{C}\fizb \degf{A'})\trid (\gamma^{\degf{C'}}_{A'', \degf{C''}\trid A'''})^{-1}}
\newcommand{\objpentmapNineteenToEighteen}{\id_A \ot (\gamma^{\degf{C}}_{A', \degf{C'}\trid A'' \ot (\degf{C'}\fizb \degf{A''})\trid (\degf{C''}\trid A''')})^{-1}}
\newcommand{\objpentmapTwentyToTwentyone}{\id_A \ot \degf{C}\trid \alpha_{A', \degf{C'}\trid A'' ,((\degf{C'}\fizb \degf{A''})\degf{C''})\trid A'''}}
\newcommand{\objpentmapTwentyoneToEighteen}{\id_A \ot \degf{C}\trid (\id_{A'}\ot \id_{\degf{C'}\trid A''} \ot (\bL^2_{\degf{C'}\fizb \degf{A''},\degf{C''}})^{-1}_{A'''})}
\newcommand{\objpentmapTwentytwoToNineteen}{\id_A \ot \id_{\degf{C}\trid A'} \ot (\degf{C}\fizb \degf{A'})\trid (\id_{\degf{C'}\trid A''}\ot (\bL^2_{\degf{C'}\fizb \degf{A''}, \degf{C''}})^{-1}_{A'''})}
\newcommand{\objpentmapTwentytwoToTwentyone}{\id_A \ot (\gamma^{\degf{C}}_{A', \degf{C'}\trid A''\ot ((\degf{C'}\fizb \degf{A''})\degf{C''})\trid A''' })^{-1}}
\newcommand{\objpentmapTwentythreeToTwentytwo}{\id_A \ot \id_{\degf{C}\trid A'} \ot (\gamma^{\degf{C}\fizb \degf{A'}}_{\degf{C'}\trid A'', ((\degf{C'}\fizb \degf{A''})\degf{C''})\trid A'''})^{-1}}
\newcommand{\objpentmapTwentythreeToTwentyfour}{\id_A \ot \id_{\degf{C}\trid A'} \ot \id_{(\degf{C}\fizb \degf{A'})\trid(\degf{C'}\trid A'')} \ot (\degf{C}\fizb(\degf{A'} (\degf{C'}\tridb\degf{A''})))\trid (\bL^2_{\degf{C'} \fizb \degf{A''}, \degf{C''}})^{-1}_{A'''}}
\newcommand{\objpentmapTwentyfourToNineteen}{\id_A \ot \id_{\degf{C}\trid A'} \ot (\gamma^{\degf{C}\fizb \degf{A'}}_{\degf{C'}\trid A'', (\degf{C'}\fizb \degf{A''})\trid (\degf{C''}\trid A''')})^{-1}}
\newcommand{\objpentmapTwentyfiveToTwentyfour}{\id_A \ot \id_{\degf{C}\trid A'} \id_{((\degf{C}\fizb \degf{A'})\degf{C'})\trid A''} \ot (\bL^2_{\degf{C}\fizb(\degf{A'}(\degf{C'}\tridb\degf{A''})), \degf{C'}\fizb \degf{A''}})^{-1}_{\degf{C''}\trid A'''}}
\newcommand{\objpentmapTwentysixToFourteen}{\alpha_{A, \degf{C}\trid (A' \ot \degf{C'}\trid A'') , ((\degf{C}\fizb (\degf{A'} (\degf{C'}\tridb \degf{A''})) )(\degf{C'} \fizb \degf{A''})\degf{C''})\trid A'''}}
\title[Bicrossed product and exact factorizations]{On bicrossed product of fusion categories and exact factorizations}
\newtheorem{theorem}{Theorem}[section]
\newtheorem{lemma}[theorem]{Lemma}
\newtheorem{coro}[theorem]{Corollary}
\newtheorem{prop}[theorem]{Proposition}
\theoremstyle{definition}
\newtheorem{definition}[theorem]{Definition}
\newtheorem{example}[theorem]{Example}
\newtheorem{question}{Question}
\newtheorem*{claim}{Claim}
\newenvironment{proofw}{\par
  \pushQED{\qed}%
  \normalfont \topsep6\p@\@plus6\p@\relax
  \trivlist
  \item[]\ignorespaces
}{%
  \popQED\endtrivlist\@endpefalse
}
\theoremstyle{remark}
\newtheorem{remark}[theorem]{Remark}
\author[M. M\"uller]{Monique M\"uller}
\address{Departamento de Matem\'atica e Estat\'istica, Universidade Federal de S\~ao
Jo\~ao del-Rei, Brazil}
\email{monique@ufsj.edu.br}
\author[H. M. Pe\~na Pollastri]{H\'ector Mart\'in Pe\~na Pollastri}
\address{Department of Mathematics, Indiana University, USA}
\email{hpenapol@iu.edu}
\author[J. Plavnik]{Julia Plavnik}
\address{Department of Mathematics, Indiana University, USA \& Fachbereich Mathematik, Universit\"at Hamburg, Germany}
\email{jplavnik@iu.edu}
\begin{document}

\begin{abstract}
We introduce the notion of a matched pair of fusion categories (fusion rings), generalizing the one for groups. Using this concept, we define the bicrossed product of fusion categories (fusion rings) and construct exact factorizations for them. This concept generalizes the bicrossed product of groups, also known as the external Zappa-Sz\'ep product. We also show that every exact factorization of fusion rings can be presented as a bicrossed product. With this characterization, we describe the adjoint subcategory and universal grading group of an exact factorization of fusion categories. We give explicit fusion rules and associativity constraints for examples of fusion categories arising as a bicrossed product of combinations of Tambara-Yamagami and pointed fusion categories. 
\end{abstract}

\maketitle

\setcounter{tocdepth}{1}
\tableofcontents

\section{Introduction}
Fusion categories are rich mathematical objects with many connections to different areas of mathematics and physics. In the foundational work of Moore and Seiberg \cite{ms}, they proposed that conformal quantum field theory can be viewed as a generalization of group theory and that the correct way to axiomatize quantum field theory is with what we call today modular fusion categories. Other applications of fusion categories involve the construction of knot invariants as done in \cite{turaev-viro} by Turaev and Viro and \cite{rt} by Reshetikhin and Turaev.

To study any algebraic (or mathematical) object, it is of major importance to construct new examples from known ones and to understand how complex objects can be decomposed into simpler ones. As fusion categories can be thought of as generalizations of finite groups, a very fruitful strategy is to mimic notions from group theory in the fusion categorical settings. One such notion is that of exact factorization of groups that generalize the notions of direct and semidirect products. A group $G$ is said to be an exact factorization of two subgroups $H$ and $K$ if $G = HK$ and $H\cap K = \{e\}$. This notion was defined for fusion categories by Gelaki in \cite[Definition 3.4]{G-exact-factorization} and further studied for tensor categories in \cite{generalization-G-nonsemisimple}.

A fusion category can be thought of as its Grothendieck ring, which is a fusion ring, plus associativity coherence data. In this article, we provide answers to both of the following natural questions.

\begin{question}\label{question:how-are-exact-fact-of-fusion-rings}
	What is the fusion ring of an exact factorization of fusion categories?
\end{question}

\begin{question}\label{question:contruction-exact-fact-fusion}
	How can we construct exact factorizations of fusion categories?
\end{question}
 For Question \ref{question:how-are-exact-fact-of-fusion-rings}, our approach is to generalize the answer to a similar question for the group case. Exact factorizations of groups $G = H\bullet K$ are completely characterized by actions $\tridb\colon K\times H\to H$ and $\fizb\colon K\times H \to K$ satisfying certain conditions (see Subsection \ref{subsection:exact-fact-groups}). The tuple $(H,K,\tridb,\fizb)$ is called a matched pair of groups, and an exact factorization of groups can be recovered from these data using a construction called the bicrossed product $H\bowtie K$ of groups, or the  Zappa–Sz\'ep product. Every exact factorization of groups can be obtained like this. One of the contributions of this work is the generalization of the previously mentioned notions to the context of fusion categories. A key insight that enabled this generalization is the central role played by the universal grading group of a fusion category in controlling the structure of an exact factorization. More specifically, it was possible to define actions that depend only on the graded component in which an object lives, that is, have a (grading) group acting on each of the categories. This can already be seen at the level of the Grothendieck ring, that is, at the level of fusion rings.

 We define the notion of matched pair of fusion rings that generalizes the one from groups (Definition \ref{def:mathced-pair-fusion-rings}) using the notion of grading of a fusion ring. In addition, we construct an exact factorization of fusion rings from this data via what we call the bicrossed product of fusion rings (Theorem \ref{thm:bicrossed-product-fusion-rings}). Moreover, we prove that every exact factorization of fusion rings is obtained like this (Theorem \ref{teo:char-exact-fact-fusion-ring}). This completes the answer to Question \ref{question:how-are-exact-fact-of-fusion-rings}. 

As an application of this characterization, we obtain results about every exact factorization of fusion categories related to concepts that depend only on the fusion ring. For example, we prove that an exact factorization  $\cB = \cA \bullet \cC$ is nilpotent if and only if $\cA$ and $\cC$ are nilpotent (Corollary \ref{coro:nilpotent-exact-fact}). In addition, we show that the universal grading group of $\cB$ is given by an exact factorization of the universal grading groups of $\cA$ and $\cC$, and that there is an exact factorization of the adjoint subcategories $\cB_{\ad} = \cA_{\ad}\bullet \cC_{\ad}$ (Proposition \ref{prop:exact-fact-of-univ-grad-group}) for these last two.

To answer Question \ref{question:contruction-exact-fact-fusion}, we generalize the notion of a matched pair of fusion rings to get a definition of matched pair of fusion categories, see Definition \ref{def:mathced-pair-categories}. In analogy to the group setting, with the data of a matched pair of fusion categories, we define the notion of \emph{bicrossed product of fusion categories}, which is an exact factorization of fusion categories. Hence, the following question is natural.

\begin{question}
Is every exact factorization of fusion categories given by a bicrossed product? If not, how much can the associativities differ from the one given by the bicrossed product?
\end{question}

We know that every exact factorization of fusion rings is given by a bicrossed product of fusion rings, and for the fusion categories setting, the question is still open as we pointed out in Remark \ref{remark-clave-2}.

As an application of the bicrossed product of fusion categories, we construct exact factorizations of fusion categories between a Tambara-Yamagami fusion category and a pointed one, and between two Tambara-Yamagami fusion categories. We emphasize that explicit formulas for the associator data can be obtained from this construction.

\subsection*{Organization of the article} In Section \ref{section:Preliminaries}, we introduce the relevant definitions and notation. Section \ref{section:exact-fact-fusion-rings} is about the exact factorization of fusion rings and is divided into three main subsections. Subsection \ref{subsection:bicrossed-product-fusion-rings} defines the notion of a matched pair for fusion rings, and from these data we construct exact factorizations of fusion rings; we call this construction the bicrossed product, this is Theorem \ref{thm:bicrossed-product-fusion-rings}. Subsection \ref{subsection:characterization-fusion-rings} is entirely dedicated to the proof of Theorem \ref{teo:char-exact-fact-fusion-ring}, which shows that every exact factorization of fusion rings is given by a bicrossed product. Finally, Subsection \ref{subsection:applications-exact-fact} shows some direct applications of this characterization to the general structure of exact factorizations of fusion categories.
Section \ref{section:exact-factorization-fusion-categories} provides categorifications of the ideas presented in Section \ref{section:exact-fact-fusion-rings}, we define the notion of a matched pair of fusion categories (Definition \ref{def:mathced-pair-categories}) and the bicrossed product of fusion categories, see Theorem \ref{teo:bowtie-monoidal} and Corollary  \ref{teo:bowtie}. We also discuss how pivotal and spherical structures of fusion categories are induced in the bicrossed product, see Propositions \ref{prop:pivotal-bicrossed-product} and \ref{prop:spherical-bicrossed-product}.
Section \ref{section:examples-of-fusion-cat-from-bicrossed-product} is about applications of the construction from Section \ref{section:exact-factorization-fusion-categories} to generate examples of fusion categories, the fusion rings of exact factorizations between a Tambara-Yamagami fusion category and a pointed fusion category, and between two Tambara-Yamagami fusion categories are fully characterized (Theorems \ref{teo:matched-pair-rings-TY-vecK} and \ref{teo:matched-pair-rings-TY-TY}). Their categorifications as a bicrossed product of fusion categories are characterized in Theorems \ref{teo:mp-fusion-TambaraY-pointed} and \ref{teo:mp-fusion-TY-TY}. Explicit fusion rules and associators are computed for these examples.

\section*{Acknowledgements}
The authors thank P. Etingof, D. Nikshych, and V. Ostrik for fruitful discussion regarding fusion categories with the same fusion rules as the Deligne product and for kindly sharing their draft \cite{ENO-no-published} with us. The authors are also grateful to A. Schopieray for useful comments on the first draft of this article. 

The research of JP was partially supported by the NSF grant DMS-2146392. HMPP and JP were partially supported by Simons Foundation Award 889000 as part of the Simons Collaboration on Global Categorical Symmetries. HMPP and JP would like to thank the hospitality and excellent working conditions at the Department of Mathematics at the Universit\"at Hamburg, where JP has carried out part of this research as an Experienced Fellow of the Alexander von Humboldt Foundation and HMPP as a visitor. HMPP and JP carried out part of this work at different institutes including the Aspen Center for Physics (supported by the NSF grant PHY-1607611), the Centre de Recherches Math\'ematiques 
during the thematic month on Quantum symmetries (HMPP's visit was supported by NSF grant DMS-2228888 while JP was supported by funding from the Simons Foundation and CRM through the Simons-CRM scholar-in-residence program), the Banff International Research Station (BIRS) through a Focused research group on Non-Classical Constructions in Tensor Categories and Conformal Field Theory. Part of this work was done while MM was visiting the Department of Mathematics at Indiana University in Bloomington; she is grateful for their warm hospitality. The author MM was partially supported by the Association for Women in Mathematics (AWM), through the AWM Mathematical Endeavors Revitalization Program (AWM-MERP) grant.

\section{Preliminaries}\label{section:Preliminaries}
In this article, we denote by $\N$ the natural numbers, and $\N_0\coloneqq \N \cup \{0\}$. If $\ell \leq n \in\N_0$, we set $\I_{\ell, n}=\{\ell, \ell +1,\dots,n\}$, $\I_n = \I_{0, n}$. 
We will denote by $\ku$ an algebraically closed field of characteristic $0$. We denote by $C_n$ the cyclic group of order $n$.

\subsection{Monoidal and fusion categories}
In this subsection, we recall some definitions and fix notation. We refer the reader to \cite{EGNO-book} for a more detailed exposition. 

A \emph{monoidal category} $\mathcal{C}$ is a category equipped with a bifunctor $\ot\colon \cC \times \cC \to \cC$ together with an unit object $\uno$, and natural isomorphisms $\ell_X\colon \uno \ot X\to X$, $r_X\colon X\ot 1\to X$, $\alpha_{X,Y,Z}\colon (X\ot Y)\ot Z\to X\ot (Y\ot Z)$ satisfying pentagon and triangle axioms for all objects $X, Y, Z$ of $\cC$, see \cite[Definition 2.1.1]{EGNO-book} for details.

A \emph{monoidal functor} between two monoidal categories $\cA$ and $\cC$ is a pair $(\bF,\bF^2)$, where $\bF\colon \cA \to \cC$ is a functor and $\bF^2_{X,Y}\colon \bF(X)\ot \bF(Y) \to \bF(X\ot Y)$ is a natural isomorphism in both variables such that $\bF(\uno)\simeq \uno$ and $\bF^2$ satisfies a commutative diagram, see \cite[Definition 2.4.1]{EGNO-book}. A \emph{monoidal natural transformation} between two monoidal functors $(\bF,\bF^2)$ and $(\bF',(\bF')^2)$ is a natural transformation $\theta_X\colon \bF(X)\to \bF'(X)$ such that $(\bF')^2_{X, Y}\theta_{X\ot Y} = (\theta_X \ot \theta_Y){\bF^2_{X, Y}}$, for all $X$, $Y\in \cA$.

In this article, the categories with which we work are fusion, which means they are abelian, $\ku$ -linear, monoidal, rigid, finite, and semisimple, and the unit is simple, see \cite[Definition 4.1.1]{EGNO-book}. The functors are also required to preserve the relevant structure, so they are additive, $\ku$-linear and exact. The functors are monoidal only when indicated.

Let $\cC$ be a fusion category. We denote by $\Irr(\cC)$ the set of isomorphism classes of simple objects of $\cC$. 
Since the category $\cC$ is semisimple, the tensor product of two objects $X, Y\in\cC$ can be decomposed as
\begin{align}\label{eq:tensor_product} 
X\ot Y \simeq  \bigoplus_{Z\in\Irr(\mathcal{C})} N_{X,Y}^{Z} Z,
\end{align}
where $N_{X,Y}^{Z} = \dim_{\ku} \Hom_{\cC}(Z,X\ot Y)$ is the multiplicity of the object $Z$ in the tensor product of $X$ and $Y$. 

\subsection{Fusion rings}
One fundamental invariant in understanding a fusion category is their Grothendieck ring. This is a special kind of ring, whose properties have been axiomatized by the notion of \emph{fusion ring}. In this subsection, we recall some definitions about this topic from \cite{ostrik}.
\begin{definition}\label{def:fusion-ring}
	A \emph{fusion ring} is a pair $(\mathsf{R},\mathsf{B}(\mathsf{R}))$, where $\mathsf{R}$ is a ring with a fixed $\Z$-basis $\mathsf{B}(\mathsf{R})=\{b_0,\dots,b_n\}$, such that:
	\begin{enumerate}[leftmargin=*,label=\rm{(\roman*)}]
		\item\label{def:fusion-ring-1} the \emph{structure coefficients} of the multiplication are non-negative integers, that is, $b_i b_j = \sum_{k=0}^n N_{i,j}^k b_k$ for some $N_{i,j}^k\in\N_0$,
		\item\label{def:fusion-ring-2} $b_0 = 1$ is the unit of the ring,
	
 \item\label{def:fusion-ring-3} there is an involution $^*\colon \I_n\to \I_n$ such that the structure coefficients satisfy
    \begin{equation*}
        N_{i,j}^0 = \begin{dcases*}
1 & if  $j = i^*$ ,\\
0 & otherwise,
        \end{dcases*}
    \end{equation*}
 and the induced map given by $x = \sum_{i\in\I_n} a_i b_i \mapsto x^* = \sum_{i\in\I_n} a_i b_{i^*}$  is an anti-automorphisms of rings. 
	\end{enumerate} 
\end{definition}

    Let $(\mathsf{R},\mathsf{B}(\mathsf{R}))$ be a fusion ring with the basis $\mathsf{B}(\mathsf{R})=\{b_0,\dots,b_n\}$. For $i,j,k\in\I_n$, we denote by $N_{b_i,b_j}^{b_k} \coloneqq N_{i,j}^k$.     
    \begin{example}
The \emph{Grothendieck ring} of a fusion category $\mathcal{C}$ is the free $\Z$-module generated by $\Irr(\cC)$ and product induced from the tensor product
\begin{align*}
    C C' =  \sum_{C''\in\Irr(\cC)}N_{C,C'}^{C''} C'', && C,C'\in\Irr(\cC),
\end{align*} 
The involution is induced by the duality of the category. This is a fusion ring and we will denote it as $K_0(\mathcal{C)}$.   
    \end{example}
    
    A \emph{fusion subring} $(\mathsf{S},\mathsf{B}(\mathsf{S}))$ is a fusion ring such that $\mathsf{S}$ is a subring of $\mathsf{R}$ and $\mathsf{B}(\mathsf{S})\subseteq \mathsf{B}(\mathsf{R})$. A \emph{based (left) module} for a fusion ring $(\mathsf{R},\mathsf{B}(\mathsf{R}))$ is a $\mathsf{R}$-module $M$, free as a $\Z$-module, endowed with a fixed basis $\{v_j\}_{j\in\cJ}$ such that
	\begin{align*}
		b_i v_j = \sum_{k\in\cJ} d_{i,j}^k v_k,
	\end{align*}
	for some $d_{i,j}^k\in\N_0$, and $d_{i,j}^k = d_{i^*,k}^j$ for all $j,k\in\cJ$ and $i\in\I_n$.

The following important result proven originally in \cite{ostrik} about based modules will be used repeatedly in this article.
\begin{lemma}\label{lemma:}\cite[Lemma 1]{ostrik} An indecomposable based module over a fusion ring is irreducible. In other words, based modules have the complete reducibility property, that is, every based module is a direct sum of irreducible based modules.
\end{lemma}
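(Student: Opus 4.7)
The plan is to deduce the complete reducibility statement from a single combinatorial observation: if $N \subseteq M$ is a \emph{based} submodule (that is, an $\mathsf{R}$-submodule whose distinguished $\Z$-basis is a subset of the given basis of $M$), then the $\Z$-span $N^{c}$ of the remaining basis vectors is automatically also a based submodule, so $M = N \oplus N^{c}$ as based modules. Iterating this decomposition forces an indecomposable based module to have no proper based submodule --- that is, to be irreducible --- and shows that every based module splits as a direct sum of irreducible based submodules.

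The whole content lies in verifying that $N^{c}$ is closed under the $\mathsf{R}$-action. Write $N = \bigoplus_{j \in J_{0}} \Z v_{j}$ for some $J_{0}\subseteq \cJ$, and $N^{c} = \bigoplus_{j \in \cJ\setminus J_{0}} \Z v_{j}$. For $j \in \cJ\setminus J_{0}$, $k \in J_{0}$, and $b_{i} \in \mathsf{B}(\mathsf{R})$, the coefficient of $v_{k}$ in $b_{i}v_{j}$ is $d_{i,j}^{k}$. The duality axiom of a based module says
\begin{equation*}
d_{i,j}^{k} \;=\; d_{i^{*},\,k}^{\,j},
\end{equation*}
and the right-hand side is the coefficient of $v_{j}$ in $b_{i^{*}} v_{k}$. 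Since $v_{k} \in N$ and $N$ is an $\mathsf{R}$-submodule, $b_{i^{*}} v_{k}$ is a non-negative integer combination of $\{v_{\ell}\}_{\ell \in J_{0}}$; as $j \notin J_{0}$, the coefficient of $v_{j}$ must vanish. Hence $d_{i,j}^{k} = 0$, and so $b_{i} v_{j} \in N^{c}$, as required.

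With the decomposition $M = N \oplus N^{c}$ in hand, an induction on the rank $|\cJ|$ finishes the argument. I do not expect a substantial obstacle: the only subtlety is conceptual rather than technical, namely identifying ``indecomposable'' with the absence of a nontrivial direct sum decomposition \emph{as a based module} and ``irreducible'' with the absence of a proper based submodule. Once that is made precise, the duality axiom $d_{i,j}^{k} = d_{i^{*},k}^{j}$ does all the work, and the proof reduces to the short calculation above.
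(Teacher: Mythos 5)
Your argument is correct, and it is essentially the proof of the result the paper simply cites (Ostrik's Lemma 1): there the basis is declared orthonormal for an inner product, the axiom $d_{i,j}^k = d_{i^*,k}^j$ says the action of $b_{i^*}$ is adjoint to that of $b_i$, and hence the orthogonal complement of a based submodule—your $N^{c}$—is again a based submodule; your coefficient computation is just this adjointness written out. So the proposal matches the standard route, with only the cosmetic difference of avoiding the explicit inner product, and the induction on the (finite) rank closes the argument as you indicate.
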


\subsubsection{Gradings of fusion rings}
Let $(\mathsf{R},\mathsf{B}(\mathsf{R}))$ be a fusion ring and $G$ a group. We say that $\mathsf{R}$ is \emph{graded} by $G$ if we have decompositions
\begin{align*}
 \mathsf{R} = \sum_{g\in G} \mathsf{R}_g, && \text{ and } && \mathsf{B}(\mathsf{R})= \bigsqcup_{g\in G} \mathsf{B}(\mathsf{R})_g,   
\end{align*}
where $\mathsf{R}_g$ are $\mathbb{Z}$-submodules generated by $\mathsf{B}(\mathsf{R})_g$ satifying $ \mathsf{R}_g \mathsf{R}_{g'} \subseteq \mathsf{R}_{g g'}$ and $\mathsf{R}_g^* = \mathsf{R}_{g^{-1}}$. Such $G$-grading is called \emph{faithful} if $\mathsf{R}_g \neq 0$ for all $g\in G$. A $G$-grading on $\mathsf{R}$ induces a map $\operatorname{deg}:\mathsf{B}(\mathsf{R})\to G$, $b\mapsto \degf{b}$ such that if $xy$ has $z$ in their $\mathsf{B}(\mathsf{R})$-decomposition one has
$\degf{z} = \degf{x} \degf{y}$.

In \cite{GN}, the authors proved that every fusion ring $\mathsf{R}$ is faithfully graded by a group $U(\mathsf{R})$ called the universal grading group. We first need the following definition to describe this grading in detail.
\begin{definition}
	The fusion subring of $\mathsf{R}$ generated by the elements $x x^*$, for $x\in \mathsf{R}$, is called the \emph{adjoint fusion subring} and it is denoted $\mathsf{R}_{\ad}$ with basis $\mathsf{B}(\mathsf{R}_{\ad})\subseteq \mathsf{B}(\mathsf{R})$.
\end{definition}
\begin{remark}\label{remark:I(1)-genera}
	The object $I(1) = \sum_{i=1}^n b_i b_i^*$ generates $\mathsf{R}_{\ad}$ in the sense that every element of $\mathsf{B}(\mathsf{R}_{\ad})$ is a summand of $I(1)^n$ for some $n\in\N$.
\end{remark}

A fusion ring $(\mathsf{R},\mathsf{B}(\mathsf{R}))$  can be regarded as a (left) based module over $(\mathsf{R}_{\ad},\mathsf{B}(\mathsf{R}_{\ad}))$. This induces a decomposition $\mathsf{R} = \sum_{j\in\cJ} \mathsf{R}_j$ where the $\mathsf{R}_j$ are indecomposable based $\mathsf{R}_{\ad}$-modules by \cite[Lemma 1]{ostrik}. The index set $\cJ$ has a group structure described in the next proposition. 
\begin{prop}\cite[Theorem 3.5]{GN}
	There is a canonical group structure in the index set $\cJ$ with the multiplication defined by the following property for $a,b,c\in\cJ$:
	\begin{align*}
		a b &= c &\text{if and only if }&& x y \in \mathsf{R}_c, &&\forall x\in \mathsf{R}_a, y\in \mathsf{R}_b.
	\end{align*}
	The identity of $\cJ$ is an element $1\in\cJ$ such that $\mathsf{R}_1 = \mathsf{R}_{\ad}$. The inverse of an element $j\in\cJ$ is an element $j^{-1}$ such that $\mathsf{R}_{j^{-1}} = \mathsf{R}_j^*$.
\end{prop}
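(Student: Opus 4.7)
The plan is to verify, in order, the four assertions needed: that the prescription $ab=c$ actually defines a map $\cJ \times \cJ \to \cJ$, that it is associative, that it admits $1$ as a two-sided identity, and that every $j \in \cJ$ has a two-sided inverse given by $\mathsf{R}_j^*$. I expect the first assertion, well-definedness, to be the main obstacle; the other three will reduce to short computations once well-definedness is in place.

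The first step is to show that for any $x \in \mathsf{B}(\mathsf{R}_a)$ and $y \in \mathsf{B}(\mathsf{R}_b)$, every basis element $z \in \mathsf{B}(\mathsf{R})$ appearing in the expansion of $xy$ lies in a single component $\mathsf{R}_c$. The key input is Frobenius reciprocity for fusion rings, $N_{x,y}^{z} = N_{x^*, z}^{y}$, which holds because the involution is an anti-automorphism and $N_{i,j}^{0} = \delta_{j, i^*}$. If $z, z' \in \mathsf{B}(\mathsf{R})$ both occur in $xy$, then $y$ occurs in $x^*z$ (by reciprocity), so $xy$ occurs in $x(x^*z) = (xx^*)z$, and therefore $z'$ occurs in $(xx^*)z$. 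Since $xx^* \in \mathsf{R}_{\ad}$ by the very definition of the adjoint subring, this means $z'$ lies in the $\mathsf{R}_{\ad}$-based submodule of $\mathsf{R}$ generated by $z$, and hence in the same indecomposable component as $z$. Independence of the resulting $c$ on the choice of $x$ and $y$ then follows from irreducibility of $\mathsf{R}_a$ and $\mathsf{R}_b$ as based $\mathsf{R}_{\ad}$-modules: any two elements of $\mathsf{B}(\mathsf{R}_a)$ are connected via iterated $\mathsf{R}_{\ad}$-actions, while $\mathsf{R}_{\ad} \mathsf{R}_c \subseteq \mathsf{R}_c$ because $\mathsf{R}_c$ is an $\mathsf{R}_{\ad}$-submodule. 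Extending from basis elements to general $x, y$ by linearity, the operation $ab := c$ is a well-defined binary operation on $\cJ$.

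Associativity then follows immediately from associativity of multiplication in $\mathsf{R}$: for basis representatives $x \in \mathsf{R}_a$, $y \in \mathsf{R}_b$, $w \in \mathsf{R}_c$, both $(ab)c$ and $a(bc)$ must coincide with the unique index of the component containing $xyw$. For the identity, observe that $1 \in \mathsf{R}_{\ad}$, and that $\mathsf{R}_{\ad}$ is itself an indecomposable based $\mathsf{R}_{\ad}$-module (generated by $1$), so it appears as one of the summands $\mathsf{R}_j$ in the decomposition $\mathsf{R} = \sum_{j\in\cJ} \mathsf{R}_j$; call this index $1 \in \cJ$. Then for any $j \in \cJ$ we have $\mathsf{R}_1 \mathsf{R}_j = \mathsf{R}_{\ad} \mathsf{R}_j \subseteq \mathsf{R}_j$ by the module structure, and symmetrically on the other side, giving $1 \cdot j = j = j \cdot 1$.

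For inverses, I would push the $*$-involution through the decomposition. Since $\mathsf{R}_{\ad}$ is $*$-stable (as $(xx^*)^* = xx^*$ and $*$ is an anti-automorphism), applying $*$ to $\mathsf{R}_j$ yields an indecomposable based submodule for the opposite $\mathsf{R}_{\ad}$-action, equivalently for the left action after relabeling; this submodule must therefore equal $\mathsf{R}_{j'}$ for a unique $j' \in \cJ$. Now pick any $x \in \mathsf{B}(\mathsf{R}_j)$, so that $x^* \in \mathsf{B}(\mathsf{R}_{j'})$. The element $xx^*$ lies in $\mathsf{R}_{\ad} = \mathsf{R}_1$ by definition, and simultaneously in $\mathsf{R}_{j \cdot j'}$ by the first step of the proof; hence $j \cdot j' = 1$. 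A symmetric argument with $x^*x$ yields $j' \cdot j = 1$, so $j^{-1} = j'$ and $\mathsf{R}_{j^{-1}} = \mathsf{R}_j^*$, completing the proof.
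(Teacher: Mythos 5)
The paper itself gives no proof of this proposition (it is quoted from \cite[Theorem 3.5]{GN}), so your argument can only be measured against what a complete proof requires. Your first step is correct and identifies the right key mechanism: Frobenius reciprocity $N_{x,y}^{z}=N_{x^{*},z}^{y}$ shows that all basis constituents of a fixed product $xy$ occur in $(xx^{*})z$, hence lie in the single left component $\mathsf{R}_c$ containing $z$, since the constituents of $xx^{*}$ lie in $\mathsf{B}(\mathsf{R}_{\ad})$ and $\mathsf{R}_{\ad}\mathsf{R}_c\subseteq\mathsf{R}_c$. (A minor omission: you should also note $xy\neq 0$ for basis elements, e.g.\ because $x$ occurs in $(xy)y^{*}$, so that ``the component of $xy$'' is defined.)

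The genuine gap is the left/right asymmetry. The decomposition $\mathsf{R}=\sum_{j\in\cJ}\mathsf{R}_j$ is into indecomposable \emph{left} $\mathsf{R}_{\ad}$-modules, and you silently assume the components are also stable under \emph{right} multiplication by $\mathsf{R}_{\ad}$ in three places: (a) independence of the product component on the second factor --- if $y'$ occurs in $uy$ with $u\in\mathsf{B}(\mathsf{R}_{\ad})$, then $xy'$ is a subsum of $(xu)y$, and nothing you proved locates the constituents of $xu$; your justification (connectivity of $\mathsf{B}(\mathsf{R}_a)$ under the left action plus $\mathsf{R}_{\ad}\mathsf{R}_c\subseteq\mathsf{R}_c$) only handles varying the \emph{first} factor; (b) the identity: ``$j\cdot 1=j$ symmetrically'' is precisely the unproved statement $\mathsf{R}_j\mathsf{R}_{\ad}\subseteq\mathsf{R}_j$; (c) inverses: since $*$ is an anti-automorphism, $\mathsf{R}_j^{*}$ is a priori only a \emph{right} $\mathsf{R}_{\ad}$-submodule, so the claim that it ``must equal $\mathsf{R}_{j'}$'' again uses the unproved coincidence of the left and right decompositions. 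That coincidence is the actual crux of the theorem and needs an argument; within your framework one can supply it as follows: the element $I(1)=\sum_{b\in\mathsf{B}(\mathsf{R})}bb^{*}$ is central, because its coefficient against any basis element computes a trace of left-multiplication operators and $\operatorname{tr}(L_{c^{*}x})=\operatorname{tr}(L_{xc^{*}})$; moreover every element of $\mathsf{B}(\mathsf{R}_{\ad})$ occurs in some power $I(1)^{n}$ (Remark \ref{remark:I(1)-genera}). Hence $z'$ occurs in $uz$ for some $u\in\mathsf{B}(\mathsf{R}_{\ad})$ iff it occurs in some $I(1)^{n}z=zI(1)^{n}$ iff it occurs in $zv$ for some $v\in\mathsf{B}(\mathsf{R}_{\ad})$, so the left and right decompositions agree. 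With that lemma in place, your arguments for well-definedness, associativity, identity, and inverses (via $xx^{*}$ and $x^{*}x$) go through essentially as written.
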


The index set $\cJ$ with this group structure is called the \emph{universal grading group} of $\mathsf{R}$ and is denoted by $U(\mathsf{R})$. The decomposition $\mathsf{R} = \sum_{j\in\cJ} \mathsf{R}_j$ endows $\mathsf{R}$ with a $U(\mathsf{R})$-grading. The following result shows why the adjective universal is appropriate for this group.

\begin{prop}\cite[Corollary 3.7]{GN} Every fusion ring $\mathsf{R}$ has a canonical faithful grading by the group $U(\mathsf{R})$. Any other faithful grading of $\mathsf{R}$ by a group $G$ is determined by a surjective group homomorphism $\pi\colon U(\mathsf{R})\to G$.
\end{prop}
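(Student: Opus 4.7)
The plan is to let the decomposition $\mathsf{R} = \bigoplus_{j\in\cJ}\mathsf{R}_j$ from the preceding proposition serve as the canonical grading itself, so that the first assertion reduces to routine verification. Indeed, the basis splits as $\mathsf{B}(\mathsf{R}) = \bigsqcup_{a\in U(\mathsf{R})} \bigl(\mathsf{B}(\mathsf{R})\cap \mathsf{R}_a\bigr)$ because each $\mathsf{R}_a$ is a based $\mathsf{R}_{\ad}$-submodule; the containment $\mathsf{R}_a \mathsf{R}_b \subseteq \mathsf{R}_{ab}$ is exactly the defining property of the product on $U(\mathsf{R})$ recalled in the previous proposition; and $\mathsf{R}_a^* = \mathsf{R}_{a^{-1}}$ is the definition of the inverse. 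Faithfulness, i.e. $\mathsf{R}_a \neq 0$ for every $a$, is automatic, since only nonzero indecomposable summands appear in the decomposition.

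For the universal property, suppose $\mathsf{R} = \bigoplus_{g\in G}\mathsf{R}'_g$ is any faithful $G$-grading. The aim is to extract the homomorphism $\pi$ from the observation that every $\mathsf{R}'_g$ is canonically an $\mathsf{R}_{\ad}$-module. To see this, note that in any grading one has $\deg(x)\deg(x^*) = \deg(x)\deg(x)^{-1} = e$, so $xx^*\in \mathsf{R}'_e$ for every basis element $x$. By Remark \ref{remark:I(1)-genera}, the whole adjoint subring $\mathsf{R}_{\ad}$ is generated by summands of powers of $I(1) = \sum_i b_i b_i^*$, hence $\mathsf{R}_{\ad} \subseteq \mathsf{R}'_e$. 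Therefore each $\mathsf{R}'_g$ is stable under left multiplication by $\mathsf{R}_{\ad}$ and, with its induced basis $\mathsf{B}(\mathsf{R})\cap \mathsf{R}'_g$, becomes a based $\mathsf{R}_{\ad}$-module. Lemma \ref{lemma:} then decomposes $\mathsf{R}'_g$ into indecomposable based $\mathsf{R}_{\ad}$-summands, and since the $\mathsf{R}_a$ are the complete list of such indecomposable summands in $\mathsf{R}$, each summand of $\mathsf{R}'_g$ must coincide with some $\mathsf{R}_a$. I then define $\pi\colon U(\mathsf{R})\to G$ by $\pi(a)=g$ whenever $\mathsf{R}_a \subseteq \mathsf{R}'_g$: this is well defined by indecomposability of $\mathsf{R}_a$; it is a homomorphism, because for basis elements $x\in\mathsf{R}_a$, $y\in\mathsf{R}_b$ one has simultaneously $xy\in \mathsf{R}_{ab}\subseteq \mathsf{R}'_{\pi(ab)}$ and $xy\in\mathsf{R}'_{\pi(a)\pi(b)}$; and it is surjective, since faithfulness of the $G$-grading forces every nonzero $\mathsf{R}'_g$ to contain some $\mathsf{R}_a$.

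I do not expect a serious obstacle here: once the previous proposition is granted, everything is bookkeeping driven by complete reducibility of based modules. The most delicate step is the upgrade from the basis-level identity $xx^*\in\mathsf{R}'_e$ to the global inclusion $\mathsf{R}_{\ad}\subseteq \mathsf{R}'_e$, which is exactly where Remark \ref{remark:I(1)-genera} must be invoked. To round out the picture, I would finally observe that the correspondence is a bijection: any surjective group homomorphism $\pi\colon U(\mathsf{R})\to G$ produces a faithful $G$-grading via the coarsening $\mathsf{R}'_g := \bigoplus_{\pi(a)=g}\mathsf{R}_a$, and the two constructions are mutually inverse, so faithful $G$-gradings of $\mathsf{R}$ are in canonical bijection with surjections $U(\mathsf{R})\twoheadrightarrow G$.
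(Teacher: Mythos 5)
Your proof is correct, but note that the paper does not prove this statement at all: it is quoted as a preliminary, citing \cite[Corollary 3.7]{GN}, so there is no internal proof to compare against. Your argument (identity component of any faithful grading contains $\mathsf{R}_{\ad}$ via $xx^*\in\mathsf{R}'_e$ and Remark \ref{remark:I(1)-genera}, then complete reducibility and the rigidity of based submodules --- they are spanned by subsets of the fixed basis --- to identify each $\mathsf{R}'_g$ with a union of components $\mathsf{R}_a$ and read off $\pi$) is exactly the standard proof from \cite{GN}, up to the routine unstated fact that $xy\neq 0$ for basis elements $x,y$, which is what lets you conclude $\pi(ab)=\pi(a)\pi(b)$.
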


\subsection{Gradings of fusion categories}
 Let $\cC$ be a fusion category and $G$ a group. We say that $\cC$ is graded by $G$ if $\cC = \bigoplus_{g\in G} \cC_g$, where $\cC_g$ are full abelian subcategories such that $\otimes: \cC_g\times \cC_h\to \cC_{gh}$ and $\cC_g^* = \cC_{g^{-1}}$. Such $G$-grading is called faithful if $\cC_g \neq 0$ for all $g\in G$. As in fusion rings, such $G$-grading on $\cC$ induces a map $|-|:\Irr(\cC)\to G$ such that if $X \otimes Y$ contains $Z$ one has $\degf{Z} = \degf{X} \degf{Y}$.


We define the \emph{adjoint subcategory} $\cC_{\ad}$ as the tensor subcategory generated by the elements $X\ot X^*$ for $X\in\cC$. It is clear that $K_0(\cC_{\ad}) = K_0(\cC)_{\ad}$.

There is a one-to-one correspondence between (faithful) gradings in $\cC$ and gradings in the fusion ring $K_0(\mathcal{C})$. 
Therefore every fusion category admits a faithful grading by $U(\mathcal{C})\coloneqq U(K_0(\mathcal{C}))$; this is the \emph{universal grading group} of $\mathcal{C}$.

\subsection{Categorical group actions}\label{subsec: cat action} Let $G$ be a finite group and $\cC$ be a fusion category. A  left (respectively right) categorical action by $\ku$-linear autoequivalences of $G$ on $\cC$ is a family of $\ku$-linear functors $\mathbf{L}_g: \cC\to \cC$ (respectively $\mathbf{R}_g: \cC\to \cC$), for each $g\in G$, where the action on an object $X$ and a morphism $f$ is denoted by
\begin{align*}
    \mathbf{L}_g(X) = g\trid X, && \mathbf{L}_g(f) = g\trid f, && 
 \mathbf{R}_g(X) = X\fiz g, &&  \mathbf{R}_g(f) = f\fiz g,
\end{align*} and natural isomorphisms $\mathbf{L}^2_{g,h}: g\trid(h\trid -)\to gh\trid -$ (respectively $\mathbf{R}^2_{g,h}: (-\fiz g)\fiz h\to -\fiz gh$), $\mathbf{L}^0: \id_{\cC}\to e\trid -$ ($\mathbf{R}^0: \id_{\cC}\to  -\fiz e$) such that the following diagrams commute:

\begin{minipage}{0.4\textwidth}
{\tiny \begin{equation}\label{action-eq1}
   \begin{tikzcd}
        g\trid (h\trid (k\trid X)) \ar[rr, "g\trid (\mathbf{L}^2_{h,k})_X"] \ar[dd, swap, "(\mathbf{L}^2_{g,h})_{k\trid X}"]&& g\trid (hk\trid X) \ar[dd, "(\mathbf{L}^2_{g,hk})_{X}"]\\
        && \\
        gh\trid (k\trid X) \ar[rr, swap, "(\mathbf{L}^2_{gh,k})_{X}"] && ghk\trid X
    \end{tikzcd}
\end{equation}}
\end{minipage}
\begin{minipage}{0.4\textwidth}
{\tiny\begin{equation*}
     \begin{tikzcd}
        ((X\fiz g)\fiz h)\fiz k \ar[rr, "((\mathbf{R}^2_{g,h})_X)\fiz k"] \ar[dd, swap, "(\mathbf{R}^2_{h,k})_{X\fiz g}"]&&  (X\fiz gh)\fiz k \ar[dd, "(\mathbf{R}^2_{gh,k})_{X}"]\\
        && \\
        (X\fiz g)\fiz hk \ar[rr, swap, "(\mathbf{R}^2_{g,hk})_{X}"] &&  X\fiz ghk
    \end{tikzcd}
\end{equation*}}
\end{minipage}

\begin{minipage}{0.4\textwidth}
{\tiny \begin{equation}\label{action-eq2}
     \begin{tikzcd}
        g\trid X \ar[rr, "\mathbf{L}^0_{g\trid X}"] \ar[dd, swap, "g\trid \mathbf{L}^0_{X}"] 
        \ar[rrdd, dashed, "\id_{g\trid X}"]&& e\trid (g\trid X) \ar[dd, "(\mathbf{L}^2_{e,g})_{X}"]\\
        && \\
        g\trid (e\trid X) \ar[rr, swap, "(\mathbf{L}^2_{g,e})_{X}"] && g\trid X
    \end{tikzcd}
\end{equation}}
\end{minipage}
\begin{minipage}{0.4\textwidth}
{\tiny \begin{equation*}
     \begin{tikzcd}
        X \fiz g \ar[rr, "\mathbf{R}^0_{X\fiz g}"] \ar[dd, swap, "\mathbf{R}^0_{X} \fiz g"] 
        \ar[rrdd, dashed, "\id_{X\fiz g}"]&&  (X\fiz g)\fiz e \ar[dd, "(\mathbf{R}^2_{g,e})_{X}"]\\
        && \\
        (X\fiz e)\fiz g \ar[rr, swap, "(\mathbf{R}^2_{e,g})_{X}"] && X\fiz g
    \end{tikzcd}
\end{equation*}}
\end{minipage}

A left (respectively right) categorical action by tensor autoequivalences of $G$ on $\cC$ occurs when additionally each of the functors $\mathbf{L}_g$ (respectively $\mathbf{R}_g$) is a monoidal functor.  In other words,  for every $g\in G$, we have natural isomorphisms $\zeta^g_{X, Y}: g\trid X\otimes g\trid Y\to g\trid (X\otimes Y)$ (respectively $\vartheta^g_{X, Y}: X\fiz g\otimes  Y\fiz g\to (X\otimes Y)\fiz g$) and  $\zeta^g_0:  \uno \to g\trid \uno$ (respectively $\vartheta^g_0:  \uno \to \uno\fiz g$) satisfying associativity and unit constraints, see \cite[Definition 2.4.1]{EGNO-book}.

\subsection{Pointed fusion rings and fusion categories}\label{subsection:pointed-definition}
A \emph{pointed fusion ring} is a fusion ring $(\mathsf{R}, \mathsf{B})$ such that the Frobenius-Perron dimension of the elements in the basis is one, that is $\fpdim b = 1$ for all $b\in\mathsf{B}$, see \cite[Section 3.2]{FK} for the definition of Frobenius-Perron dimension. A \emph{pointed fusion category} is a fusion category whose fusion ring is a pointed fusion ring. All pointed fusion rings are completely determined by a finite group $G$, and the corresponding fusion ring is $\mathsf{R}(G) = \mathbb{Z} G$ the integral group ring, with basis $\mathsf{B}(\mathsf{R}(G)) = G$. Pointed fusion categories are parametrized by a finite group $G$ and a normalized $3$-cocycle $\omega\in H^3(G,\ku^\times)$ that determines the associativity, and are denoted as $\vect_G^\omega$. This category consists of finite dimensional $G$-graded vector spaces, simple objects are parametrized by elements $g\in G$ and consist in one-dimensional vector spaces concentrated in degree $g$. The associativity natural isomorphism in the simples is $\alpha_{g,h,k} = \omega(g,h,k) \id_{ghk},$ for $g,h,k\in G$.

Given a fusion ring $\mathsf{R}$ (respectively fusion category $\cC$), we denote the maximal pointed fusion subring (respectively subcategory) by $\mathsf{R}_{\pt}$ (respectively $\cC_{\pt}$).

\subsection{Tambara-Yamagami fusion rings and fusion categories}\label{subsection:Tambara-Yamagami-definition}
In this subsection, we introduce a nice family of examples of fusion rings and fusion categories studied for the first time in \cite{TY}. It is also one of the few families where the pentagon equation for the associativity were solved explicitly. Let $\Gamma$ be a group, the Tambara-Yamagami fusion ring $\mathsf{TY}(\Gamma)$ is the free $\mathbb{Z}$-module with basis $\Gamma\cup \{X\}$ and multiplication given by
\begin{align*}
    g X = X g = X, \quad g\in\Gamma, && X^2 = \sum_{g\in \Gamma} g,  
\end{align*}
and the multiplication between group elements is the group multiplication. This ring admits a categorification only when $\Gamma$ is abelian, the extra data needed is a non-degenerate symmetric bicharacter  $\chi\colon \Gamma\times \Gamma\to \ku^\times$ and $\tau\in\ku$ such that $\tau^2 = 1/|\Gamma|$. The corresponding fusion category is denoted as $\mathcal{TY}(\Gamma,\chi,\tau)$. The natural isomorphisms $\ell$ and $r$ are identities. The associativity natural isomorphisms are given by
{\small\begin{align*}
    \alpha_{g,h,k} &= \id_{ghk}, & \alpha_{g,h,X} &= \alpha_{X,g,h} = \id_X, & \alpha_{g,X,h} &= \chi(g,h) \id_X, &
    \alpha_{g,X,X} &= \alpha_{X,X,g} = \id_{X\ot X},
\end{align*}
\begin{align*}
    \alpha_{X,g,X} = \bigoplus_{g'\in \Gamma} \chi(g,g') \id_{g'}, &&
\alpha_{X,X,X} = \left(\frac{\tau}{\chi(g,h)}\right)_{g,h} \colon \oplus_{g\in \Gamma} X\to \oplus_{g\in \Gamma} X, && g,h,k\in \Gamma.
\end{align*}}

\section{Exact factorization of fusion rings and fusion categories}\label{section:exact-fact-fusion-rings}

In this section, we recall the definition and properties of exact factorization of fusion categories given by \cite{G-exact-factorization}. We also propose a similar notion for fusion rings and a definition of matched pairs of fusion rings and their bicrossed product. Finally, we show that every exact factorization of fusion rings can be realized as a bicrossed product. We start by recalling some definitions related to exact factorizations of groups.

\subsection{Exact factorization of groups}\label{subsection:exact-fact-groups}
An \emph{exact factorization} of a group $G$ is a pair $(H, K)$ of subgroups of $G$ such that $G = H K$ and $H \cap K = \{e\}$ or, in other words, the restriction of the multiplication $H \times K \to G$
is a bijection \cite{kac,Mackey,Takeuchi-matched}. In this case, we write $G = H \bullet K$.
Exact factorizations can be described in terms of the notion of a matched pair.
A \emph{matched pair} of groups is a collection $(H, K, \tridb, \fizb)$ where $H$ and $K$ are groups,
$\tridb$ and $\fizb$ are left and right actions 
$\xymatrix{K & K \times H \ar  @{->}[r]^{\quad\tridb}\ar  @{->}[l]_{\fizb\quad } & H }$
and the following  conditions hold:
\begin{align}\label{eq:mp-group1}
	(kt) \fizb h &= \left(k \fizb (t \tridb h)\right)(t \fizb h),
	\\ \label{eq:mp-group2}
	k \tridb (h g) &=\left(k \tridb h\right)\left((k \fizb h) \tridb g \right),
\end{align}
for all $k, t \in K$, $h, g \in H$. Then $H \bowtie K := H \times K$ with the multiplication
\begin{align*}
	(h, k)(g , t) &= \left(h (k \tridb g), (k \fizb g) t\right),
	& k, t &\in K, \ h, g \in H,
\end{align*}
is a group which is an exact factorization. Moreover, any exact factorization of $H$ and $K$ is like this. The group $H\bowtie K$ is known as the \emph{Zappa–Sz\'ep product} or \emph{bicrossed product} of the groups $H$ and $K$.

\begin{remark}
	If the action $\fizb\colon K\times H\to K$ is trivial, meaning $k\fizb h = k$ for every $h\in H$, $k\in K$, then $\tridb\colon K\times H\to H$ is an action of $K$ in $H$ by group automorphisms. In that case, $H\bowtie K\simeq H\ltimes K$ is the semidirect product of $H$ and $K$ respect to $\tridb$. A similar result holds if $\tridb$ is trivial instead. In the general case, neither action $\tridb$ or $\fizb$ is by group automorphisms.
\end{remark}

\begin{remark}\label{remark:inversas-matched-pair-grupos}
In a bicrossed product of groups $H\bowtie K$, the inverses are given by $(k\tridb h)^{-1} = (k\fizb h)\tridb h^{-1}$, $(k\fizb h)^{-1} = k^{-1}\fizb (k\tridb h)$. 
\end{remark}

\subsection{Exact factorization of fusion categories} The following definition was first proposed in \cite{G-exact-factorization}. 
\begin{definition}\label{def-exact-fact}
Let $\cB$ be a fusion category and let $\cA$, $\cC$ be fusion subcategories of $\cB$. The category $\cB$ is an \textit{exact factorization} of $\cA$ and $\cC$, denoted by $\cB= \cA \bullet \cC$, if any of the next 
equivalent conditions are satisfied:
\begin{enumerate}[leftmargin=*,label=\rm{(\roman*)}]
    \item\label{def-exact-fact-1} $\cB$ is the full abelian subcategory spanned by direct summands of $X\otimes Y$, $X\in \cA$, $Y\in\cC$,  and $\cA\cap \cC=\vect$,
    \item\label{def-exact-fact-2} $\cA\cap\cC = \vect$ and $\fpdim(\cB) = \fpdim(\cA)\fpdim(\cC)$,
    \item\label{def-exact-fact-3} Every simple object of $\cB$ can be written uniquely as $A\ot C$ with $A$ and $C$ simple objects in $\cA$ and $\cC$, respectively.
\end{enumerate}

\end{definition}

\begin{remark}\label{remark-tensor}
The definition of exact factorization of fusion categories is symmetric in $\cA$ and $\cC$. It follows from Definition \ref{def-exact-fact} \ref{def-exact-fact-2} that if $\cB=\cA\bullet\cC$ then $\cB=\cC\bullet\cA$. Hence for any $B\in \Irr(\cB)$ there exist unique $A,\widetilde{A}\in\Irr(\cA)$ and $C,\widetilde{C}\in\Irr(\cC)$ such that $B = A\ot C = \widetilde{C}\ot\widetilde{A}$.
\end{remark}

 The following example shows why this definition is a generalization of the definition of exact factorization for groups.

\begin{example}\cite[Example 3.6]{G-exact-factorization}\label{example:pointed}
Exact factorizations $\mathcal{B}=\textrm{vec}_{G_1}^{\omega_1}\bullet \textrm{vec}_{G_2}^{\omega_2}$ of two pointed fusion categories are classified by groups $G$ with an exact factorization $G=G_1\bullet G_2$ and $\omega\in H^3(G, \Bbbk^\times)$ such that the restrictions to $G_1$ and $G_2$ are $\omega_1$ and $\omega_2$, respectively. And it follows that $\cB\simeq \vect_G^\omega$.
\end{example}

\begin{remark}\label{rmk:pointed}
    If a fusion category $\cB$ admits an exact factorization $\cB=\cA\bullet\cC$ then there is an induced exact factorization of its pointed subcategory $\cB_{\pt}=\cA_{\pt}\bullet\cC_{\pt}$. This follows from the fact that the Frobenius-Perron dimensions of simple objects in $\cB$ are the product of the Frobenius-Perron dimensions of simple objects in $\cA$ and $\cC$
\end{remark}

 Also, it is worth mentioning that any fusion category $\mathcal{B}$ admits the \emph{trivial} exact factorization $\mathcal{B}=\mathcal{B}\bullet \vect$.

 \subsection{Exact factorization of fusion rings}
 We propose the following definition for the exact factorization of fusion rings.

\begin{definition}
    Let $(\mathsf{R},\mathsf{B}(\mathsf{R}))$ be a fusion ring and $(\mathsf{A},\mathsf{B}(\mathsf{A}))$, $(\mathsf{C},\mathsf{B}(\mathsf{C}))$ be fusion subrings. We say that $(\mathsf{R},\mathsf{B}(\mathsf{R}))$ is an \emph{exact factorization} of $(\mathsf{A},\mathsf{B}(\mathsf{A}))$ and $(\mathsf{C},\mathsf{B}(\mathsf{C}))$ if every element $b$ of $\mathsf{B}(\mathsf{R})$ can be written in a unique way as $b = ac$ for $a\in \mathsf{B}(\mathsf{A})$ and $c\in \mathsf{B}(\mathsf{C})$. We then write $\mathsf{R} = \mathsf{A}\bullet \mathsf{C}$.
\end{definition}

\begin{remark}
    If $\mathcal{B} = \mathcal{A}\bullet \mathcal{C}$ is an exact factorization of fusion categories, then there is an associated exact factorization of its Grothendieck ring, that is, $K_0(\mathcal{B}) = K_0(\mathcal{A})\bullet K_0(\mathcal{C})$ is an exact factorization of fusion rings.
\end{remark}

The next proposition is analogous to Remark \ref{remark-tensor} for exact factorizations of fusion rings.

\begin{prop}\label{prop:exact-fact-is-symmetric}
    Let $(\mathsf{R},\mathsf{B}(\mathsf{R}))$ be a fusion ring and $(\mathsf{A},\mathsf{B}(\mathsf{A}))$, $(\mathsf{C},\mathsf{B}(\mathsf{C}))$ be fusion subrings. If $\mathsf{R} = \mathsf{A} \bullet \mathsf{C}$ then $\mathsf{R}=\mathsf{C}\bullet \mathsf{A}$.
\end{prop}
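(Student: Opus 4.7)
The plan is to leverage the anti-automorphism property of the involution $*$ to transfer a left-factorization into a right-factorization. Since $(\mathsf{A}, \mathsf{B}(\mathsf{A}))$ and $(\mathsf{C}, \mathsf{B}(\mathsf{C}))$ are themselves fusion rings (Definition of fusion subring in the Preliminaries), the restriction of $*$ to $\mathsf{B}(\mathsf{A})$ (respectively $\mathsf{B}(\mathsf{C})$) is an involution on that basis. I will use this to build a bijection $\mathsf{B}(\mathsf{C})\times \mathsf{B}(\mathsf{A})\to \mathsf{B}(\mathsf{R})$ from the given bijection $\mathsf{B}(\mathsf{A})\times\mathsf{B}(\mathsf{C})\to\mathsf{B}(\mathsf{R})$.

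First, I show existence. Fix $b\in \mathsf{B}(\mathsf{R})$. Then $b^*\in \mathsf{B}(\mathsf{R})$ as well, so by the hypothesis $\mathsf{R}=\mathsf{A}\bullet \mathsf{C}$ there exist unique $a\in \mathsf{B}(\mathsf{A})$ and $c\in \mathsf{B}(\mathsf{C})$ such that $b^* = ac$. Applying $*$ and using that it is a ring anti-automorphism (Definition \ref{def:fusion-ring} \ref{def:fusion-ring-3}), I obtain
\begin{equation*}
b = (b^*)^* = (ac)^* = c^*\,a^*,
\end{equation*}
and $c^*\in \mathsf{B}(\mathsf{C})$, $a^*\in \mathsf{B}(\mathsf{A})$ because the involution preserves the bases of the fusion subrings. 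This exhibits the required decomposition $b = \tilde c\,\tilde a$ with $\tilde c=c^*$ and $\tilde a=a^*$.

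Next I handle uniqueness. Suppose $b=\tilde c_1\tilde a_1 = \tilde c_2\tilde a_2$ with $\tilde c_i\in \mathsf{B}(\mathsf{C})$ and $\tilde a_i\in \mathsf{B}(\mathsf{A})$. Applying $*$ again,
\begin{equation*}
b^* = \tilde a_1^*\,\tilde c_1^* = \tilde a_2^*\,\tilde c_2^*,
\end{equation*}
which are two decompositions of $b^*$ of the form (element of $\mathsf{B}(\mathsf{A})$)(element of $\mathsf{B}(\mathsf{C})$). By the uniqueness in the hypothesis $\mathsf{R}=\mathsf{A}\bullet\mathsf{C}$, we conclude $\tilde a_1^*=\tilde a_2^*$ and $\tilde c_1^*=\tilde c_2^*$; applying $*$ once more yields $\tilde a_1=\tilde a_2$ and $\tilde c_1=\tilde c_2$, as desired.

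There is no real obstacle here: the argument is a direct application of the anti-automorphism $*$ and relies only on the fact that fusion subrings are closed under duality, which is built into the definition. The only point requiring (minor) care is to make sure the roles of $\mathsf{A}$ and $\mathsf{C}$ are correctly swapped after applying $*$, which is automatic since $*$ reverses the order of multiplication.
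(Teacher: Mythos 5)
Your proof is correct and takes essentially the same route as the paper: both transfer the factorization $\mathsf{R}=\mathsf{A}\bullet\mathsf{C}$ through the anti-automorphism $*$, using that the bases $\mathsf{B}(\mathsf{A})$ and $\mathsf{B}(\mathsf{C})$ are stable under duality. If anything, yours is slightly more explicit, since you verify existence and uniqueness of the decomposition $b=\tilde c\,\tilde a$ for each $b\in\mathsf{B}(\mathsf{R})$ directly against the definition, whereas the paper argues more tersely that the products $ca=(a^*c^*)^*$ lie in $\mathsf{B}(\mathsf{R})$ and are pairwise distinct.
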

\begin{proof}
    We first show that the elements $c a$ are in $\mathsf{B}(\mathsf{R})$ for $c\in \mathsf{B}(\mathsf{C})$ and $a\in \mathsf{B}(\mathsf{A})$. Since $a^* c^*$ is in $\mathsf{B}(\mathsf{R})$, then $c a =(a^* c^*)^* \in \mathsf{B}(\mathsf{R})$. Also, all elements $c a$ are different for distinct $a$ and $c$ since their duals are different elements by hypothesis.
\end{proof}

\subsection{Matched pairs and bicrossed products of fusion rings}\label{subsection:bicrossed-product-fusion-rings}
We propose the following definition for a matched pair of fusion rings generalizing the one for groups.
\begin{definition}\label{def:mathced-pair-fusion-rings}
    Let $(\mathsf{A},\mathsf{B}(\mathsf{A}))$, $(\mathsf{C},\mathsf{B}(\mathsf{C}))$ be fusion rings with faithful gradings given by
    \begin{align*}
        \mathsf{A} = \sum_{h\in H} \mathsf{A}_h, && \mathsf{B}(\mathsf{A}) = \bigsqcup_{h\in H} \mathsf{B}(\mathsf{A})_h, &&   \mathsf{C} = \sum_{k\in K} \mathsf{C}_k, && \mathsf{B}(\mathsf{C}) = \bigsqcup_{k\in K} \mathsf{B}(\mathsf{C})_k, 
    \end{align*}
    for groups $H$ and $K$ respectively. A \emph{matched pair of fusion rings} between $\mathsf{A}$ and $\mathsf{C}$ consist of a $8-$tuple $(\mathsf A, \mathsf C, H, K, \tridb, \fizb, \trid, \fiz)$, where
    \begin{enumerate}[leftmargin=*,label=\rm{(\roman*)}]
    \item the tuple $(H, K, \tridb, \fizb)$ is a matched pair between the groups $H$ and $K$,
    \item\label{def:mathced-pair-fusion-rings-2} a $\mathbb{Z}$-linear left action $\trid\colon K\times \mathsf{A} \to \mathsf{A}$ and a $\mathbb{Z}$-linear right action $\fiz\colon \mathsf{C}\times H \to \mathsf{C}$ such that $k \trid \mathsf{B}(\mathsf{
A})_h = \mathsf{B}(\mathsf{A})_{k\tridb h}$ and $\mathsf{B}(\mathsf{C})_k \fiz h= \mathsf{B}(\mathsf{C})_{k \fizb h}$,
    \item\label{def:mathced-pair-fusion-rings-3} for each $k\in K$ and $h\in H$,
    \begin{align*}
        & k\trid (aa') = (k\trid a) ((k\fizb\degf{a})\trid a'), & a,a'\in \mathsf{B} (\mathsf{A}),\\
        &(cc')\fiz h = (c\fiz (\degf{c'}\tridb h)) (c'\fiz h), & c,c'\in\mathsf{B} (\mathsf{C}),
    \end{align*}
 \item\label{def:mathced-pair-fusion-rings-4} for each $k\in K$ and $h\in H$, $k\trid 1=1$ and $1\fiz h=1.$
    \end{enumerate}
    \end{definition}

The following lemma provides an analogous to Remark \ref{remark:inversas-matched-pair-grupos}.
\begin{lemma}\label{lemma:dual-of-triangles}
Let $(\mathsf A, \mathsf C, H, K, \tridb, \fizb, \trid, \fiz)$ be a matched pair of fusion rings. Then the actions $\trid$ and $\fiz$ satisfy
\begin{align*}
    (k\trid a)^* = (k\fizb \degf{a})\trid a^*, && (c\fiz h)^* = c^*\fiz (\degf{c}\tridb h), && a\in \mathsf{B}(\mathsf{A}), c\in\mathsf{B}(\mathsf{C}), h\in H, k\in K.
\end{align*}
\end{lemma}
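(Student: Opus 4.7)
My plan is to exploit the interaction of the $\mathbb{Z}$-linear action $\trid$ with the unit $1 \in \mathsf{A}$, together with the distributivity condition \ref{def:mathced-pair-fusion-rings-3} of Definition \ref{def:mathced-pair-fusion-rings}, and the fact (axiom \ref{def-exact-fact-3} of Definition \ref{def:fusion-ring}) that for basis elements $b, b' \in \mathsf{B}(\mathsf{A})$ the product $bb'$ contains $1$ in its basis expansion if and only if $b' = b^*$, in which case it contains $1$ with multiplicity exactly one.

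First, fix $a \in \mathsf{B}(\mathsf{A})_h$ and $k \in K$. Condition \ref{def:mathced-pair-fusion-rings-2} guarantees that $k \trid a$ and $(k \fizb h) \trid a^*$ are basis elements of $\mathsf{A}$ (lying respectively in $\mathsf{B}(\mathsf{A})_{k \tridb h}$ and $\mathsf{B}(\mathsf{A})_{(k \fizb h) \tridb h^{-1}}$). Applying $k \trid -$ to the equation $a a^* = 1 + \sum_{\ell} N_{a, a^*}^{\ell} \, b_{\ell}$ (with the sum ranging over nontrivial basis elements) and using $\mathbb{Z}$-linearity together with $k \trid 1 = 1$ from \ref{def:mathced-pair-fusion-rings-4}, the element $k \trid (a a^*) \in \mathsf{A}$ contains $1$ in its basis expansion with multiplicity one. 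On the other hand, the compatibility relation \ref{def:mathced-pair-fusion-rings-3} gives
\begin{equation*}
k \trid (a a^*) = (k \trid a) \bigl((k \fizb h) \trid a^*\bigr),
\end{equation*}
so the right-hand side must contain $1$. Since both factors are basis elements, the fusion ring axiom \ref{def-exact-fact-3} of Definition \ref{def:fusion-ring} forces $(k \fizb h) \trid a^* = (k \trid a)^*$, proving the first identity.

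The second identity is entirely symmetric: fix $c \in \mathsf{B}(\mathsf{C})_k$ and $h \in H$. By \ref{def:mathced-pair-fusion-rings-2}, both $c \fiz h$ and $c^* \fiz (k \tridb h)$ are basis elements of $\mathsf{C}$. Apply $- \fiz h$ to $c^* c = 1 + \sum_{\ell} N_{c^*, c}^{\ell} \, b_{\ell}$; by \ref{def:mathced-pair-fusion-rings-4}, $1 \fiz h = 1$, so $(c^* c) \fiz h$ contains $1$. Using the distributivity \ref{def:mathced-pair-fusion-rings-3} and noting $\degf{c^*} = k^{-1}$ so that the inner index becomes $k \tridb h$ (as in Remark \ref{remark:inversas-matched-pair-grupos} applied to the underlying matched pair of groups), we obtain
\begin{equation*}
(c^* c) \fiz h = \bigl(c^* \fiz (k \tridb h)\bigr)(c \fiz h),
\end{equation*}
and the same fusion-ring argument as above yields $(c \fiz h)^* = c^* \fiz (k \tridb h)$, which is the claim.

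The routine aspect is linearity; the only subtle point is tracking the degree of $a^*$ (respectively $c^*$) correctly so that the right argument $k \fizb h$ (respectively $k \tridb h$) appears, which is where the compatibility between the grading and the action from \ref{def:mathced-pair-fusion-rings-2} is essential. I do not anticipate any serious obstacle.
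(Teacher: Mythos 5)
Your proof is correct and follows essentially the same route as the paper: act with $k\trid-$ (respectively $-\fiz h$) on $aa^*=1+\cdots$ (respectively $c^*c=1+\cdots$), use $k\trid 1=1$, $1\fiz h=1$ and the compatibility of Definition \ref{def:mathced-pair-fusion-rings}\ref{def:mathced-pair-fusion-rings-3}, then invoke the fusion-ring axiom that $N^{1}_{b,b'}\neq 0$ forces $b'=b^*$. The only minor blemish is the parenthetical invoking $\degf{c^*}=k^{-1}$ and Remark \ref{remark:inversas-matched-pair-grupos}: the matched-pair relation $(c^*c)\fiz h=\bigl(c^*\fiz(\degf{c}\tridb h)\bigr)(c\fiz h)$ uses the degree of the \emph{right} factor $c$, so the index $k\tridb h$ appears directly and no inversion formula is needed.
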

\begin{proof}
    Let $a\in \mathsf{B}(A)$ and $k\in K$. By Definition \ref{def-exact-fact} \ref{def-exact-fact-3}, we know that $a a^* = 1 + \sum_{a'\in\mathsf{B}(\mathsf A), a'\neq 1} N_{a,a^*}^{a'} a'$. Acting with $k$ on both sides, we get
    \begin{align*}
        k\trid (a a^*) = k\trid 1 + \sum_{a'\in\mathsf{B}(\mathsf A), a'\neq 1} N_{a,a^*}^{a'} (k\trid a') =  1 + \sum_{a'\in\mathsf{B}(\mathsf A), a'\neq 1} N_{a,a^*}^{a'} (k\trid a').
    \end{align*}
    On the other hand,
    \begin{align*}
        k\trid (a a^*)  =  (k\trid a) ((k\fizb\degf{a})\trid a^*) = \sum_{a'\in\mathsf{B(\mathsf A)}} N_{(k\trid a), (k\fizb\degf{a})\trid a^* }^{a'} a'.
    \end{align*}
    From both of this it follows that $N_{(k\trid a), (k\fizb\degf{a})\trid a^* }^{1} = 1 \neq 0$, hence by Definition \ref{def-exact-fact} \ref{def-exact-fact-3} $(k\fizb\degf{a})\trid a^* = (k\trid a)^*$. The proof that $(c\fiz h)^* = c^*\fiz (\degf{c}\tridb h)$ is analogous.
\end{proof}

\begin{definition}\label{def:bicrossed-fusion-rings}
    Given a matched pair of fusion rings $(\mathsf A, \mathsf C, H, K, \tridb, \fizb, \trid, \fiz)$, we define the 
\emph{bicrossed product} of $\mathsf{A}$ and $\mathsf{C}$, denoted as $\mathsf{A}\bowtie\mathsf{C}$, as the following $\mathbb{Z}$-ring:
\begin{enumerate}[leftmargin=*,label=\rm{(\roman*)}]
    \item $\mathsf{A}\bowtie \mathsf{C} = \mathsf{A} \otimes_\mathbb{Z} \mathsf{C}$ as a $\mathbb{Z}$-module, and the elements are denoted as $a\bowtie c \coloneqq a \ot c$ for $a\in\mathsf{A}$ and $c\in\mathsf{C}$,
    \item the fixed $\mathbb{Z}$-basis is $\mathsf{B}(\mathsf{A}\bowtie \mathsf{C}) = \{a \bowtie c \colon a\in \mathsf{B}(\mathsf{A}), c\in \mathsf{B}(\mathsf{C})\}$,
    \item the multiplication is given by
    \begin{align*}
        (a\bowtie c)(a'\bowtie c') = a (\degf{c}\trid a') \bowtie (c\fiz \degf{a'}) c',
    \end{align*}
    for all $a, a'\in \mathsf{B}(\mathsf{A}), c, c'\in \mathsf{B}(\mathsf{C})$ and extended $\mathbb{Z}$-linearly,
    \item the involution $^*\colon \mathsf{A}\bowtie \mathsf{C}\to \mathsf{A}\bowtie \mathsf{C}$ is given by
    \begin{align*}
        (a\bowtie c)^* = \degf{c}^{-1}\trid a^* \bowtie c^* \fiz \degf{a}^{-1},
    \end{align*}
    for all $a\in \mathsf{B}(\mathsf{A}), c\in \mathsf{B}(\mathsf{C})$ and extended $\mathbb{Z}$-linearly.
\end{enumerate}
\end{definition}
\begin{theorem}\label{thm:bicrossed-product-fusion-rings}     The pair $(\mathsf{A}\bowtie \mathsf{C}, \mathsf{B}(\mathsf{A}\bowtie \mathsf{C}))$ with $^*$ is a fusion ring. Furthermore, $(\mathsf{A},\mathsf{B}(\mathsf{A}))$ and $(\mathsf{C},\mathsf{B}(\mathsf{C}))$  are fusion subrings of $\mathsf{A}\bowtie \mathsf{C}$ and the bicrossproduct $\mathsf{A}\bowtie \mathsf{C}$ is an exact factorization of $\mathsf{A}$ and $\mathsf{C}$.
\end{theorem}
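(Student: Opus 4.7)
My plan is to verify, in sequence, the four assertions of the theorem: (a) the structure coefficients of $\bowtie$ lie in $\N_0$ and $1\bowtie 1$ is a two-sided unit; (b) associativity of $\bowtie$; (c) $^*$ is the involution of a fusion ring; and (d) $\mathsf{A}$ and $\mathsf{C}$ embed as fusion subrings realizing the exact factorization.

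For (a), Definition \ref{def:mathced-pair-fusion-rings} \ref{def:mathced-pair-fusion-rings-2} forces $\degf{c}\trid a'\in\mathsf{B}(\mathsf{A})$ and $c\fiz\degf{a'}\in\mathsf{B}(\mathsf{C})$, so the products $a(\degf{c}\trid a')$ and $(c\fiz\degf{a'})c'$ expand with $\N_0$-coefficients, whence so does $(a\bowtie c)(a'\bowtie c')$ in $\mathsf{B}(\mathsf{A}\bowtie\mathsf{C})$. The unit property of $1\bowtie 1$ follows from $\degf{1}=e$ together with Definition \ref{def:mathced-pair-fusion-rings} \ref{def:mathced-pair-fusion-rings-4} and the identity axioms of the actions $\trid,\fiz$.

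For (b), I would expand both parenthesizations of the triple product on basis elements. Noting that every basis summand of $a(\degf{c}\trid a')$ has the common degree $\degf{a}(\degf{c}\tridb\degf{a'})$, the $\mathsf{A}$-components of the two sides reduce to
\[
a(\degf{c}\trid a')\bigl(((\degf{c}\fizb\degf{a'})\degf{c'})\trid a''\bigr)
\quad\text{and}\quad
a\bigl(\degf{c}\trid(a'(\degf{c'}\trid a''))\bigr),
\]
which coincide by Definition \ref{def:mathced-pair-fusion-rings} \ref{def:mathced-pair-fusion-rings-3} together with the left-action property $(k_1k_2)\trid x=k_1\trid(k_2\trid x)$. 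A symmetric computation using the $\fiz$-version of \ref{def:mathced-pair-fusion-rings-3} and the right-action property handles the $\mathsf{C}$-component.

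For (c), I would first check that $^*$ preserves the basis and squares to the identity, which uses the relations of Remark \ref{remark:inversas-matched-pair-grupos}. Inspecting the coefficient of $1\bowtie 1$ in $(a\bowtie c)(a'\bowtie c')$ shows it equals $1$ precisely when $\degf{c}\trid a'=a^*$ and $c\fiz\degf{a'}=c^*$, which by Lemma \ref{lemma:dual-of-triangles} is equivalent to $(a'\bowtie c')=(a\bowtie c)^*$. For the anti-automorphism property it suffices to check $((a_1\bowtie c_1)(a_2\bowtie c_2))^*=(a_2\bowtie c_2)^*(a_1\bowtie c_1)^*$ on basis elements; I would first verify the auxiliary identities $((a\bowtie 1)(1\bowtie c))^*=(1\bowtie c)^*(a\bowtie 1)^*$ and the more delicate $((1\bowtie c)(a\bowtie 1))^*=(a\bowtie 1)^*(1\bowtie c)^*$, the latter following by combining Lemma \ref{lemma:dual-of-triangles} with the commutation rule of Definition \ref{def:bicrossed-fusion-rings}. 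I would then expand $(a_1\bowtie c_1)(a_2\bowtie c_2)=(a_1\bowtie 1)\bigl((1\bowtie c_1)(a_2\bowtie 1)\bigr)(1\bowtie c_2)$ via the associativity proved in (b), apply the auxiliary identities together with the anti-involutions already carried by $\mathsf{A}$ and $\mathsf{C}$, and collapse. Part (d) is then immediate: the maps $a\mapsto a\bowtie 1$ and $c\mapsto 1\bowtie c$ are ring monomorphisms by Definition \ref{def:mathced-pair-fusion-rings} \ref{def:mathced-pair-fusion-rings-4}, and the decomposition $a\bowtie c=(a\bowtie 1)(1\bowtie c)$ together with the uniqueness of the pair $(a,c)$ delivers the exact factorization.

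The main obstacle will be the anti-automorphism step in (c): it is the only point at which the matched-pair relations, Lemma \ref{lemma:dual-of-triangles}, and the commutation rule must be combined simultaneously, and the bookkeeping is intricate even though each individual substitution is routine.
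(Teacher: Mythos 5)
Your proposal is correct and follows essentially the same route as the paper: associativity, non-negative structure constants and unitality from the matched-pair axioms, the coefficient-of-$1\bowtie 1$/duality condition via Lemma \ref{lemma:dual-of-triangles} and Remark \ref{remark:inversas-matched-pair-grupos}, and the exact factorization via the embeddings $a\mapsto a\bowtie 1$, $c\mapsto 1\bowtie c$; your explicit verification that $^*$ is anti-multiplicative, using the factorization $(a_1\bowtie 1)\bigl((1\bowtie c_1)(a_2\bowtie 1)\bigr)(1\bowtie c_2)$ together with the two auxiliary identities, only spells out what the paper leaves implicit. One small slip: in the coefficient computation the second condition should read $c'=(c\fiz\degf{a'})^*$ (equivalently $c\fiz\degf{a'}=(c')^*$) rather than $c\fiz\degf{a'}=c^*$; with that correction, Lemma \ref{lemma:dual-of-triangles} yields $(a'\bowtie c')=(a\bowtie c)^*$ exactly as you conclude.
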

\begin{proof} It is not difficult to see that the multiplication defined in $\mathsf{A}\bowtie \mathsf{C}$ is associative by Definition \ref{def:mathced-pair-fusion-rings} \ref{def:mathced-pair-fusion-rings-3} and the structure coefficients are natural numbers given by the formula $N^{a\bowtie c}_{a_1\bowtie c_1, a_2\bowtie c_2}=N^a_{a_1, \degf{c_1}\trid a_2}N^c_{c_1\fiz \degf{a_2}, c_2}$. The unitality condition follows from Definition \ref{def:mathced-pair-fusion-rings} \ref{def:mathced-pair-fusion-rings-4}.

It follows from Lemma \ref{lemma:dual-of-triangles} and Remark \ref{remark:inversas-matched-pair-grupos} that the map $^*$ is indeed an involution. 

We need to check that $N^{1\bowtie 1}_{a_1\bowtie c_1, a_2\bowtie c_2} = 1$ if $a_2\bowtie c_2 = (a_1\bowtie c_1)^* =\degf{c_1}^{-1}\trid a_1^* \bowtie c_1^* \fiz \degf{a_1}^{-1}$ and it is equal $0$ otherwise. It follows that
\begin{align*}
N^{1\bowtie 1}_{a_1\bowtie c_1, a_2\bowtie c_2}=N^1_{a_1, \degf{c_1}\trid a_2}N^1_{c_1\fiz \degf{a_2}, c_2 } = \begin{dcases*}
    1 & if $\degf{c_1}\trid a_2 = a_1^*$ and $c_2= (c_1\fiz \degf{a_2})^*$,\\
    0 & otherwise.
\end{dcases*}
\end{align*}
Then $N^{1\bowtie 1}_{a_1\bowtie c_1, a_2\bowtie c_2} = 1$ when $a_2 =  \degf{c_1}^{-1}\trid a_1^* $ and $c_2 = (c_1\fiz \degf{a_2})^*$ and $0$ otherwise. By Lemma \ref{lemma:dual-of-triangles}, 
we have $(c_1\fiz \degf{a_2})^* = c_1^* \fiz (\degf{c_1}\tridb \degf{a_2})=c_1^* \fiz (\degf{c_1}\tridb (\degf{c_1}^{-1}\tridb \degf{a_1}^{-1})=c_1^* \fiz\degf{a_1}^{-1}$.
  To finish the proof notice that the maps $\iota_{\mathsf A}\colon \mathsf A \to \mathsf A\bowtie \mathsf C$ and $\iota_{\mathsf C}\colon \mathsf C \to \mathsf A\bowtie \mathsf C$ given by $a\mapsto a\bowtie 1$ and $c\mapsto 1\bowtie c$ are inclusions of fusion rings, and $\mathsf A\bowtie \mathsf C =\iota_{\mathsf A}(\mathsf A) \bullet \iota_{\mathsf C}(\mathsf C) \simeq \mathsf A \bullet \mathsf C$.
\end{proof}

\subsection{Characterization of exact factorizations of fusion rings}\label{subsection:characterization-fusion-rings}
In the previous subsection, we showed that we can construct an exact factorization given a matched pair of fusion rings. Now, we prove that all exact factorizations can be obtained in this way.
\begin{theorem}\label{teo:char-exact-fact-fusion-ring}
Let $\mathsf{R} = \mathsf{A}\bullet \mathsf{C}$ be an exact factorization of fusion rings. Then there exists a matched pair of fusion rings between $\mathsf{A}$ and $\mathsf{C}$ such that $\mathsf{R}\simeq \mathsf{A}\bowtie \mathsf{C} $.
\end{theorem}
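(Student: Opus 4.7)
The plan is to exhibit $(\mathsf{A}, \mathsf{C}, U(\mathsf{A}), U(\mathsf{C}), \tridb, \fizb, \trid, \fiz)$ as a matched pair of fusion rings whose bicrossed product recovers $\mathsf{R}$. By Proposition~\ref{prop:exact-fact-is-symmetric}, $\mathsf{R} = \mathsf{C}\bullet \mathsf{A}$ as well, so for basis elements $c\in\mathsf{B}(\mathsf{C})$ and $a\in\mathsf{B}(\mathsf{A})$ the product $ca\in\mathsf{R}$ is itself a single basis element, which factors uniquely in the $\mathsf{A}\bullet \mathsf{C}$ form as $a'c'$. I define $c\trid a\coloneqq a'$, $c\fiz a\coloneqq c'$, and extend $\mathbb{Z}$-linearly in the second variable.

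The first technical lemma to prove is an iterated action identity: expanding both sides of $(c_1 c_2)a = c_1(c_2 a)$ in $\mathsf{R}$ and comparing $\mathsf{A}$-parts of the resulting basis summands, every basis element $c''\in\mathsf{B}(\mathsf{C})$ in the support of $c_1 c_2$ must satisfy $c''\trid a = c_1\trid(c_2\trid a)$; the analogous statement for $\fiz$ follows from $c(a_1 a_2) = (c a_1)a_2$. Applying this identity with $c_2 = c_1^*$ and using $N_{c,c^*}^1 = 1$ together with $1\trid a = a$ yields that every basis summand of $cc^*$ acts trivially via $\trid$, so in particular $c\trid(c^*\trid a) = a$ and each $c\trid$ is a bijection of $\mathsf{B}(\mathsf{A})$ inverted by $c^*\trid$. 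By Remark~\ref{remark:I(1)-genera} and induction on $n$ for basis summands of $I(1)^n$, triviality extends to all of $\mathsf{B}(\mathsf{C}_{\ad})$, and symmetrically every basis element of $\mathsf{A}_{\ad}$ acts trivially on $\mathsf{C}$ via $\fiz$.

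I then expect the actions to descend to the universal grading groups $H = U(\mathsf{A})$ and $K = U(\mathsf{C})$. For $c_1, c_2\in\mathsf{B}(\mathsf{C})_k$, the product $c_1 c_2^*$ lies in $\mathsf{C}_{\ad}$, so by the iterated action identity combined with adjoint triviality, $c_1\trid(c_2^*\trid a) = a$; combined with $c_2\trid(c_2^*\trid a) = a$ and bijectivity of $c_2^*\trid$, this forces $c_1\trid = c_2\trid$. Hence $k\trid a\coloneqq c\trid a$ for any $c\in\mathsf{B}(\mathsf{C})_k$ is a well-defined $K$-action on $\mathsf{A}$, and symmetrically a right $H$-action on $\mathsf{C}$ is obtained. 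To get compatibility with the $H$-grading of $\mathsf{A}$, the key step is to show $k\trid$ is semilinear over the ring automorphism $k\trid|_{\mathsf{A}_{\ad}}$ of $\mathsf{A}_{\ad}$: comparing expansions of $c(ya)$ and $(cy)a$ for $y\in\mathsf{A}_{\ad}$ and using adjoint triviality of $\fiz$ gives $k\trid(ya) = (k\trid y)(k\trid a)$, so the basis-preserving $k\trid$ permutes the indecomposable based $\mathsf{A}_{\ad}$-submodules $\{\mathsf{A}_h\}_{h\in H}$ of $\mathsf{A}$ and yields the group-level action $\tridb\colon K\times H\to H$; the symmetric argument produces $\fizb\colon K\times H\to K$.

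With the data $(H, K, \tridb, \fizb, \trid, \fiz)$ in hand, the matched-pair-of-groups axioms and the bilinear conditions of Definition~\ref{def:mathced-pair-fusion-rings}\ref{def:mathced-pair-fusion-rings-3} follow from associativity in $\mathsf{R}$ applied to $(c_1c_2)a$ and $c(a_1a_2)$, while condition~\ref{def:mathced-pair-fusion-rings-4} is immediate from $1\cdot a = a$ and $c\cdot 1 = c$. The $\mathbb{Z}$-linear map $\varphi\colon \mathsf{A}\bowtie\mathsf{C}\to\mathsf{R}$ defined on the basis by $a\bowtie c\mapsto ac$ is a bijection onto $\mathsf{B}(\mathsf{R})$, and the bicrossed multiplication of Definition~\ref{def:bicrossed-fusion-rings} is engineered precisely to match $(ac)(a'c') = a(ca')c' = a(c\trid a')(c\fiz a')c'$ in $\mathsf{R}$, so $\varphi$ is an isomorphism of fusion rings. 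The main obstacle is proving the triviality of the adjoint actions and combining it with the semilinearity argument to descend $\trid$ and $\fiz$ to the universal grading groups in a grading-compatible way; once these structural facts are secured, the remaining verifications are routine bookkeeping with associativity in $\mathsf{R}$.
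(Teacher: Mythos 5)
Your construction of the basic data coincides with the paper's: your $c\trid a$ and $c\fiz a$ are the maps $\ell_c(a)$ and $r_a(c)$, your iterated-action identity is Lemma \ref{lemma:composicion-de-funciones-relacion-con-producto}, invertibility is Corollary \ref{coro:T-A-es-invertible}, and adjoint triviality is Proposition \ref{prop:adjunta-actua-trivial}, proved by the same induction over summands of $I(1)^n$. You diverge in two places. First, your proof that $c_1$ and $c_2$ of the same universal degree act equally (via $c_1c_2^*\in\mathsf{C}_{\ad}$, adjoint triviality, and invertibility of $c_2^*\trid$) is shorter than the paper's Proposition \ref{prop:mismo-grado-universal-actua-igual}, which inducts through the indecomposable based $\mathsf{C}_{\ad}$-module $\mathsf{C}_k$; your argument is correct and is a genuine simplification. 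Second, you bypass the paper's route to the group-level data: the paper first proves $\mathsf{R}_{\ad}=\mathsf{A}_{\ad}\bullet\mathsf{C}_{\ad}$ (Proposition \ref{prop:Adjoint-subcat-of-an-exact-fact}) and then, by a Frobenius--Perron dimension count, that $U(\mathsf{R})\simeq U(\mathsf{A})\bullet U(\mathsf{C})$ (Proposition \ref{prop:canonical-matched-pair}), reading off $\tridb$, $\fizb$ and the compatibility $k\trid\mathsf{B}(\mathsf{A})_h=\mathsf{B}(\mathsf{A})_{k\tridb h}$ from degrees in $U(\mathsf{R})$, whereas you want to define $\tridb,\fizb$ directly from how $k\trid$ and $-\fiz h$ permute the graded components. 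Your verification of condition \rm{(iii)} of Definition \ref{def:mathced-pair-fusion-rings} and the final isomorphism $a\bowtie c\mapsto ac$ then match Proposition \ref{prop:canonical-matchedpair-rings} and the paper's concluding argument.

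The one genuine gap is in the step ``$k\trid$ permutes the $\mathsf{A}_h$''. Your semilinearity identity $k\trid(ya)=(k\trid y)(k\trid a)$ for $y\in\mathsf{B}(\mathsf{A}_{\ad})$ is correct (it follows from $c(ya)=(cy)a$, adjoint triviality of $\fiz$, homogeneity of the support of $ya$, and uniqueness of factorization), but it does not by itself justify calling $k\trid|_{\mathsf{A}_{\ad}}$ a ring automorphism \emph{of} $\mathsf{A}_{\ad}$: you still need $k\trid\mathsf{B}(\mathsf{A}_{\ad})\subseteq\mathsf{B}(\mathsf{A}_{\ad})$, and semilinearity only gives multiplicativity, not stability of the adjoint part. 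Without that stability, the image of an indecomposable based $\mathsf{A}_{\ad}$-submodule need not be an $\mathsf{A}_{\ad}$-submodule, so the well-definedness of $\tridb$ (and, symmetrically, $\fizb$) is not yet established. The gap is fillable: comparing the two expansions of $c(aa^*)=(ca)a^*$, using $r_{a^*}\circ r_a=\id$, $\ell_c(1)=1$, and uniqueness of factorization, one gets $\degf{c\fiz\degf{a}}\trid a^*=(k\trid a)^*$ and hence $k\trid(aa^*)=(k\trid a)(k\trid a)^*\in\mathsf{A}_{\ad}$, a construction-side analogue of Lemma \ref{lemma:dual-of-triangles}; an induction over summands of $I(1)^n$ as in Proposition \ref{prop:adjunta-actua-trivial} then gives stability of $\mathsf{B}(\mathsf{A}_{\ad})$, after which your permutation argument and the rest of the proposal go through. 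Alternatively, you can keep the paper's Propositions \ref{prop:Adjoint-subcat-of-an-exact-fact} and \ref{prop:canonical-matched-pair}, which produce $(\tridb,\fizb)$ and the grading compatibility without meeting this issue.
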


The proof of this theorem will be accomplished through a series of lemmas and constructions. So for the remaining of this subsection, let us fix a fusion ring $(\mathsf{R},\mathsf{B}(\mathsf{R}))$, and fusion subrings  $(\mathsf{A},\mathsf{B}(\mathsf{A}))$ and $(\mathsf{C},\mathsf{B}(\mathsf{C}))$ such that $\mathsf{R} = \mathsf{A}\bullet \mathsf{C}$. By Proposition \ref{prop:exact-fact-is-symmetric}, we also have $\mathsf{R} = \mathsf{C}\bullet \mathsf{A}$. Therefore, given $b\in \mathsf{B}(\mathsf{R})$ there exist unique $a, a' \in \mathsf{B}(\mathsf{A})$ and $c, c' \in \mathsf{B}(\mathsf{C})$ such that $b = a'c' = c a$. Hence, for each $a \in \mathsf{B}(\mathsf{A})$ and $c \in \mathsf{B}(\mathsf{C})$, there are well defined functions $\ell_c \colon \mathsf{B}(\mathsf{A}) \to \mathsf{B}(\mathsf{A})$ and $r_a\colon \mathsf{B}(\mathsf{C}) \to \mathsf{B}(\mathsf{C})$ such that $r_a(c) = c'$ and $\ell_c(a) = a'$. In other words, for every $a\in \mathsf{B}(\mathsf{A})$ and $c\in \mathsf{B}(\mathsf{C})$, we have $ca = \ell_c(a) r_a(c)$. 


Our next goal is to define certain $\mathbb Z$-linear actions by the universal grading group $U(\mathsf{A})$ on $\mathsf{C}$ ($U(\mathsf{C})$ on $\mathsf{A}$, respectively). For this, we need a series of auxiliaries lemmas and omit some proofs that are straightforward calculations from the definitions. 

\begin{lemma}\label{lemma:accion-del-1}
The following statements hold:
\begin{enumerate}[leftmargin=*,label=\rm{(\roman*)}]
    \item\label{item1-funciones-identidad} $\ell_{1} = \id_{\mathsf{B}(\mathsf{A})}$ and $r_{1} = \id_{\mathsf{B}(\mathsf{C})}$,
\item\label{item2-funciones-identidad} $\ell_c(1) = 1$ and $r_a(1) = 1$, for all $a\in \mathsf{B}(\mathsf{A})$ and $c\in \mathsf{B}(\mathsf{C})$.
\end{enumerate}
\end{lemma}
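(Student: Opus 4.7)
The plan is to read off both statements directly from the defining equation $ca = \ell_c(a)\, r_a(c)$ together with the uniqueness of factorization that is built into the definition of an exact factorization of fusion rings. Recall that by Proposition \ref{prop:exact-fact-is-symmetric} and the symmetric version of Definition \ref{def-exact-fact} \ref{def-exact-fact-3}, every element of $\mathsf{B}(\mathsf{R})$ has a unique expression as a product $a'c'$ with $a'\in\mathsf{B}(\mathsf{A})$ and $c'\in\mathsf{B}(\mathsf{C})$, and also a unique expression as $c''a''$ with $c''\in\mathsf{B}(\mathsf{C})$ and $a''\in\mathsf{B}(\mathsf{A})$. The functions $\ell_c$ and $r_a$ were defined precisely so as to record these unique factors, so any other decomposition of the product $ca$ into a basis element of $\mathsf{A}$ times a basis element of $\mathsf{C}$ must coincide with $\ell_c(a)\cdot r_a(c)$.

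For part \ref{item1-funciones-identidad}, I would specialize the defining equation to $c=1$. Since $1\in\mathsf{B}(\mathsf{C})$ and $a\in\mathsf{B}(\mathsf{A})$, the element $1\cdot a=a$ admits the obvious factorization $a=a\cdot 1$ with $a\in\mathsf{B}(\mathsf{A})$ and $1\in\mathsf{B}(\mathsf{C})$. On the other hand, by definition, $1\cdot a = \ell_1(a)\, r_a(1)$. Uniqueness of the factorization then forces $\ell_1(a)=a$ and $r_a(1)=1$; the first identity gives $\ell_1=\id_{\mathsf{B}(\mathsf{A})}$ and the second gives half of part \ref{item2-funciones-identidad}. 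The identity $r_1=\id_{\mathsf{B}(\mathsf{C})}$ (and the remaining half $\ell_c(1)=1$ of part \ref{item2-funciones-identidad}) is obtained in the same way by setting $a=1$ and comparing $c\cdot 1=c=1\cdot c$ with $c=\ell_c(1)\, r_1(c)$.

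There is no serious obstacle here: the entire argument is a direct application of the unique factorization property, so the proof amounts to two lines per item. The only mild subtlety is to use both exact factorization orderings $\mathsf{R}=\mathsf{A}\bullet\mathsf{C}$ and $\mathsf{R}=\mathsf{C}\bullet\mathsf{A}$ (guaranteed by Proposition \ref{prop:exact-fact-is-symmetric}) to justify that $\ell_c(a)\, r_a(c)$ really is the unique $\mathsf{B}(\mathsf{A})\cdot\mathsf{B}(\mathsf{C})$-decomposition of the product $ca$, so that the comparison with $a\cdot 1$ (or $1\cdot c$) is legitimate.
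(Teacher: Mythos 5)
Your proof is correct, and it is exactly the "straightforward calculation from the definitions" that the paper omits: specializing $ca=\ell_c(a)\,r_a(c)$ to $c=1$ (resp. $a=1$) and invoking uniqueness of the $\mathsf{B}(\mathsf{A})\cdot\mathsf{B}(\mathsf{C})$ factorization, which is legitimate since Proposition \ref{prop:exact-fact-is-symmetric} guarantees $ca\in\mathsf{B}(\mathsf{R})$. No gaps.
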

\begin{lemma}\label{lemma:composicion-de-funciones-relacion-con-producto}
The following statements hold:
\begin{enumerate}[leftmargin=*,label=\rm{(\roman*)}]
    \item\label{item:composicion-de-funciones-relacion-con-producto-1}  For $c_1, c_2, c_3\in \mathsf{B}(\mathsf{C}) $ such that $N_{c_1,c_2}^{c_3}\neq 0$, it follows that $\ell_{c_3} = \ell_{c_1}\circ \ell_{c_2}$. 
    \item\label{item:composicion-de-funciones-relacion-con-producto-2} For $a_1, a_2, a_3\in \mathsf{B}(\mathsf{A})$ such that $N_{a_1,a_2}^{a_3}\neq 0$, it follows that $r_{a_3} = r_{a_2}\circ r_{a_1}$. 
\end{enumerate}
\end{lemma}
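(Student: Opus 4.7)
The strategy is to compute a triple product in $\mathsf{R}$ in two ways using associativity, expand both sides in the basis $\mathsf{B}(\mathsf{R})$, and then read off the conclusion from the uniqueness of the decomposition $b = a'c'$ guaranteed by the exact factorization $\mathsf{R} = \mathsf{A}\bullet\mathsf{C}$ (and its mirror $\mathsf{R} = \mathsf{C}\bullet\mathsf{A}$ from Proposition \ref{prop:exact-fact-is-symmetric}).

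For part \ref{item:composicion-de-funciones-relacion-con-producto-1}, I would fix an arbitrary $a \in \mathsf{B}(\mathsf{A})$ and expand $c_1 c_2 a$ in two ways. Associating to the left yields
\[
(c_1 c_2)a \;=\; \sum_{c_3 \in \mathsf{B}(\mathsf{C})} N_{c_1,c_2}^{c_3}\, c_3 a \;=\; \sum_{c_3} N_{c_1,c_2}^{c_3}\, \ell_{c_3}(a)\, r_a(c_3),
\]
whereas associating to the right yields
\[
c_1(c_2 a) \;=\; c_1\, \ell_{c_2}(a)\, r_a(c_2) \;=\; \ell_{c_1}(\ell_{c_2}(a))\, r_{\ell_{c_2}(a)}(c_1)\, r_a(c_2).
\]
After further expanding the $\mathsf{C}$-product $r_{\ell_{c_2}(a)}(c_1)\, r_a(c_2)$ in the basis $\mathsf{B}(\mathsf{C})$, the right-hand side becomes a non-negative integer combination of basis elements of $\mathsf{R}$ whose $\mathsf{A}$-parts are all equal to the single element $\ell_{c_1}(\ell_{c_2}(a))$.

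Next, I would compare the multiplicities of any $b \in \mathsf{B}(\mathsf{R})$ in these two expansions. Because every element of $\mathsf{B}(\mathsf{R})$ has a unique $\mathsf{A}$-part in its decomposition $b = a'c'$, the match forces $\ell_{c_3}(a) = \ell_{c_1}(\ell_{c_2}(a))$ for every $c_3$ with $N_{c_1,c_2}^{c_3} \neq 0$. Since $a \in \mathsf{B}(\mathsf{A})$ was arbitrary, this gives $\ell_{c_3} = \ell_{c_1}\circ \ell_{c_2}$.

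Part \ref{item:composicion-de-funciones-relacion-con-producto-2} is entirely analogous with the roles of $\mathsf{A}$ and $\mathsf{C}$ (and of $\ell$ and $r$) swapped: I would fix $c \in \mathsf{B}(\mathsf{C})$, expand $c\, a_1 a_2$ in the two associations, and compare the $\mathsf{C}$-parts using the uniqueness of the decomposition $b = c''a''$ afforded by Proposition \ref{prop:exact-fact-is-symmetric}. I do not expect substantive obstacles; the only minor care needed is to note that $\ell_{c_2}(a)$ (resp.\ $r_{a_1}(c)$) is itself a basis element, so that the outer function $\ell_{c_1}$ (resp.\ $r_{a_2}$) can legitimately be applied to it, which is immediate from the definitions of $\ell_c$ and $r_a$ as maps between $\mathbb{Z}$-bases.
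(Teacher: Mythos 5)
Your proposal is correct and follows essentially the same route as the paper's own proof: expand the triple product $c_1c_2a$ (resp.\ $c\,a_1a_2$) using both associations, note that every basis element on one side has $\mathsf{A}$-part $\ell_{c_1}(\ell_{c_2}(a))$, and conclude from the uniqueness of the factorization of basis elements that $\ell_{c_3}(a)=(\ell_{c_1}\circ\ell_{c_2})(a)$ whenever $N_{c_1,c_2}^{c_3}\neq 0$ (and analogously for $r$). One tiny remark: in part (ii) the comparison still happens in the form $b=a'c'$, since that is the form in which $\ell_c$ and $r_a$ are defined, so the uniqueness you invoke is again that of the $\mathsf{A}\bullet\mathsf{C}$ decomposition rather than the mirrored decomposition $b=c''a''$.
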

\begin{proof}
    We just prove \ref{item:composicion-de-funciones-relacion-con-producto-1} since \ref{item:composicion-de-funciones-relacion-con-producto-2} has an analogous proof. Let $a\in\mathsf{B}(\mathsf{A})$. Then
      {\small \begin{align*}
        c_1 c_2 a =  c_1 \ell_{c_2}(a) r_a(c_2) = (\ell_{c_1}\circ \ell_{c_2})(a) r_{\ell_{c_2}(a)}(c_1) r_a(c_2) =\sum_{c\in \mathsf{B}(\mathsf{C})} N_{r_{\ell_{c_2}(a)}(c_1),r_a(c_2)}^c (\ell_{c_1}\circ \ell_{c_2})(a) c.
    \end{align*}}
    On the other hand, $c_1 c_2 a = \sum_{c\in \mathsf{B}(\mathsf{C})} N_{c_1,c_2}^c c a = \sum_{c\in \mathsf{B}(\mathsf{C})} N_{c_1,c_2}^c \ell_c(a) r_a(c)$.
    Hence, we have the following equality in $\mathsf{R}$
    \begin{align}\label{eq:proof-composition-l-e}
        \sum_{c\in \mathsf{B}(\mathsf{C})} N_{c_1,c_2}^c \ell_c(a) r_a(c) = \sum_{c\in \mathsf{B}(\mathsf{C})} N_{r_{\ell_{c_2}(a)}(c_1),r_a(c_2)}^c (\ell_{c_1}\circ \ell_{c_2})(a) c. 
    \end{align}
     Since $N_{c_1,c_2}^{c_3}\neq 0$, then $\ell_{c_3}(a) r_a(c_3)$ on the left-hand side Equation \eqref{eq:proof-composition-l-e} corresponds to a basic element on the right-hand side. Thus
$\ell_{c_3}(a) r_a(c_3) = (\ell_{c_1}\circ \ell_{c_2})(a) \widetilde{c}$, for some $\widetilde{c}\in\mathsf{B}(\mathsf{C})$. By uniqueness of the factorization, we have $\ell_{c_3}(a) = (\ell_{c_1}\circ \ell_{c_2})(a)$.
\end{proof}
\begin{coro}\label{coro:T-A-es-invertible}
The functions $\ell_c$ and $r_a$ are bijective, for all $a\in\mathsf{B}(\mathsf{A})$ and $c\in\mathsf{B}(\mathsf{C})$. Moreover, $\ell_c^{-1} = \ell_{c^*}$ and $r_a^{-1} = r_{a^*}$.
\end{coro}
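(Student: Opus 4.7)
The plan is to derive the corollary directly from Lemmas \ref{lemma:accion-del-1} and \ref{lemma:composicion-de-funciones-relacion-con-producto}, exploiting the fact that in any fusion ring the involution $^*$ satisfies $N_{c,c^*}^{1} = 1 = N_{c^*,c}^{1}$ and $N_{a,a^*}^{1} = 1 = N_{a^*,a}^{1}$ (Definition \ref{def:fusion-ring} \ref{def:fusion-ring-3}).

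First I would treat the left actions. Since $N_{c,c^*}^{1} = 1 \neq 0$, Lemma \ref{lemma:composicion-de-funciones-relacion-con-producto} \ref{item:composicion-de-funciones-relacion-con-producto-1} applied with $c_1 = c$, $c_2 = c^*$, $c_3 = 1$ gives $\ell_{1} = \ell_c \circ \ell_{c^*}$, and the symmetric choice $c_1 = c^*$, $c_2 = c$, $c_3 = 1$ gives $\ell_{1} = \ell_{c^*} \circ \ell_{c}$. Combined with Lemma \ref{lemma:accion-del-1} \ref{item1-funciones-identidad}, which identifies $\ell_{1}$ with $\id_{\mathsf{B}(\mathsf{A})}$, this shows that $\ell_c$ and $\ell_{c^*}$ are mutually inverse bijections on $\mathsf{B}(\mathsf{A})$, so $\ell_c^{-1} = \ell_{c^*}$.

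The argument for $r_a$ is entirely analogous, using Lemma \ref{lemma:composicion-de-funciones-relacion-con-producto} \ref{item:composicion-de-funciones-relacion-con-producto-2} with the pairs $(a, a^*)$ and $(a^*, a)$ and $a_3 = 1$; one must only be careful about the order of composition, which is reversed in part \ref{item:composicion-de-funciones-relacion-con-producto-2}. From $N_{a, a^*}^{1} = 1$ one obtains $r_1 = r_{a^*} \circ r_a$, and from $N_{a^*, a}^{1} = 1$ one obtains $r_1 = r_a \circ r_{a^*}$. Together with Lemma \ref{lemma:accion-del-1} \ref{item1-funciones-identidad}, this yields $r_a^{-1} = r_{a^*}$. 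There is no genuine obstacle here since the proof is a direct application of the two preceding lemmas; the only point requiring mild attention is bookkeeping the order of composition in the right-handed case.
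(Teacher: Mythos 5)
Your proof is correct and follows exactly the paper's route: the paper's own (one-line) argument also combines Lemma \ref{lemma:composicion-de-funciones-relacion-con-producto} with Lemma \ref{lemma:accion-del-1}, using that $1$ is a summand of $xx^*$ (and $x^*x$). Your careful handling of the reversed composition order in the $r_a$ case is the only detail the paper leaves implicit, and you got it right.
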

\begin{proof}
This follows from Lemma \ref{lemma:composicion-de-funciones-relacion-con-producto} and Lemma \ref{lemma:accion-del-1} since in a fusion ring $1$ is a summand of $x x^*$.
\end{proof}
\begin{prop}\label{prop:adjunta-actua-trivial}
The following statements hold:
\begin{enumerate}[leftmargin=*,label=\rm{(\roman*)}]
    \item\label{item:adjunta-actua-trivial-1} If $c\in\mathsf{B}(\mathsf{C}_{\ad})$ then $\ell_c = \id_{\mathsf{B}(\mathsf{A})}$.
    \item\label{item:adjunta-actua-trivial-2} If $a\in\mathsf{B}(\mathsf{A}_{\ad})$ then $r_a = \id_{\mathsf{B}(\mathsf{C})}$.
\end{enumerate}
\end{prop}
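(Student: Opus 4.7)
The two statements are symmetric, so the plan is to prove only \ref{item:adjunta-actua-trivial-1}; the argument for \ref{item:adjunta-actua-trivial-2} will be completely analogous using $r_a$ and the right-hand side of Lemma \ref{lemma:composicion-de-funciones-relacion-con-producto}. The strategy is to invoke Remark \ref{remark:I(1)-genera} to reduce the problem to computing $\ell_c$ on basis summands of products of the form $c'c'^*$, and then to exploit the fact that such summands inherit $\ell$-values that must collapse to the identity because of Corollary \ref{coro:T-A-es-invertible}.

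First I would establish the following auxiliary statement by induction on $m$:
\begin{equation*}
\text{if } c \in \mathsf{B}(\mathsf{C}) \text{ is a summand of } y_1 y_2 \cdots y_m \text{ with each } y_i \in \mathsf{B}(\mathsf{C}), \text{ then } \ell_c = \ell_{y_1}\circ \ell_{y_2}\circ \cdots \circ \ell_{y_m}.
\end{equation*}
The base case $m = 1$ is trivial, and $m = 2$ is precisely Lemma \ref{lemma:composicion-de-funciones-relacion-con-producto} \ref{item:composicion-de-funciones-relacion-con-producto-1}. For $m > 2$, write $y_1 y_2 \cdots y_m = y_1 (y_2 \cdots y_m)$; any basis summand $c$ of this product appears as a summand of some $y_1 d$, where $d$ is a basis summand of $y_2 \cdots y_m$. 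Applying Lemma \ref{lemma:composicion-de-funciones-relacion-con-producto} \ref{item:composicion-de-funciones-relacion-con-producto-1} and the inductive hypothesis to $d$ yields the claim.

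Now let $c \in \mathsf{B}(\mathsf{C}_{\ad})$. By Remark \ref{remark:I(1)-genera}, $c$ is a summand of some power $I_{\mathsf{C}}(1)^n$, where $I_{\mathsf{C}}(1) = \sum_{c' \in \mathsf{B}(\mathsf{C})} c' (c')^*$. Expanding this power, $c$ must be a summand of a specific product of the form $c_1 c_1^* c_2 c_2^* \cdots c_n c_n^*$ for certain $c_1, \ldots, c_n \in \mathsf{B}(\mathsf{C})$. By the auxiliary statement above,
\begin{equation*}
\ell_c = \ell_{c_1} \circ \ell_{c_1^*} \circ \ell_{c_2} \circ \ell_{c_2^*} \circ \cdots \circ \ell_{c_n} \circ \ell_{c_n^*},
\end{equation*}
and Corollary \ref{coro:T-A-es-invertible} gives $\ell_{c_i^*} = \ell_{c_i}^{-1}$ for each $i$. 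The composition therefore collapses to $\id_{\mathsf{B}(\mathsf{A})}$, which is what we needed. The only real content of the argument is the auxiliary inductive statement, and even that is a direct bootstrap of Lemma \ref{lemma:composicion-de-funciones-relacion-con-producto} via associativity, so I do not anticipate any genuine obstacle.
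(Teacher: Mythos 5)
Your proposal is correct and takes essentially the same route as the paper: both arguments rest on Lemma \ref{lemma:composicion-de-funciones-relacion-con-producto}, Corollary \ref{coro:T-A-es-invertible}, and Remark \ref{remark:I(1)-genera}, together with the non-negativity of the structure constants to pass from summands of $I_{\mathsf{C}}(1)^n$ to summands of a single product $c_1c_1^*\cdots c_nc_n^*$. The only difference is organizational: the paper inducts over recursively defined sets $K_n$ of summands of $I(1)^n$ and shows $\ell_c=\id$ level by level, while you first prove a multi-factor composition formula and then telescope using $\ell_{c_i^*}=\ell_{c_i}^{-1}$, which is the same idea repackaged.
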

\begin{proof}
    We prove \ref{item:adjunta-actua-trivial-1} only, since the proof of \ref{item:adjunta-actua-trivial-2} is similar. We define recursively
    \begin{align*}
        &K_1 := \{c\in\mathsf{B}(\mathsf{C})\colon c\text{ is a summand of } xx^*\text{ for some }x\in\mathsf{B}(\mathsf{C})\},\\
        &K_n :=  \{c\in\mathsf{B}(\mathsf{C})\colon c\text{ is a summand of } xy\text{ for some }x\in K_1, y\in K_{n-1}\},
    \end{align*}
    for $n>1$. Then $K := \bigcup_{n\in\N} K_n = \mathsf{B}(\mathsf{C}_{\ad})$, as the elements of $K_n$ are exactly the elements of $\mathsf{B}(\mathsf{C})$ that are summands of $I(1)^n$, see Remark \ref{remark:I(1)-genera}. Hence, it is enough to prove that if $c\in K_n$ then $\ell_c = \id$, for every $n\in\N$ . We prove this by induction on $n$. If $n=1$ and $c\in K_1$, then there exist $x\in\mathsf{B}(\mathsf{C})$ such that $c$ is a summand of $x x^*$, that is, $N_{x,x^*}^c\neq 0$. Hence, by Lemma \ref{lemma:composicion-de-funciones-relacion-con-producto} and Corollary \ref{coro:T-A-es-invertible}, $\ell_c = \ell_x\circ \ell_{x^*} = \id$. Let $n>1$ and $c\in K_n$. There exists $x\in K_1, y\in K_{n-1}$ such that $c$ is a summand of $xy$, that is, $N_{x,y}^c\neq 0$. By the inductive hypothesis, $\ell_x = \ell_y= \id$. Therefore, by Lemma \ref{lemma:composicion-de-funciones-relacion-con-producto}, $\ell_c = \ell_x \circ \ell_y = \id$.
\end{proof}

The next proposition shows that the maps $\ell_c$ and $r_a$ only depend on the degrees of $c$ and $a$ in $U(\mathsf{C})$ and $U(\mathsf{A})$, respectively. 

\begin{prop}\label{prop:mismo-grado-universal-actua-igual}
 Consider the universal grading of the fusion rings $\mathsf{A} = \sum_{h\in U(\mathsf{A})} \mathsf{A}_h$, $\mathsf{B}(\mathsf{A}) = \bigsqcup_{h\in U(\mathsf{A})} \mathsf{B}(\mathsf{A})_h$, and $\mathsf{C} = \sum_{k\in U(\mathsf{C})} \mathsf{C}_k$, $\mathsf{B}(\mathsf{C}) = \bigsqcup_{k\in U(\mathsf{C})} \mathsf{B}(\mathsf{C})_k$. 
 \begin{enumerate}[leftmargin=*,label=\rm{(\roman*)}]
     \item\label{item:mismo-grado-universal-actua-igual-1}  If $c,c'\in \mathsf{B}(\mathsf{C})_k$ then $\ell_c = \ell_{c'}$, for every $k\in U(\mathsf{C})$.
     \item\label{item:mismo-grado-universal-actua-igual-2}  If $a,a'\in \mathsf{B}(\mathsf{A})_h$ then $r_a = r_{a'}$, for every $h\in U(\mathsf{A})$.
 \end{enumerate}
 \end{prop}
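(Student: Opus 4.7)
The plan is to exploit the defining property of the universal grading of $\mathsf{C}$: by construction, $\mathsf{C}_k$ is an indecomposable based $\mathsf{C}_{\ad}$-module with distinguished basis $\mathsf{B}(\mathsf{C})_k$. Combined with Proposition \ref{prop:adjunta-actua-trivial}\ref{item:adjunta-actua-trivial-1} (which kills $\ell_x$ for $x$ in the adjoint) and the composition law of Lemma \ref{lemma:composicion-de-funciones-relacion-con-producto}\ref{item:composicion-de-funciones-relacion-con-producto-1}, this will force $\ell$ to be constant on $\mathsf{B}(\mathsf{C})_k$.

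Concretely, I will first observe that whenever $c,c'\in\mathsf{B}(\mathsf{C})_k$ satisfy $N^{c'}_{x,c}\neq 0$ for some $x\in\mathsf{B}(\mathsf{C}_{\ad})$, Lemma \ref{lemma:composicion-de-funciones-relacion-con-producto}\ref{item:composicion-de-funciones-relacion-con-producto-1} combined with Proposition \ref{prop:adjunta-actua-trivial}\ref{item:adjunta-actua-trivial-1} gives $\ell_{c'}=\ell_x\circ\ell_c=\ell_c$. I will then define a relation $\sim$ on $\mathsf{B}(\mathsf{C})_k$ by taking the transitive closure of the above; symmetry comes for free from the based module axiom $d^{c'}_{x,c}=d^{c}_{x^*,c'}$ together with the observation that $x^*\in\mathsf{B}(\mathsf{C}_{\ad})$ whenever $x$ is. Thus $\sim$ is an equivalence relation on $\mathsf{B}(\mathsf{C})_k$, and by the previous step $\ell$ is constant on each equivalence class.

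To conclude, I will note that each equivalence class of $\sim$ spans a based $\mathsf{C}_{\ad}$-submodule of $\mathsf{C}_k$, so a nontrivial partition of $\mathsf{B}(\mathsf{C})_k$ into such classes would contradict the indecomposability of $\mathsf{C}_k$ as a based $\mathsf{C}_{\ad}$-module (equivalently, the complete reducibility statement in \cite[Lemma 1]{ostrik}). Hence there is a single class, and $\ell_c=\ell_{c'}$ for all $c,c'\in\mathsf{B}(\mathsf{C})_k$, proving \ref{item:mismo-grado-universal-actua-igual-1}. Part \ref{item:mismo-grado-universal-actua-igual-2} will be carried out in an entirely analogous way, replacing $\ell$, $\mathsf{C}_{\ad}$, Lemma \ref{lemma:composicion-de-funciones-relacion-con-producto}\ref{item:composicion-de-funciones-relacion-con-producto-1} and Proposition \ref{prop:adjunta-actua-trivial}\ref{item:adjunta-actua-trivial-1} by their right-action counterparts $r$, $\mathsf{A}_{\ad}$, Lemma \ref{lemma:composicion-de-funciones-relacion-con-producto}\ref{item:composicion-de-funciones-relacion-con-producto-2} and Proposition \ref{prop:adjunta-actua-trivial}\ref{item:adjunta-actua-trivial-2}, using the universal grading of $\mathsf{A}$ by $U(\mathsf{A})$ and the indecomposability of each $\mathsf{A}_h$ as a based $\mathsf{A}_{\ad}$-module.

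The main (and essentially only) subtle point will be packaging the indecomposability argument cleanly: one must verify that the $\mathbb{Z}$-span of an equivalence class of $\sim$ genuinely is a based $\mathsf{C}_{\ad}$-submodule, which is exactly what the symmetry provided by the based-module duality $d^{c'}_{x,c}=d^c_{x^*,c'}$ guarantees. Once this is in place, all remaining ingredients are immediate consequences of the lemmas already established.
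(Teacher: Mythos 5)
Your proposal is correct and follows essentially the same route as the paper: constancy of $\ell$ along summand relations with $\mathsf{B}(\mathsf{C}_{\ad})$ (via Lemma \ref{lemma:composicion-de-funciones-relacion-con-producto} and Proposition \ref{prop:adjunta-actua-trivial}), then indecomposability of $\mathsf{C}_k$ as a based $\mathsf{C}_{\ad}$-module to force a single orbit. The only difference is bookkeeping: the paper fixes $c$, inductively generates its one-sided orbit $L$ and notes its span is a nonzero based submodule, so it never needs the symmetry step you obtain from $d^{c'}_{x,c}=d^{c}_{x^*,c'}$; your equivalence-class packaging is equally valid.
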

 \begin{proof}
      We only prove \ref{item:mismo-grado-universal-actua-igual-1} as the proof of \ref{item:mismo-grado-universal-actua-igual-2} is similar. Let fix $k\in U(\mathsf{C})$ and $c\in\mathsf{B}(\mathsf{C})_k$. Then $\mathsf{C}_k$ is an indecomposable $\mathsf{C}_{\ad}$-module by definition of the universal grading. Now we define recursively:
    \begin{align*}
        &L_1 := \{x\in\mathsf{B}(\mathsf{C})_k\colon x\text{ is a summand of } y c\text{ for some }y\in\mathsf{B}(\mathsf{C}_{\ad})\},\\
        &L_n :=  \{x\in\mathsf{B}(\mathsf{C})_k\colon x\text{ is a summand of } yz\text{ for some }y\in\mathsf{C}_{\ad},  z\in L_{n-1}\}.
    \end{align*}
    Consider $L:=\cup_{n\in\N} L_n$, and the based $\mathsf{C}_{\ad}$-submodule $E:=\{\sum_i n_i x_i\colon n_i\in\N_0, x_i\in L\}$ of $\mathsf{C}_k$. Then $E= \mathsf{C}_k$ and $\mathsf{B}(\mathsf{C})_k = L$ since based modules have the complete reducibility property \cite[Lemma 1]{ostrik} and $\mathsf{C}_k$ is indecomposable. 
    
    We will prove by induction on $n$ that $\ell_x = \ell_c$, for all $x\in L=\cup_{n\in\N} L_n = \mathsf{B}(\mathsf{C})_h$. If $n=1$ and $x\in L_1$, then there exists $y\in\mathsf{B}(\mathsf{C}_{\ad})$ such that $x$ is a summand of $yc$, that is, $N_{y,c}^x\neq 0$. Then, by Lemma \ref{lemma:composicion-de-funciones-relacion-con-producto} and Proposition \ref{prop:adjunta-actua-trivial}, $\ell_x = \ell_y\circ \ell_c = \ell_c$. If $n>1$ and $x\in L_n$, then there exist $y\in\mathsf{B}(\mathsf{C}_{\ad})$ and $z\in L_{n-1}$ such that $x$ is a summand of $yz$, that is, $N_{y,z}^x\neq 0$. The inductive hypothesis implies $\ell_z = \ell_c$, and by the same argument as in the base case, $\ell_x = \ell_y \circ \ell_z = \ell_c$. This completes the proof.
 \end{proof}

\begin{coro}\label{coro:group-actions-fusion-ring}
 Let $\mathsf{R} = \mathsf{A}\bullet \mathsf{C}$ be an exact factorization of fusion rings. Then there exist $\mathbb{Z}$-linear group left action $\mathbf{\trid}\colon U(\mathsf{C})\times \mathsf{A}\to \mathsf{A}$ and right action $\mathbf{\fiz}\colon  \mathsf{C}\times U(\mathsf{A})\to \mathsf{C}$ that map basic elements to basic elements and 
    \begin{align}\label{eq:dar-vuelta-fact-exacta}
        c a = (\degf{c}\trid a) (c\fiz \degf{a}), && c\in \mathsf{B}(\mathsf{C}), a\in\mathsf{B}(\mathsf{A}).
    \end{align}
\end{coro}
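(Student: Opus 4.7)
The plan is straightforward: the propositions immediately preceding the corollary already do all the heavy lifting, and the statement is essentially a repackaging of the maps $\ell_c, r_a$ as group actions. First, I would invoke Proposition \ref{prop:mismo-grado-universal-actua-igual} to observe that $\ell_c$ depends only on $\degf{c} \in U(\mathsf{C})$ and $r_a$ depends only on $\degf{a} \in U(\mathsf{A})$. Combined with the faithfulness of the universal grading (so $\mathsf{B}(\mathsf{C})_k \neq \emptyset$ for every $k \in U(\mathsf{C})$, and symmetrically for $\mathsf{A}$), this lets me define, for each $k \in U(\mathsf{C})$ and each basic element $a \in \mathsf{B}(\mathsf{A})$, the element $k \trid a := \ell_c(a)$ for any choice of $c \in \mathsf{B}(\mathsf{C})_k$; then extend $\mathbb{Z}$-linearly in the second entry to obtain a map $\trid \colon U(\mathsf{C}) \times \mathsf{A} \to \mathsf{A}$. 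The right action $c \fiz h := r_a(c)$ for any $a \in \mathsf{B}(\mathsf{A})_h$ is defined analogously. Both maps send basic elements to basic elements by construction, since $\ell_c$ and $r_a$ are valued in $\mathsf{B}(\mathsf{A})$ and $\mathsf{B}(\mathsf{C})$ respectively.

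Next I would verify that $\trid$ is indeed a group action. Unitality $1 \trid a = a$ is immediate from Lemma \ref{lemma:accion-del-1}\ref{item1-funciones-identidad}, which gives $\ell_1 = \id_{\mathsf{B}(\mathsf{A})}$. For the associativity $k_1 \trid (k_2 \trid a) = (k_1 k_2) \trid a$, I would pick representatives $c_i \in \mathsf{B}(\mathsf{C})_{k_i}$ for $i=1,2$ and select any basic summand $c_3$ of the product $c_1 c_2$. Since the $U(\mathsf{C})$-grading is a grading, $c_3 \in \mathsf{B}(\mathsf{C})_{k_1 k_2}$, while Lemma \ref{lemma:composicion-de-funciones-relacion-con-producto}\ref{item:composicion-de-funciones-relacion-con-producto-1} gives $\ell_{c_3} = \ell_{c_1} \circ \ell_{c_2}$. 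Evaluating on $a$ yields exactly the required identity. The analogous checks for $\fiz$ use Lemma \ref{lemma:composicion-de-funciones-relacion-con-producto}\ref{item:composicion-de-funciones-relacion-con-producto-2} together with Lemma \ref{lemma:accion-del-1}\ref{item2-funciones-identidad}.

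Finally, the factorization formula is simply the defining identity $ca = \ell_c(a)\, r_a(c)$, established at the start of Subsection \ref{subsection:characterization-fusion-rings}, transcribed in the new notation as $ca = (\degf{c} \trid a)(c \fiz \degf{a})$.

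I do not anticipate a serious obstacle: every nontrivial input, namely the well-definedness of $\ell_c$ and $r_a$, their compatibility with multiplication, and the fact that they only see the universal-grading class of their subscripts, has been proved in Lemmas \ref{lemma:accion-del-1}, \ref{lemma:composicion-de-funciones-relacion-con-producto}, and Propositions \ref{prop:adjunta-actua-trivial}, \ref{prop:mismo-grado-universal-actua-igual}. The only minor point to keep track of is that one must choose a representative $c \in \mathsf{B}(\mathsf{C})_k$ to define $k \trid -$, but this choice is harmless precisely because of Proposition \ref{prop:mismo-grado-universal-actua-igual}.
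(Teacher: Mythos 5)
Your proposal is correct and follows the same route as the paper: define $\degf{c}\trid a=\ell_c(a)$ and $c\fiz\degf{a}=r_a(c)$, use Proposition \ref{prop:mismo-grado-universal-actua-igual} for independence of the representative, Lemma \ref{lemma:accion-del-1} for unitality, and Lemma \ref{lemma:composicion-de-funciones-relacion-con-producto} for the action axiom, with Equation \eqref{eq:dar-vuelta-fact-exacta} being the defining identity $ca=\ell_c(a)r_a(c)$. The paper additionally cites Corollary \ref{coro:T-A-es-invertible}, but invertibility already follows from the unit and composition properties, so your argument is complete.
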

\begin{proof}
We define $\degf{c}\trid a \coloneqq \ell_c(a)$ and $c\fiz\degf{a} \coloneqq r_a(c)$, for $c\in \mathsf{B}(\mathsf{C})$ and $a\in\mathsf{B}(\mathsf{A})$, and extend them $\mathbb{Z}$-linearly. These functions are well defined by Proposition \ref{prop:mismo-grado-universal-actua-igual} and are group actions by Lemmas \ref{lemma:accion-del-1} and \ref{lemma:composicion-de-funciones-relacion-con-producto}, and Corollary \ref{coro:T-A-es-invertible}.
\end{proof}

We study now the structure of the adjoint subring $\mathsf{R}_{\ad}$ and the universal grading group $U(\mathsf{R})$ in terms of the ones from $\mathsf{A}$ and $\mathsf{C}$.

\begin{prop}\label{prop:Adjoint-subcat-of-an-exact-fact}
    Let $\mathsf{R} = \mathsf{A}\bullet \mathsf{C}$ be an exact factorization of fusion rings. Then there exist an exact factorization for the adjoint subring $\mathsf{R}_{\ad} = \mathsf{A}_{\ad}\bullet \mathsf{C}_{\ad}$.  
\end{prop}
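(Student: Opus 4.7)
The plan is to prove the two inclusions $\mathsf{A}_{\ad}\cdot\mathsf{C}_{\ad}\subseteq\mathsf{R}_{\ad}$ and $\mathsf{R}_{\ad}\subseteq\mathsf{A}_{\ad}\cdot\mathsf{C}_{\ad}$ and, along the way, check that the product $\mathsf{A}_{\ad}\cdot\mathsf{C}_{\ad}$ realizes the claimed exact factorization. The inclusions $\mathsf{A}_{\ad},\mathsf{C}_{\ad}\subseteq\mathsf{R}_{\ad}$ are immediate from the definition of the adjoint. The first key observation is that Proposition \ref{prop:adjunta-actua-trivial} forces the basis elements of $\mathsf{A}_{\ad}$ and $\mathsf{C}_{\ad}$ to commute inside $\mathsf{R}$: if $a\in\mathsf{B}(\mathsf{A}_{\ad})$ and $c\in\mathsf{B}(\mathsf{C}_{\ad})$, then $\ell_{c}=\id$ and $r_{a}=\id$, so $ca = \ell_{c}(a)\,r_{a}(c) = ac$. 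From this I obtain $(a_{1}c_{1})(a_{2}c_{2}) = a_{1}a_{2}\,c_{1}c_{2}\in\mathsf{A}_{\ad}\cdot\mathsf{C}_{\ad}$ and $(ac)^{*}=c^{*}a^{*}=a^{*}c^{*}\in\mathsf{A}_{\ad}\cdot\mathsf{C}_{\ad}$, so $\mathsf{A}_{\ad}\cdot\mathsf{C}_{\ad}$ is a fusion subring; and since the products $ac$ with $a\in\mathsf{B}(\mathsf{A})$, $c\in\mathsf{B}(\mathsf{C})$ are pairwise distinct basis elements of $\mathsf{R}$, restricting to $\mathsf{B}(\mathsf{A}_{\ad})\times\mathsf{B}(\mathsf{C}_{\ad})$ exhibits both a $\mathbb{Z}$-basis of $\mathsf{A}_{\ad}\cdot\mathsf{C}_{\ad}$ and its exact factorization.

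The heart of the proof is showing $\mathsf{R}_{\ad}\subseteq\mathsf{A}_{\ad}\cdot\mathsf{C}_{\ad}$, which reduces to verifying $bb^{*}\in\mathsf{A}_{\ad}\cdot\mathsf{C}_{\ad}$ for each $b\in\mathsf{B}(\mathsf{R})$. I would use Proposition \ref{prop:exact-fact-is-symmetric} to write $b=\tilde c\,\tilde a$ with $\tilde c\in\mathsf{B}(\mathsf{C})$, $\tilde a\in\mathsf{B}(\mathsf{A})$, giving $bb^{*}=\tilde c\,(\tilde a\tilde a^{*})\,\tilde c^{*}$. Because $\tilde a\tilde a^{*}\in\mathsf{A}_{\ad}$ expands as a $\mathbb{Z}$-linear combination of basis elements of $\mathsf{A}_{\ad}$, the commutation observation and $\mathbb{Z}$-linearity of the left action yield $\tilde c\,(\tilde a\tilde a^{*}) = \ell_{\tilde c}(\tilde a\tilde a^{*})\,\tilde c$, whence $bb^{*}=\ell_{\tilde c}(\tilde a\tilde a^{*})\cdot\tilde c\tilde c^{*}$. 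The decisive identity is $\ell_{\tilde c}(\tilde a\tilde a^{*}) = \ell_{\tilde c}(\tilde a)\,\ell_{\tilde c}(\tilde a)^{*}$, which places the first factor in $\mathsf{A}_{\ad}$ (as an element of the form $a'(a')^{*}$) and the second factor $\tilde c\tilde c^{*}$ in $\mathsf{C}_{\ad}$, yielding $bb^{*}\in\mathsf{A}_{\ad}\cdot\mathsf{C}_{\ad}$.

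The main obstacle is deriving this identity, which I would handle by computing $\tilde c\,(\tilde a\tilde a^{*})$ a second way as $(\tilde c\tilde a)\tilde a^{*}$. Applying $ca=\ell_{c}(a)r_{a}(c)$ twice and invoking Corollary \ref{coro:T-A-es-invertible} in the form $r_{\tilde a^{*}}\circ r_{\tilde a} = \id$, I get $(\tilde c\tilde a)\tilde a^{*} = \ell_{\tilde c}(\tilde a)\,\ell_{r_{\tilde a}(\tilde c)}(\tilde a^{*})\,\tilde c$. Equating this with the first expression and cancelling the trailing $\tilde c$ by uniqueness of the $\mathsf{A}\bullet\mathsf{C}$ decomposition gives $\ell_{\tilde c}(\tilde a\tilde a^{*}) = \ell_{\tilde c}(\tilde a)\,\ell_{r_{\tilde a}(\tilde c)}(\tilde a^{*})$ inside $\mathsf{A}$. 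The coefficient of $1$ on the left equals $1$ (since $\ell_{\tilde c}$ permutes the basis and fixes $1$, while $\tilde a\tilde a^{*}$ contains $1$ with multiplicity exactly $1$), so Definition \ref{def:fusion-ring} \ref{def:fusion-ring-3} forces $\ell_{r_{\tilde a}(\tilde c)}(\tilde a^{*}) = \ell_{\tilde c}(\tilde a)^{*}$; substitution produces the identity, and everything else is routine bookkeeping using the results already established in this subsection.
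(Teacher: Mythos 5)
Your proof is correct, and it reaches the proposition by a route that differs from the paper's at the decisive step. Both arguments share the outer structure: the $\mathbb{Z}$-span of the products $ac$ with $a\in\mathsf{B}(\mathsf{A}_{\ad})$, $c\in\mathsf{B}(\mathsf{C}_{\ad})$ is a fusion subring of $\mathsf{R}_{\ad}$ (via the trivial action of adjoint elements from Proposition \ref{prop:adjunta-actua-trivial}, exactly as you argue), so it suffices to place the generators of $\mathsf{R}_{\ad}$ inside it. The paper does this for the single element $I_{\mathsf{R}}(1)=\sum_b bb^*$: writing $I_{\mathsf{R}}(1)=\sum_a a\,I_{\mathsf{C}}(1)\,a^*$ and using that adjoint elements of $\mathsf{C}$ act trivially, it shows $I_{\mathsf{R}}(1)$ lies in the span $\widetilde{\mathsf{R}}_{\mathsf{A}}$ of $\mathsf{B}(\mathsf{A}_{\ad})\mathsf{B}(\mathsf{C})$, symmetrically in $\widetilde{\mathsf{R}}_{\mathsf{C}}$, hence in their intersection, which is the desired span; this intersection trick deliberately avoids any statement about compatibility of $\ell_c$ with duals. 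You instead prove the pointwise identity $\ell_{\tilde c}(\tilde a\tilde a^*)=\ell_{\tilde c}(\tilde a)\,\ell_{\tilde c}(\tilde a)^*$ — the exact-factorization analogue of Lemma \ref{lemma:dual-of-triangles}, and your derivation via computing $\tilde c(\tilde a\tilde a^*)$ two ways, cancelling $\tilde c$ by uniqueness of the factorization, using $r_{\tilde a^*}\circ r_{\tilde a}=\id$ from Lemma \ref{lemma:composicion-de-funciones-relacion-con-producto} and Corollary \ref{coro:T-A-es-invertible}, and matching the coefficient of $1$, is sound — and conclude that each generator $bb^*=\ell_{\tilde c}(\tilde a)\ell_{\tilde c}(\tilde a)^*\,\tilde c\tilde c^*$ individually lies in $\mathsf{A}_{\ad}\cdot\mathsf{C}_{\ad}$. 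Your version is slightly longer but yields a sharper local statement (each $bb^*$ is itself a product of an element of $\mathsf{A}_{\ad}$ of the form $a'(a')^*$ and the element $\tilde c\tilde c^*$ of $\mathsf{C}_{\ad}$) and makes the duality-compatibility of the canonical actions explicit before it is subsumed in Proposition \ref{prop:canonical-matchedpair-rings}; the paper's version buys brevity by summing over all basic $b$ and intersecting the two one-sided spans. All results you invoke precede the proposition in the paper, so there is no circularity.
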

\begin{proof}
    Let $\widetilde{\mathsf{R}}_{\mathsf{A}}$ (respectively $\widetilde{\mathsf{R}}_{\mathsf{C}}$) be the $\mathbb{Z}$-submodule generated by the elements $a c$ with $a\in\mathsf{B}(\mathsf{A}_{\ad})$ and $c\in\mathsf{B}(\mathsf{C})$ (respectively $a\in\mathsf{B}(\mathsf{A})$ and $c\in\mathsf{B}(\mathsf{C}_{\ad})$). Let $\widetilde{\mathsf{R}} = \widetilde{\mathsf{R}}_{\mathsf{A}} \cap \widetilde{\mathsf{R}}_{\mathsf{C}}$.
    \begin{claim}
    The element $I_{\mathsf{R}}(1) = \sum_{b\in\mathsf{B}(\mathsf{R})} b b^*$ is in $\widetilde{\mathsf{R}}$.
    \end{claim}
    Given that every $b\in \mathsf{B}(\mathsf{R})$ can be written as $b = a c$, with $a\in\mathsf{B}(\mathsf{A})$ and $c\in\mathsf{B}(\mathsf{C})$, we have
    \begin{align*}
        I_{\mathsf{R}}(1) = \sum_{b\in\mathsf{B}(\mathsf{R})} bb^* =  \sum_{a\in\mathsf{B}(\mathsf{A})}  \sum_{c\in\mathsf{B}(\mathsf{C})} acc^*a^* =  \sum_{a\in\mathsf{B}(\mathsf{A})} a I_\mathsf{C}(1) a^*.
    \end{align*}
    Since $I_{\mathsf{C}}(1) = \sum_j c_j$, for some $c_j\in\mathsf{B}(\mathsf{C}_{\ad})$, we get that
    {\small\begin{align*}
        I_{\mathsf{R}}(1) = \sum_{a\in\mathsf{B}(\mathsf{A})}\sum_j  a c_j a^* = \sum_{a\in\mathsf{B}(\mathsf{A})} \sum_j a (\degf{c_j}\trid a^*) (c_j\fiz \degf{a^*}) = \sum_{a\in\mathsf{B}(\mathsf{A})} \sum_j a  a^* (c_j\fiz \degf{a^*}) \in \widetilde{\mathsf{R}}_{\mathsf{A}},
    \end{align*}}  where the second equality follows from Equation \eqref{eq:dar-vuelta-fact-exacta} and the third one from $\degf{c_j} = e$ and $a a^*\in \mathsf{A}_{\ad}$.
    A similar argument 
    shows that $I_\mathsf{R}(1)\in \widetilde{\mathsf{R}}_{\mathsf{C}}$.

    \begin{claim}
   The $\mathbb{Z}$-module $\widetilde{\mathsf{R}}$ with basis $\mathsf{B}(\widetilde{\mathsf{R}}) = \left\{ac \colon a\in \mathsf{B}(\mathsf{A}_{ad}), c\in \mathsf{B}(\mathsf{C}_{ad}) \right\}$ is a fusion subring of $\mathsf{R}_{ad}$. 
    \end{claim}
   
   If $b = ac$ and $b' = a'c'$ are elements of $B(\widetilde{\mathsf{R}})$ with $a, a'\in\mathsf{B}(\mathsf{A}_{\ad})$ and $c,c'\in\mathsf{B}(\mathsf{C}_{\ad})$ then 
    \begin{align*}
        bb' =  aca' c' = a (\degf{c}\trid a')(c\fiz\degf{a'})c'= aa' cc'\in\widetilde{\mathsf{R}},
    \end{align*}
    where the second equality follows from Equation \eqref{eq:dar-vuelta-fact-exacta} and the last one from $\degf{c} = e = \degf{a'}$. 
    
    \begin{claim} 
    $  \mathsf{R}_{\ad} = \widetilde{\mathsf{R}} = \mathsf{A}_{\ad}\bullet \mathsf{C}_{\ad}$.
    \end{claim}
    It follows directly from the definition of $\mathsf{B}(\widetilde{\mathsf{R}})$ and from the fact that $\mathsf{R} = \mathsf{A}\bullet \mathsf{C}$ that $\widetilde{\mathsf{R}} = \mathsf{A}_{\ad}\bullet \mathsf{C}_{\ad}$.  
    Since $\mathsf{R}_{\ad}$ is generated as a fusion ring by the element  $I_{\mathsf{R}}(1)$ and $I_\mathsf{R}(1)\in\widetilde{\mathsf{R}}$, we have that $\mathsf{R}_{\ad}$ is a fusion subring of $\widetilde{\mathsf{R}}$. Hence $\mathsf{R}_{\ad} \subseteq \widetilde{\mathsf{R}}$ and, since the other inclusion is clear, this concludes the proof.
\end{proof}

Let $(\mathsf{A}, \mathsf{B}(\mathsf{A})) \subseteq (\mathsf{R}, \mathsf{B}(\mathsf{R}))$ be a fusion subring and $H_{\mathsf{A}} := \{h\in U(\mathsf{R}) \colon \mathsf{A}\cap \mathsf{R}_h \neq 0 \}$  subgroup of $U(\mathsf{R})$ then $\mathsf{A} = \sum_{h\in H_{\mathsf{A}}} \mathsf{A}_h$, where $\mathsf{A}_h := \mathsf{A}\cap \mathsf{R}_h$ is a faithful grading. See \cite[Section 2]{Na_faithful} for details.
Because of the universality of $U(\mathsf{A})$ (respectively $U(\mathsf{C})$), there is a surjective group homomorphism $\phi_{\mathsf{A}}: U(\mathsf{A})\to H_{\mathsf A}$ (respectively $\phi_{\mathsf{C}}: U(\mathsf{C})\to H_{\mathsf C}$) with kernel denoted by $K_{\mathsf{A}}$ (respectively $K_{\mathsf{C}}$). 

If $\mathsf{R} = \mathsf A\bullet \mathsf C$ is an exact factorization of fusion rings then $U(\mathsf R)$ is a factorization (not necessarily exact) of the groups $H_{\mathsf A}$ and $H_{\mathsf C}$. Now, we will characterize the universal grading group $U(\mathsf R)$ of the exact factorization $\mathsf R = \mathsf A\bullet \mathsf C$. 
\begin{prop}\label{prop:canonical-matched-pair}
	If $\mathsf{R} = \mathsf A\bullet \mathsf C$ is an exact factorization of fusion rings then:
	\begin{enumerate}
		\item  The morphisms $\phi_{\mathsf A}$ and $\phi_{\mathsf C}$ are isomorphisms of groups.
		\item There is an exact factorization of groups $U(\mathsf R) \simeq U(\mathsf A)\bullet U(\mathsf C)$.
	\end{enumerate}
\end{prop}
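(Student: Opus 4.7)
The plan is to reduce both assertions to the structural identity
\begin{equation*}
\mathsf A\cap \mathsf R_{\ad}\;=\;\mathsf A_{\ad},\qquad \mathsf C\cap \mathsf R_{\ad}\;=\;\mathsf C_{\ad},
\end{equation*}
after which part (1) follows from the universal property of $U(\mathsf A)$ and $U(\mathsf C)$, and part (2) reduces to a counting argument via Frobenius--Perron dimensions. The inclusion $\mathsf A_{\ad}\subseteq \mathsf A\cap \mathsf R_{\ad}$ is immediate. Conversely, if $a\in \mathsf A\cap \mathsf R_{\ad}$, then by Proposition \ref{prop:Adjoint-subcat-of-an-exact-fact} the element $a$ expands as a $\mathbb Z$-linear combination of basis elements of $\mathsf R_{\ad}$ of the form $a_i c_i$ with $a_i\in \mathsf B(\mathsf A_{\ad})$ and $c_i\in \mathsf B(\mathsf C_{\ad})$. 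Since $a\in \mathsf A$, the uniqueness of the decomposition $\mathsf R=\mathsf A\bullet \mathsf C$ forces every $c_i$ to equal $1$, hence $a\in \mathsf A_{\ad}$. The argument for $\mathsf C$ is identical.

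For part (1), the restriction of the universal $U(\mathsf R)$-grading to $\mathsf A$ is the faithful $H_{\mathsf A}$-grading whose trivial component is $\mathsf A\cap \mathsf R_e=\mathsf A\cap \mathsf R_{\ad}=\mathsf A_{\ad}$. By the universal property of $U(\mathsf A)$, this $H_{\mathsf A}$-grading is induced by $\phi_{\mathsf A}\colon U(\mathsf A)\to H_{\mathsf A}$, and its trivial component equals $\bigoplus_{h\in \ker\phi_{\mathsf A}}\mathsf A_h$ expressed in the universal grading of $\mathsf A$. Faithfulness of the universal grading (each $\mathsf A_h$ is nonzero) forces this to equal $\mathsf A_{\ad}=\mathsf A_e$ only when $\ker\phi_{\mathsf A}=\{e\}$, so $\phi_{\mathsf A}$ is an isomorphism. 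The same reasoning shows that $\phi_{\mathsf C}$ is an isomorphism.

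For part (2), identify $U(\mathsf A)\cong H_{\mathsf A}$ and $U(\mathsf C)\cong H_{\mathsf C}$ via the isomorphisms just established. Given $g\in U(\mathsf R)$, choose any $b\in \mathsf B(\mathsf R)_g$ and factor it as $b=ac$ with $a\in \mathsf B(\mathsf A)$ and $c\in \mathsf B(\mathsf C)$; then $g=\degf{a}\degf{c}\in H_{\mathsf A}H_{\mathsf C}$, yielding $U(\mathsf R)=H_{\mathsf A}H_{\mathsf C}$. Exactness follows from a cardinality count: for any faithful grading of a fusion ring by a finite group $G$, every graded component has Frobenius--Perron dimension $\fpdim(\mathsf R)/|G|$. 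Combining this with $\fpdim(\mathsf R)=\fpdim(\mathsf A)\fpdim(\mathsf C)$ and the adjoint factorization from Proposition \ref{prop:Adjoint-subcat-of-an-exact-fact} gives
\begin{equation*}
|U(\mathsf R)|=\frac{\fpdim(\mathsf R)}{\fpdim(\mathsf R_{\ad})}=\frac{\fpdim(\mathsf A)\fpdim(\mathsf C)}{\fpdim(\mathsf A_{\ad})\fpdim(\mathsf C_{\ad})}=|U(\mathsf A)||U(\mathsf C)|=|H_{\mathsf A}||H_{\mathsf C}|.
\end{equation*}
Since $|H_{\mathsf A}H_{\mathsf C}|=|H_{\mathsf A}||H_{\mathsf C}|/|H_{\mathsf A}\cap H_{\mathsf C}|$, we conclude $H_{\mathsf A}\cap H_{\mathsf C}=\{e\}$, which delivers the exact factorization $U(\mathsf R)\simeq H_{\mathsf A}\bullet H_{\mathsf C}\simeq U(\mathsf A)\bullet U(\mathsf C)$.

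The main obstacle I anticipate is the clean justification of the dimension formula $\fpdim(\mathsf R_g)=\fpdim(\mathsf R)/|U(\mathsf R)|$ purely at the fusion ring level; this is standard (each $\mathsf R_g$ carries the unique Frobenius--Perron eigenvector under multiplication by elements of $\mathsf R_{\ad}$, and multiplication by any element of $\mathsf R_{g'}$ exchanges these eigenvectors up to a scalar equal to the corresponding Frobenius--Perron dimension), but it should be invoked explicitly from the Gelaki--Nikshych framework already used in the excerpt.
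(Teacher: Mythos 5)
Your proof is correct, and it shares the paper's two main inputs, but it routes part (1) differently. The paper never proves injectivity of $\phi_{\mathsf A}$ and $\phi_{\mathsf C}$ on its own: it computes $\fpdim \mathsf R$ in two ways, using the factorization $U(\mathsf R)=H_{\mathsf A}H_{\mathsf C}$, the adjoint factorization $\mathsf R_{\ad}=\mathsf A_{\ad}\bullet\mathsf C_{\ad}$ of Proposition \ref{prop:Adjoint-subcat-of-an-exact-fact}, and the equal-dimension-of-components fact \cite[Proposition 8.20]{ENO}, to obtain $|K_{\mathsf A}|\,|K_{\mathsf C}|\,|H_{\mathsf A}\cap H_{\mathsf C}|=1$, which kills both kernels and the intersection in a single stroke. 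You instead establish the structural identity $\mathsf A\cap\mathsf R_{\ad}=\mathsf A_{\ad}$ (correctly, from Proposition \ref{prop:Adjoint-subcat-of-an-exact-fact} together with uniqueness of the factorization of basis elements), deduce $\ker\phi_{\mathsf A}=\{e\}$ from faithfulness of the universal grading of $\mathsf A$, and only invoke the Frobenius--Perron count to show $H_{\mathsf A}\cap H_{\mathsf C}=\{e\}$. Both arguments are sound; yours buys a numerics-free, more conceptual proof of part (1) plus the extra identity $\mathsf A\cap\mathsf R_{\ad}=\mathsf A_{\ad}$ (which the paper only gets implicitly), while the paper's is shorter because one dimension equation settles all three trivialities simultaneously. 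Your closing concern about justifying $\fpdim(\mathsf R_g)=\fpdim(\mathsf R)/|U(\mathsf R)|$ at the fusion-ring level is already handled the way you suggest: the paper itself cites \cite[Proposition 8.20]{ENO} for exactly this step, so quoting that reference suffices.
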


\begin{proof}
    The groups $H_{\mathsf A}$ and $H_{\mathsf C}$ form a factorization of $U(\mathsf R)$, since for every $g\in U(\mathsf R)$ there is a basic element $b\in \mathsf{B}(\mathsf R)_g$. Since $\mathsf R =\mathsf A\bullet \mathsf C$ is exact, there exist unique $a\in \mathsf{B}(\mathsf{A})$ and $c\in \mathsf{B}(\mathsf{C})$ such that $b = ac$. If $a$ has degree $h\in H_\mathsf{A}$ and $c$ has degree $k\in H_\mathsf{C}$, then $g = hk$. Therefore, $U(\mathsf R) = H_{\mathsf A}  H_{\mathsf C}$ is a factorization and, by applying the Orbit-Stabilizer Theorem to the $H_{\mathsf A}$-action on the cosets $h H_{\mathsf C}$, we get  $|U(\mathsf R)| = \frac{|H_\mathsf A||H_\mathsf C|}{|H_\mathsf A \cap H_\mathsf C|}$. 
    Also, since $\phi_{\mathsf{A}}: U(\mathsf{A})\to H_{\mathsf A}$ and $\phi_{\mathsf{C}}: U(\mathsf{C})\to H_{\mathsf C}$ are epimorphisms with kernels $K_{\mathsf{A}}$ and $K_{\mathsf{C}}$ respectively, then $|H_\mathsf A| = \frac{|U(\mathsf A)|}{|K_\mathsf A|}$ and $|H_\mathsf C| = \frac{|U(\mathsf C)|}{|K_\mathsf C|}$.   
 By considering Frobenius-Perron dimensions, we get
\begin{align*}
    \fpdim \mathsf R &= \frac{|H_{\mathsf A}||H_{\mathsf{C}}|}{|H_{\mathsf A} \cap H_{\mathsf C}|} \fpdim \mathsf{R}_{\ad} = \frac{|U(\mathsf A)||U(\mathsf C)|}{|K_{\mathsf A}||K_{\mathsf C}||H_{\mathsf A} \cap H_{\mathsf C}|} \fpdim \mathsf{A}_{\ad} \fpdim \mathsf{C}_{\ad}\\
    &= \frac{\fpdim \mathsf{A}\fpdim \mathsf{C}}{|K_{\mathsf A}||K_{\mathsf C}||H_{\mathsf A} \cap H_{\mathsf C}|},
\end{align*}
where the first and third equalities follow from \cite[Proposition 8.20]{ENO} and the second equality follows from Proposition \ref{prop:Adjoint-subcat-of-an-exact-fact}.
    Since $\mathsf{R} = \mathsf{A} \bullet \mathsf{C}$ then
    \begin{align*}
       \fpdim \mathsf{R} = \sum_{b\in\mathsf{B}(\mathsf R)} (\fpdim b)^2 = \sum_{a\in\mathsf{B}(\mathsf A)} \sum_{c\in\mathsf{B}(\mathsf c)} (\fpdim a)^2 (\fpdim c)^ =\fpdim \mathsf{A} \fpdim \mathsf{C}.
    \end{align*}
    Therefore $|K_{\mathsf A}||K_{\mathsf C}||H_{\mathsf A} \cap H_{\mathsf C}| = 1$,
    which implies $|K_{\mathsf A}|=|K_{\mathsf C}|=|H_{\mathsf A} \cap H_{\mathsf C}|=1$. Hence, $\phi_{\mathsf A}$ and $\phi_{\mathsf C}$ are isomorphisms and the factorization $U({\mathsf R}) = H_{\mathsf A}  H_{\mathsf C}\simeq U(\mathsf A) U(\mathsf C) $ is exact because $H_{\mathsf A} \cap H_{\mathsf C} \simeq U(\mathsf A)\cap U(\mathsf C)$ is trivial.
\end{proof}

It follows from the previous proposition that every exact factorization of fusion rings $\mathsf{R} = \mathsf{A} \bullet \mathsf{C}$ has associated an exact factorization of their universal grading groups $U(\mathsf R) = U(\mathsf A) \bullet U(\mathsf C)$ and therefore, a matched pair $(U(\mathsf A), U(\mathsf C), \tridb, \fizb)$.

\begin{prop}[\emph{Canonical matched pair of fusion rings}]\label{prop:canonical-matchedpair-rings}  Let $\mathsf{R} = \mathsf{A}\bullet \mathsf{C}$ be an exact factorization of fusion rings. 
The data below gives rise to a matched pair of fusion rings in the following way: 
\begin{enumerate}[leftmargin=*,label=\rm{(\roman*)}]
    \item the grading of $\mathsf A$ and $\mathsf C$ by their universal grading groups $U(\mathsf A)$ and $U(\mathsf C)$,
    \item the associated matched pair of groups $(U(\mathsf A), U(\mathsf C), \tridb, \fizb)$ defined above,
    \item the group actions $\mathbf{\trid}\colon U(\mathsf{C})\times \mathsf{A}\to \mathsf{A}$ and $\mathbf{\fiz}\colon  \mathsf{C}\times U(\mathsf{A})\to \mathsf{C}$ from Corollary \ref{coro:group-actions-fusion-ring}.
\end{enumerate}
We call this matched pair the \emph{canonical matched pair} of fusion rings associated to the exact factorization $\mathsf{R} = \mathsf{A}\bullet \mathsf{C}$.
\end{prop}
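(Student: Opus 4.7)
The plan is to verify each of the four axioms of Definition \ref{def:mathced-pair-fusion-rings} for the proposed data. Axiom (i) is exactly Proposition \ref{prop:canonical-matched-pair}, and axiom (iv) is immediate from $\ell_c(1) = 1$ and $r_a(1) = 1$ (Lemma \ref{lemma:accion-del-1}) together with the definitions $k\trid a = \ell_c(a)$, $c\fiz h = r_a(c)$ in Corollary \ref{coro:group-actions-fusion-ring}.

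For the grading compatibility in axiom (ii), I would start from Equation \eqref{eq:dar-vuelta-fact-exacta}: for $a\in \mathsf{B}(\mathsf{A})_h$ and $c\in \mathsf{B}(\mathsf{C})_k$, one has $ca = (k\trid a)(c\fiz h)$ in $\mathsf{R}$. Comparing degrees in $U(\mathsf{R})$, the left-hand side has degree $kh$ while the right-hand side has degree $\degf{k\trid a}\,\degf{c\fiz h}$. Writing $kh = (k\tridb h)(k\fizb h)$ via the group matched pair and invoking the uniqueness of the factorization $U(\mathsf{R}) = U(\mathsf{A})\bullet U(\mathsf{C})$ established in Proposition \ref{prop:canonical-matched-pair}, I would read off $\degf{k\trid a} = k\tridb h$ and $\degf{c\fiz h} = k\fizb h$. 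Bijectivity of the actions (Corollary \ref{coro:T-A-es-invertible}) would then upgrade these inclusions to the required equalities.

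The heart of the argument is axiom (iii). My strategy is to expand $c(aa')$ in two ways, for a fixed basis element $c\in \mathsf{B}(\mathsf{C})_k$ and basis elements $a, a'\in \mathsf{B}(\mathsf{A})$. On the one hand, writing $c(aa') = \sum_{a''} N_{a,a'}^{a''}\, ca''$, applying \eqref{eq:dar-vuelta-fact-exacta} to each summand, and using that every basic summand $a''$ of $aa'$ has the same degree $\degf{a}\degf{a'}$, I obtain
\begin{align*}
c(aa') = \bigl(k\trid(aa')\bigr)\bigl(c\fiz(\degf{a}\degf{a'})\bigr).
\end{align*}
On the other hand, computing $(ca)a'$ and applying \eqref{eq:dar-vuelta-fact-exacta} twice, while using that $\fiz$ is a right action of $U(\mathsf{A})$ on $\mathsf{C}$ to rewrite $(c\fiz\degf{a})\fiz\degf{a'}$ as $c\fiz(\degf{a}\degf{a'})$, I obtain
\begin{align*}
c(aa') = \bigl((k\trid a)((k\fizb\degf{a})\trid a')\bigr)\bigl(c\fiz(\degf{a}\degf{a'})\bigr).
\end{align*}
The second distributivity will be established analogously by expanding $(cc')a$, this time using the grading output of axiom (ii) to match the $U(\mathsf{C})$-degrees.

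The main obstacle, and the delicate step, is extracting axiom (iii) from these two expansions by canceling the common right factor $c\fiz(\degf{a}\degf{a'})$. This cancellation is legitimate because for a fixed basic element $c_0\in \mathsf{B}(\mathsf{C})$ the map $\mathsf{A} \to \mathsf{R}$, $X\mapsto Xc_0$, is injective: the products $ac_0$ with $a\in \mathsf{B}(\mathsf{A})$ are pairwise distinct basic elements of $\mathsf{R}$ by the uniqueness in the exact factorization $\mathsf{R} = \mathsf{A}\bullet \mathsf{C}$. Once this cancellation is justified, the distributivity $k\trid(aa') = (k\trid a)((k\fizb\degf{a})\trid a')$ drops out, completing the verification that the canonical data satisfies all four conditions of Definition \ref{def:mathced-pair-fusion-rings}.
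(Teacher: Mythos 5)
Your proposal is correct and follows essentially the same route as the paper: degrees of both sides of $ca=(\degf{c}\trid a)(c\fiz\degf{a})$ compared via the exact factorization $U(\mathsf R)=U(\mathsf A)\bullet U(\mathsf C)$ for the grading compatibility, and the two expansions of $c(aa')$ (respectively $(cc')a$) with the common factor $c\fiz(\degf{a}\degf{a'})$ cancelled for the compatibility of the actions with multiplication. Your explicit justification of the cancellation, via injectivity of right multiplication by a fixed basic element of $\mathsf C$ coming from the uniqueness in the exact factorization, is just a spelled-out version of the paper's appeal to uniqueness of the decomposition into basic elements.
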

\begin{proof}
  To prove that Definition \ref{def:mathced-pair-fusion-rings}\ref{def:fusion-ring-2} holds, it is enough to show that $k\trid a \in \mathsf{B}(\mathsf A)_{k\tridb h}$ and $c\fiz h\in \mathsf{B}(\mathsf C)_{k \fizb h}$, for $h\in U(\mathsf A)$, $k\in U(\mathsf C)$, $a\in\mathsf{B}(\mathsf A)_h$, and $c\in\mathsf{B}(\mathsf{C})_k$. Identifying $U(\mathsf A)$ and $U(\mathsf C)$ as subgroups of $U(\mathsf R)$ via $\phi_{\mathsf A}$ and $\phi_{\mathsf C}$, respectively, the element $c a$ has degree $\degf{ca} = \degf{c}\degf{a} = k h = (k\tridb h)(k \fizb h)\in U(\mathsf R)$.
    On the other hand, since $c a = (k\trid a) (c\fiz h)$ then $ \degf{ca} = \degf{(k\trid a) (c\fiz h)} = \degf{k\trid a} \degf{c\fiz h}.$
    By the uniqueness of the decomposition of elements of  $U(\mathsf R)$ in product of elements of $U(\mathsf A)$ and $U(\mathsf C)$,  we have $\degf{k\trid a} = k\tridb h$ and $\degf{c\fiz h } = k \fizb h$. 
   Hence  $k\trid \mathsf{B}(\mathsf A)_h = \mathsf{B}(\mathsf A)_{k\tridb h}$ and $\mathsf{B}(\mathsf C)_k\fiz h = \mathsf{B}(\mathsf C)_{k\fizb h}$ as desired.

   To check that Definition \ref{def:mathced-pair-fusion-rings} \ref{def:mathced-pair-fusion-rings-3} holds, we consider
   \begin{align*}
       c a_1 a_2 &= (k\trid a_1) (c\fiz \degf{a_1})a_2 = (k\trid a_1) (\degf{c\fiz \degf{a_1}}\trid a_2) ((c\fiz \degf{a_1}) \fiz \degf{a_2})\\
       &=(k\trid a_1) ((k\fizb \degf{a_1})\trid a_2) (c\fiz \degf{a_1}\degf{a_2}),
   \end{align*}
    for $a_1, a_2\in \mathsf{B}(\mathsf A)$, $k\in U(\mathsf C)$ and $c\in \mathsf{B}(\mathsf C)_k$.

Notice that if $N_{a_1,a_2}^a \neq 0$ then $\degf{a} =\degf{a_1}\degf{a_2}$ and we get the alternative expression
   \begin{align*}
       ca_1a_2&=\sum_{a\in \mathsf{B}(\mathsf A)} N_{a_1,a_2}^a (k\trid a) (c\fiz \degf{a}) = \sum_{a\in \mathsf{B}(\mathsf A)} N_{a_1,a_2}^a (k\trid a) (c\fiz \degf{a_1}\degf{a_2})\\
       &= \left(k\trid \left(\sum_{a\in \mathsf{B}(\mathsf A)} N_{a_1,a_2}^a a\right)\right) (c\fiz \degf{a_1}\degf{a_2}) =(k\trid (a_1 a_2))  (c\fiz \degf{a_1}\degf{a_2}),
   \end{align*}
   for $a_1, a_2\in \mathsf{B}(\mathsf A)$, $k\in U(\mathsf C)$ and $c\in \mathsf{B}(\mathsf C)_k$.
   
   Then $(k\trid (a_1 a_2))  (c\fiz \degf{a_1}\degf{a_2}) = (k\trid a_1) ((k\fizb \degf{a_1})\trid a_2) (c\fiz \degf{a_1}\degf{a_2})$.
    By uniqueness of the decomposition in basic elements, we have $ k\trid (a_1 a_2)  = (k\trid a_1) ((k\fizb \degf{a_1})\trid a_2).$
   Similarly, $(c_1 c_2)\fiz h = (c_1\fiz (\degf{c_2}\tridb h)) (c_2\fiz h)$, for $c_1,c_2\in\mathsf{B}(\mathsf C)$ and $h\in U(\mathsf{A})$. 
   
   It follows from Lemma \ref{lemma:accion-del-1} that the condition in Definition \ref{def:mathced-pair-fusion-rings}\ref{def:mathced-pair-fusion-rings-4} holds.
\end{proof}

Now, we will prove the main theorem from this subsection.

\subsubsection*{Proof of Theorem \ref{teo:char-exact-fact-fusion-ring}}
\begin{proofw}
    For the exact factorization $\mathsf{R} = \mathsf{A}\bullet \mathsf{C}$, we consider the canonical matched pair of fusion rings associated to it, as in Proposition \ref{prop:canonical-matchedpair-rings}, and the bicrossed product $\mathsf A \bowtie \mathsf C$, as in Definition \ref{def:bicrossed-fusion-rings}.  We define the $\mathbb{Z}$-module map $\psi\colon \mathsf A \bowtie \mathsf C \to \mathsf R$ on the basic elements $a\in\mathsf{B}(\mathsf A)$ and $c\in\mathsf{B}(\mathsf C)$ by
    $\psi(a\bowtie c) = ac$, which is an isomorphism since it sends basic elements into basic elements.
    To finish the proof, we check below that $\psi$ is a ring map: 
    \begin{align*}
        &\psi\left((a_1\bowtie c_1) (a_2\bowtie c_2)\right) =
        \psi(a_1 (\degf{c_1} \trid a_2)\bowtie (c_1\fiz \degf{a_2}) c_2)\\
        &= \sum_{a\in \mathsf{B}(A), c\in \mathsf{B}(\mathsf{C})} N_{a_1, \degf{c_1} \trid a_2}^a N_{c_1\fiz \degf{a_2}, c_2}^c \psi(a\bowtie c) = \sum_{a\in \mathsf{B}(A), c\in \mathsf{B}(\mathsf{C})} N_{a_1, \degf{c_1} \trid a_2}^a N_{c_1\fiz \degf{a_2}, c_2}^c ac\\
        &= a_1 (\degf{c_1} \trid a_2)(c_1\fiz \degf{a_2}) c_2 = a_1 c_1 a_2 c_2 = \psi\left(a_1\bowtie c_1\right) \psi\left(a_2\bowtie c_2\right),
    \end{align*}
    for $a_1, a_2\in \mathsf{B}(\mathsf A)$ and $c_1, c_2\in\mathsf{B}(\mathsf C)$.
\end{proofw}
\begin{coro}\label{coro:fusion-ring-adjoint-subcat} Let $\mathsf{R} = \mathsf{A}\bullet\mathsf{C}$ be an exact factorization of fusion rings. Then $\mathsf{R}_{\ad} \simeq \mathsf{A}_{\ad}\ot_{\mathbb{Z}}\mathsf{C}_{\ad}$ as fusion rings.
\end{coro}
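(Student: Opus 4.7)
The plan is to chain together Proposition \ref{prop:Adjoint-subcat-of-an-exact-fact}, Theorem \ref{teo:char-exact-fact-fusion-ring}, and Proposition \ref{prop:adjunta-actua-trivial} in order to reduce the bicrossed product description of $\mathsf{R}_{\ad}$ to a plain tensor product of rings. First, Proposition \ref{prop:Adjoint-subcat-of-an-exact-fact} already gives that the adjoint is itself an exact factorization $\mathsf{R}_{\ad} = \mathsf{A}_{\ad}\bullet \mathsf{C}_{\ad}$. Applying Theorem \ref{teo:char-exact-fact-fusion-ring} to this smaller exact factorization, with the canonical matched pair from Proposition \ref{prop:canonical-matchedpair-rings}, yields a ring isomorphism $\mathsf{R}_{\ad}\simeq \mathsf{A}_{\ad}\bowtie \mathsf{C}_{\ad}$, where the relevant actions come from the functions $\ell_c$ and $r_a$ restricted to $\mathsf{B}(\mathsf{A}_{\ad})$ and $\mathsf{B}(\mathsf{C}_{\ad})$ respectively.

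The heart of the argument is to observe that these restricted actions are trivial. Indeed, for $c\in \mathsf{B}(\mathsf{C}_{\ad})$ and $a\in \mathsf{B}(\mathsf{A}_{\ad})\subseteq \mathsf{B}(\mathsf{A})$, Proposition \ref{prop:adjunta-actua-trivial}\ref{item:adjunta-actua-trivial-1} applied to the ambient factorization $\mathsf{R}=\mathsf{A}\bullet \mathsf{C}$ gives $\ell_c(a) = a$, and symmetrically \ref{item:adjunta-actua-trivial}\ref{item:adjunta-actua-trivial-2} gives $r_a(c) = c$. Consequently, the canonical matched pair for $\mathsf{R}_{\ad} = \mathsf{A}_{\ad}\bullet \mathsf{C}_{\ad}$ satisfies $\degf{c}\trid a = a$ and $c\fiz \degf{a} = c$ on basic elements, so both $\trid$ and $\fiz$ are trivial after $\Z$-linear extension.

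With both actions trivial, the bicrossed product multiplication from Definition \ref{def:bicrossed-fusion-rings},
\begin{align*}
(a\bowtie c)(a'\bowtie c') = a(\degf{c}\trid a')\bowtie (c\fiz\degf{a'})c',
\end{align*}
collapses to $(a\bowtie c)(a'\bowtie c') = aa'\bowtie cc'$, which is exactly the multiplication in the tensor product ring $\mathsf{A}_{\ad}\ot_{\mathbb{Z}}\mathsf{C}_{\ad}$. The duality formula in Definition \ref{def:bicrossed-fusion-rings} similarly reduces to $(a\bowtie c)^* = a^*\bowtie c^*$, which matches the componentwise duality on the tensor product. Composing the two isomorphisms gives $\mathsf{R}_{\ad}\simeq \mathsf{A}_{\ad}\bowtie \mathsf{C}_{\ad}\simeq \mathsf{A}_{\ad}\ot_{\mathbb{Z}}\mathsf{C}_{\ad}$ as fusion rings, which is the claim.

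The only subtle point I expect is verifying that the $\ell_c$, $r_a$ obtained from the smaller factorization $\mathsf{A}_{\ad}\bullet \mathsf{C}_{\ad}$ coincide with the restrictions of those from $\mathsf{A}\bullet \mathsf{C}$, so that Proposition \ref{prop:adjunta-actua-trivial} (proved in the larger setting) genuinely applies. This is however immediate from the uniqueness of the basic-element factorization $ca = \ell_c(a) r_a(c)$ together with the fact that both $\ell_c(a)$ and $r_a(c)$ land in $\mathsf{B}(\mathsf{A}_{\ad})$ and $\mathsf{B}(\mathsf{C}_{\ad})$ under Proposition \ref{prop:adjunta-actua-trivial}, so no genuine obstacle should arise.
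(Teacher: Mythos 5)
Your proof is correct and uses essentially the same ingredients as the paper's own argument, namely Proposition \ref{prop:Adjoint-subcat-of-an-exact-fact} and Theorem \ref{teo:char-exact-fact-fusion-ring}, with the bicrossed multiplication of Definition \ref{def:bicrossed-fusion-rings} collapsing to the tensor-product multiplication because the actions are trivial on adjoint (trivial-degree) elements. The only difference is organizational: the paper applies the characterization theorem to $\mathsf{R}=\mathsf{A}\bullet\mathsf{C}$ and reads off the product formula in the trivial-degree component, while you apply it to $\mathsf{R}_{\ad}=\mathsf{A}_{\ad}\bullet\mathsf{C}_{\ad}$ and invoke Proposition \ref{prop:adjunta-actua-trivial}, and the compatibility check you flag (that the restricted $\ell_c$, $r_a$ agree with those of the smaller factorization, by uniqueness of the factorization) is handled correctly.
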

\begin{proof}
    This follows from Proposition \ref{prop:Adjoint-subcat-of-an-exact-fact} and Theorem \ref{teo:char-exact-fact-fusion-ring}. Indeed, $\mathsf{R} \simeq \mathsf{A}\bowtie\mathsf{C}$ and the result follows from looking at the product formula in Definition \ref{def:bicrossed-fusion-rings} in the trivial component, which is exactly the adjoint fusion ring. 
\end{proof}
\subsection{Grothendieck invariants of exact factorizations of fusion categories}\label{subsection:applications-exact-fact}

In this subsection, we present some results at the level of fusion categories that depend only on the underlying Grothendieck ring and, therefore, are direct consequence of the results in previous sections, since an exact factorization of fusion categories $\cB = \cA \bullet \cC$ induces an exact factorization of the corresponding fusion rings $K_0(\cB) = K_0(\cA)\bullet K_0(\cC)$.

 The next proposition follows from Proposition \ref{prop:Adjoint-subcat-of-an-exact-fact} and Proposition \ref{prop:canonical-matched-pair}.

\begin{prop}\label{prop:exact-fact-of-univ-grad-group}
    Let $\cB = \cA \bullet \cC$ be an exact factorization of fusion categories then
    \begin{enumerate}
        \item there is an exact factorization of the adjoint subcategory $\cB_{\ad} = \cA_{\ad}\bullet \cC_{\ad}$, and
        \item there is an exact factorization $U(\cB) \simeq U(\cA)\bullet U(\cC)$ of the universal grading group.
    \end{enumerate}
\end{prop}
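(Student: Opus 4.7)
The plan is to lift the corresponding statements from the level of fusion rings directly to the categorical level, using that the Grothendieck ring functor $K_0$ is compatible with all the structures involved and that the invariants $\cC_{\ad}$ and $U(\cC)$ are by definition determined by $K_0(\cC)$.

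First I would observe that an exact factorization of fusion categories $\cB = \cA \bullet \cC$ immediately induces an exact factorization of the associated Grothendieck rings $K_0(\cB) = K_0(\cA)\bullet K_0(\cC)$. Indeed, by Definition \ref{def-exact-fact}\ref{def-exact-fact-3}, every simple object of $\cB$ is written uniquely as $A\ot C$ with $A\in\Irr(\cA)$ and $C\in\Irr(\cC)$, which translates exactly into the condition that each basis element of $K_0(\cB)$ factors uniquely as a product of a basis element of $K_0(\cA)$ and one of $K_0(\cC)$.

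Next, for part (1), I would apply Proposition \ref{prop:Adjoint-subcat-of-an-exact-fact} to this factorization, obtaining $K_0(\cB)_{\ad} = K_0(\cA)_{\ad}\bullet K_0(\cC)_{\ad}$. Recall from the preliminaries that $K_0(\cD_{\ad}) = K_0(\cD)_{\ad}$ for any fusion category $\cD$, since the adjoint subcategory is generated by objects of the form $X\ot X^*$. Therefore, the $\Z$-basis of $K_0(\cB_{\ad})$ consists of the simple objects $A\ot C$ with $A\in\Irr(\cA_{\ad})$ and $C\in\Irr(\cC_{\ad})$, each appearing uniquely. This is precisely condition \ref{def-exact-fact-3} in Definition \ref{def-exact-fact} for the subcategories $\cA_{\ad}$ and $\cC_{\ad}$ of $\cB_{\ad}$, establishing $\cB_{\ad} = \cA_{\ad}\bullet\cC_{\ad}$.

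For part (2), I would apply Proposition \ref{prop:canonical-matched-pair} to the induced exact factorization $K_0(\cB) = K_0(\cA)\bullet K_0(\cC)$ of fusion rings, which yields an exact factorization of groups $U(K_0(\cB)) \simeq U(K_0(\cA))\bullet U(K_0(\cC))$. Since the universal grading group of a fusion category is defined as $U(\cD) := U(K_0(\cD))$, this is exactly the desired exact factorization $U(\cB) \simeq U(\cA)\bullet U(\cC)$.

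I do not anticipate any significant obstacle: the entire proof is a translation between the categorical invariants and their fusion ring counterparts, followed by invocation of the already-established ring-theoretic results. The only mild care required is to verify the identification $K_0(\cD_{\ad}) = K_0(\cD)_{\ad}$, which is essentially built into the definitions as noted in the preliminaries.
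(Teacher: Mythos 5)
Your proposal is correct and follows essentially the same route as the paper: the paper deduces the proposition directly from Proposition \ref{prop:Adjoint-subcat-of-an-exact-fact} and Proposition \ref{prop:canonical-matched-pair}, using exactly the induced exact factorization $K_0(\cB) = K_0(\cA)\bullet K_0(\cC)$ and the identifications $K_0(\cD_{\ad}) = K_0(\cD)_{\ad}$ and $U(\cD) = U(K_0(\cD))$ that you spell out.
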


The \emph{upper central series} $\mathcal{B}^{(0)} = \mathcal{B}$ of a fusion category is defined recursively as follows:  $\mathcal{B}^{(1)} := \mathcal{B}_\text{ad}$, and $\mathcal{B}^{(n)} := (\mathcal{B}^{(n-1)})_\text{ad}$, for $n \geq 1$. This gives rise to a non-increasing sequence of fusion subcategories $\mathcal{B} = \mathcal{B}^{(0)} \supseteq \mathcal{B}^{(1)} \supseteq \cdots \supseteq \mathcal{B}^{(n)} \supseteq \cdots$ of $\mathcal{B}$. 
A fusion category $\mathcal{B}$ is \emph{nilpotent} if there exists $n$ such that $\mathcal{B}^{(n)} = \vect$, see \cite[Definition 4.4]{GN}.

The following corollary is a direct consequence of Proposition \ref{prop:Adjoint-subcat-of-an-exact-fact}.

\begin{coro}\label{coro:nilpotent-exact-fact}
Let $\mathcal{B}=\mathcal{A}\bullet\mathcal{C}$ be an exact factorization of fusion categories. Then $\mathcal{B}$ is nilpotent if and only if $\mathcal{A}$ and $\mathcal{C}$ are nilpotent fusion categories. 
\end{coro}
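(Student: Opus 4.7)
The plan is to apply Proposition \ref{prop:exact-fact-of-univ-grad-group}(1), which gives $\cB_{\ad} = \cA_{\ad}\bullet \cC_{\ad}$, inductively up the upper central series. I would first establish the iterated identity
\begin{equation*}
    \cB^{(n)} = \cA^{(n)} \bullet \cC^{(n)}, \qquad n\geq 0,
\end{equation*}
by induction on $n$. The base case $n=0$ is the hypothesis $\cB = \cA \bullet \cC$. For the inductive step, assuming $\cB^{(n-1)} = \cA^{(n-1)}\bullet \cC^{(n-1)}$ is an exact factorization of fusion categories, I apply Proposition \ref{prop:exact-fact-of-univ-grad-group}(1) to this exact factorization to conclude
\begin{equation*}
    \cB^{(n)} = (\cB^{(n-1)})_{\ad} = (\cA^{(n-1)})_{\ad} \bullet (\cC^{(n-1)})_{\ad} = \cA^{(n)}\bullet \cC^{(n)}.
\end{equation*}

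With this identity in hand, both implications follow readily from the Frobenius-Perron dimension criterion (Definition \ref{def-exact-fact}\ref{def-exact-fact-2}), namely $\fpdim(\cB^{(n)}) = \fpdim(\cA^{(n)})\fpdim(\cC^{(n)})$. If $\cA$ and $\cC$ are nilpotent with $\cA^{(m)} = \vect = \cC^{(k)}$, taking $n = \max(m,k)$ gives $\cA^{(n)} = \vect = \cC^{(n)}$ (since the series is non-increasing), hence $\cB^{(n)} = \vect \bullet \vect = \vect$ and $\cB$ is nilpotent. Conversely, if $\cB^{(n)} = \vect$ for some $n$, then $1 = \fpdim(\cB^{(n)}) = \fpdim(\cA^{(n)})\fpdim(\cC^{(n)})$, and since Frobenius-Perron dimensions of fusion categories are at least $1$, both factors equal $1$, forcing $\cA^{(n)} = \vect = \cC^{(n)}$.

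There is no serious obstacle here; the whole content is packaged in Proposition \ref{prop:exact-fact-of-univ-grad-group}(1), which itself rests on Proposition \ref{prop:Adjoint-subcat-of-an-exact-fact}. The only minor thing to be careful about is ensuring the inductive hypothesis gives an exact factorization (not merely an equality of underlying abelian categories) so that Proposition \ref{prop:exact-fact-of-univ-grad-group}(1) applies at each step, but this is automatic since $\cA^{(n-1)}$ and $\cC^{(n-1)}$ are fusion subcategories of $\cB^{(n-1)}$ whose intersection is contained in $\cA \cap \cC = \vect$.
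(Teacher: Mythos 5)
Your proposal is correct and follows essentially the route the paper intends: the paper derives this corollary directly from the exact factorization of adjoint subcategories (Proposition \ref{prop:Adjoint-subcat-of-an-exact-fact}, via Proposition \ref{prop:exact-fact-of-univ-grad-group}(1)), which amounts to exactly your inductive iteration $\cB^{(n)} = \cA^{(n)}\bullet\cC^{(n)}$ along the upper central series. Your concluding argument in both directions (using that $\vect\bullet\vect=\vect$ and the Frobenius--Perron dimension criterion) is sound.
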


\begin{remark}\label{remark-clave}
The previous results allow us to recover every exact factorization of fusion categories $\cB=\cA \bullet \cC$ by extension theory (see \cite[Theorem 7.7]{eno2}). Indeed, by Proposition \ref{prop:exact-fact-of-univ-grad-group} such a factorization induces an exact factorization $\cB_{\ad} = \cA_{\ad}\bullet \cC_{\ad}$ for the adjoint subcategory with same fusion rules as the Deligne product $\cA_{\ad}\boxtimes \cC_{\ad}$ by Corollary \ref{coro:fusion-ring-adjoint-subcat}. Categories satisfying this last condition are discussed in a paper in preparation by  P. Etingof, D. Nikshych, and V. Ostrik \cite{ENO-no-published}. Then the category $\cB = \cA \bullet \cC$ is recovered as a $U(\cB) = U(\cA)\bullet U(\cC)$-extension of $\cB_{\ad} = \cA_{\ad}\bullet \cC_{\ad}$. Then, by extension theory, we get exact factorizations $\cB^\omega$ of $\cA$ and $\cC$ by multiplying the associativity of $\cB$ by a cocycle $\omega \in H^3(U(\cA)\bullet U(\cC), \ku^\times)$. All exact factorizations with trivial component given $\cB_{\ad}$ and the same fusion rules as $\cB$ are of this form. Notice that the fusion rules of $\cB=\cA \bullet \cC$ correspond to some bicrossed product of the fusion rings of $\cA$ and $\cC$ by Theorem \ref{teo:char-exact-fact-fusion-ring}.
\end{remark}

\section{Matched pair of fusion categories and their bicrossed product}\label{section:exact-factorization-fusion-categories}
In the previous sections, we showed how to construct exact factorizations of fusion rings from matched pairs between them. We now explore the categorification of these ideas. We will define matched pair between two fusion categories $\mathcal A$ and $\mathcal C$ and from them we will construct a new fusion category $\mathcal{A}\bowtie \mathcal C$ called the bicrossproduct fusion category between them. This will provide an exact factorization of $\mathcal{A}$ and $\mathcal{C}$, and will allow us to construct new fusion categories from known ones. 
\begin{definition}\label{def:mathced-pair-categories} 
Let $\mathcal{A}$ and $\mathcal{C}$ be fusion categories. 
A \emph{matched pair of fusion categories} between $\mathcal{A}$ and $\mathcal{C}$ is a $8$-tuple $(\mathcal A, \mathcal C, H, K, \tridb, \fizb, \trid, \fiz)$, where
    \begin{enumerate}[leftmargin=*,label=\rm{(\roman*)}]
    \item  the categories $\mathcal{A}$ and $\mathcal{C}$ have faithful gradings by groups the $H$ and $K$, respectively,
    \begin{align*}
        \cA = \bigoplus_{h\in H} \cA_h, && \cC = \bigoplus_{k\in K} \cC_k,
    \end{align*}

    \item a matched pair of groups $(H, K, \tridb, \fizb)$ between $H$ and $K$,
    \item categorical $\ku$-linear actions:
    \begin{align*}
        \trid &\colon \underline{K} \to \Aut(\cA),  &
        \fiz &\colon \underline{H}^{\operatorname{op}} \to \Aut(\cC),
    \end{align*}
    such that $k \trid \cA_h = \cA_{k\tridb h}$ and $\cC_k \fiz h= \cC_{k \fizb h}$,
    \item for each $k\in K$ and $h\in H$, there are families of natural isomorphisms 
    \begin{align*}
        &\gamma_{A,A'}^k\colon k\trid (A\ot A') \xrightarrow{\quad\simeq\quad} (k\trid A)\ot (k\fizb\degf{A})\trid A', & A,A'\in\Irr(\cA),\\
        &\eta_{C,C'}^h\colon (C\ot C')\fiz h \xrightarrow{\quad\simeq\quad} C\fiz (\degf{C'}\tridb h)\ot C'\fiz h, & C,C'\in\Irr(\cC),
    \end{align*}
 \item for each $k\in K$ and $h\in H$, there are isomorphisms
    \begin{align*}
        &\gamma_{0}^k\colon k\trid \mathbf{1} \xrightarrow{\quad\simeq\quad} \mathbf{1}, & \\
        &\eta_{0}^h\colon \mathbf{1}\fiz h \xrightarrow{\quad\simeq\quad} \mathbf{1}, & 
    \end{align*}
\end{enumerate}
such that the following diagrams commute
{\footnotesize
\begin{equation}\label{eq:gamma-T2-diagram}
   \begin{adjustbox}{scale=.8}
        \begin{tikzcd}
            k\trid(k'\trid(A\ot A')) \ar[d, "(\mathbf{L}^2_{k,k'})_{A\ot A'}"] \ar[rr, "k\trid \gamma^{k'}_{A,A'}"]&& k\trid (k'\trid A \ot (k'\fizb \degf{A})\trid A') \ar[d, "\gamma^k_{k'\trid A, (k'\fizb \degf{A})\trid A'}"]\\
            k k'\trid(A\ot A') \ar[d, "\gamma^{k k'}_{A,A'}"] && k\trid(k'\trid A)\ot (k\fizb(k'\tridb\degf{A}))\trid ((k'\fizb \degf{A})\trid A') 
            \ar[lld,bend left=10, "(\mathbf{L}^2_{k,k'})_A\ot (\mathbf{L}^2_{k\fiz(k'\tridb\degf{A}), k'\fizb \degf{A}})_{A'}"]     \\
          k k' \trid A \ot (k k'\fizb \degf{A})\trid A'  &  & 
        \end{tikzcd}
        \end{adjustbox}
\end{equation}

    \begin{equation}\label{eq:eta-S2-diagram}
   \begin{adjustbox}{scale=.8}
        \begin{tikzcd}
          ((C\ot C')\fiz h)\fiz h' \ar[d, "(\mathbf{R}^2_{h,h'})_{C\ot C'}"] \ar[rr, "\eta^{h}_{C,C'}\fiz h'"]& & ((C \fiz (\degf{C'}\tridb h)) \ot C'\fiz h)\fiz h' \ar[d, "\eta^{h'}_{C\fiz(\degf{C'}\tridb h), C'\fiz h}"]\\
            (C\ot C')\fiz(hh') \ar[d, "\eta^{hh'}_{C,C'}"] && (C \fiz (\degf{C'}\fizb h))\fiz ((\degf{C'}\fizb h)\tridb h')\ot (C'\fiz h)\fiz h' 
            \ar[lld,bend left=10,"(\mathbf{R}^2_{\degf{C'}\tridb h, (\degf{C'}\fizb h)\tridb h'})_C\ot (\mathbf{R}^2_{h, h'})_{C'}"]     \\
            (C\fiz(\degf{C'}\tridb (hh')))\ot C'\fiz(hh') &  & 
        \end{tikzcd}
        \end{adjustbox}
    \end{equation}

\begin{minipage}{0.45\textwidth}
	\begin{equation}
		  \begin{tikzcd}\label{diagram:MP-13}
				k\trid (\uno \ot A) \ar[r,"\gamma^k_{\uno,A}"] \ar[d,"k\trid \ell_A"]& k\trid \uno \ot k\trid A \ar[d,"\gamma_0^k\ot \id_{k\trid A}"]\\
				k\trid A  & \uno\ot k\trid A \ar[l,"\ell_{k\trid A}"] 
		\end{tikzcd} 
	\end{equation}
\end{minipage}
\begin{minipage}{0.45\textwidth}
	\begin{equation}\label{diagram:MP-14}
		\begin{tikzcd}
			k\trid (A\ot \uno) \ar[r,"\gamma^k_{A,\uno}"] \ar[d,"k\trid r_A"]& k\trid A \ot (k\fizb \degf{A})\trid \uno \ar[d,"\id_{k\trid A}\ot \gamma_0^{k\fizb\degf{A}} "]\\
			k\trid A  &  k\trid A \ot \uno \ar[l,"r_{k\trid A}"]
		\end{tikzcd}
	\end{equation}
\end{minipage}

\begin{minipage}{0.45\textwidth}
	\begin{equation}\label{diagram:MP-15}
		    \begin{tikzcd}
				(\uno \ot C)\fiz h \ar[r,"\eta^h_{\uno,C}"] \ar[d,"\ell_C\fiz h"]&\uno \fiz (\degf{C}\tridb h)\ot C\fiz h \ar[d,"\eta_0^{\degf{C}\tridb h}\ot \id_{C\fiz h} "]\\
				C\fiz h  & \uno\ot C\fiz h \ar[l,"\ell_{C\fiz h}"]
		\end{tikzcd}
	\end{equation}
\end{minipage}
\begin{minipage}{0.45\textwidth}
	\begin{equation}\label{diagram:MP-16}
		\begin{tikzcd}
			(C\ot \uno)\fiz h \ar[r,"\eta^h_{C,\uno}"] \ar[d,"r_C\fiz h"]&C\fiz h \ot \uno\fiz h \ar[d,"\id_{C\fiz h}\ot \eta_0^{h} "]\\
			C\fiz h  & C\fiz h \ot \uno \ar[l,"r_{C\fiz h}"]
		\end{tikzcd}
	\end{equation}
\end{minipage}

\begin{minipage}{0.45\textwidth}
	\begin{equation}\label{eq:T2-gamma-uno}
		\begin{tikzcd}
			k\trid (k'\trid \uno) \ar[r,"(L^2_{k,k'})_\uno"] \ar[d, "k\trid \gamma_0^{k'}"] & k k'\trid \uno \ar[d,"\gamma_0^{k k'}"]\\
			k\trid \uno \ar[r,"\gamma_0^k"] & \uno
		\end{tikzcd}
	\end{equation}
\end{minipage}
\begin{minipage}{0.45\textwidth}
	\begin{equation}\label{diagram:MP-18}
		\begin{tikzcd}
			(\uno \fiz h)\fiz h' \ar[r,"(R^2_{h,h'})_\uno"] \ar[d,"\eta_0^h \fiz h'"] & \uno \fiz h h' \ar[d,"\eta_0^{h h'}"]\\
			\uno \fiz h' \ar[r,"\eta_0^{h'}"] & \uno
		\end{tikzcd}
	\end{equation}
\end{minipage}

    \begin{equation}\label{eq:gamma-alpha-diagram}
\begin{adjustbox}{scale=.8}
    \begin{tikzcd}
        k\trid ((A\ot A')\ot A'') \ar[dd,"\gamma^k_{A\ot A',A''}"]\ar[rr, "k\trid \alpha_{A,A',A''}"] & & k\trid (A\ot (A'\ot A'')) \ar[dd, "\gamma^k_{A,A'\ot A''}"] \\
        &&\\
        k \trid (A\ot A') \ot (k\fizb |A||A'|)\trid A'' \ar[dd, "\gamma^k_{A,A'}\ot \id"] && k\trid A\ot (k\fizb |A|)\trid (A'\ot A'') \ar[ddddll, bend left, "\id\ot \gamma^{k\fizb |A|}_{A',A''}"]\\
        &&\\
        (k\trid A\ot (k\fizb |A|)\trid A')\ot (k\fizb |A||A'|)\trid A''\ar[dd,"\alpha_{k\trid A, (k\fizb |A|)\trid A', (k\fizb |A||A'|)\trid A''}"]\\
        &&\\
        k\trid A\ot ((k\fizb |A|)\trid A'\ot (k\fizb |A||A'|)\trid A'') 
    \end{tikzcd}
    \end{adjustbox}
    \end{equation}
    
    \begin{equation}\label{eq:eta-alpha-diagram}
  \begin{adjustbox}{scale=.8}
    \begin{tikzcd}
         ((C\ot C')\ot C'')\fiz h \ar[dd,"\eta^h_{C\ot C',C''}"]\ar[rr, " \alpha_{C,C',C''}\fiz h"] & & (C\ot (C'\ot C''))\fiz h \ar[dd, "\eta^h_{C,C'\ot C''}"] \\
        &&\\
        (C\ot C') \fiz (|C''|\tridb h) \ot C'' \fiz h \ar[dd, "\eta^{|C''|\tridb h}_{C,C'}\ot \id"] && C\fiz (|C'||C''|\tridb h) \ot (C'\ot C'')\fiz h \ar[ddddll,bend left, "\id\ot \eta^{h}_{C',C''}"]\\
        &&\\
       (C \fiz (|C'||C''|\tridb h) \ot C' \fiz (|C''|\tridb h)) \ot C''\fiz h\ar[dd,"\alpha_{C \fiz (|C'||C''|\tridb h), C' \fiz (|C''|\tridb h), C''\fiz h}"]\\
        &&\\
        C \fiz (|C'||C''|\tridb h) \ot (C' \fiz (|C''|\tridb h) \ot C''\fiz h ).
    \end{tikzcd}
    \end{adjustbox}
    \end{equation}
}
\end{definition}
\begin{remark}
    Notice that the maps $\gamma^k_0$ and $\eta^h_0$ are completely determined by $\gamma^k_{e,e}$ and $\eta^h_{e,e}$ from the diagrams \eqref{diagram:MP-13} and \eqref{diagram:MP-16}, respectively.
\end{remark}

From the definition, it follows that more commutative diagrams are satisfied directly as the next lemma states. The proof can be found in the Appendix appearing in the ArXiv version of this article.

\begin{lemma}\label{diagrmas-e-Sonia}
   For every $A$, $A'$ homogeneous objects in $\cA$, and $C$, $C'$ homogeneous objects in $\cC$, the following diagrams commute: 
   
   \begin{minipage}{0.45\textwidth}
 \begin{equation}\label{diagrama-e-Sonia-1}
\begin{adjustbox}{scale=.8,center}
\begin{tikzcd}
   e\trid (A\ot A') \ar[rr, "\gamma^e_{A, A'}"] & & e\trid A \ot e\trid A' \\
    & & \\
   A\ot A'.\ar[uu,"(\mathbf{L}^0)_{A\ot A'}"]\ar[rruu,swap, "(\mathbf{L}^0)_A\ot (\mathbf{L}^0)_{A'}"]  &&
\end{tikzcd}
\end{adjustbox}
    \end{equation}
    \end{minipage}
    \begin{minipage}{0.5\textwidth}
\begin{equation}\label{diagrama-e-Sonia-2}
\begin{adjustbox}{scale=.8,center}
\begin{tikzcd}
   \uno\ar[ddrr,"\id"] \ar[rr, "({\mathbf L}^0)_\uno"] & & e\trid \uno\ar[dd,"\gamma_0^e"] \\
    & & \\
   && \uno.
\end{tikzcd}
\end{adjustbox}
    \end{equation}
    \end{minipage}
    
\begin{minipage}{0.45\textwidth}
\begin{equation}\label{diagrama-e-Sonia-3}
\begin{adjustbox}{scale=.8,center}
\begin{tikzcd}
   (C\ot C')\fiz e \ar[rr, "\eta^e_{C, C'}"] & &  C\fiz e \ot C'\fiz e \\
    & & \\
   C\ot C'.\ar[uu,"(\mathbf{R}^0)_{C\ot C'}"]\ar[rruu,swap, "(\mathbf{R}^0)_C\ot (\mathbf{R}^0)_{C'}"]  &&
\end{tikzcd}
\end{adjustbox}
    \end{equation}
    \end{minipage}
    \begin{minipage}{0.5\textwidth}
\begin{equation}\label{diagrama-e-Sonia-4}
\begin{adjustbox}{scale=.8,center}
\begin{tikzcd}
   \uno\ar[ddrr,"\id"] \ar[rr, "({\mathbf R}^0)_\uno"] & &  \uno\fiz e\ar[dd,"\eta_0^e"] \\
    & & \\
   && \uno.
\end{tikzcd}
\end{adjustbox}
    \end{equation}
    \end{minipage}

\end{lemma}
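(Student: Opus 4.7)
The four diagrams together say that $\mathbf{L}^0\colon\id_{\cA}\to e\trid -$ and $\mathbf{R}^0\colon\id_{\cC}\to -\fiz e$ are \emph{monoidal} natural isomorphisms, once the target functors are endowed with the monoidal structures $(\gamma^e,\gamma_0^e)$ and $(\eta^e,\eta_0^e)$. I will sketch the $\trid$-side, namely \eqref{diagrama-e-Sonia-1} and \eqref{diagrama-e-Sonia-2}; the diagrams \eqref{diagrama-e-Sonia-3} and \eqref{diagrama-e-Sonia-4} are handled identically after swapping $(\trid,\mathbf{L},\gamma)$ with $(\fiz,\mathbf{R},\eta)$ and invoking \eqref{eq:eta-S2-diagram} and \eqref{diagram:MP-18} in place of \eqref{eq:gamma-T2-diagram} and \eqref{eq:T2-gamma-uno}.

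Two preliminary observations drive the argument. First, the matched-pair-of-groups axioms \eqref{eq:mp-group1}--\eqref{eq:mp-group2} combined with the action axioms $e\tridb h = h$ and $k\fizb e = k$ force $e\fizb h = e$ and $k\tridb e = e$; consequently every occurrence of $e\fizb\degf{A}$ collapses to $e$, and the target of $\gamma^e_{A,A'}$ simplifies to $e\trid A \ot e\trid A'$, so that the statement of \eqref{diagrama-e-Sonia-1} is well-typed. Second, the unit coherence \eqref{action-eq2} applied with $g = e$ yields $(\mathbf{L}^2_{e,e})_X = (\mathbf{L}^0_{e\trid X})^{-1} = (e\trid \mathbf{L}^0_X)^{-1}$ for every $X$, and in particular the useful identity $\mathbf{L}^0_{e\trid X} = e\trid \mathbf{L}^0_X$.

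For \eqref{diagrama-e-Sonia-2}, specialize \eqref{eq:T2-gamma-uno} to $k = k' = e$ and cancel the isomorphism $\gamma_0^e$ on the right to obtain $(\mathbf{L}^2_{e,e})_{\uno} = e\trid \gamma_0^e$. Comparing with $(\mathbf{L}^2_{e,e})_{\uno} = e\trid (\mathbf{L}^0_{\uno})^{-1}$ and invoking the faithfulness of the autoequivalence $e\trid -$ gives $\gamma_0^e = (\mathbf{L}^0_{\uno})^{-1}$, which is exactly \eqref{diagrama-e-Sonia-2}.

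For \eqref{diagrama-e-Sonia-1}, specialize \eqref{eq:gamma-T2-diagram} to $k = k' = e$ so that it collapses to the single identity
\[
\gamma^e_{A,A'}\circ (\mathbf{L}^2_{e,e})_{A\ot A'} \;=\; \bigl((\mathbf{L}^2_{e,e})_A\ot (\mathbf{L}^2_{e,e})_{A'}\bigr)\circ \gamma^e_{e\trid A,\,e\trid A'}\circ (e\trid \gamma^e_{A,A'}).
\]
Substituting $(\mathbf{L}^2_{e,e})^{-1}_Z = \mathbf{L}^0_{e\trid Z}$, using naturality of $\mathbf{L}^0$ to commute $\mathbf{L}^0_{e\trid(A\ot A')}$ through $e\trid \gamma^e_{A,A'}$, and cancelling $\gamma^e_{A,A'}$ on the right produces $\gamma^e_{e\trid A,\,e\trid A'}\circ \mathbf{L}^0_{e\trid A\ot e\trid A'} = \mathbf{L}^0_{e\trid A}\ot \mathbf{L}^0_{e\trid A'}$, i.e.\ \eqref{diagrama-e-Sonia-1} for the pair $(e\trid A,\, e\trid A')$. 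To transport this to arbitrary $(A,A')$ I apply $e\trid -$ to both sides of the desired equation and use naturality of $\gamma^e$ and $\mathbf{L}^0$, together with $\mathbf{L}^0_{e\trid Z} = e\trid \mathbf{L}^0_Z$, to reduce both sides to $\mathbf{L}^0_{e\trid A\ot e\trid A'}\circ \gamma^e_{A,A'}$; faithfulness of $e\trid -$ then closes the argument. The main source of friction is purely notational: the hexagons \eqref{eq:gamma-T2-diagram} and \eqref{eq:eta-S2-diagram} are subscript-heavy, and one must keep the small group identities $e\fizb h = e$, $k\tridb e = e$ and their $\fiz$-analogues in mind throughout to see the targets of $\gamma^e$ and $\eta^e$ collapse; once that is done, the derivations are routine diagram chases.
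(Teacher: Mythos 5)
Your proposal is correct and uses essentially the same ingredients as the paper: specialize \eqref{eq:gamma-T2-diagram} and \eqref{eq:T2-gamma-uno} at $k=k'=e$, combine with \eqref{action-eq2} (giving $(\mathbf{L}^2_{e,e})_X=(\mathbf{L}^0_{e\trid X})^{-1}=(e\trid\mathbf{L}^0_X)^{-1}$) and naturality of $\gamma^e$ and $\mathbf{L}^0$, and conclude by faithfulness of the autoequivalence $e\trid-$; the $\fiz$-side is symmetric. Your two-step organization (first the identity for the pair $(e\trid A,e\trid A')$, then transporting it back — where the transport indeed reuses that identity together with naturality of $\gamma^e$) is just a repackaging of the paper's single diagram chase, whose inner subdiagram is $e\trid-$ applied to \eqref{diagrama-e-Sonia-1}.
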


It is clear that a matched pair of fusion categories $(\mathcal A, \mathcal C, H, K, \tridb, \fizb, \trid, \fiz)$ induces a matched pair of the corresponding fusion rings $(K_0(\mathcal A), K_0(\mathcal C), H, K, \tridb, \fizb, \trid, \fiz)$ by decategorification. This motivates the following definition.

\begin{definition}\label{def:lifting-matched-pair}
    Let $(\mathsf{A}, \mathsf{C}, H, K, \tridb, \fizb, \trid, \fiz)$ be a matched pair of fusion rings, and let $\cA$ and $\cC$ be fusion categories such that $K_0(\cA) = \mathsf{A}$ and $K_0(\cC) = \mathsf{C}$. A \emph{lifting} of this matched pair is a matched pair of fusion categories $(\mathcal A, \mathcal C, H, K, \tridb, \fizb, \trid, \fiz)$ such that its decategorification is the original matched pair of fusion rings.
\end{definition}

Given a matched pair of fusion categories $(\mathcal A, \mathcal C, H, K, \tridb, \fizb, \trid, \fiz)$, we define the 
\emph{bicrossed product} $\mathcal{A}\bowtie\mathcal{C}$ of $\mathcal{A}$ and $\mathcal{C}$ as the following semisimple monoidal category:
\begin{enumerate}[leftmargin=*,label=\rm{(\roman*)}]
    \item As an abelian category $\mathcal{A}\bowtie \mathcal{C} = \mathcal{A} \boxtimes \mathcal{C}$. This is a semisimple abelian category with simple objects of the form $A\bowtie C\coloneqq A\boxtimes C$, with $A\in\Irr(\mathcal{A})$ and $C\in\Irr(\mathcal{C})$. For $A$, $A'\in \mathcal{A}$ and $C, C'\in\mathcal{C}$, we have 
\begin{align*}
    \Hom_{\mathcal{A}\bowtie \mathcal{C}}(A\bowtie C, A'\bowtie C') \simeq \Hom_{\mathcal{A}}(A,A')\ot \Hom_{\mathcal{C}}(C,C').
\end{align*}
and we denote by $f\bowtie g:=f \boxtimes g$, for $f\in \Hom_{\mathcal{A}}(A,A')$ and $g\in \Hom_{\mathcal{C}}(C,C')$.
    \item The monoidal structure is given on simple objects as
    \begin{align*}
    A\bowtie C \ot A'\bowtie C' \coloneqq A\ot \degf{C} \trid A'\bowtie C\fiz \degf{A'} \ot C', && A,A'\in\Irr(\cA), C,C'\in\Irr(\cC),
\end{align*}
\end{enumerate}

 It is enough to define tensor products of morphisms of the form $f\bowtie g$ and $f'\bowtie g'$ for $f\colon A\to B$, $f'\colon A'\to B'$ in $\mathcal{A}$, $g\colon C\to D, g^\prime\colon C^\prime\to  D^\prime$ in $\cC$ with homogeneous degree objects as source and target.  Then the morphism
    \begin{align*}
        (f\bowtie g)\ot (f'\bowtie g)\colon (A\bowtie C)\ot (A'\bowtie C') \to (B\bowtie D)\ot (B'\bowtie D')
    \end{align*}
is given by the formula $f\bowtie g \otimes f^\prime\bowtie g^\prime :=f\otimes \degf C\trid f^\prime\bowtie g\fiz \degf{A^\prime}\otimes g^\prime$.

The associativity $\alpha_{A\bowtie C, A'\bowtie C', A''\bowtie C''}:= \alpha^L_{A\bowtie C, A'\bowtie C', A''\bowtie C''}\bowtie \alpha^R_{A\bowtie C, A'\bowtie C', A''\bowtie C''}$ is defined as follows
{\tiny\begin{align*}\label{formula:asociatividad}
   \alpha^L_{A\bowtie C, A'\bowtie C', A''\bowtie C''} & = \left(\id_A \otimes {\gamma^{\degf{C}}_{A', \degf{C'}\trid A''}}^{-1}\right)\left(\id_A\otimes\id_{\degf C\trid A^\prime}\otimes ({\mathbf{L}^2_{\degf{C}\fiz \degf{A'}, \degf{C'}}}^{-1})_{A''}\right)\alpha_{A, |C|\trid A',(|C|\fiz |A'|)|C'|\trid A''},\\ \nonumber
   \alpha^R_{A\bowtie C, A'\bowtie C', A''\bowtie C''} & = \alpha_{C\fiz |A'|(|C'|\tridb |A''|), C'\fiz |A''|, C''}\left((\mathbf{R}^2_{\degf{A'}, \degf{C'}\tridb\degf{A''}})_C\otimes \id_{C^\prime\fiz \degf{A''}}\otimes \id_{C^{\prime\prime}}\right)\left(\eta^{\degf{A''}}_{C\fiz \degf{A'}, C'}\otimes \id_{C^{\prime\prime}}\right).
\end{align*}}

The unit object is $\uno\bowtie \uno$, with left and right natural transformations 
given by
{\footnotesize\begin{align*}
 \ell_{A\bowtie C}\coloneqq(\ell_A\bowtie \ell_C) (\id_\uno\otimes {(\mathbf{L}^0_A)}^{-1}\bowtie \eta_0^{\degf A}\otimes \id_C),
 &&r_{A\bowtie C}:=(r_A\bowtie r_C) (\id_A\otimes \gamma_0^{\degf C}\bowtie {(\mathbf{R}^0_C)}^{-1}\otimes \id_{\uno}).
\end{align*}}

Before showing that $\cA \bowtie \cC$ is a monoidal category, we need the following lemma which will be useful for proving certain diagrams are commutative.
\begin{lemma}\label{lemma:commutativity-both-sides-bowtie}
Let $\cA$ and $\cC$ be semisimple abelian categories.The diagram 
\begin{equation}\label{eq:mixed-lemma-deligne-product}
\begin{adjustbox}{scale=.8}
    	\begin{tikzcd}
		A_{1,1}\boxtimes C_{1,1} \ar[r, "f_{1,2}\boxtimes g_{1,2}"] \ar[d,swap, "f_{2,1}\boxtimes g_{2,1}"]& A_{1,2}\boxtimes  C_{1,2} \ar[rd, dotted] & \\
		A_{2,1}\boxtimes C_{2,1} \ar[rd, dotted] & & A_{1,n-1}\boxtimes C_{1,n-1} \ar[d, "f_{1,n}\boxtimes g_{1,n}"]\\
		& A_{m-1,1}\boxtimes C_{m-1,1} \ar[r,swap, "f_{m,1}\boxtimes g_{m,1}"] & A_{m,1}\boxtimes C_{m,1} = A_{1,n}\boxtimes C_{1,n} 
	\end{tikzcd}
    \end{adjustbox}
\end{equation}
is commutative in $\cA\boxtimes \cC$ if the two diagrams
\begin{equation}\label{eq:LI-lemma-delingue-product}
\begin{adjustbox}{scale=.8}
	\begin{tikzcd}
		A_{1,1} \ar[r, "f_{1,2}"] \ar[d,swap, "f_{2,1}"]& A_{1,2} \ar[rd, dotted] & \\
		A_{2,1} \ar[rd, dotted] & & A_{1,n-1} \ar[d, "f_{1,n}"]\\
		& A_{m-1,1} \ar[r,swap, "f_{m,1}"] & A_{m,1} = A_{1,n}, 
	\end{tikzcd}
    \end{adjustbox}
\end{equation}
	\begin{equation}\label{eq:LD-lemma-delingue-product}
    \begin{adjustbox}{scale=.8}
	\begin{tikzcd}
		C_{1,1} \ar[r, "g_{1,2}"] \ar[d,swap, "g_{2,1}"]& C_{1,2} \ar[rd, dotted] & \\
		C_{2,1} \ar[rd, dotted] & & C_{1,n-1} \ar[d, "g_{1,n}"]\\
		& C_{m-1,1} \ar[r,swap, "g_{m,1}"] & C_{m,1} = C_{1,n}. 
	\end{tikzcd}
     \end{adjustbox}
\end{equation}
are commutative in $\cA$ and $\cC$, respectively.
\end{lemma}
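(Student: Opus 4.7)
The plan is to reduce commutativity in $\cA\boxtimes\cC$ to the bifunctoriality of $\boxtimes$ together with the decomposition of $\Hom$-spaces. Since $\cA$ and $\cC$ are semisimple abelian, morphisms between simple objects of $\cA\boxtimes\cC$ satisfy
\begin{equation*}
    \Hom_{\cA\boxtimes\cC}(A\boxtimes C, A'\boxtimes C')\simeq \Hom_{\cA}(A,A')\otimes \Hom_{\cC}(C,C'),
\end{equation*}
and composition is compatible with the $\boxtimes$-structure, that is $(f'\boxtimes g')\circ (f\boxtimes g)=(f'\circ f)\boxtimes (g'\circ g)$ whenever the compositions are defined. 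This is just the bifunctoriality of $\boxtimes\colon \cA\times \cC\to \cA\boxtimes\cC$.

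The next step is to observe that, by repeatedly applying this compatibility, any directed path in the diagram \eqref{eq:mixed-lemma-deligne-product} starting at $A_{1,1}\boxtimes C_{1,1}$ and ending at $A_{m,1}\boxtimes C_{m,1}=A_{1,n}\boxtimes C_{1,n}$ composes to a morphism of the form $F\boxtimes G$, where $F$ is the composition of the corresponding $f_{i,j}$'s in $\cA$ along the analogous path in diagram \eqref{eq:LI-lemma-delingue-product}, and $G$ is the composition of the corresponding $g_{i,j}$'s in $\cC$ along the analogous path in diagram \eqref{eq:LD-lemma-delingue-product}. Thus, checking that the two outer paths in \eqref{eq:mixed-lemma-deligne-product} agree reduces to checking that the two induced paths agree in each component.

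By hypothesis, diagrams \eqref{eq:LI-lemma-delingue-product} and \eqref{eq:LD-lemma-delingue-product} commute, so both the $\cA$-component and the $\cC$-component of the two paths coincide. Since $\boxtimes$ is a bifunctor, equal morphisms in each factor give equal morphisms in $\cA\boxtimes\cC$, and therefore diagram \eqref{eq:mixed-lemma-deligne-product} commutes. No step here is a real obstacle; the only thing that needs a small amount of care is the bookkeeping for a general diagram (rather than just a square), but this is handled uniformly by induction on the length of a path, so the argument goes through without difficulty.
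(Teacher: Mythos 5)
Your proposal is correct and follows essentially the same argument as the paper: compose each outer path using $(f'\boxtimes g')\circ(f\boxtimes g)=(f'\circ f)\boxtimes(g'\circ g)$ to write it as $F\boxtimes G$, then invoke the commutativity of the two component diagrams. The remark about the $\Hom$-space decomposition is harmless but not needed; the composition rule for $\boxtimes$ is all that the paper uses as well.
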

\begin{proof}
The commutativity of Diagram \ref{eq:mixed-lemma-deligne-product} is equivalent to checking the equality
	\begin{align*}
		(f_{m,1}\boxtimes g_{m,1}) \circ \dots \circ (f_{2,1}\boxtimes g_{2,1})   = (f_{1,n}\boxtimes g_{1,n}) \circ \dots \circ (f_{1,2}\boxtimes g_{1,2}).  
	\end{align*}
It follows from the commutativity of Diagrams \eqref{eq:LI-lemma-delingue-product} and \eqref{eq:LD-lemma-delingue-product} that
\begin{align*}
f_{m,1} \circ \dots \circ f_{2,1}   = f_{1,n} \circ \dots \circ f_{1,2}, && g_{m,1} \circ \dots \circ g_{2,1}   = g_{1,n} \circ \dots \circ g_{1,2},   
\end{align*}
 respectively. Then, we get that
\begin{align*}
	&(f_{m,1}\boxtimes g_{m,1}) \circ \dots \circ (f_{2,1}\boxtimes g_{2,1}) = (f_{m,1} \circ \dots \circ f_{2,1}) \boxtimes (g_{m,1} \circ \dots \circ g_{2,1})\\
	& = (f_{1,n} \circ \dots \circ f_{1,2}) \boxtimes (g_{1,n} \circ \dots \circ g_{1,2}) = (f_{1,n}\boxtimes g_{1,n}) \circ \dots \circ (f_{1,2}\boxtimes g_{1,2}),
\end{align*} 
by repeatedly applying the formula $(f\boxtimes g) \circ (\widetilde{f}\boxtimes \widetilde{g}) = f\circ \widetilde{f} \boxtimes g \circ \widetilde{g}$,
which proves the lemma.
\end{proof}

\begin{theorem}\label{teo:bowtie-monoidal}
    The bicrossed product $\cA\bowtie \cC$ with $\ot$ as above is a monoidal category.
\end{theorem}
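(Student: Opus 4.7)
The plan is to verify, in order, that $\otimes$ is a well-defined bifunctor, that the associator $\alpha_{-,-,-}$ and the unit isomorphisms $\ell_{-}$, $r_{-}$ are natural isomorphisms, and finally that the pentagon and triangle axioms hold. The crucial simplification is Lemma \ref{lemma:commutativity-both-sides-bowtie}, which lets us reduce every coherence diagram in $\cA\bowtie\cC=\cA\boxtimes\cC$ (whose edges are all of the form $f\bowtie g$) to a pair of diagrams, one entirely in $\cA$ and one entirely in $\cC$. Throughout I will exploit this splitting: since both $\alpha^{L}$ and $\alpha^{R}$, as well as the unit maps, are defined as compositions of $\bowtie$-factored morphisms, each axiom in $\cA\bowtie\cC$ breaks into a purely $\cA$-sided diagram (built from $\alpha^\cA$, $\gamma$, $\bL^{2}$, $\bL^{0}$, $\gamma_0$, $\ell^\cA$, $r^\cA$) and a purely $\cC$-sided diagram (built from $\alpha^\cC$, $\eta$, $\bR^{2}$, $\bR^{0}$, $\eta_0$, $\ell^\cC$, $r^\cC$). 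By the symmetry of Definition \ref{def:mathced-pair-categories} in the two sides, it will suffice to treat one side in detail, say $\cA$; the $\cC$-sided diagram is handled verbatim under the obvious dualization (left/right, $\trid/\fiz$, $\gamma/\eta$, $\bL/\bR$).

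First I would show that $\otimes$ is a bifunctor. Functoriality in each slot follows from the functoriality of $\boxtimes$ together with the functoriality of the autoequivalences $k\trid(-)$ and $(-)\fiz h$; compatibility with composition of $\bowtie$-morphisms with matching homogeneous sources and targets is a direct check using $(f\bowtie g)\circ(f'\bowtie g')=(f\circ f')\bowtie (g\circ g')$. Naturality of $\alpha^L$ in all three arguments reduces, via Lemma \ref{lemma:commutativity-both-sides-bowtie}, to naturality of $\alpha^\cA$, of $\gamma^{\degf{C}}$, and of $(\bL^{2}_{\degf{C}\fizb\degf{A'},\degf{C'}})^{-1}$; naturality of $\alpha^R$ is analogous. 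The triangle axiom reduces to Diagrams \eqref{diagram:MP-13}--\eqref{diagram:MP-16} together with the naturality of $\ell$, $r$ in $\cA$ and $\cC$ and the compatibility relations \eqref{diagrama-e-Sonia-1}--\eqref{diagrama-e-Sonia-4}: unwinding $\ell_{A\bowtie C}$ and $r_{A\bowtie C}$ and inserting them into the triangle, each side collapses to the corresponding identity through one application of these diagrams.

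The main obstacle is the pentagon axiom. After splitting via Lemma \ref{lemma:commutativity-both-sides-bowtie}, the $\cA$-sided pentagon has corners
\begin{equation*}
\objpentagonOne,\qquad \objpentagonFive,
\end{equation*}
and the two pentagon paths expand into compositions of $\alpha^\cA$'s, $\gamma$'s and $(\bL^{2})^{-1}$'s. The strategy is to introduce the intermediate objects $\pentagonOne,\ldots,\pentagonTwentySix$ already encoded in the macros and to insert between them only the maps $\pentmapOneToTwo,\pentmapOneToFour,\ldots,\pentmapTwentysixToFourteen$ that correspond to a single structural move: an associator in $\cA$, one application of $\gamma^{k}$, or one application of $(\bL^{2}_{k,k'})^{-1}$. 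The outer boundary of the resulting planar diagram is the pentagon we must commute; the interior is tiled by small cells, each of which commutes for one of four elementary reasons:
\begin{enumerate}[leftmargin=*,label=\rm{(\alph*)}]
\item the pentagon axiom for $\alpha^\cA$ in $\cA$;
\item naturality of $\alpha^\cA$, $\gamma^{k}$, or $\bL^{2}_{k,k'}$;
\item the compatibility \eqref{eq:gamma-alpha-diagram} between $\gamma^{k}$ and $\alpha^\cA$;
\item the compatibility \eqref{eq:gamma-T2-diagram} between $\gamma^{k}$ and $\bL^{2}_{k,k'}$.
\end{enumerate}
Every two-cell in the tiling will be recognizable as an instance of (a)--(d), and verifying this cell-by-cell gives commutativity of the outer pentagon. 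By the dual tiling, with $\eta$, $\alpha^\cC$ and $\bR^{2}$ in place of $\gamma$, $\alpha^\cA$ and $\bL^{2}$, together with Diagrams \eqref{eq:eta-S2-diagram} and \eqref{eq:eta-alpha-diagram}, the $\cC$-sided pentagon also commutes. Reassembling the two sides via Lemma \ref{lemma:commutativity-both-sides-bowtie} yields the pentagon axiom in $\cA\bowtie\cC$, completing the proof that $(\cA\bowtie\cC,\otimes,\uno\bowtie\uno,\alpha,\ell,r)$ is a monoidal category.
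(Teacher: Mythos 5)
Your proposal is correct and takes essentially the same route as the paper: verify bifunctoriality, split every coherence diagram through Lemma \ref{lemma:commutativity-both-sides-bowtie} into an $\cA$-side and a $\cC$-side, handle the triangle with the unit diagrams \eqref{diagram:MP-13}--\eqref{diagram:MP-16} plus naturality, and prove the pentagon by the same tiling through the intermediate objects $\pentagonOne,\dots,\pentagonTwentySix$ with cells justified by the pentagon in $\cA$, naturality, \eqref{eq:gamma-alpha-diagram} and \eqref{eq:gamma-T2-diagram}. The only minor omissions are that some cells also invoke the coherence \eqref{action-eq1} of the categorical action (the paper's cell (xi)), and the triangle additionally uses the triangle axiom of $\cA$ and \eqref{action-eq2}; these do not affect the validity of the approach.
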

\begin{proof} It is a straightforward computation to check that $\ot$ is a bifunctor. It is enough to check the axioms for homogeneous objects $A\bowtie C$ and $A'\bowtie C'$. Since all the morphisms involved arrows of the form $f\bowtie g$, for some morphisms $f$ and $g$, we can apply Lemma \ref{lemma:commutativity-both-sides-bowtie}.
Indeed, the relevant morphisms are
{\footnotesize\begin{align*}
    &r_{A\bowtie  C}\ot \id_{A'\bowtie  C'}=(r_A\ot \id_{\degf{C}\trid A'})(\id_A\ot\gamma_0^{\degf{C}}\ot \id_{\degf{C}\trid A'})
    \bowtie (r_C\fiz \degf{A'}\ot\id_{C'})(((\bR^0_C)^{-1}\ot \id_C)\fiz \degf{A'}),\\
    &\id_{A\bowtie  C}\ot \ell_{A'\bowtie  C'}= (\id_A\ot |C|\trid \ell_{A'})(\id_A\ot |C|\trid(\id_\uno\otimes {(\mathbf{L}^0_{A'})}^{-1}))\bowtie \id_{C\fiz \degf{A'}} \ot \ell_{C'}(\eta_0^{\degf{A'}}\otimes \id_{C'}),\\
    &\alpha_{A\bowtie  C, \uno\bowtie \uno, A'\bowtie  C'} = \alpha_{A\bowtie  C, \uno\bowtie \uno, A'\bowtie  C'}^L\bowtie \alpha_{A\bowtie  C, \uno\bowtie \uno, A'\bowtie  C'}^R,\,\text{where}\\ 
    &\alpha_{A\bowtie  C, \uno\bowtie \uno, A'\bowtie  C'}^L =(\id_A \otimes {\gamma^{\degf{C}}_{\uno, e\trid A'}}^{-1})(\id_A\otimes\id_{\degf C\trid \uno}\otimes ({\mathbf{L}^2_{\degf{C}, e}})^{-1}_{A'})\alpha_{A, |C|\trid\uno, |C|\trid A'},\\
    &\alpha_{A\bowtie  C, \uno\bowtie \uno, A'\bowtie  C'}^R =\alpha_{(C\fiz |A'|)\fiz e, \uno\fiz |A'|, C'}((\mathbf{R}^2_{\degf{A'}, e})_C\otimes \id_{\uno\fiz \degf{A'}}\otimes \id_{C^{\prime}})(\eta^{\degf{A'}}_{C\fiz e, \uno}\otimes \id_{C^{\prime}}).
\end{align*}}
 For the triangle axiom, Diagram \eqref{eq:LI-lemma-delingue-product} translates to the one in Figure \ref{fig:triangle-lhs}. We have that $(i)$ commutes by the naturality of $\alpha_{A, -, \degf{C}\trid A'}$, $(ii)$ commutes trivially, $(iii)$ and $(v)$ commute by the naturality of $\ell_{-}$, $(iv)$ commutes by the triangle axiom in $\cA$, and the others diagrams are Equations \eqref{action-eq2} and \eqref{diagram:MP-13}. The analogous  of the Diagram \ref{eq:LD-lemma-delingue-product} commutes by a similar computation. The proof of the pentagon axiom is included only in the Appendix  of the Arxiv version.
\begin{figure}[htb]
	\centering
\begin{adjustbox}{width=\textwidth}
\begin{tikzcd}
     (A\ot \degf{C}\trid\uno)\ot \degf{C}\trid A' \ar[ddd,swap,"\id_A\ot\gamma_0^k\ot\id_{\degf{C}\trid A'}"]\ar[rr, "\alpha_{A,  \degf{C}\trid\uno, \degf{C}\trid A'}"] &  & \ar[dd,"\id_A\ot\id_{\degf{C}\trid\uno}\ot ({\mathbf L}^2_{\degf{C}, e})^{-1}_{A'}"]  A\ot (\degf{C}\trid\uno\ot \degf{C}\trid A')\ar[ld,swap, "\id_A\ot\gamma_0^{\degf{C}}\ot\id_{\degf{C}\trid A'}"] \\
     \hspace{.7in} (i) & A\ot (\uno\ot \degf{C}\trid A')\ar[ldddddd,swap,  "\id_A\ot \ell_{\degf{C}\trid A'}"] \ar[ddd,"\id_A\ot\id_{\uno}\ot({\mathbf L}^2_{\degf{C}, e})^{-1}"]  &\\
         &   &    A\ot (\degf{C}\trid \uno\ot \degf{C}\trid (e\trid A'))\ar[ldd,bend left,swap, "\id_A\ot\gamma^{\degf{C}}_0\ot \id_{\degf{C}\trid (e\trid A')}"]\ar[dd, "\id_A\ot (\gamma_{\uno, e\trid A'}^{\degf{C}})^{-1}"]\\
     (A\ot \uno)\ot \degf{C}\trid A'\ar[ruu, "\alpha_{A, \uno,\degf{C}\trid A'}"] \ar[dddd,swap, "r_A\ot\id_{\degf{C}\trid A'}"] & \hspace{-2in} (iv) & \hspace{-2in}(ii)\\
      & A\ot (\uno\ot \degf{C}\trid (e\trid A'))\ar[dd,"\id_A\ot \ell_{\degf{C}\trid (e\trid A')}"] & A\ot \degf{C}\trid (\uno\ot e\trid A')\ar[ddd, "\id_{A}\ot \degf{C}\trid (\id_{\uno}\ot ({\mathbf L}^0)_{A'}^{-1})"]\ar[ddl, bend left, swap, "\id_A\ot\degf{C}\trid \ell_{e\trid A'}"]\\
      & \hspace{-.7in}(v) & \hspace{-1.5in} \eqref{diagram:MP-13}\\
    \hspace{1.9in}\eqref{action-eq2} &   A\ot \degf{C}\trid (e\trid A')\ar[ld,"\id_A\ot \degf{C}\trid ({\mathbf L}^0_{A'})^{-1}"]  & \hspace{-.7in} (iii)\\
     A\ot \degf{C}\trid A'\ar[ru, bend left,pos=.8, "\id_A\ot ({\mathbf L}^2_{\degf{C}, e})_{A'}^{-1}"]  & & A\ot \degf{C}\trid (\uno\ot A').\ar[ll,"\id_A\ot \degf{C}\trid \ell_{A'}"]
\end{tikzcd}
\end{adjustbox}
\caption{}
	\label{fig:triangle-lhs}
\end{figure}

\end{proof}

For the rest of this subsection, we fix a bicrossed product $\cA \bowtie \cC$ between fusion categories $\cA$ and $\cC$. Consider the monoidal subcategories  $\widetilde{\cA}$ and $\widetilde{\cC}$ generated $A\bowtie \uno$ and $\uno\bowtie C$, for $A\in\Irr(\cA)$ and $C\in\Irr(\cC)$, respectively. The proof of the following equivalence is available only in the Appendix of the Arxiv version.

\begin{prop}\label{prop: subcategorias}
	The category $\widetilde{\cA}$ is monoidally equivalent to $\cA$ via the functor $(\bF, \bF^2)$, where $\bf F$ is defined by $\bF(A) = A\bowtie \uno$, for $A\in\cA$, $\bF(f) = f\bowtie \id_{\uno}$, for $f\colon A\to A'$, and ${\bf F}^2$ is defined by $\bF^2_{A,A'} = (\id_A \ot (\bL^0)^{-1}_{A'})\bowtie \ell_\uno (\eta_0^{\degf{A'}}\ot \id_\uno)$, for $A, A'\in \Irr(\cA)$. In a similar way, the category $\,\widetilde{\cC}$ is monoidally equivalent to $\cC$ via the functor $({\bf G}, {\bf G}^2)$, where ${\bf G}(C) = \uno\bowtie C$, for $C\in\cC$, ${\bf G}(g) = \id_\uno \bowtie g$, for $g\colon C\to C'$, and ${\bf G}^2$ is defined by ${\bf G}^2_{C,C'} = r_\uno(\id_\uno \ot \gamma_0^{\degf{C}}) \bowtie ((\bR^0)_C^{-1}\ot\id_{C'})$, for $C, C'\in\Irr(\cC)$.
\end{prop}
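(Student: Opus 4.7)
The plan is to verify that $(\bF,\bF^2)$ is a monoidal equivalence; the argument for $({\bf G},{\bf G}^2)$ is symmetric with the roles of left and right tensor factors reversed. I will organize the proof into four steps: functoriality of $\bF$, the natural-isomorphism property of $\bF^2$, monoidal coherence, and the equivalence property.

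First, $\bF$ is plainly a functor by the bifunctoriality of $\boxtimes=\bowtie$, and it preserves the unit strictly since $\bF(\uno_\cA)=\uno_\cA\bowtie\uno_\cC=\uno_{\cA\bowtie\cC}$. Second, each $\bF^2_{A,A'}$ is an isomorphism because $(\bL^0)^{-1}_{A'}$, $\ell_\uno$, and $\eta_0^{\degf{A'}}$ are invertible; naturality in $A$ and $A'$ then reduces to the naturality of these four constraints via the formula $(f\bowtie g)\circ(f'\bowtie g')=(f\circ f')\bowtie(g\circ g')$.

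Third, and this is the heart of the argument, I would check the hexagon and triangle coherence axioms. Expanding
\[
\bF(A)\ot\bF(A')=A\ot e\trid A'\,\bowtie\,\uno\fiz\degf{A'}\ot\uno
\]
and substituting into the bicrossed associator, every instance of $\degf{C}$ in the associator formula becomes $e$, so each occurrence of $\gamma^{\degf{C}}$ specializes to $\gamma^{e}$ and each occurrence of $\bL^2$ to $\bL^2_{e,e}$, and dually on the $\cC$-side. By Lemma \ref{lemma:commutativity-both-sides-bowtie}, the hexagon diagram in $\cA\bowtie\cC$ then splits into two diagrams, one in $\cA$ and one in $\cC$. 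On the $\cA$-side, the degeneracies \eqref{diagrama-e-Sonia-1} and \eqref{action-eq2} collapse $\gamma^e$ and $\bL^2_{e,e}$ to identifications coming from $\bL^0$, reducing the diagram to the pentagon/associativity of $\ot$ in $\cA$. The $\cC$-side is handled analogously via \eqref{diagrama-e-Sonia-3}, \eqref{diagram:MP-18}, and the associativity of $\ot$ in $\cC$. The triangle axiom for the unit likewise splits in two; its $\cA$-half follows from \eqref{diagrama-e-Sonia-2} together with the triangle axiom in $\cA$, and its $\cC$-half from \eqref{diagrama-e-Sonia-4} together with the triangle axiom in $\cC$.

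Fourth, $\bF$ is essentially surjective onto $\widetilde{\cA}$ by the very definition of the latter as the monoidal subcategory generated by the $A\bowtie\uno$, and it is fully faithful since
\[
\Hom_{\cA\bowtie\cC}(A\bowtie\uno,\,A'\bowtie\uno)\simeq \Hom_\cA(A,A')\ot_\ku\Hom_\cC(\uno,\uno)\simeq \Hom_\cA(A,A'),
\]
using that $\uno\in\cC$ is simple so $\Hom_\cC(\uno,\uno)=\ku$. The main obstacle is the bookkeeping in the third step: the bicrossed associator is a composition of six structural pieces ($\alpha_\cA$, $\bL^2$, $\gamma$ on the left and $\eta$, $\bR^2$, $\alpha_\cC$ on the right), and one has to trace how each of them collapses when the middle tensor factor is $\uno\bowtie\uno$. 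All needed collapses are packaged in the matched-pair axioms \eqref{diagram:MP-13}--\eqref{diagram:MP-18} and in the $e$-degeneracies of Lemma \ref{diagrmas-e-Sonia}, so the verification is routine but diagrammatically lengthy; for this reason, as the authors note, the full computation is deferred to the appendix of the arXiv version.
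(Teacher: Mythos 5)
Your outline takes the same route as the paper: the only substantive issue is the coherence axiom of \cite[Definition 2.4.1]{EGNO-book} for $(\bF,\bF^2)$, which the paper verifies by the diagram chase of Figure \ref{fig:inclusion-A} using exactly the toolbox you name (naturality of the structure maps, the unit axioms \eqref{diagram:MP-13}--\eqref{diagram:MP-18}, the $e$-degeneracies of Lemma \ref{diagrmas-e-Sonia}, and \eqref{action-eq2}); splitting the diagram with Lemma \ref{lemma:commutativity-both-sides-bowtie} as you propose is a legitimate alternative to the paper's single combined diagram, and your explicit remarks on unit preservation, full faithfulness and essential surjectivity (which the paper leaves implicit) are correct.

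Two caveats about your Step 3, which is the heart of the matter and which you do not actually carry out. First, the description of the $\cC$-side collapse is inaccurate: taking $C=C'=C''=\uno$ only forces $\degf{C}=\degf{C'}=e$, so on the left component $\gamma^{\degf{C}}$ and $\bL^2$ do specialize to $\gamma^{e}$ and $\bL^2_{e,e}$, but on the right component the maps that occur are $\eta^{\degf{A''}}_{\uno\fiz\degf{A'},\uno}$, $\eta_0^{\degf{A'}}$, $\eta_0^{\degf{A''}}$, $\eta_0^{\degf{A'}\degf{A''}}$ and $(\bR^2_{\degf{A'},\degf{A''}})_{\uno}$, whose indices $\degf{A'},\degf{A''}\in H$ are arbitrary; they are not $e$-indexed and \eqref{diagrama-e-Sonia-3} is not what makes them disappear. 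What does the work is that they are evaluated on unit objects, so the unit-compatibility axioms \eqref{diagram:MP-15}, \eqref{diagram:MP-16}, \eqref{diagram:MP-18}, naturality, and standard unit coherence in $\cC$ (\cite[Proposition 2.2.4]{EGNO-book}) apply --- this is precisely how steps (ii)--(iv), (vii), (viii), (xi), (xii), (xiv) of the paper's chase proceed. Second, no pentagon is needed on either side: only naturality of $\alpha_{\cA}$ together with \eqref{diagrama-e-Sonia-1} and \eqref{action-eq2}, and dually naturality of $\alpha_{\cC}$ with the unit axioms just cited. These are imprecisions in a sketch rather than a wrong strategy, but since the coherence chase is the entire content of the proposition, your argument is incomplete until that diagram is actually traced, as the paper does in its appendix.
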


\begin{coro}\label{cor: rigid}
The category $\mathcal A \bowtie \mathcal C$  is rigid.
\end{coro}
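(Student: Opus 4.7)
The plan is to transfer rigidity from $\cA$ and $\cC$ to $\cA \bowtie \cC$ by means of Proposition \ref{prop: subcategorias} combined with a factorization of simple objects through the two coordinate subcategories. Since $\cA \bowtie \cC = \cA \boxtimes \cC$ is semisimple as an abelian category and duals distribute over finite direct sums, it suffices to produce left and right duals for the simple objects $A \bowtie C$ with $A \in \Irr(\cA)$ and $C \in \Irr(\cC)$.

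The first step is to establish the canonical isomorphism
\[
A \bowtie C \;\simeq\; (A \bowtie \uno) \,\ot\, (\uno \bowtie C).
\]
Expanding the right-hand side by the definition of $\ot$ in $\cA \bowtie \cC$ gives $(A \ot e\trid\uno) \bowtie (\uno \fiz e \ot C)$; the desired isomorphism is then assembled from the unit constraints of $\cA$ and $\cC$ together with $\gamma_0^e$ and $\eta_0^e$, whose compatibility with $\bL^0$ and $\bR^0$ is precisely Lemma \ref{diagrmas-e-Sonia} (diagrams \eqref{diagrama-e-Sonia-2} and \eqref{diagrama-e-Sonia-4}). Next, Proposition \ref{prop: subcategorias} identifies $\widetilde{\cA}$ and $\widetilde{\cC}$ as monoidal subcategories equivalent to the rigid categories $\cA$ and $\cC$, so $A \bowtie \uno$ and $\uno \bowtie C$ are dualizable in $\cA \bowtie \cC$, with duals obtained by transporting $A^*$ and $C^*$ through the equivalences.

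To conclude, one invokes the standard fact that in any monoidal category the tensor product of two dualizable objects is again dualizable, with $(X \ot Y)^\vee = Y^\vee \ot X^\vee$; applied to the factorization above, this yields left and right duals for $A \bowtie C$, and rigidity of $\cA \bowtie \cC$ follows. I do not anticipate any serious obstacle: the bookkeeping in step one essentially mirrors the construction of the unit isomorphisms $\ell_{A\bowtie C}$ and $r_{A\bowtie C}$ already present in the monoidal structure, while the subsequent steps amount to a routine transport of structure through the monoidal equivalences of Proposition \ref{prop: subcategorias}.
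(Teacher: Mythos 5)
Your proposal is correct and follows essentially the same route as the paper: reduce to simple objects, factor $A\bowtie C \simeq (A\bowtie\uno)\ot(\uno\bowtie C)$, use Proposition \ref{prop: subcategorias} to see that each factor lies in a rigid monoidal copy of $\cA$ or $\cC$, and conclude via the fact that a tensor product of dualizable objects is dualizable. The only difference is that you spell out the unit-constraint/Lemma \ref{diagrmas-e-Sonia} bookkeeping behind the factorization, which the paper leaves implicit.
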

\begin{proof}
    It is enough to prove only that the simple objects in the category have duals, and all simple objects are isomorphic to $(A\bowtie \uno) \ot (\uno \bowtie C)$, for $A\in \Irr(\cA)$, $C\in\Irr(\cC)$. Since the objects $A\bowtie \uno$ and $\uno\bowtie C$ belong to copies of the categories $\mathcal A$ and $\mathcal C$, respectively, by Proposition \ref{prop: subcategorias}, and these are rigid monoidal, thus the result follows from the fact that the product of objects with duals also have duals. 
\end{proof}

\begin{coro}\label{teo:bowtie}
The bicrossed product $\mathcal A \bowtie \mathcal C$  is a fusion category. Moreover,  $\mathcal A \bowtie \mathcal C$ is an exact factorization between $\mathcal{A}$ and $\mathcal{C}$.
\end{coro}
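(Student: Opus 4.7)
The plan is to assemble this corollary from the monoidal and rigidity results already established (Theorem \ref{teo:bowtie-monoidal} and Corollary \ref{cor: rigid}) together with the identification of $\cA$ and $\cC$ as fusion subcategories of $\cA \bowtie \cC$ provided by Proposition \ref{prop: subcategorias}, and then to verify one of the equivalent conditions of Definition \ref{def-exact-fact}.

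First I would observe that, as a $\ku$-linear abelian category, $\cA \bowtie \cC$ coincides by construction with the Deligne product $\cA \boxtimes \cC$. Since $\cA$ and $\cC$ are fusion categories, $\cA \boxtimes \cC$ is finite, semisimple, and $\ku$-linear, with irreducible objects exactly $\{A \bowtie C : A \in \Irr(\cA),\, C \in \Irr(\cC)\}$, and the unit $\uno \bowtie \uno$ is simple because it is the Deligne product of two simple unit objects. Combining this with Theorem \ref{teo:bowtie-monoidal} (monoidal structure) and Corollary \ref{cor: rigid} (rigidity) shows that $\cA \bowtie \cC$ satisfies every requirement of a fusion category.

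Next I would verify the exact factorization. By Proposition \ref{prop: subcategorias}, the assignments $A \mapsto A \bowtie \uno$ and $C \mapsto \uno \bowtie C$ realize $\cA$ and $\cC$ as fusion subcategories $\widetilde{\cA}, \widetilde{\cC} \subseteq \cA \bowtie \cC$. I would then check condition \ref{def-exact-fact-3} of Definition \ref{def-exact-fact}: that every simple object of $\cA \bowtie \cC$ factors uniquely as a tensor product of a simple from each. Given simples $A \in \Irr(\cA)$, $C \in \Irr(\cC)$, compute using the tensor product formula
\[
(A \bowtie \uno) \ot (\uno \bowtie C) \;=\; A \ot (\degf{\uno} \trid \uno) \,\bowtie\, (\uno \fiz \degf{\uno}) \ot C \;=\; A \ot (e \trid \uno) \,\bowtie\, (\uno \fiz e) \ot C.
\]
The unit-preservation isomorphisms $\gamma_0^e \colon e \trid \uno \to \uno$ and $\eta_0^e \colon \uno \fiz e \to \uno$ (together with $\ell$, $r$) yield a canonical isomorphism $(A \bowtie \uno) \ot (\uno \bowtie C) \simeq A \bowtie C$. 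Since the simple objects of $\cA \bowtie \cC$ are precisely the $A \bowtie C$, existence of the factorization is clear, and uniqueness follows from the Deligne product structure: $A \bowtie C \simeq A' \bowtie C'$ forces $A \simeq A'$ in $\cA$ and $C \simeq C'$ in $\cC$. In particular $\widetilde{\cA} \cap \widetilde{\cC} = \vect$, since the only simple object of both shapes $A \bowtie \uno$ and $\uno \bowtie C$ is $\uno \bowtie \uno$.

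I do not expect any serious obstacle: the only substantive check is the identification $(A \bowtie \uno) \ot (\uno \bowtie C) \simeq A \bowtie C$, which is a direct unwinding of the tensor product formula together with the normalization axioms \eqref{diagram:MP-13}--\eqref{diagram:MP-18} for $\gamma_0^e$ and $\eta_0^e$. All heavy lifting (coherence of the monoidal structure and existence of duals) has already been done in Theorem \ref{teo:bowtie-monoidal} and Corollary \ref{cor: rigid}.
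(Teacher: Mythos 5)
Your proposal is correct and follows essentially the same route as the paper: the fusion structure is assembled from the Deligne product description of the underlying abelian category together with Theorem \ref{teo:bowtie-monoidal}, Proposition \ref{prop: subcategorias}, and Corollary \ref{cor: rigid}, and exactness comes from the simples being exactly the $A\bowtie C$. Your explicit check that $(A\bowtie \uno)\ot(\uno\bowtie C)\simeq A\bowtie C$ via $\gamma_0^e$, $\eta_0^e$ just spells out what the paper summarizes by saying the underlying abelian category is $\cA\boxtimes\cC$.
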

\begin{proof}
   The bicrossed product $\mathcal A \bowtie \mathcal C$  is a fusion category by the definition of $\mathcal{A}\bowtie \mathcal{C}$ as an abelian category, Theorem \ref{teo:bowtie-monoidal}, Proposition \ref{prop: subcategorias} and Corollary \ref{cor: rigid}. In addition, $\mathcal A \bowtie \mathcal C$ is an exact factorization, since as an abelian category is $\mathcal A \boxtimes \mathcal C$.
\end{proof}

\begin{coro} The Grothendieck ring of $\cA \bowtie \cC$ is  $K_0(\cA \bowtie \cC) = K_0(\mathcal{A})\bowtie K_0(\mathcal{C})$.
\end{coro}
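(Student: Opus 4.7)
The plan is to show that the map sending the isomorphism class of $A\bowtie C$ to the basis element $[A]\bowtie [C]$ gives an isomorphism of fusion rings $K_0(\cA\bowtie\cC)\xrightarrow{\sim} K_0(\cA)\bowtie K_0(\cC)$. Since by Theorem~\ref{teo:bowtie-monoidal} and Corollary~\ref{teo:bowtie} the category $\cA\bowtie\cC$ is a fusion category whose simple objects are precisely $A\bowtie C$ for $A\in\Irr(\cA)$, $C\in\Irr(\cC)$, the underlying $\mathbb{Z}$-modules and their distinguished bases already match the definition of $K_0(\cA)\bowtie K_0(\cC)$ from Definition~\ref{def:bicrossed-fusion-rings}. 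It therefore suffices to identify the multiplication and the involution.

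For the multiplication, I would start from the tensor product formula on simples
\[
(A\bowtie C)\ot(A'\bowtie C')=A\ot(\degf{C}\trid A')\bowtie (C\fiz\degf{A'})\ot C',
\]
decompose $\degf{C}\trid A'$ and $C\fiz\degf{A'}$ into simples in $\cA$ and $\cC$ respectively, and use that by hypothesis the categorical actions $\trid$ and $\fiz$ descend to the $\Z$-linear actions of the matched pair of fusion rings $(K_0(\cA),K_0(\cC),H,K,\tridb,\fizb,\trid,\fiz)$ (this is precisely the lifting condition of Definition~\ref{def:lifting-matched-pair}). Taking classes in $K_0(\cA\bowtie\cC)$ then yields exactly the formula
\[
[A\bowtie C]\,[A'\bowtie C']=[A]\bigl(\degf{C}\trid[A']\bigr)\bowtie\bigl([C]\fiz\degf{A'}\bigr)[C']
\]
from Definition~\ref{def:bicrossed-fusion-rings}, and reading off the structure constants gives $N^{A\bowtie C}_{A_1\bowtie C_1,A_2\bowtie C_2}=N^{A}_{A_1,\degf{C_1}\trid A_2}\,N^{C}_{C_1\fiz\degf{A_2},C_2}$, which matches the formula displayed in the proof of Theorem~\ref{thm:bicrossed-product-fusion-rings}.

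For the involution, I would use the rigid structure coming from Corollary~\ref{cor: rigid} together with Proposition~\ref{prop: subcategorias}: the dual of $A\bowtie\uno$ is $A^*\bowtie\uno$ (computed inside $\widetilde{\cA}\simeq\cA$), the dual of $\uno\bowtie C$ is $\uno\bowtie C^*$, and the dual of a tensor product is the tensor product of duals in reverse order. Combining these with the tensor product formula yields $(A\bowtie C)^*\simeq \degf{C}^{-1}\trid A^*\bowtie C^*\fiz\degf{A}^{-1}$, which agrees on classes with the involution in Definition~\ref{def:bicrossed-fusion-rings}. Each of these steps is essentially bookkeeping given the machinery already developed; the only mild subtlety, which I expect to be the main obstacle, is to check consistently that the grading of objects in $\cA$ and $\cC$ is preserved by taking classes so that the actions $\trid,\fiz$ of the matched pair of fusion categories really do decategorify to the actions used in the bicrossed product of fusion rings, but this is built into Definition~\ref{def:mathced-pair-categories}.
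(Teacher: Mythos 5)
Your proposal is correct and is essentially the argument the paper intends (the corollary is stated without proof there): the simple objects $A\bowtie C$, the Deligne-product Hom-spaces, and the tensor product formula on simples decategorify directly to the multiplication of Definition \ref{def:bicrossed-fusion-rings}, with the matched pair of fusion categories inducing the matched pair of fusion rings as observed just before Definition \ref{def:lifting-matched-pair}. Your separate verification of the involution via duals (using Corollary \ref{cor: rigid} and Proposition \ref{prop: subcategorias}) is a valid check but not strictly necessary, since in a fusion ring the involution is already determined by the structure constants through the condition $N_{b,b'}^{1}=1$ exactly when $b'=b^*$.
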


The proof of the following proposition is given in the Appendix  of the Arxiv version. See \cite[Definition 4.7.7]{EGNO-book} for the definition of a pivotal structure on a rigid monoidal category.

\begin{prop}\label{prop:pivotal-bicrossed-product}
    If $p^\cA_A: A \to A^{**}$ and $p_C^{\cC}:C\to C^{**}$ are pivotal structures on $\cA$ and $\cC$ respectively, then the bicrossed product $\cA\bowtie\cC$ has a canonical pivotal structure given by the natural transformation
    \begin{equation*}
\begin{split}
    p^{\cA\bowtie\cC}_{A\bowtie C}  = & (r_{A^{**}}\bowtie \ell_{C^{**}})(\id_{A^{**}}\ot (\gamma_0^e)^{-1}\bowtie (\eta_0^e)^{-1}\ot\id_{C^{**}})\\
    & (p^\cA_A\ot e\trid \id_\uno \bowtie \id_\uno\fiz e\ot p^\cC_C)     (\id_A\ot \gamma_0^e\bowtie \eta^e_0\ot\id_C)(r_A^{-1}\bowtie \ell_C^{-1}).
    \end{split}
\end{equation*}
\end{prop}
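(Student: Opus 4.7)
The plan is to verify the three properties of a pivotal structure on $\cA\bowtie\cC$: that each $p^{\cA\bowtie\cC}_{A\bowtie C}$ is an isomorphism, that the family is natural in $A\bowtie C$, and that it is monoidal in the sense required by the definition of pivotal structure. Isomorphism is immediate because every factor in the given composition, namely $r$, $\ell$, $\gamma_0^e$, $\eta_0^e$, $p^\cA_A$, and $p^\cC_C$ (together with identities), is invertible.

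For naturality, every factor in the formula has the shape $\alpha\bowtie\beta$ with $\alpha$ in $\cA$ and $\beta$ in $\cC$, and compositions of such morphisms again have this form via the rule $(\alpha\bowtie\beta)\circ(\alpha'\bowtie\beta')=(\alpha\circ\alpha')\bowtie(\beta\circ\beta')$. Lemma \ref{lemma:commutativity-both-sides-bowtie} then reduces the naturality square for a morphism $f\bowtie g\colon A\bowtie C\to A'\bowtie C'$ to two independent squares, one in $\cA$ and one in $\cC$, each of which commutes by the naturality of its individual constituents ($p^\cA$, $r$, $\gamma_0^e$, and the autoequivalence $e\trid(-)$ on one side; $p^\cC$, $\ell$, $\eta_0^e$, and $(-)\fiz e$ on the other).

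The substantive content is monoidality. Here I would exploit Proposition \ref{prop: subcategorias} together with Corollary \ref{teo:bowtie}: every simple of $\cA\bowtie\cC$ splits as $(A\bowtie\uno)\ot(\uno\bowtie C)$, and the embedded subcategories $\widetilde\cA,\widetilde\cC$ are monoidally equivalent to $\cA$ and $\cC$. Restricting the formula to $\widetilde\cA$ one sees, using $p^\cC_\uno=\id_\uno$ together with \eqref{diagram:MP-13}--\eqref{diagram:MP-16}, that $p^{\cA\bowtie\cC}_{A\bowtie\uno}$ agrees with the image of $p^\cA_A$ under the equivalence; symmetrically on $\widetilde\cC$. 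Consequently monoidality of $p^{\cA\bowtie\cC}$ on pairs drawn from $\widetilde\cA$ (resp.\ $\widetilde\cC$) follows from monoidality of $p^\cA$ (resp.\ $p^\cC$). The mixed pairs $(A\bowtie\uno,\uno\bowtie C)$ are essentially forced by the very shape of the formula, which is designed to encode precisely how the two sides interact through $\gamma_0^e$ and $\eta_0^e$.

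The main obstacle will be bookkeeping the associators of $\cA\bowtie\cC$, which explicitly involve $\gamma$, $\eta$, $\bL^2$ and $\bR^2$. However, throughout the relevant diagrams all intermediate objects involve only the unit $\uno$ and the neutral element $e$ of the grading groups, so the coherence conditions \eqref{diagram:MP-13}--\eqref{diagram:MP-16}, \eqref{eq:T2-gamma-uno}, \eqref{diagram:MP-18}, together with the consequences \eqref{diagrama-e-Sonia-1}--\eqref{diagrama-e-Sonia-4} and the action axioms \eqref{action-eq1}--\eqref{action-eq2}, collapse these associators to identities. A final application of Lemma \ref{lemma:commutativity-both-sides-bowtie} then reduces the full monoidality diagram in $\cA\bowtie\cC$ to two separate identities, which are precisely the pivotality of $p^\cA$ and of $p^\cC$.
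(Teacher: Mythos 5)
Your handling of invertibility and naturality is fine, but the monoidality step --- which you correctly identify as the substantive content --- has a genuine gap. The identity $p_{X\ot Y}=p_X\ot p_Y$ has to be verified for \emph{all} pairs of simples $X=A\bowtie C$, $Y=A'\bowtie C'$, and your plan only engages with pairs inside $\widetilde\cA$, pairs inside $\widetilde\cC$, and the mixed pair in the harmless order $(A\bowtie\uno,\uno\bowtie C)$, whose tensor product is just $A\bowtie C$ up to unit isomorphisms. Two things are missing. First, the implicit principle ``monoidality on generating pairs implies monoidality everywhere'' is itself a claim that needs an argument (propagating the condition through tensor products uses naturality of $p$, the monoidal structure of the double-dual functor, and the associators), and you do not supply it. Second, and decisively, even with such a propagation you would need the mixed pairs in the opposite order, e.g.\ $(\uno\bowtie C)\ot(A'\bowtie\uno)\simeq \degf{C}\trid A'\bowtie C\fiz\degf{A'}$, and more generally arbitrary pairs, where the left component of $p^{\cA\bowtie\cC}_{X\ot Y}$ involves $p^\cA_{A\ot\degf{C}\trid A'}$ while $p_X\ot p_Y$ contributes $p^\cA_A\ot\degf{C}\trid p^\cA_{A'}$ (and dually $p^\cC_{C\fiz\degf{A'}\ot C'}$ versus $p^\cC_C\fiz\degf{A'}\ot p^\cC_{C'}$). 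Matching these --- pivotality of $p^\cA$ at the pair $(A,\degf{C}\trid A')$ together with the compatibility of $p^\cA$ with the action functor $\degf{C}\trid(-)$ --- is exactly where the paper's proof does its real work (Lemma \ref{lemma:commutativity-both-sides-bowtie} plus the diagrams in Figures \ref{fig:pivotal} and \ref{fig:pivotal-2}, using naturality of $r$, $\ell$ and pivotality of $p^\cA$, $p^\cC$ at the twisted objects). Your phrase ``essentially forced by the very shape of the formula'' is doing all of this work without justification.

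Relatedly, your bookkeeping claim is not true in the mixed case: for $\degf{C}$, $\degf{A'}$ nontrivial the intermediate objects and structure maps involve $\gamma^{\degf{C}}$, $\eta^{\degf{A'}}$, $\bL^2$, $\bR^2$ at nontrivial group elements, so the unit coherences \eqref{diagram:MP-13}--\eqref{diagram:MP-18} and Lemma \ref{diagrmas-e-Sonia} do not collapse everything to identities; only the $\gamma_0^e$, $\eta_0^e$ decorations appearing in the formula itself are of that kind. The paper's route avoids your reduction altogether: it checks $p_{A\bowtie C\ot A'\bowtie C'}=p_{A\bowtie C}\ot p_{A'\bowtie C'}$ directly for all simples, splitting the two sides of $\bowtie$ with Lemma \ref{lemma:commutativity-both-sides-bowtie}. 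To repair your proposal you would either have to do the same, or prove the propagation lemma \emph{and} verify the mixed pair in the nontrivial order, which amounts to the same computation.
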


Pivotal structures allow us to define two types of traces in the category, the left trace $\textrm{Tr}^L$ and the right trace $\textrm{Tr}^R$, see  \cite[\S 4.7]{EGNO-book} for details. When both of these traces coincide, the category is said to be \emph{spherical}.

\begin{prop}\label{prop:spherical-bicrossed-product}
        If $\cA$ and $\cC$ are spherical 
then $\cA\bowtie\cC$ with the induced canonical pivotal structure is spherical.
\end{prop}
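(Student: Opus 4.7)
The plan is to reduce sphericality of $\cA\bowtie\cC$ to sphericality of the factors $\cA$ and $\cC$ via the monoidal embeddings of Proposition \ref{prop: subcategorias}, combined with multiplicativity of quantum dimensions under tensor products. Since $\cA\bowtie\cC$ is fusion, it suffices to prove $\textrm{Tr}^L(f)=\textrm{Tr}^R(f)$ for every endomorphism of a simple object; by Schur's lemma this reduces to showing $\dim^L(X)=\dim^R(X)$ for every simple $X$. Every simple object has the form $A\bowtie C$ with $A\in\Irr(\cA)$, $C\in\Irr(\cC)$, and by definition of the monoidal structure together with the unit constraints $\ell,r$, there is a canonical isomorphism $A\bowtie C\simeq (A\bowtie \uno)\ot (\uno\bowtie C)$.

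The first key step is to show that the canonical pivotal structure $p^{\cA\bowtie\cC}$ restricts appropriately to the subcategories $\widetilde{\cA}$ and $\widetilde{\cC}$. Concretely, I would show that $p^{\cA\bowtie\cC}_{A\bowtie \uno}$ corresponds to $p^\cA_A$ under the monoidal equivalence $(\bF,\bF^2)\colon \cA\simeq \widetilde{\cA}$, and analogously for $p^{\cA\bowtie\cC}_{\uno\bowtie C}$ and $(\bG,\bG^2)\colon \cC\simeq\widetilde{\cC}$. Plugging $C=\uno$ into the formula of Proposition \ref{prop:pivotal-bicrossed-product}, the right-hand tensor factor collapses using diagrams \eqref{diagrama-e-Sonia-2} and \eqref{diagrama-e-Sonia-4} (which say $\gamma_0^e\circ(\bL^0)_\uno=\id_\uno$ and $\eta_0^e\circ(\bR^0)_\uno=\id_\uno$), the unit axioms \eqref{diagram:MP-13} and \eqref{diagram:MP-16}, and naturality of $\ell,r$, to leave $p^\cA_A$ on the left factor and (essentially) $\id_{\uno^{**}}$ on the right; a symmetric reduction handles $p^{\cA\bowtie\cC}_{\uno\bowtie C}$.

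Once this identification is established, both $\widetilde{\cA}$ and $\widetilde{\cC}$ inherit pivotal structures matching $p^\cA$ and $p^\cC$ under the equivalences, hence are spherical. Since quantum dimensions are multiplicative under tensor product in any pivotal category, we conclude
\begin{align*}
\dim^L(A\bowtie C) &= \dim^L(A\bowtie \uno)\cdot \dim^L(\uno\bowtie C) = \dim^L_\cA(A)\cdot \dim^L_\cC(C),\\
\dim^R(A\bowtie C) &= \dim^R(A\bowtie \uno)\cdot \dim^R(\uno\bowtie C) = \dim^R_\cA(A)\cdot \dim^R_\cC(C),
\end{align*}
and by sphericality of $\cA$ and $\cC$ the right-hand sides agree, proving $\dim^L(A\bowtie C) = \dim^R(A\bowtie C)$.

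The main obstacle is the first step: the explicit formula for $p^{\cA\bowtie\cC}$ is long, and checking that it restricts to the factor pivotal structures requires carefully chaining together the unit-compatibility axioms of the matched pair (\eqref{diagram:MP-13}--\eqref{diagram:MP-18}) together with naturality of the structure morphisms, and invoking Lemma \ref{lemma:commutativity-both-sides-bowtie} to split the verification across the two $\bowtie$-factors. The remaining multiplicativity of dimensions and the passage through the equivalences $\bF, \bG$ are standard facts about pivotal monoidal categories.
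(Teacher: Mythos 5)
Your proposal is correct and follows essentially the same route as the paper: the paper's proof also reduces sphericality to equality of left and right traces on simple objects $A\bowtie C$ and concludes from $\textrm{Tr}^{L}(f\bowtie g)=\textrm{Tr}^{L}(f)\textrm{Tr}^{L}(g)$ (and likewise for $\textrm{Tr}^R$) together with sphericality of $\cA$ and $\cC$. The only difference is that the paper asserts this multiplicativity of traces directly, while you supply the justification it leaves implicit, namely that the canonical pivotal structure restricts, via $C=\uno$ (resp.\ $A=\uno$) and the unit diagrams, to $p^{\cA}$ and $p^{\cC}$ on $\widetilde{\cA}$ and $\widetilde{\cC}$, after which multiplicativity of quantum dimensions under $\ot$ gives the same conclusion.
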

\begin{proof}
    It is enough to check the equality of the traces in the simple objects. Let $f\ot g: A\ot C \to A\ot C$ be a morphism in $\cA\bowtie\cC$, where $A$ is a simple in $\cA$ and $C$ is a simple in $\cC$. Then $\textrm{Tr}^L(f\ot g)=\textrm{Tr}^L(f)\textrm{Tr}^L(g)=\textrm{Tr}^R(f)\textrm{Tr}^R(g)=\textrm{Tr}^R(f\ot g)$.
\end{proof}

 In the next proposition, we give a condition for the bicrossed product being equivalent to the Deligne tensor product. The proof is given in the Appendix of the Arxiv version.

\begin{prop}\label{prop:equiv-to-deligne}
Let $(\cA, \cC, H, K, \tridb, \fizb, \trid, \fiz)$ be a matched pair of fusion categories. If $K$ acts on $\cA$ by tensor autoequivalences, $H$ acts on $\cC$ by tensor autoequivalences, and there exist  $u_k: k\trid - \to\id_\cA$ and $v_h: -\fiz h\to \id_\cC$  monoidal natural isomorphisms, for all $k\in K, h\in H$, then $\bf F: \cA\bowtie\cC\to \cA \boxtimes\cC$ is a monoidal equivalence, where $F(A\boxtimes C)=A\bowtie C$ and $\bF^2_{A\boxtimes C, A'\boxtimes C'}=\id_A\otimes (u_{\degf{C}})_{A'}\bowtie (v_{\degf{A'}})_C\otimes \id_{C'}$.
\end{prop}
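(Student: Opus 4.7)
The plan is to verify that $(\bF, \bF^2)$ defines a monoidal functor and that $\bF$ is an equivalence of categories. The latter is immediate because $\cA\bowtie\cC$ and $\cA\boxtimes\cC$ share the same underlying abelian category, so $\bF$ is the identity on objects and hom-spaces. Naturality of $\bF^2_{A\boxtimes C, A'\boxtimes C'}=\id_A\ot (u_{\degf{C}})_{A'}\bowtie (v_{\degf{A'}})_C\ot\id_{C'}$, checked on simple homogeneous objects and extended by linearity, follows from the naturality of $u_k$ in $\cA$ and of $v_h$ in $\cC$. It remains to check the unit and the hexagon axioms of a monoidal functor.

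For the unit axiom, I would use the explicit formulas for $\ell_{A\bowtie C}$ and $r_{A\bowtie C}$ displayed just before Lemma \ref{lemma:commutativity-both-sides-bowtie} to reduce the compatibility to a short cell computation involving $\gamma_0^k$, $\eta_0^h$, and the unit components of $u_k, v_h$. This follows from the coherence diagrams \eqref{diagram:MP-13}--\eqref{diagram:MP-16}, \eqref{eq:T2-gamma-uno}--\eqref{diagram:MP-18}, together with the fact that monoidal natural isomorphisms preserve the unit structures of their source and target monoidal functors.

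The bulk of the work is the hexagon. Since the associator $\alpha^\bowtie = \alpha^L\bowtie\alpha^R$ in $\cA\bowtie\cC$, the associator $\alpha^\cA\boxtimes\alpha^\cC$ in $\cA\boxtimes\cC$, and $\bF^2$ itself are all of the form $f\bowtie g$, Lemma \ref{lemma:commutativity-both-sides-bowtie} splits the hexagon into two independent diagrams, one entirely in $\cA$ and one entirely in $\cC$. By symmetry, I focus on the $\cA$-side, setting $k = \degf{C}$, $k' = \degf{C'}$, $h = \degf{A'}$, $h' = \degf{A''}$. The resulting diagram in $\cA$ is built from the associator $\alpha^{\cA}$, the matched-pair morphisms $\gamma^k$ and $\bL^2_{k,k'}$, the monoidal structure $\zeta^k$ of the tensor autoequivalence $k\trid -$, and the components $(u_k)_A$. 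I would proceed cell by cell: Diagrams \eqref{eq:gamma-T2-diagram} and \eqref{eq:gamma-alpha-diagram} control the interaction of $\gamma^k$, $\bL^2$, and $\alpha^\cA$, while naturality of $u_k$ and the monoidality identity $(u_k)_{A\ot A'}\circ \zeta^k_{A,A'} = (u_k)_A\ot(u_k)_{A'}$ dispose of the remaining cells.

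The main obstacle is reconciling two different decompositions of $k\trid(A\ot A')$: the one given by $\gamma^k$ from the matched-pair data, landing in $k\trid A\ot (k\fizb\degf{A})\trid A'$, and the one given by $(\zeta^k)^{-1}$ from the tensor autoequivalence assumption, landing in $k\trid A\ot k\trid A'$. Although these target distinct objects, after post-composing each with the appropriate components of $u_k$ they both yield the same morphism $(u_k)_{A\ot A'}\colon k\trid(A\ot A')\to A\ot A'$. This observation is the key that allows the tangle of $\gamma$'s and $\bL^2$'s in the formula for $\alpha^{L}$ to collapse, under conjugation by $\bF^2$, to the bare associator $\alpha^{\cA}$. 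The analogous argument with $\eta^h$, $\bR^2$, $\vartheta^h$, and $v_h$ handles the $\cC$-side, completing the verification of the hexagon and hence of the monoidal equivalence.
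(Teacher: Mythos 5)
Your scaffolding (equivalence on underlying abelian categories, naturality of $\bF^2$, the unit axiom, and splitting the hexagon into an $\cA$-side and a $\cC$-side via Lemma \ref{lemma:commutativity-both-sides-bowtie}) agrees with the paper, but the step you yourself single out as the key observation is a genuine gap. You keep two separate pieces of data on $k\trid -$: the matched-pair coherence $\gamma^k_{A,A'}\colon k\trid(A\ot A')\to k\trid A\ot (k\fizb\degf{A})\trid A'$ and a tensor structure $\zeta^k$ coming from the hypothesis, and you assert that after composing with the trivializations both give $(u_k)_{A\ot A'}$. Monoidality of $u_k$ only yields $(u_k)_{A\ot A'}\circ\zeta^k_{A,A'}=(u_k)_A\ot(u_k)_{A'}$, an identity involving $\zeta^k$ alone; the matched-pair axioms \eqref{eq:gamma-T2-diagram}, \eqref{eq:gamma-alpha-diagram}, \eqref{diagram:MP-13}, \eqref{diagram:MP-14} constrain $\gamma$, $\bL^2$ and $\alpha$ but never mention $\zeta^k$ or $u_k$. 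Since the associator $\alpha^L$ of $\cA\bowtie\cC$ is built from $\gamma$ and $\bL^2$, while your toolkit (naturality of $u$, monoidality of $u$ against $\zeta$, and the $\gamma$--$\bL^2$--$\alpha$ axioms) never couples $\gamma$ to $u$, no assembly of these cells can close the hexagon. Indeed, one may rescale $\gamma^k_{A,A'}$ by degree-dependent scalars compatible with \eqref{eq:gamma-T2-diagram} and \eqref{eq:gamma-alpha-diagram} without touching $\zeta$ or $u$; this changes the associator of $\cA\bowtie\cC$, and the proposed $\bF^2$ then fails the hexagon by exactly those scalars even though all your quoted hypotheses still hold. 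So the claimed identity $((u_k)_A\ot(u_{k\fizb\degf{A}})_{A'})\circ\gamma^k_{A,A'}=(u_k)_{A\ot A'}$ is the very thing that must be assumed or proved, not an observation.

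The paper's proof does not face this reconciliation because it introduces no independent $\zeta$: the intended reading is that the tensor structure of the action is the (inverse of the) matched-pair coherence itself — compare the proof of Theorem \ref{teo:mp-fusion-TambaraY-pointed}, where the tensor structure of the lifted action is taken to be $\vartheta^h=(\eta^h)^{-1}$ as in Subsection \ref{subsec: cat action} — so that monoidality of $u_{\degf{C}}$ is exactly the needed compatibility with $\gamma^{\degf{C}}$. The cells of the paper's diagram are then naturality of $\alpha$ and of $\gamma$, monoidality of $u_{\degf{C}}$ against $\gamma^{\degf{C}}$, and a compatibility of the family $\{u_k\}$ with $\bL^2$ (the cell the paper justifies by ``$u$ is a monoidal natural transformation''); notably it does not route through \eqref{eq:gamma-T2-diagram} or \eqref{eq:gamma-alpha-diagram} at all. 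Your sketch supplies neither the $\gamma$--$u$ compatibility nor the $\bL^2$--$u$ compatibility, and both are needed to collapse $\alpha^L$ to $\alpha^{\cA}$ under conjugation by $\bF^2$; until you either adopt the paper's reading of the hypotheses (tensor structure $=(\gamma^k)^{-1}$, $u_k$ monoidal for it, with the induced coherence over $\bL^2$) or add these identities as hypotheses and verify them, the hexagon argument does not go through.
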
\label{lemma:eq-deligne}

\begin{coro}
    Under the previous hypothesis, if $\cA$ and $\cC$ are braided, then $\cA\bowtie\cC$ is braided. 
\end{coro}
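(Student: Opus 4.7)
The plan is to leverage Proposition \ref{prop:equiv-to-deligne}, which under the given hypotheses produces a monoidal equivalence $\bF\colon \cA\bowtie\cC \to \cA\boxtimes \cC$ with $\bF(A\bowtie C) = A\boxtimes C$ and monoidal constraint $\bF^2_{A\boxtimes C,\,A'\boxtimes C'} = \id_A\otimes (u_{\degf{C}})_{A'}\bowtie (v_{\degf{A'}})_C\otimes \id_{C'}$. Since braided structures transport along monoidal equivalences, it suffices to observe that $\cA\boxtimes \cC$ is braided and pull the braiding back through $\bF$.

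First, I would recall that the Deligne product $\cA\boxtimes \cC$ of two braided fusion categories is itself braided, with braiding defined on simple objects by
\[
\sigma^{\boxtimes}_{A\boxtimes C,\, A'\boxtimes C'} := c^{\cA}_{A,A'} \boxtimes c^{\cC}_{C, C'},
\]
where $c^{\cA}$ and $c^{\cC}$ denote the braidings of $\cA$ and $\cC$, and extended bilinearly. The naturality and the two hexagon axioms in $\cA\boxtimes \cC$ hold componentwise, because both the associator and the tensor product of $\cA\boxtimes \cC$ are defined factorwise; this is classical.

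Next, I would define a braiding on $\cA\bowtie\cC$ by transport along $\bF$, setting
\[
\sigma^{\bowtie}_{X,Y} := \bF^{-1}\!\left( (\bF^2_{Y,X})^{-1}\circ \sigma^{\boxtimes}_{\bF(X),\bF(Y)}\circ \bF^2_{X,Y} \right),
\]
for $X,Y\in \cA\bowtie\cC$. Naturality follows from naturality of $\sigma^{\boxtimes}$ and of $\bF^2$. The two hexagon axioms for $\sigma^{\bowtie}$ are obtained by applying $\bF^{-1}$ to the corresponding hexagons in $\cA\boxtimes \cC$, inserting the coherence isomorphisms $\bF^2$ appropriately, and using that $\bF$ is a monoidal equivalence (so that $\bF^{-1}$ is as well). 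This is a purely formal manipulation, independent of the specific form of $\bF^2$.

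Because the argument is entirely formal and rests only on the fact that $\bF$ is a monoidal equivalence, there is no genuine obstacle: the substantive content of the corollary is already contained in Proposition \ref{prop:equiv-to-deligne}. The only step requiring any care is the verification that the Deligne product of two braided fusion categories is braided via the componentwise formula above, but this is standard.
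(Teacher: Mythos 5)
Your proposal is correct and is essentially the argument the paper intends: the corollary is stated as an immediate consequence of Proposition \ref{prop:equiv-to-deligne}, namely that the componentwise braiding on the Deligne product $\cA\boxtimes\cC$ transports along the monoidal equivalence to $\cA\bowtie\cC$. One small slip: with the convention $\bF^2_{X,Y}\colon \bF(X)\ot\bF(Y)\to\bF(X\ot Y)$ the transported braiding should be the morphism corresponding under full faithfulness to $\bF^2_{Y,X}\circ\sigma^{\boxtimes}_{\bF(X),\bF(Y)}\circ(\bF^2_{X,Y})^{-1}$, i.e.\ your inverses are placed on the wrong factors, but this does not affect the argument.
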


\begin{coro}
     The category $(\cA\bowtie\cC)_{\ad}$ is monoidally equivalent to $\cA_{\ad}\boxtimes\cC_{\ad}$.
\end{coro}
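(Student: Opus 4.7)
The plan is to identify $(\cA\bowtie\cC)_{\ad}$ with the internal exact factorization $\cA_{\ad}\bullet\cC_{\ad}$ sitting inside $\cA\bowtie\cC$, and then to show that on this sub\-category the bicrossed structure collapses to the Deligne product. First, since $\cA\bowtie\cC=\cA\bullet\cC$ by Corollary~\ref{teo:bowtie}, Proposition~\ref{prop:exact-fact-of-univ-grad-group}(1) gives $(\cA\bowtie\cC)_{\ad}=\cA_{\ad}\bullet\cC_{\ad}$. By Definition~\ref{def-exact-fact}\ref{def-exact-fact-3}, this subcategory is spanned by the simples $A\bowtie C$ with $A\in\Irr(\cA_{\ad})$ and $C\in\Irr(\cC_{\ad})$; write it $\cA_{\ad}\bowtie\cC_{\ad}$.

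Next, I would use that $\cA_{\ad}$ sits in the trivial component of $U(\cA)$, hence of $H$, and similarly $\cC_{\ad}$ sits in the trivial component of $K$. So for every $A,A'\in\cA_{\ad}$ and $C,C'\in\cC_{\ad}$ the bicrossed tensor product reduces to
\[
(A\bowtie C)\ot(A'\bowtie C')\;=\;\bigl(A\ot e_K\trid A'\bigr)\bowtie\bigl(C\fiz e_H\ot C'\bigr),
\]
so only the identities of $H$ and $K$ act, and those identities are trivialized by the natural isomorphisms $\bL^0$ and $\bR^0$ of Subsection~\ref{subsec: cat action}.

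Then I would define a candidate equivalence $\bF\colon\cA_{\ad}\boxtimes\cC_{\ad}\to(\cA\bowtie\cC)_{\ad}$ by $\bF(A\boxtimes C)=A\bowtie C$ and $\bF(f\boxtimes g)=f\bowtie g$, with monoidal structure
\[
\bF^2_{A\boxtimes C,\,A'\boxtimes C'}\;=\;\bigl(\id_A\ot(\bL^0_{A'})^{-1}\bigr)\bowtie\bigl((\bR^0_C)^{-1}\ot\id_{C'}\bigr).
\]
Essential surjectivity and full faithfulness are immediate from the definition of $\cA\bowtie\cC$ as an abelian category. The core of the verification is the hexagon axiom for $\bF^2$: by Lemma~\ref{lemma:commutativity-both-sides-bowtie} it splits into two diagrams, one in $\cA$ and one in $\cC$; all the $\gamma^{\degf C}$, $\eta^{\degf{A'}}$, $\bL^2_{\degf C,\degf{C'}}$, $\bR^2_{\degf{A'},\degf{A''}}$ occurring in the associator of $\cA\bowtie\cC$ are evaluated at identity elements, and the required equalities then follow from diagrams~\eqref{diagrama-e-Sonia-1}, \eqref{diagrama-e-Sonia-3} together with the action coherence~\eqref{action-eq1}. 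Compatibility of $\bF^2$ with the unit constraints is handled by~\eqref{diagrama-e-Sonia-2} and~\eqref{diagrama-e-Sonia-4}.

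The main obstacle I expect is simply the bookkeeping needed to see that the lengthy associator $\alpha^L\bowtie\alpha^R$ of $\cA\bowtie\cC$ really does degenerate to the Deligne associator on the trivial component. Conceptually this is exactly the mechanism of Proposition~\ref{prop:equiv-to-deligne}, with $u_{e_K}=(\bL^0)^{-1}$ and $v_{e_H}=(\bR^0)^{-1}$ playing the role of the monoidal natural isomorphisms, so the cleanest write-up is to check the hypotheses of that proposition only at the identities of $H$ and $K$ (which is all that is used) and read off the equivalence on the adjoint subcategories.
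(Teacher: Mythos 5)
Your proposal is correct and follows essentially the same route as the paper: the paper also identifies the adjoint subcategory with the simples $A\bowtie C$, $A\in\Irr(\cA_{\ad})$, $C\in\Irr(\cC_{\ad})$, and applies Proposition \ref{prop:equiv-to-deligne} with $u_e=(\bL^0)^{-1}$ and $v_e=(\bR^0)^{-1}$, using the action coherence \eqref{action-eq2} and Lemma \ref{diagrmas-e-Sonia} to verify the hypotheses at the identity elements. Your explicit formula for $\bF^2$ and the invocation of the $e$-degeneration diagrams match the paper's argument, just written out in more detail.
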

\begin{proof}
    The result follows from Lemma \ref{lemma:eq-deligne}, for $u_e=({\mathbf L}^ 0)^{-1}$ and $v_e=({\mathbf R}^0)^{-1}$, Equation \eqref{action-eq2} and Lemma \ref{diagrmas-e-Sonia}.
\end{proof} 

\begin{remark}\label{remark-clave-2}
    In the same spirit as Remark \ref{remark-clave}, the category $\cA\bowtie \cC$ is an extension of $(\cA\bowtie\cC)_{\ad} \simeq \cA_{\ad} \boxtimes \cC_{\ad}$ by the previous corollary, so if another exact factorization has the adjoint subcategory as the Deligne product and same fusion rules as the original one, then they would differ by twisting by a $3$-cocycle. A natural follow-up question is whether there are exact factorizations whose adjoint subcategory has an associativity constraint different from that of the Deligne tensor product.
\end{remark}

\section{Examples of fusion categories from bicrossed products}\label{section:examples-of-fusion-cat-from-bicrossed-product}
In this section, we study exact factorization between Tambara-Yamagami fusion categories and pointed fusion categories, see Subsections \ref{subsection:pointed-definition} and \ref{subsection:Tambara-Yamagami-definition} for the relevant definitions. We do this by classifying their possible fusion rings as bicrossed products, and then classifying their possible liftings of matched pair as in Definition \ref{def:lifting-matched-pair}.

\subsection{Bicrossed products between a Tambara-Yamagami fusion category and a pointed fusion category}
 Let $\mathcal{TY}(\Gamma,\chi,\tau)$ be a Tambara-Yamagami fusion category and $\vect_K^\omega$ be a pointed fusion category, with $K_0(\mathcal{TY}(\Gamma,\chi,\tau))=\mathsf{TY}(\Gamma)$ and $K_0(\vect_K^\omega) = \mathsf{R}(K)$, respectively. In this subsection, we study the category $\mathcal{B} = \mathcal{TY}(\Gamma,\chi,\tau) \bullet \vect_K^\omega$. 
 
  \begin{theorem}\label{teo:matched-pair-rings-TY-vecK}
  The fusion ring of $\mathcal{B}$ is completely characterized by
  \begin{enumerate}[leftmargin=*,label=\rm{(\roman*)}]
  	\item an automorphism $\phi$ of $K$ of order at most two, and 
  	\item an action $\varphi\colon K\to \operatorname{Aut}(\Gamma)$ by group automorphisms.
  \end{enumerate}
  This fusion ring is a bicrossed product of fusion rings $\mathsf{TY}(\Gamma) \bowtie \mathsf{R}(K)$ given by a matched pair $(\mathsf{TY}(\Gamma) , \mathsf{R}(K), C_2, K, \tridb, \fizb, \trid, \fiz)$ such that 
  \begin{itemize}
  	\item the action $\tridb$ is trivial and $\fizb$ is determined by $\phi$,
        \item the action $\trid$ fixes the non-invertible element $X$ and acts on $\Gamma$ by $\varphi$, and the action $\fiz$ is the same as $\fizb$ on basic elements.
  \end{itemize} The basic elements of $K_0(\mathcal{B})$ are $g\bowtie k$ and $X\bowtie k$, for $g\in\Gamma$, $k\in K$. The fusion rules are
  \begin{align}\label{eq:fusion-rules-TY-vecK}
  \begin{aligned}
  	(g\bowtie k)(g'\bowtie k')=g\varphi_k(g')\bowtie kk', && (g\bowtie k)(X\bowtie k')=X\bowtie \phi(k)k'\\
  	(X\bowtie k')(g\bowtie k)=X\bowtie k' k, && (X\bowtie k)(X\bowtie k')=\sum_{g\in\Gamma} g\bowtie \phi(k)k',
    \end{aligned}
  \end{align} for $g, g'\in \Gamma$, $k, k'\in K$.
  The universal grading group is $U(\cB) \simeq C_2 \rtimes_\phi K$ and the pointed subring is $(K_0(\cB))_{\pt} = \mathsf{R}(\Gamma \ltimes_\varphi K)$.
  \end{theorem}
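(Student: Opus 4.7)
The strategy is to apply Theorem \ref{teo:char-exact-fact-fusion-ring} to realize $K_0(\cB)$ as a bicrossed product $\mathsf{TY}(\Gamma) \bowtie \mathsf{R}(K)$ and then to show that the associated canonical matched pair (Proposition \ref{prop:canonical-matchedpair-rings}) reduces to exactly the data of $\phi$ and $\varphi$; conversely, that any such pair gives a valid matched pair of fusion rings.

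I will first analyze the matched pair of groups $(C_2, K, \tridb, \fizb)$ from Proposition \ref{prop:canonical-matched-pair}, where $C_2 = U(\mathsf{TY}(\Gamma))$ with nontrivial element $X$, and $K = U(\mathsf{R}(K))$. The condition $k \tridb X \in \{e, X\}$ combined with exactness of $U(\cB) = C_2 \bullet K$ rules out $k \tridb X = e$ (otherwise $kX = k \fizb X \in K$, forcing $X \in K$). Hence $\tridb$ is trivial and $kX = X\phi(k)$ where $\phi(k) := k \fizb X \in K$. Multiplicativity of $\phi$ follows from axiom \eqref{eq:mp-group1}, and $\phi^2 = \id$ follows from $X^2 = e$ via $(k \fizb X) \fizb X = k \fizb X^2 = k$.

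Next, I will use Corollary \ref{coro:group-actions-fusion-ring} to study the induced actions $\trid\colon K \times \mathsf{TY}(\Gamma) \to \mathsf{TY}(\Gamma)$ and $\fiz\colon \mathsf{R}(K) \times C_2 \to \mathsf{R}(K)$. Since these actions preserve the gradings and permute basic elements, and since $\tridb$ is trivial, one must have $k \trid X = X$ and $k$ acting on $\Gamma$ by some permutation $\varphi_k$; similarly $k \fiz X$ is the unique basic element of degree $\phi(k)$, hence equals $\phi(k)$ itself. The axiom of Definition \ref{def:mathced-pair-fusion-rings}\ref{def:mathced-pair-fusion-rings-3} applied to $g, g' \in \Gamma$ (so that $k \fizb \degf{g} = k$) forces $k \trid (gg') = (k \trid g)(k \trid g')$, so $\varphi_k \in \operatorname{Aut}(\Gamma)$, and the $K$-action on $\Gamma$ yields a group homomorphism $\varphi\colon K \to \operatorname{Aut}(\Gamma)$. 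The remaining instances of Definition \ref{def:mathced-pair-fusion-rings}\ref{def:mathced-pair-fusion-rings-3}, obtained by letting one or both arguments be $X$, reduce to identities already known (using $gX = X = Xg$ and $X^2 = \sum_{g\in\Gamma} g$), so $\phi$ and $\varphi$ are two independent parameters.

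For the converse, any pair $(\phi, \varphi)$ with the stated properties assembles into a matched pair of fusion rings by direct verification of Definition \ref{def:mathced-pair-fusion-rings}, and the multiplication formula of Definition \ref{def:bicrossed-fusion-rings} gives precisely the fusion rules \eqref{eq:fusion-rules-TY-vecK}. The remaining invariants are immediate: by Proposition \ref{prop:canonical-matched-pair}, $U(\cB) \simeq C_2 \bullet K$, which with trivial $\tridb$ and $\fizb(k,X) = \phi(k)$ is the semidirect product written $C_2 \rtimes_\phi K$; the pointed subring is spanned by the basic elements of Frobenius--Perron dimension $1$, namely the $g \bowtie k$, whose multiplication $(g\bowtie k)(g'\bowtie k') = g\varphi_k(g') \bowtie kk'$ is exactly that of the group ring $\mathsf{R}(\Gamma \rtimes_\varphi K)$. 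The main obstacle is the case-by-case verification that the matched pair structure contains no hidden compatibility between $\phi$ and $\varphi$, which requires running through Definition \ref{def:mathced-pair-fusion-rings}\ref{def:mathced-pair-fusion-rings-3} on all combinations of basic elements in $\{\Gamma \cup \{X\}\} \times \{\Gamma \cup \{X\}\}$ and in $K \times \{e, X\}$.
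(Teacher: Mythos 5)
Your proposal is correct and follows essentially the same route as the paper: invoke Theorem \ref{teo:char-exact-fact-fusion-ring} and the canonical matched pair, note that the matched pair of groups $(C_2,K,\tridb,\fizb)$ forces $\tridb$ trivial and $\fizb$ given by an order-at-most-two automorphism $\phi$, use degree-preservation (Definition \ref{def:mathced-pair-fusion-rings}\ref{def:mathced-pair-fusion-rings-2}) to get $k\trid X=X$, $k\trid g=\varphi_k(g)$, $k\fiz\sigma=\phi(k)$, and Definition \ref{def:mathced-pair-fusion-rings}\ref{def:mathced-pair-fusion-rings-3} to make each $\varphi_k$ a group automorphism, after which the fusion rules, $U(\cB)\simeq C_2\rtimes_\phi K$, and the pointed subring follow as in the paper. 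The only cosmetic difference is that you derive triviality of $\tridb$ from exactness of $U(\cB)=C_2\bullet K$, whereas it already follows from the fact that a group action on $C_2$ fixing the identity is trivial; you are also slightly more explicit than the paper about the converse verification and the absence of compatibility conditions between $\phi$ and $\varphi$.
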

  \begin{proof}
  The fusion ring of $\mathcal{B}$ is a bicrossed product of fusion rings by Theorem \ref{teo:char-exact-fact-fusion-ring}, that is,  $K_0(\mathcal{B}) = \mathsf{TY}(\Gamma)\bullet \mathsf{R}(K) \simeq \mathsf{TY}(\Gamma) \bowtie \mathsf{R}(K)$. 
  The universal grading groups of the relevant fusion rings (and fusion categories) are $U(\mathsf{TY}(\Gamma)) = U(\mathcal{TY}(\Gamma,\chi,\tau)) = C_2$, and $U(\mathsf{R}(K)) = U(\vect_K^\omega) = K.$
 We fix $\sigma\in C_2$ as the generator of the group. Specifically, the gradings are  $\degf{X} = \sigma$,  $\degf{g} = e$, and $\degf{k} = k$, for $g\in \Gamma, k\in K.$

 A matched pair of groups $(C_2, K, \tridb, \fizb)$ has necessarily $\tridb$ trivial, and the action $\fizb\colon K \times C_2 \to K$ is completely determined by an automorphism of groups $\phi\colon K\to K$ of order two, indeed $k \fizb \sigma = \phi(k)$, for $k\in K$. Thus $U(\cB) \simeq C_2 \rtimes_\phi K$.
 
Let us describe the $\mathbb{Z}$-linear actions $\trid\colon K \times \mathsf{TY}(\Gamma) \to \mathsf{TY}(\Gamma)$ and $\fiz\colon \mathsf{R}(K) \times C_2 \to \mathsf{R}(K)$. For $b\in\mathsf B(\mathsf{TY}(\Gamma))$, $k\in K$, from Definition \ref{def:mathced-pair-fusion-rings}\ref{def:mathced-pair-fusion-rings-2},  $|k\trid b|=k\tridb |b|=|b|$, then $k\trid X=X$ and $k\trid g=\varphi_k(g)$, for some $\varphi_k: \Gamma \to \Gamma$. From Definition \ref{def:mathced-pair-fusion-rings}\ref{def:mathced-pair-fusion-rings-3}, $\varphi_k$ is a group automorphism. From Definition \ref{def:mathced-pair-fusion-rings}\ref{def:mathced-pair-fusion-rings-2}, $|k\fiz \sigma|=|k|\fizb \sigma=k\fizb\sigma$, then $k\fiz\sigma=k\fizb \sigma$. From this, the fusion rules are clear and  $(K_0(\mathcal{B}))_{\pt} = \mathsf{R}( \Gamma\rtimes_\varphi K)$.
 \end{proof}
 
We characterize the categorical matched pairs between $\mathcal{TY}(\Gamma,\chi,\tau)$ and $\vect_K^\omega$ by lifting the matched pairs in Theorem \ref{teo:matched-pair-rings-TY-vecK}.

\begin{theorem}\label{teo:mp-fusion-TambaraY-pointed}
    Let $\mathcal{TY}(\Gamma,\chi,\tau)$ be a Tambara-Yamagami fusion category and $\vect_K^\omega$ be a pointed fusion category. Let $\phi$ and $\varphi$ be as in Theorem \ref{teo:matched-pair-rings-TY-vecK} characterizing a matched pair $(\mathsf{TY}(\Gamma) , \mathsf{R}(K), C_2, K, \tridb, \fizb, \trid, \fiz)$ between their corresponding fusion rings. Then a necessary condition for a lifting of the matched pair of the fusion ring to exist is that the bicharacter $\chi$ is invariant by $\varphi_k$ in each variable:
    \begin{align}\label{eq:invariant-chi-for-lifting}
        \chi(\varphi_k(g), g') = \chi(g, g') = \chi(g, \varphi_k(g')), && k\in K, g,g'\in\Gamma.    \end{align}
    When this happens, every lifting of the matched pair of fusion rings above is characterized by
    \begin{enumerate}[leftmargin=*,label=\rm{(\roman*)}]
    \item\label{item:cat-TY-pointed-1} a normalized 2-cocycle $\lambda_X\in H^2(K,\ku^\times)$,
    \item\label{item:cat-TY-pointed-2-prime} a character $\mu\colon K\to \ku^\times$,
    \item\label{item:cat-TY-pointed-2} a function $f\colon K \to \ku^\times$ such that 
    \begin{align}\label{eq:condition-for-map-f-in-TY-pointed}
        f(\phi(k)) = f(k) \frac{\mu(k)}{\mu(\phi(k))}, && f(k k') = f(k) f(k') \lambda_X(k,k') \lambda_X(\phi(k), \phi(k')), k\in K,
    \end{align}
    \item\label{item:cat-TY-pointed-3} a function $\beta\colon K\times K \to \ku^\times$ such that
    \begin{align*}
        \omega(k,k',k'') \beta(k,k' k'') \beta(k', k'') = \beta(k k', k'') \beta(k, k') \omega(\phi(k), \phi(k'), \phi(k'')),
    \end{align*}
    \item\label{item:cat-TY-pointed-4} a family of functions $\lambda_k\colon C_2\times C_2\to \ku^\times$ for $k\in K$ such that
    \begin{align}\label{eq:condition-for-map-R2-in-TY-pointed}
        \lambda_k(h h', h'') \lambda_k(h, h') = \lambda_k(h, h' h'') \lambda_{k\fizb h}(h',h''), \,\,h, h', h''\in C_2.
    \end{align}
    \end{enumerate}
     The maps $\gamma$, $\eta$, $\mathbf{L}^2$ and $\mathbf{R}^2$ that lift the matched pair in terms of these data are:
    \begin{align*}
        \gamma_{g,g'}^k &= \mu(k) \id_{\varphi_k(g g')}, & \gamma_{g,X}^k &= \mu(k)\id_{X}, & \gamma^k_{X,g} &= \mu(\phi(k)) \id_X \\
       \gamma_{X,X}^k &= f(k) \id_{X\ot X}, & (\mathbf{L}^2_{k,k'})_g &= \id_{\varphi_{k k'}(g)},  &(\mathbf{L}^2_{k,k'})_X &= \lambda_X(k,k') \id_{X}, \\
         (\mathbf{R}^2_{h,h'})_k &= \lambda_k(h,h')\id_{k\fiz h h'}, & \eta^\sigma_{k,k'} &= \beta(k,k') \id_{\phi(k k')}, &\eta^e_{k,k'} &= \id_{kk'},
    \end{align*} 
for all $g,g'\in\Gamma$, $k,k'\in K$, $h,h'\in C_2$. 
\end{theorem}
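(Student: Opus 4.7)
The plan is to dissect the lifting data (Definition~\ref{def:mathced-pair-categories}) into scalars and then translate each coherence diagram into a scalar equation. Since the simple objects of $\mathcal{TY}(\Gamma,\chi,\tau)$ are $\Gamma\cup\{X\}$, those of $\vect_K^\omega$ are $K$, and each simple object has endomorphism ring $\ku$, Schur's Lemma (applied summand-wise to $X\otimes X=\bigoplus_{g\in\Gamma} g$) forces every natural isomorphism in the lifting data to be a scalar on each isotypic component; moreover, the autoequivalences $k\trid(-)$ and $(-)\fiz h$ are prescribed on objects by Theorem~\ref{teo:matched-pair-rings-TY-vecK}. First I would parameterize the resulting scalars as $\mu,f,\beta,\lambda_X,\lambda$ matching the statement, absorbing $\gamma^k_0$ and $\eta^h_0$ into the unit normalizations \eqref{diagram:MP-13}--\eqref{diagram:MP-16}, \eqref{eq:T2-gamma-uno}, \eqref{diagram:MP-18} and invoking Lemma~\ref{diagrmas-e-Sonia}.

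Next, I would process the remaining coherences by case analysis on the homogeneous components of the arguments. The associativity-of-action axiom \eqref{action-eq1} gives that $\lambda_X$ and $\lambda_k$ are normalized $2$-cocycles (the latter being \eqref{eq:condition-for-map-R2-in-TY-pointed}). The diagram \eqref{eq:gamma-T2-diagram} on $(A,A')=(g,g')$ forces $\mu$ to be a character of $K$, and on $(X,X)$ it produces the multiplicative relation $f(kk')=f(k)f(k')\lambda_X(k,k')\lambda_X(\phi(k),\phi(k'))$. Combining \eqref{eq:eta-S2-diagram} with the compatibility \eqref{eq:eta-alpha-diagram} (whose nontrivial input is the associator $\omega$ of $\vect_K^\omega$) produces the $\omega$-twisted cocycle identity for $\beta$ in part~\ref{item:cat-TY-pointed-3}.

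The main obstacle is the compatibility \eqref{eq:gamma-alpha-diagram} of $\gamma^k$ with the Tambara-Yamagami associator. There are three nontrivial cases, corresponding to $\alpha_{g,X,g'}=\chi(g,g')\,\id_X$, $\alpha_{X,g,X}=\bigoplus_{g'}\chi(g,g')\id_{g'}$, and $\alpha_{X,X,X}=(\tau/\chi(g,h))_{g,h}$. Unwinding the case $(g,X,g')$ and cancelling the $\mu$-factors from $\gamma^k_{-,X}$ and $\gamma^k_{X,-}$ collapses the diagram to the scalar identity $\chi(\varphi_k(g),g')=\chi(g,g')$; together with the symmetry of $\chi$, this is the necessary invariance \eqref{eq:invariant-chi-for-lifting}. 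Under this invariance, the case $(X,g,X)$ extracts the remaining identity $f(\phi(k))=f(k)\mu(k)/\mu(\phi(k))$ in \eqref{eq:condition-for-map-f-in-TY-pointed}, and the case $(X,X,X)$, involving the Gauss-type matrix $(\tau/\chi(g,h))_{g,h}$, reduces under all previously derived relations to a tautology. I expect this step to be the most technical, due to the bookkeeping required across the $|\Gamma|^2$ summands of $X\otimes X\otimes X$ on which the diagram must match isotypically.

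For sufficiency, assuming $(\lambda_X,\mu,f,\beta,\lambda)$ satisfy \ref{item:cat-TY-pointed-1}--\ref{item:cat-TY-pointed-4} and that $\chi$ is $\varphi$-invariant, the explicit formulas in the statement define consistent natural isomorphisms; by the analysis above each coherence diagram has been reduced to one of the imposed conditions, so the formulas constitute a valid lifting and the classification is complete.
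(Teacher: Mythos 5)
Your overall strategy---reduce every structure map to scalars via Schur's lemma and translate each coherence diagram of Definition \ref{def:mathced-pair-categories} into scalar identities case by case---is exactly the paper's. However, two of your specific case attributions are wrong, and they are not cosmetic: as written, your plan would fail to establish both halves of the necessity claims. First, the case $(A,A',A'')=(g,X,g')$ of \eqref{eq:gamma-alpha-diagram} does not collapse to $\chi(\varphi_k(g),g')=\chi(g,g')$: the associator appearing on the lower path is $\alpha_{\varphi_k(g),\,X,\,\varphi_{\phi(k)}(g')}$, so after cancelling the $\mu$-factors you only obtain the joint invariance $\chi(\varphi_k(g),\varphi_{\phi(k)}(g'))=\chi(g,g')$, and the symmetry of $\chi$ does not let you separate the two variables from this single relation. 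You need the case $(X,g,X)$ (which yields invariance in the second variable) before the case $(g,X,g')$ can give invariance in the first, i.e.\ \eqref{eq:invariant-chi-for-lifting}. Second, the case $(X,g,X)$ cannot produce the relation $f(\phi(k))=f(k)\mu(k)/\mu(\phi(k))$: on both paths the only scalar of type $\gamma^{?}_{X,X}$ that occurs is attached to the same $k$ (it comes from $\gamma^k$ evaluated on objects isomorphic to $X\ot X$), so the $f$-factors cancel identically and no relation between $f(k)$ and $f(\phi(k))$ can emerge. That relation comes precisely from the case $(X,X,X)$, where the upper path carries $\gamma^{\phi(k)}_{X,X}$ while the lower carries $\gamma^{k}_{X,X}$; this case is therefore not the tautology you claim, and discarding it leaves the first equation of \eqref{eq:condition-for-map-f-in-TY-pointed} underived.

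A smaller but genuine omission: you parameterize the $\gamma$'s from the outset by functions of $k$ alone. A priori the scalars of $\gamma^k_{g,g'}$, $\gamma^k_{g,X}$, $\gamma^k_{X,g}$ and the $\Gamma$-indexed components of $\gamma^k_{X,X}$ may also depend on the group elements involved; a nontrivial portion of the argument (the cases $(X,X,g)$, $(g,X,X)$ and $(X,X,X)$ of \eqref{eq:gamma-alpha-diagram}) consists exactly of showing that this dependence disappears. Likewise the normalization $\lambda_g\equiv 1$ for the $\mathbf{L}^2$-scalars on invertible objects is not automatic from a cocycle condition alone: it follows from \eqref{eq:gamma-T2-diagram} applied to $(g,X)$, which exhibits $\lambda_g$ as the coboundary of $1/\mu$ and simultaneously forces $\mu$ to be a character after the gauge choice. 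Without these steps the classification you assert in the final paragraph is not justified.
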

 \begin{proof}

First, we check the conditions on $\chi$.  By Axiom \eqref{eq:gamma-alpha-diagram}, taking $A = X$, $A' = g$, and $A'' = X$, we get $\mu(\phi(k))\chi(g', g) = \chi(g', \varphi_{\phi(k)}(g)) \mu(\phi(k))$, for all $k\in K, g, g'\in \Gamma$.
Hence, $\chi$ is invariant under $\varphi_k$ in the second variable. Now, by taking $A = g$, $A' = X$, and $A'' = g'$, we get 
$   \chi(\varphi_k(g), \varphi_{\phi(k)}(g')) = \chi(g, g')$, for all $k\in K, g, g'\in \Gamma$, which means $\chi$ is also invariant under $\varphi_k$ in the first variable since $\chi$ is invariant under $\varphi_k$ in the second variable.

In order to lift the group actions $\trid$ and $\fiz$, notice that the corresponding maps are characterized by scalars $(\mathbf{L}^2_{k,k'})_A = \lambda_A(k,k') \id_{(k k')\trid A}$, for $k,k'\in K$, $A\in\Irr(\mathcal{TY}(\Gamma,\chi,\tau))$ (respectively,  $(\mathbf{R}^2_{h,h'})_k = \lambda_k(h,h') \id_{k\fiz (h h')}$, for $h,h'\in C_2$, $k\in\Irr(\vect_K^\omega) = K$ ). The family $\lambda_k$, for $k\in K$, satisfies Equation \eqref{eq:condition-for-map-R2-in-TY-pointed} due to Diagram \eqref{action-eq2}. The scalars $\lambda_A$, for $A\in\Irr(\mathcal{TY}(\Gamma,\chi,\tau))$, are 2-cocyles of $K$ by Diagram \eqref{action-eq1} and because $\fizb$ is trivial.  We will show later that we can choose $\lambda_g = 1$, for all $g\in\Gamma$, so only $\lambda_X$ remains non-trivial, corresponding to item \ref{item:cat-TY-pointed-1}. Since the action $\tridb$ is trivial, a lifting of $\fiz$ gives a $C_2$-action by tensor autoequivalences of $\vect_K^\omega$, with tensor structure $\vartheta^h_{k,k'} = (\eta^h_{k,k'})^{-1}$ as in Subsection \ref{subsec: cat action}. By \cite[\S 2.6]{EGNO-book}, the action $\fiz$ is characterized by a function $\beta\colon K\times K \to \ku^\times$ as in item \ref{item:cat-TY-pointed-3}. The possible maps $\gamma$ are given by  
 {\small \begin{align*}
     \gamma_{g,g'}^k &= \widetilde{\mu}(k,g,g') \id_{\varphi_k(gg')}, & \gamma_{X, g}^k &= \theta(k,g) \id_X, & \gamma_{g, X}^k &= \widetilde{\theta}(k,g) \id_X,   & \gamma_{X,X}^k = \bigoplus_{g\in\Gamma} \widetilde{f}(k,g) \id_g,
 \end{align*}} 
 
 \noindent for $g,g'\in\Gamma$, and $k\in K$, and functions $\widetilde{\mu}\colon K\times  \Gamma\times \Gamma \to \ku^\times$, $\theta\colon K \times \Gamma \to \ku^\times$,  $\widetilde{\theta}\colon K \times \Gamma \to \ku^\times$, and $\widetilde{f}\colon K\times \Gamma \to \ku^\times$.  
 By Axiom \eqref{eq:gamma-alpha-diagram}, taking $A = X$, $A' = X$, and $A'' = g$, we get $\theta(\phi(k), g) = \widetilde{\mu}(k,g',g)$, for all $g, g'\in \Gamma$, $k\in K$. So $\widetilde{\mu}(k,g', g)$ does not depends on $g'$. That is,  $\widetilde{\mu}(k,g',g) = \widetilde{\mu}(k,e,g) = \theta(\phi(k),g)$,  for all $k\in K, g\in \Gamma$. By taking $A = g$, $A' = X$, and $A'' = X$, we get $\widetilde{\theta}(k, g') = \widetilde{\mu}(k,g',g)$, for all $g, g'\in \Gamma$, $k\in K$. So $\widetilde{\mu}(k,g', g)$ does not depends on $g$. In particular,  $\widetilde{\mu}(k,g',g) = \widetilde{\mu}(k,g',e)$ for all $k\in K$ and $g'\in \Gamma$. Let $\mu\colon K\to \ku^\times$ defined as $\mu(k) = \widetilde{\mu}(k,e,e)$. Therefore $ \widetilde{\mu}(k,g,g') = \mu(k)$,  $\theta(k,g) = \mu(\phi(k))$, $\widetilde{\theta}(k,g) = \mu(k)$,  for all $k\in K, g,g'\in\Gamma$.  By Axiom \eqref{eq:gamma-T2-diagram}, taking $A = g$, $A' = X$, we get that 
\begin{align}\label{eq:thetatilde-mult-k}
\mu(kk') = \mu(k) \mu(k')\lambda_g(k,k'),
\end{align}
which means that $\lambda_g = d(1/\mu)$. Then, $\lambda_g$ is cohomologically trivial, for all $g\in\Gamma$, and we can choose $\lambda_g(k,k') = 1$, for all $g\in \Gamma$, and $\mu$ a character of $K$, by Equation \eqref{eq:thetatilde-mult-k}.

By Axiom \eqref{eq:gamma-alpha-diagram}, taking $A=A'=A''=X$, we get $\widetilde{f}(k,g) \mu(k) = \widetilde{f}(\phi(k),g')\mu(\phi(k))$, for all $k\in K$, $g,g'\in \Gamma$.
In particular,
\begin{align*}
    \widetilde{f}(k,g) = \frac{\widetilde{f}(\phi(k),e) \mu(\phi(k))}{\mu(k)} = \frac{\widetilde{f}(k,e) \mu(k) \mu(\phi(k))}{\mu(k) \mu(\phi(k))} = \widetilde{f}(k,e),
\end{align*}
for all $k\in K$, $g\in \Gamma$. Hence, $\widetilde{f}$ does not depend of $g\in\Gamma$. Let $f\colon K\to \ku^\times$ defined as $f(k) = \widetilde{f}(k,e)$. Then $\widetilde{f}(k,g) = f(k)$, for all $g\in\Gamma$. 
By Axiom \eqref{eq:gamma-alpha-diagram}, taking $A=A'=A''=X$, and by Axiom \eqref{eq:gamma-T2-diagram}, taking $A = X$, $A' = X$, we get that $f$ satisfies Equations \eqref{eq:condition-for-map-f-in-TY-pointed}. 

The commutativity of all other diagrams follows from the conditions that we already obtained.
 \end{proof} 
 We can explicitly describe the associators in terms of the data characterized before for the fusion ring $\mathsf{TY}(\Gamma) \bowtie \mathsf{R}(K)$ with fusion rules \eqref{eq:fusion-rules-TY-vecK}.
 Indeed,
\begin{align*}
    \alpha_{g\bowtie k, g'\bowtie k', g''\bowtie k''} & =  \mu^{-1}(k)\omega(k,k',k'')\lambda_k(e,e) \id_{g\varphi_k( g^\prime)\varphi_{kk'}(g'')}\bowtie\id_{kk^\prime k^{\prime\prime}},\\
\alpha_{g\bowtie k, g'\bowtie k', X\bowtie k''} & = \mu^{-1}(k)\lambda_X^{-1}(k,k')\omega(\phi(k), \phi(k'), k'')\beta(k,k')\lambda_k(e,\sigma)\id_X\bowtie\id_{\phi(k)\phi(k')k''},\\
\alpha_{g\bowtie k, X\bowtie k', g'\bowtie k''} & = \mu^{-1}(\phi(k))\omega(\phi(k), k', k'')\chi(g,\varphi_{\phi(k)k'}(g')) \lambda_k(\sigma,e) \id_X\bowtie \id_{\phi(k)k'k''},\\
\alpha_{X\bowtie k, g\bowtie k', g'\bowtie k''} & =\omega(k, k', k'')\mu^{-1}(k)\lambda_k(e,e)\id_X\bowtie \id_{kk'k''},\\
\alpha_{g\bowtie k, X\bowtie k', X\bowtie k''} &= \omega(k, \phi(k'), k''){f(k)}^{-1}\beta(\phi(k), k')\lambda^{-1}_X(\phi(k),k')\lambda_k(\sigma,\sigma)\id_{X\otimes X}\bowtie  \id_{k\phi(k')k''},\\
\alpha_{X\bowtie k, X\bowtie k', g\bowtie k''} &= \omega(\phi(k), k', k'')\mu^{-1}(\phi(k))\lambda_k(\sigma,e) \id_{X \otimes  X} \bowtie \id_{\phi(k)k'k''},\\
\alpha_{X\bowtie k, g\bowtie k', X\bowtie k''} &= \bigoplus_{g'\in\Gamma}\omega(\phi(k), \phi(k'), k'')\mu^{-1}(k)\lambda_X^{-1}(k,k')\chi(\varphi_k(g),g')\beta(k,k')\lambda_k(e,\sigma)\\
&\id_{X \otimes X}\ot \id_{g'}\bowtie  \id_{\phi(k)\phi(k')k''},
\end{align*}
    \begin{align*}
\alpha_{X\bowtie k, X\bowtie k', X\bowtie k''} & \colon \oplus_{g\in \Gamma}  X\bowtie k\phi(k')k''\to \oplus_{g\in \Gamma} X\bowtie k\phi(k')k'' \\
& = \left(\frac{\tau \omega(k, \phi(k'), k'')\lambda_X^{-1}(\phi(k), k')f(k)^{-1}\beta(\phi(k), k')\lambda_k(\sigma,\sigma)}{\chi(g,h)}\right)_{g,h}.
\end{align*}

\subsection{Bicrossed products between two Tambara-Yamagami fusion categories}
Let $\mathcal {TY}(H, \chi, \tau)$ and $\mathcal{TY}(K, \zeta, \upsilon)$ be Tambara-Yamagami fusion categories, with Grothendieck rings $K_0(\mathcal{TY}(H, \chi, \tau))$ $=\mathsf{TY}(H)$ and $K_0(\mathcal{TY}(K, \zeta, \upsilon))=\mathsf{TY}(K)$, respectively. In this subsection, we study $\mathcal{B}=\mathcal{TY}(H, \chi, \tau)\bullet \mathcal{TY}(K, \zeta, \upsilon)$.

\begin{theorem}\label{teo:matched-pair-rings-TY-TY}
  The fusion ring of $\mathcal{B}$ is completely characterized by the group automorphisms $\varphi: H \to H$ and $\psi: K \to K$ of order at most two.
  This fusion ring is a bicrossed product of fusion rings $\mathsf{TY}(H) \bowtie \mathsf{TY}(K)$ given by a matched pair $(\mathsf{TY}(H) , \mathsf{TY}(K), C_2, C_2, \tridb, \fizb, \trid, \fiz)$ such that 
  \begin{itemize}
  	\item the action $\tridb$  and $\fizb$  are trivial, and 
        \item the action $\trid$ fixes the non-invertible element $X$ and acts on $H$ by $\varphi$, and the action $\fiz$ fixes the non-invertible element $Y$ and acts on $K$ by $\psi$.
  \end{itemize} The basic elements of $K_0(\mathcal{B})$ are $h\bowtie k$, $X\bowtie k$, $h\bowtie Y$, and $X\bowtie Y$, for $k\in K$, $h\in H$. The fusion rules are
\begin{align}\label{eq:fusion-rules-TY-TY}
\begin{aligned}
        (h\bowtie k)(h'\bowtie k')&= h h' \bowtie kk', & (h\bowtie k)(X\bowtie k')&= X \bowtie \psi(k) k',\\
    (X\bowtie k)(h\bowtie k')&= X  \bowtie kk',& (h\bowtie k)(h'\bowtie Y)&= hh' \bowtie Y,
\end{aligned}
\end{align}
\begin{align*}
    (h\bowtie Y)(h'\bowtie k)&= h\varphi(h')\bowtie Y, & (X\bowtie k)(X\bowtie k')&= \sum_{h\in H} h \bowtie \psi(k)k', \\
     (h\bowtie Y)(h'\bowtie Y)&=\sum_{k\in K} h\varphi(h') \bowtie  k, & (X\bowtie Y)(X\bowtie Y)&= \sum_{k\in K, h\in H} h \bowtie k,
\end{align*}
\begin{align*}
 (X\bowtie Y)(X\bowtie k) &= \sum_{h\in H}h \bowtie Y  =(X\bowtie k)(X\bowtie Y), \\
 (X\bowtie Y)(h\bowtie Y) &= \sum_{k\in K} X \bowtie k  = (h\bowtie Y)(X\bowtie Y), \\
 (X\bowtie Y)(h\bowtie k)&= X \bowtie Y = (h\bowtie k)(X\bowtie Y),\\
 (X\bowtie k)(h\bowtie Y)&= X \bowtie Y = (h\bowtie Y)(X\bowtie k),
\end{align*} for all $k, k'\in K$, $h, h'\in H$. The universal grading group is $U(\cB) \simeq C_2\times C_2$ and the pointed subring is $(K_0(\cB))_{\pt} = \mathsf{R}(H\times K)$.
  \end{theorem}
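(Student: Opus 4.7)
The plan is to apply Theorem \ref{teo:char-exact-fact-fusion-ring} which gives $K_0(\mathcal B) \simeq \mathsf{TY}(H)\bowtie \mathsf{TY}(K)$ for some matched pair $(\mathsf{TY}(H),\mathsf{TY}(K),U(\mathsf{TY}(H)),U(\mathsf{TY}(K)),\tridb,\fizb,\trid,\fiz)$, and then to classify all such matched pairs. Recall that $U(\mathsf{TY}(H)) = U(\mathsf{TY}(K)) = C_2$, with the generator $\sigma$ carrying $X$ (respectively $Y$) and $e$ carrying the group elements. Since $\operatorname{Aut}(C_2)$ is trivial, every action of $C_2$ on $C_2$ by group automorphisms is trivial, so the matched pair of groups $(C_2, C_2, \tridb, \fizb)$ is forced to have $\tridb$ and $\fizb$ both trivial. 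In particular $U(\mathcal B) \simeq C_2 \bullet C_2 = C_2 \times C_2$ by Proposition \ref{prop:canonical-matched-pair}.

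Next I would determine the $\mathbb{Z}$-linear actions $\trid\colon C_2\times \mathsf{TY}(H)\to \mathsf{TY}(H)$ and $\fiz\colon \mathsf{TY}(K)\times C_2\to \mathsf{TY}(K)$. By Definition \ref{def:mathced-pair-fusion-rings}\ref{def:mathced-pair-fusion-rings-2}, these must preserve the grading: since $\sigma\tridb h = h$ for every $h\in U(\mathsf{TY}(H))$, $\sigma\trid$ must send basis elements to basis elements of the same degree. Thus $\sigma\trid X = X$ (the unique basis element of degree $\sigma$) and $\sigma\trid$ restricts to a bijection $\varphi\colon H\to H$. Condition \ref{def:mathced-pair-fusion-rings-3} applied to $a=h,a'=h'\in H$ (both of degree $e$, so that $\sigma\fizb e=\sigma$) gives $\varphi(hh') = \varphi(h)\varphi(h')$, and the fact that $\trid$ is a $C_2$-action forces $\varphi^2 = \operatorname{id}_H$; condition \ref{def:mathced-pair-fusion-rings-4} takes care of $1\in H$. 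A symmetric argument produces an automorphism $\psi\colon K\to K$ of order at most two with $Y\fiz\sigma = Y$ and $k\fiz\sigma = \psi(k)$. Conversely, any such $(\varphi,\psi)$ clearly defines a valid matched pair of fusion rings, so the classification is complete.

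The remaining work is routine bookkeeping with the bicrossed product formula from Definition \ref{def:bicrossed-fusion-rings}: $(a\bowtie c)(a'\bowtie c') = a(\degf{c}\trid a')\bowtie (c\fiz\degf{a'})c'$. I would systematically evaluate this on all pairs of basis elements $\{h,X\}\times\{k,Y\}$, using the $\mathsf{TY}$ multiplication rules $hX=Xh=X$, $X^2 = \sum_{h\in H} h$ (and analogously for $Y$) together with the formulas $\sigma\trid X=X$, $\sigma\trid h=\varphi(h)$, $Y\fiz\sigma=Y$, $k\fiz\sigma=\psi(k)$, and $e\trid=\operatorname{id}$, $-\fiz e=\operatorname{id}$. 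The fusion rules in \eqref{eq:fusion-rules-TY-TY} then drop out after simplification. For the pointed subring, the basis elements of Frobenius--Perron dimension one are exactly $\{h\bowtie k:h\in H,k\in K\}$, and the fusion rule $(h\bowtie k)(h'\bowtie k') = hh'\bowtie kk'$ identifies this subring with $\mathsf R(H\times K)$.

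I do not expect any significant obstacle. The only subtle point is verifying the constraints on the actions from Definition \ref{def:mathced-pair-fusion-rings} completely determine the possibilities; once $\operatorname{Aut}(C_2)$ being trivial is noted, everything else is a direct consequence of basic properties of $C_2$-actions on groups by automorphisms. The fusion rule computations, while lengthy, are entirely mechanical.
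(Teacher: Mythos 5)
Your proposal is correct and follows essentially the same route as the paper: invoke Theorem \ref{teo:char-exact-fact-fusion-ring} with the canonical matched pair on the universal grading groups $U(\mathsf{TY}(H))=U(\mathsf{TY}(K))=C_2$, force $\tridb$ and $\fizb$ to be trivial, deduce $\sigma\trid X=X$, $\sigma\trid h=\varphi(h)$, $Y\fiz\sigma=Y$, $k\fiz\sigma=\psi(k)$ with $\varphi,\psi$ group automorphisms of order at most two from Definition \ref{def:mathced-pair-fusion-rings}, and read off the fusion rules from Definition \ref{def:bicrossed-fusion-rings}. One small caveat: the actions in a matched pair of groups are not by group automorphisms in general, so citing the triviality of $\operatorname{Aut}(C_2)$ is not quite the right justification for the triviality of $\tridb,\fizb$; the correct (and equally quick) argument is that $k\tridb e=e$ and $e\fizb h=e$, so each induced permutation of the two-element set $C_2$ fixes $e$ and hence is the identity (equivalently, as in the paper, any exact factorization $C_2\bullet C_2$ is $C_2\times C_2$, forcing both actions to be trivial).
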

\begin{proof}
The universal grading groups of the relevant fusion rings are $U(\mathsf{TY}(H)) =  U(\mathsf{TY}(K))$ $ = C_2.$
 We fix $\sigma\in C_2$ as the generator of the group. Specifically,  the gradings are
 $\degf{X} = \degf{Y} = \sigma$, and $\degf{h} = \degf{k} = e$,  for all $h\in H, k\in K$. It is clear that any matched pair $(C_2, C_2, \tridb, \fizb)$ gives rise to the exact factorization $C_2\bullet C_2\simeq C_2\times C_2$. Hence, the actions $\tridb \colon C_2\times C_2 \to C_2$ and $\fizb \colon C_2\times C_2 \to C_2$ must be trivial. 
 
 To describe the $\mathbb{Z}$-linear actions $\trid\colon C_2 \times \mathsf{TY}(H) \to \mathsf{TY}(H)$ and $\fiz\colon  \mathsf{TY}(K)\times C_2 \to \mathsf{TY}(K)$, notice that by Definition \ref{def:mathced-pair-fusion-rings}\ref{def:mathced-pair-fusion-rings-2}, $|\sigma\trid b|=\sigma\tridb |b|=|b|$, for $b\in\mathsf B(\mathsf{TY}(H))$ and for $h\in H$, and then $\sigma\trid X=X$ and $\sigma\trid h=\varphi(h)$, with $\varphi: H\to H$. It follows from Definition \ref{def:mathced-pair-fusion-rings}\ref{def:mathced-pair-fusion-rings-3} that $\varphi$ is a group automorphism of order at most two. Similarly, $Y\fiz \sigma =Y$ and $k\fiz \sigma =\psi(k)$, for $k\in H$, where $\psi: K\to K$ is a group automorphism of order at most two. The fusion rules are clear and $(K_0(\mathcal{B}))_{\pt} = \mathsf{R}(H\times K)$.
\end{proof}

We construct a categorical matched pair between $\mathcal{TY}(H,\chi,\tau)$ and $\mathcal{TY}(K,\zeta,\upsilon)$  by lifting the matched pair at the level of fusion rings in Theorem \ref{teo:matched-pair-rings-TY-TY}.

\begin{theorem}\label{teo:mp-fusion-TY-TY}
    Let $\mathcal{TY}(H,\chi,\tau)$ and $\mathcal{TY}(K,\zeta,\upsilon)$ be  Tambara-Yamagami fusion categories. Let $\psi$ and $\varphi$ as in Theorem \ref{teo:matched-pair-rings-TY-TY} characterizing a matched pair $(\mathsf{TY}(H), \mathsf{TY}(K),$ $ C_2, C_2, \tridb, \fizb, \trid, \fiz)$ between their corresponding fusion rings. Then a lifting exists if and only if $\chi$ and $\zeta$ are invariant in each variable by $\varphi$ and $\psi$, respectively. 
    In that case, every lifting of the matched pair of fusion rings above is characterized by a pair $(\theta_\ell, \theta_r)$ of characters of $C_2$.  In particular, $\theta_\ell$ and $\theta_r$ are either trivial or $\sgn\colon C_2 \to \ku^\times$.
    
     The maps $\gamma$, $\eta$, $\mathbf{L}^2$, and $\mathbf{R}^2$ that lift the matched pair in terms of this data are 
    \begin{align*}
          &  \gamma_{X,X}^g = \theta_\ell(g) \id_{X\ot X}, &  \eta_{Y,Y}^g = \theta_r(g) \id_{Y\ot Y},
    \end{align*} 
for all  $g\in C_2$, and identity in all the other cases. 
\end{theorem}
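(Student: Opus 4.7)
The proof follows the strategy used for Theorem \ref{teo:mp-fusion-TambaraY-pointed}, and is simplified by the fact that both actions $\tridb$ and $\fizb$ in Theorem \ref{teo:matched-pair-rings-TY-TY} are trivial. As a result every diagram of Definition \ref{def:mathced-pair-categories} decouples into one involving only $\gamma$ and $\mathbf{L}^2$ (depending on the $C_2$-action on $\mathcal{TY}(H,\chi,\tau)$) and a symmetric one involving only $\eta$ and $\mathbf{R}^2$ (depending on the $C_2$-action on $\mathcal{TY}(K,\zeta,\upsilon)$). By this symmetry, I focus on the $\trid$-side.

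Because $\mathcal{TY}(H,\chi,\tau)$ is multiplicity-free, for the nontrivial element $\sigma\in C_2$ I write the components of $\gamma^\sigma$ as scalar multiples of identity: $\gamma^\sigma_{h,h'} = a(h,h')\id$, $\gamma^\sigma_{h,X} = b(h)\id_X$, $\gamma^\sigma_{X,h} = c(h)\id_X$, and $\gamma^\sigma_{X,X}$ is $f(h)\id_h$ on the $h$-th summand of $X \otimes X = \bigoplus_h h$, for scalar-valued functions $a, b, c, f$; similarly $\mathbf{L}^2_{\sigma,\sigma}$ is parametrized summand-wise. The data for $k = e$ is forced to be identity by Lemma \ref{diagrmas-e-Sonia}. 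Applying Axiom \eqref{eq:gamma-alpha-diagram} with $A = X$, $A' = h$, $A'' = X$, and using the TY associators $\alpha_{X,h,X} = \bigoplus_{h'} \chi(h,h')\id_{h'}$ on one side and $\alpha_{X,\varphi(h),X} = \bigoplus_{h'} \chi(\varphi(h),h')\id_{h'}$ on the other, the scalars $b, c, f$ cancel on the $h'$-summand and one obtains $\chi(h, h') = \chi(\varphi(h), h')$, which by the symmetry of $\chi$ is the required $\varphi$-invariance in each variable. The analogous argument for \eqref{eq:eta-alpha-diagram} establishes the $\psi$-invariance of $\zeta$, yielding the necessity of the hypotheses on $\chi$ and $\zeta$.

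Further instances of \eqref{eq:gamma-alpha-diagram} with three elements of $H$ impose a 2-cocycle relation on $a$, while the mixed choices $(h,h',X)$ and $(X,h,h')$ force $a = db = dc$ as 2-cocycles on $H$. Invoking the coboundary freedom available in the choice of $\gamma^\sigma$ (parallel to the normalization $\lambda_g \equiv 1$ in the proof of Theorem \ref{teo:mp-fusion-TambaraY-pointed}), I may normalize $a \equiv b \equiv c \equiv 1$.

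With these normalizations in place, taking $A = X$, $A' = X$, $A'' = h$ in \eqref{eq:gamma-alpha-diagram} collapses (using the $\chi$-invariance already obtained) to the identity $f(h'h) = f(h')$ for all $h, h' \in H$; hence $f$ is constant, and I write its value as $\theta_\ell(\sigma) \in \ku^\times$. A final application of \eqref{eq:gamma-T2-diagram} with $A = A' = X$, together with the vanishing of $H^2(C_2, \ku^\times)$ which trivializes $\mathbf{L}^2_{\sigma,\sigma}$, forces $\theta_\ell(\sigma)^2 = 1$, so $\theta_\ell$ extends to a character of $C_2$. Symmetric analysis on the $\eta$-side produces $\theta_r$. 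Sufficiency is verified by direct computation that the data in the statement satisfies all diagrams of Definition \ref{def:mathced-pair-categories}. The main obstacle is the bookkeeping in the last step: $\gamma^\sigma_{X,X}$ a priori carries $|H|$ independent scalars, and \eqref{eq:gamma-alpha-diagram} couples them nontrivially through the associator of $\mathcal{TY}(H,\chi,\tau)$; only after invoking the $\chi$-invariance and the prior normalizations do the equations decouple into the clean statement that $f$ is constant.
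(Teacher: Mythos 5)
Your overall strategy is the same as the paper's: decouple the axioms into the $(\gamma,\mathbf{L}^2)$ side and the $(\eta,\mathbf{R}^2)$ side using the triviality of $\tridb$ and $\fizb$, parametrize the components of $\gamma^\sigma$ by scalars, extract relations from instances of Axiom \eqref{eq:gamma-alpha-diagram}, kill $\mathbf{L}^2_{\sigma,\sigma}$ via $H^2(C_2,\ku^\times)=0$, and get the character condition $\theta_\ell(\sigma)^2=1$ from \eqref{eq:gamma-T2-diagram}. The genuine gap is the normalization step ``I may normalize $a\equiv b\equiv c\equiv 1$.'' From the instances $(h,h',X)$ and $(X,h,h')$ you only obtain $a=db$ and $a=dc$, hence that $b/c$ is a character of $H$. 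But the gauge freedom you invoke --- replacing the lifting by an equivalent one by modifying $\mathbf{L}_\sigma$ through a natural isomorphism with scalars $u(S)$ on simples --- changes $a$ by the coboundary $du$ and multiplies $b$ and $c$ by the \emph{same} function $u(h)$, so the ratio $b/c$ is gauge-invariant. Consequently your two relations plus coboundary freedom do not allow setting $b$ and $c$ to $1$ simultaneously; a priori a lifting could carry a nontrivial character $b/c$, and your next step (deducing that $f$ is constant from the $(X,X,h)$ instance, which uses $a=c=1$) then fails. The paper closes exactly this hole with the additional instances $(X,X,h)$ and $(h,X,X)$ of \eqref{eq:gamma-alpha-diagram}: these identify $\mu_g(h,h')$ with $\rho(g,h)$ and $\widetilde\rho(g,h)$, i.e.\ they force $a$, $b$, $c$ to coincide and $a$ to be independent of its second argument; combined with the $2$-cocycle identity for $a$ from $(h,h',h'')$ this makes them constant, and only then does the unit normalization give $a=b=c=1$ (the paper then gets constancy of $f$ from $(X,X,X)$ rather than $(X,X,h)$, a harmless variation). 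You need either these extra relations or a precisely stated equivalence relation on liftings under which your simultaneous normalization is legitimate; as written it is not justified.

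Two smaller points. First, Lemma \ref{diagrmas-e-Sonia} does not by itself force the $e$-components to be identities; it ties $\gamma^e$ to $\mathbf{L}^0$, so you are implicitly normalizing $\mathbf{L}^0$ (and $\mathbf{R}^0$) to the identity --- harmless, but it should be said, since it is part of the same gauge bookkeeping as above. Second, in your necessity argument the claim that ``the scalars $b,c,f$ cancel'' in the $(X,h,X)$ instance is not automatic: $\gamma^\sigma_{X,h}$ and $\gamma^\sigma_{h,X}$ enter on opposite sides of that hexagon, so their cancellation amounts to $b=c$, which again comes from the instances you skipped (the paper's exposition is similarly brisk here, but since you rely on this single instance together with the symmetry of $\chi$ for the whole necessity direction, the bookkeeping deserves an explicit verification).
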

\begin{proof}
    First, we show that the conditions on $\chi$ and $\zeta$ are necessary. Indeed, assume a lifting exists. Then, by Axiom \eqref{eq:gamma-alpha-diagram}, taking $A = h$, $A' = X$, and $A'' = h'$, we get $\chi(h, h') = \chi(s\trid h, s\trid h')$,  for all $h, h'\in H, s\in C_2$.
Hence $\chi$ is invariant under $\varphi$ in both variables simultaneously. By taking $A = X$, $A' = h$, and $A'' = X$ in the same diagram, we get $\chi(h',h)=\chi( h', g\trid h)$,  for all $g\in C_2$.
Henceforth, $\chi$ is invariant under $\varphi$ in both variables separately. A similar argument can be made for $\zeta$. 

To show that this condition on $\chi$ and $\zeta$ is sufficient for the existence of a lifting, we will construct $\gamma$, $\eta$, $\bL^2$, and $\bR^2$. First, we lift the group actions $\trid$ and $\fiz$. In fact, the corresponding maps are characterized by scalars $(\mathbf{L}^2_{g,g'})_A = \lambda_A(g,g') \id_{(gg')\trid A}$, for $g,g'\in C_2$, $A\in\Irr(\mathcal{TY}(H,\chi,\tau))$ (respectively  
 $(\mathbf{R}^2_{g,g'})_C = \lambda_C(g,g') \id_{C\fiz (g g')}$, for $g,g'\in C_2$, $C\in\Irr(\mathcal{TY}(K,\zeta,\upsilon))$ ). These scalars are 2-cocycles on $C_2$ because the actions $\tridb$ and $\fizb$ are trivial and by Equations \eqref{action-eq1} and \eqref{action-eq2}. Since $C_2$ has trivial second cohomology group, we can assume that $\lambda_A(g,g') = 1=\lambda_C(g,g')$, for all  $A\in\Irr(\mathcal{TY}(H,\chi,\tau))$, $C\in\Irr(\mathcal{TY}(K,\zeta,\upsilon))$, $g,g'\in C_2$. Since the actions $\tridb$ and $\fizb$ are trivial, then a lifting of $\fiz$ and $\trid$ gives a $C_2$-action by tensor autoequivalences of $\mathcal{TY}(K,\zeta,\upsilon)$ and $\mathcal{TY}(K,\zeta,\upsilon)$, respectively. 

Now, we consider the maps $\gamma$. For each $g\in C_2$, let $\mu_g\colon H\times H \to \ku^\times$, $\rho\colon C_2 \times H \to \ku^\times$,  $\widetilde{\rho}\colon K \times C_2 \to \ku^\times$ and $f\colon C_2 \times H\to \ku^\times$ such that
 \begin{align*}
     \gamma_{h,h'}^g &= \mu_g(h,h') \id_{g\trid (hh')}, & \gamma_{X, h}^g &=  \rho(g,h)\id_X,\\
     \gamma_{h, X}^g &=  \widetilde{\rho}(g,h)\id_X, & \gamma_{X, X}^g&= \sum_{h\in H} f(g,h) \id_h,     
 \end{align*}
for $h,h'\in H, g\in C_2$.
By Axiom \eqref{eq:gamma-alpha-diagram}, taking $A = h$, $A' = h'$, and $A'' = h''$, we get that 
\begin{align*}
		\mu_g(h',h'')\mu_g(h, h' h'')=\mu_g(h,h')\mu_g(h h', h''), \,\,\,g\in C_2,
\end{align*} which means that $\mu_g$ is a 2-cocycle in $H$, for each $g\in C_2$. By Axiom \eqref{eq:gamma-alpha-diagram}, 
 taking $A = X$, $A' = X$, and $A'' = h$, we get that $\mu_g (h,h')=\rho(g,h)$, for all $g\in C_2, h'\in H$. In particular, $ \mu_g (h,h')=\rho(g,h) = \mu_g (h,e),$
 for all $g\in C_2$ and $h,h'\in H$. Since we can choose $\mu_g$ as a normalized 2-cocycle  in $H$, then $\mu_g=1$ for all $g\in C_2$. Then,  $\rho(g,h)=1$, for all $g\in C_2$, $h, h'\in H$.  By a similar argument for $A = h$, $A' = X$, and $A'' = X$, we get that $\widetilde{\rho}(g,h)=\mu_g(h,h')=1$, for all $g\in C_2, h'\in H$.

Let us analyze $f$. By Axiom \eqref{eq:gamma-alpha-diagram}, with $A=X$, $A'=X$ and $A''=X$, we get that $f(g,h)=f(g,h')$, for all $g\in C_2, h, h'\in H$. Then, $f$ does not depends on $H$, and we can write $f(g,h)=\theta_\ell(g)$, for some $\theta_\ell: C_2\to \ku^\times$. By Axiom \eqref{eq:gamma-T2-diagram}, with $A=X$ and $A'=X$, we get that $\theta_\ell(gg')=\theta_\ell(g)\theta_\ell(g')$, that is,  $\theta_\ell$ is a character of $C_2$. 

The conditions we already have imply that all the other diagrams for $\gamma$ commute. Similar arguments give the formulas for $\eta$, and the only nontrivial map is $\eta_{Y,Y}^g$, which is completely determined by a character $\theta_r\colon C_2 \to \ku^\times$.  
\end{proof}

 We can explicitly describe the associators in terms of the data for the fusion ring $\mathsf{TY}(H) \bowtie \mathsf{TY}(K)$ with the fusion rules given by \eqref{eq:fusion-rules-TY-TY}. The non-identity associativity natural isomorphisms are:
 {\footnotesize
 \begin{align*}
     \alpha_{h\bowtie Y, h'\bowtie Y, X\bowtie k} &= \theta_r(\sigma) \id_{X\bowtie (Y\ot Y)}, & \alpha_{h\bowtie Y, X\bowtie Y, X\bowtie k} &= \theta_\ell(\sigma) \id_{(X\ot X)\bowtie Y},\\
    \alpha_{h\bowtie Y, X\bowtie Y, X\bowtie k} &= \theta_r(\sigma)\theta_\ell(\sigma) \id_{(X\ot X)\bowtie (Y\ot Y)}, & \alpha_{h\bowtie k, X\bowtie k' , h'\bowtie k''} &= \chi(h,h') \id_{X\bowtie \psi(k) k' k''},\\
    \alpha_{h\bowtie k, X\bowtie k', h'\bowtie Y} &= \chi(h,h')\id_{X\bowtie Y}, & \alpha_{h\bowtie Y, X\bowtie k, h'\bowtie k'} &= \chi(h,h') \id_{X\bowtie Y},\\
    \alpha_{h\bowtie Y, X\bowtie Y, h'\bowtie k} &= \chi(h,h') \id_{X\bowtie (Y\ot Y)}, & \alpha_{h\bowtie k, X\bowtie Y, h'\bowtie Y} &= \chi(h,h') \id_{X\bowtie (Y\ot Y)},
    \end{align*}
    \begin{align*}
    \alpha_{h\bowtie k, h'\bowtie Y, h''\bowtie k'} &= \zeta(k,k') \id_{h h' \varphi(h'') \bowtie Y}, & \alpha_{h\bowtie k, h'\bowtie Y, X\bowtie k'} &= \zeta(k,k') \id_{X\bowtie Y},\\
    \alpha_{X\bowtie k, h\bowtie Y, h'\bowtie k'} &= \zeta(k,k') \id_{X\bowtie Y}, & \alpha_{X\bowtie k, X\bowtie Y, h\bowtie k'} &= \zeta(k,k') \id_{(X\ot X)\bowtie Y},\\
    \alpha_{h\bowtie k, X\bowtie Y, X\bowtie k'} &= \zeta(k,k') \id_{(X\ot X)\bowtie Y}, & \alpha_{h\bowtie k, X\bowtie Y, h'\bowtie k'} &= \chi(h,h')\zeta(k,k')\id_{X\bowtie Y},\\
    \alpha_{X\bowtie k, h\bowtie k', X\bowtie k''} &= \bigoplus_{h'\in H} \chi(h,h') \id_{h'\bowtie \psi(k k') k''}, & \alpha_{X\bowtie k, h\bowtie k', X\bowtie Y} &= \bigoplus_{h'\in H} \chi(h,h') \id_{h'\bowtie Y},\\
    \alpha_{X\bowtie Y, h\bowtie k, X\bowtie k'} &= \bigoplus_{h'\in H} \chi(h,h') \id_{h'\bowtie Y}, & \alpha_{X\bowtie Y, h\bowtie Y, X\bowtie k} &= \bigoplus_{h'\in H} \theta_r(\sigma)\chi(h,h') \id_{h'\bowtie(Y\ot Y)},\\
    \alpha_{X\bowtie k, h\bowtie Y, X\bowtie Y} &= \bigoplus_{h'\in H} \chi(h,h') \id_{h'\bowtie (Y\ot Y)}, & \alpha_{h\bowtie Y, h'\bowtie k, h''\bowtie Y} &= \bigoplus_{k'\in K} \zeta(k,k') \id_{h\varphi(h' h'')\bowtie k'},\\
    \alpha_{h\bowtie Y, h'\bowtie k, X\bowtie Y} &= \bigoplus_{k'\in K} \zeta(k,k') \id_{X\bowtie k'}, & \alpha_{X\bowtie Y, h\bowtie k, h'\bowtie Y} &= \bigoplus_{k'\in K} \zeta(k,k') \id_{X\bowtie k'},\\
    \alpha_{X\bowtie Y, X\bowtie k, h\bowtie Y} &= \bigoplus_{k'\in K} \zeta(k,k') \id_{(X\ot X)\bowtie k'}, & \alpha_{h\bowtie Y, X\bowtie k, X\bowtie Y} &= \bigoplus_{k'\in K} \theta_\ell(\sigma)\zeta(k,k') \id_{(X\ot X)\bowtie k'},\\
    \alpha_{X\bowtie k,h\bowtie Y,X\bowtie k'} &= \bigoplus_{h'\in H} \chi(h,h')\zeta(k,k') \id_{h'\bowtie Y}, &  \alpha_{h\bowtie Y, X\bowtie k, h'\bowtie Y} &= \bigoplus_{k'\in K} \chi(h,h') \zeta(k,k') \id_{X\bowtie k'},
    \end{align*}
    \begin{align*}
    &\alpha_{X\bowtie Y, h\bowtie k, X\bowtie Y} = \bigoplus_{h'\in H, k'\in K} \chi(h,h') \zeta(k,k') \id_{h'\bowtie k'},    \\
        \alpha_{X\bowtie k, X\bowtie k', X\bowtie k''} &= \left(\frac{\tau}{\chi(h,h')}\right)_{h,h'\in H} \bowtie \id_{k\psi(k')k''} \colon \left(\bigoplus_{h\in H} X\right)\bowtie k \psi(k') k'' \to \left(\bigoplus_{h\in H} X\right)\bowtie k \psi(k') k'',\\
        \alpha_{X\bowtie k, X\bowtie k', X\bowtie Y} &= \left(\frac{\tau}{\chi(h,h')}\right)_{h,h'\in H} \bowtie \id_Y \colon \left(\bigoplus_{h\in H} X\right)\bowtie Y \to \left(\bigoplus_{h\in H} X\right)\bowtie Y,\\
        \alpha_{X\bowtie Y, X\bowtie k, X\bowtie k'} &= \left(\frac{\tau \theta_\ell(\sigma)}{\chi(h,h')}\right)_{h,h'\in H} \bowtie \id_Y \colon \left(\bigoplus_{h\in H} X\right)\bowtie Y \to \left(\bigoplus_{h\in H} X\right)\bowtie Y,\\
        \alpha_{X\bowtie Y, X\bowtie Y, X\bowtie k} &= \left(\frac{\tau \theta_\ell(\sigma) \theta_r(\sigma)}{\chi(h,h')}\right)_{h,h'\in H} \bowtie \id_{Y\ot Y}\colon \left(\bigoplus_{h\in H} X\right)\bowtie (Y\ot Y) \to \left(\bigoplus_{h\in H} X\right)\bowtie (Y\ot Y),\\
        \alpha_{X\bowtie k, X\bowtie Y, X\bowtie Y} &= \left(\frac{\tau}{\chi(h,h')}\right)_{h,h'\in H} \bowtie \id_{Y\ot Y}\colon \left(\bigoplus_{h\in H} X\right)\bowtie (Y\ot Y) \to \left(\bigoplus_{h\in H} X\right)\bowtie (Y\ot Y),\\
        \alpha_{X\bowtie k, X\bowtie Y, X\bowtie k'} &= \left(\frac{\tau \zeta(k,k') \theta_\ell(\sigma)}{\chi(h,h')}\right)_{h,h'\in H} \bowtie \id_{Y}\colon \left(\bigoplus_{h\in H} X\right)\bowtie Y \to \left(\bigoplus_{h\in H} X\right)\bowtie Y,\\
        \alpha_{X\bowtie Y, X\bowtie k, X\bowtie Y} &= \bigoplus_{k'\in K}\left(\frac{\tau \zeta(k,k') \theta_\ell(\sigma)}{\chi(h,h')}\right)_{h,h'\in H} \bowtie \id_{k'}\colon \bigoplus_{k'\in K}\left(\bigoplus_{h\in H} X\right)\bowtie k' \to \bigoplus_{k'\in K}\left(\bigoplus_{h\in H} X\right)\bowtie k',\\
        \alpha_{h\bowtie Y, h'\bowtie Y, h''\bowtie Y}&= \id_{h\varphi(h')h''} \bowtie \left(\frac{v}{\zeta(k,k')}\right)_{k,k'\in K}\colon h\varphi(h')h''\bowtie \left(\bigoplus_{k\in K} Y\right) \to h\varphi(h')h''\bowtie \left(\bigoplus_{k\in K} Y\right),\\
        \alpha_{h\bowtie Y, h'\bowtie Y, X\bowtie Y}&= \id_{X} \bowtie \left(\frac{v \theta_r(\sigma)}{\zeta(k,k')}\right)_{k,k'\in K}\colon X\bowtie \left(\bigoplus_{k\in K} Y\right) \to X\bowtie \left(\bigoplus_{k\in K} Y\right),\\
        \alpha_{X\bowtie Y, h\bowtie Y, h'\bowtie Y}&= \id_{X} \bowtie \left(\frac{v}{\zeta(k,k')}\right)_{k,k'\in K}\colon X\bowtie \left(\bigoplus_{k\in K} Y\right) \to X\bowtie \left(\bigoplus_{k\in K} Y\right),\\
        \alpha_{X\bowtie Y, X\bowtie Y, h\bowtie Y}&= \id_{X\ot X} \bowtie \left(\frac{v\theta_\ell(\sigma)}{\zeta(k,k')}\right)_{k,k'\in K}\colon (X\ot X)\bowtie \left(\bigoplus_{k\in K} Y\right) \to (X\ot X)\bowtie \left(\bigoplus_{k\in K} Y\right),
        \end{align*}
    \begin{align*}
        \alpha_{h\bowtie Y, X\bowtie Y, X\bowtie Y}&= \id_{X\ot X} \bowtie \left(\frac{v \theta_r(\sigma)}{\zeta(k,k')}\right)_{k,k'\in K}\colon (X\ot X)\bowtie \left(\bigoplus_{k\in K} Y\right) \to (X\ot X)\bowtie \left(\bigoplus_{k\in K} Y\right),\\
        \alpha_{h\bowtie Y, X\bowtie Y, h'\bowtie Y}&= \id_{X} \bowtie \left(\frac{v \chi(h,h')}{\zeta(k,k')}\right)_{k,k'\in K}\colon X\bowtie \left(\bigoplus_{k\in K} Y\right) \to X\bowtie \left(\bigoplus_{k\in K} Y\right),\\
        \alpha_{X\bowtie Y, h\bowtie Y, X\bowtie Y}&= \bigoplus_{h'\in H} \id_{h'} \bowtie \left(\frac{v \theta_r(\sigma) \chi(h,h')}{\zeta(k,k')}\right)_{k,k'\in K}\colon \bigoplus_{h'\in H} h'\bowtie \left(\bigoplus_{k\in K} Y\right) \to \bigoplus_{h'\in H} h'\bowtie \left(\bigoplus_{k\in K} Y\right),\\
        \alpha_{X\bowtie Y, X\bowtie Y, X\bowtie Y} &= \left(\frac{v \tau \theta_\ell(\sigma) \theta_r(\sigma)}{\chi(h,h')\zeta(k,k')}\right)_{(h,k), (h',k')\in H\times K} \colon \bigoplus_{(h,k)\in H\times K} X\bowtie Y \to \bigoplus_{(h,k)\in H\times K} X\bowtie Y. 
    \end{align*}}
    
\section*{Appendix: Proofs of some results in Section \ref{section:exact-factorization-fusion-categories}(preprint version only)}\label{ap:diagramas-largos} 

In this section, we include the proofs of Lemma \ref{diagrmas-e-Sonia}, Proposition \ref{prop: subcategorias}, and Proposition \ref{prop:equiv-to-deligne}. In addition, we also show that the associativity of the bicrossed product given in Theorem \ref{teo:bowtie-monoidal} satisfies the pentagon axiom.

\subsubsection*{Proof of Lemma \ref{diagrmas-e-Sonia}}
\begin{proofw}

We will first show that the diagram \eqref{diagrama-e-Sonia-1} commutes.
For this, notice that the diagram
\begin{adjustbox}{scale=.5,center}
        \begin{tikzcd}
            e\trid(e\trid(A\ot A')) \ar[dd, swap,"(\mathbf{L}^2_{e,e})_{A\ot A'}"] \ar[rrrr, "e\trid \gamma^{e}_{A,A'}"]& && & e\trid (e\trid A \ot (e\fizb \degf{A})\trid A') \ar[dd, "\gamma^e_{e\trid A, e\trid A'}"]\\
          \hspace{.5cm}(i) & (ii) &  e\trid (e\trid A\ot A')\ar[dd,"\gamma_{e\trid A, A'}^e"]\ar[rru,"e\trid (\id_{e\trid A}\ot \mathbf{L}^0_{A'})"] & (iv) & \\
            e \trid(A\ot A')\ar[rru,swap,"e\trid (\mathbf{L}^0_A\ot \id_{A'})"] \ar[dddrr, swap,"\gamma^{e }_{A,A'}"]\ar[uu, bend right, swap, "e\trid \mathbf{L}^0_{A\ot A'}"] & (iii) && & e\trid(e\trid A)\ot (e\fiz(e\tridb\degf{A}))\trid ((e\fizb \degf{A})\trid A') 
            \ar[llddd,"(\mathbf{L}^2_{e,e})_A\ot (\mathbf{L}^2_{e, e})_{A'}"]     \\
            &  & e\trid (e\trid A)\ot e\trid A'\ar[rru,"\id\ot e\trid \mathbf{L}^0_{A'}"]& (v)&\\
            &&& & \\
            && e \trid A \ot (e\fizb \degf{A})\trid A'\ar[uu, swap, "e\trid \mathbf{L}^0_A\ot \id_{e\trid A'}"] & &
        \end{tikzcd}
        \end{adjustbox} commutes by \eqref{eq:gamma-T2-diagram}. In addition, \noindent $(i)$ and $(v)$ commute by \eqref{action-eq2}, $(iii)$ commutes by the naturality of $\gamma^e_{-,A'}$, and $(iv)$ commutes by the naturality of $\gamma^e_{e\trid A, -}$. This shows that $(ii)$ also commutes. Since $e\trid-$ is an equivalence, then  \eqref{diagrama-e-Sonia-1} commutes, as desired. 

To show that the diagram \eqref{diagrama-e-Sonia-2} commutes, notice first that the following diagram

\begin{adjustbox}{scale=.8,center}
 \begin{tikzcd}
   e\trid (e\trid \uno) \ar[rrr, "({\mathbf L}^2_{e,e})_\uno"]\ar[ddd,swap, "e \trid \gamma^e_0"]  & & & e\trid \uno\ar[ddd,"\gamma^e_0"]\ar[lll,swap, bend left=30, "e\trid ({\mathbf L}^0)_\uno"] \\
     &   &   & \\
  \hspace{2cm} (i) & & (ii)& \\
   A\ot A'\ar[rrr, "\gamma^e_0"]\ar[rrruuu,"\id"] & & & \uno.
\end{tikzcd}
\end{adjustbox} commutes by \eqref{action-eq2}. Since $(ii)$ commutes trivially then $(i)$ commutes. This shows that \eqref{diagrama-e-Sonia-2} commutes because $e\trid -$ is an equivalence. The commutativity of diagrams \eqref{diagrama-e-Sonia-3} and \eqref{diagrama-e-Sonia-4} is proven similarly.
    \end{proofw}

\subsubsection*{Proof of Theorem \ref{teo:bowtie-monoidal} - Pentagon}
\begin{proofw}
    We apply Lemma \ref{lemma:commutativity-both-sides-bowtie} to prove the commutativity of the pentagon axiom for $\alpha$. We only do the left side (the analog to diagram \eqref{eq:LI-lemma-delingue-product}) as the other side is similar. Let $A\bowtie C$, $A'\bowtie C'$,  $A''\bowtie C''$ and $A'''\bowtie C'''$ be homogeneous objects in $\cA \bowtie\cC$. The corresponding diagram is in Figure \ref{fig:pentagono-bicrossed-lhs-objetos}. The diagram commutes for the following reasons:
\begin{enumerate}[leftmargin=*,label=(\roman*)]
 \item[$(i)$] pentagon axiom for $\alpha^{\cA}$,
	\item[$(ii)$] naturality of $\alpha_{A, \degf{C}\trid A', -}$ applied to $\id_{((\degf{C}\fizb\degf{A'})\degf{C'})\trid A''} \ot (\bL^2_{((\degf{C}\fizb\degf{A'})\degf{C'})\fizb\degf{A''},\degf{C''}})^{-1}_{A'''}$,
	\item[$(iii)$]  naturality of $\alpha_{A, \degf{C}\trid A', -}$ applied to $(\gamma^{(\degf{C}\fizb \degf{A'})\degf{C'}}_{A'', \degf{C''}\trid A'''})^{-1}$,
	\item[$(iv)$] naturality of $\alpha_{A, - , ((((\degf{C}\fizb \degf{A'})\degf{C'})\fizb \degf{A''})\degf{C''})\trid A'''}$ applied to $\id_{\degf{C}\trid A'}\ot (\bL^2_{\degf{C}\fizb\degf{A'}, \degf{C'}})^{-1}_{A''}$,
	\item[$(v)$] naturality of $\alpha_{\degf{C}\trid A', - , ((((\degf{C}\fizb \degf{A'})\degf{C'})\fizb \degf{A''})\degf{C''})\trid A'''}$ applied to $(\bL^2_{\degf{C}\fizb\degf{A'}, \degf{C'}})^{-1}_{A''}$,
	\item[$(vi)$]  maps applied to different slots of the tensor product,
	\item[$(vii)$] because of \eqref{eq:gamma-T2-diagram},
	\item[$(viii)$] naturality of $\alpha_{A, - , ((((\degf{C}\fizb \degf{A'})\degf{C'})\fizb \degf{A''})\degf{C''})\trid A'''}$ applied to $(\gamma_{A',\degf{C'}\trid A''}^{\degf{C}})^{-1}$,
	\item[$(ix)$]  maps applied to different slots of the tensor product,
	\item[$(x)$] naturality of $\alpha_{\degf{C}\trid A', (\degf{C}\fizb \degf{A'})\trid(\degf{C'}\trid A'') ,-}$ applied to \newline $(\bL^2_{\degf{C}\fizb (\degf{A'}(\degf{C'}\tridb \degf{A''})),(\degf{C'}\fizb \degf{A''})\degf{C''}})^{-1}_{A'''}$,
	\item[$(xi)$] because of \eqref{action-eq1},
	\item[$(xii)$] because of \eqref{eq:gamma-alpha-diagram},
	\item[$(xiii)$] naturality of $(\gamma^{\degf{C}\fizb \degf{A'}}_{\degf{C'}\trid A'', -})^{-1}$ applied to $(\bL^2_{\degf{C'}\fizb \degf{A''}, \degf{C''}})^{-1}_{A'''}$,
	\item[$(xiv)$] naturality of $(\gamma^{\degf{C}}_{A',-})^{-1}$ applied to $\id_{\degf{C'}\trid A} \ot (\bL^2_{\degf{C'}\fizb \degf{A''}, \degf{C''}})^{-1}_{A'''}$,
	\item[$(xv)$] naturality of $(\gamma^{\degf{C}}_{A',-})^{-1}$ applied to $(\gamma^{\degf{C'}}_{A'', \degf{C''}\trid A'''})^{-1}$. 
\end{enumerate}

\begin{figure}[htbp]
	\centering
	\rotatebox{90}{
		\begin{adjustbox}{width= \textheight,totalheight=\textwidth,keepaspectratio}
		\begin{tikzcd}[ampersand replacement=\&]
			\&\& \objpentagonOne \\
			\\
			\\
			\&\&\& \objpentagonTen \\
			\\
			\& \objpentagonTwo \\
			\&\& \objpentagonFour \&\& \objpentagonEleven \\
			\\
			\\
			\objpentagonSix \&\&\& \objpentagonNine \\
			\& \objpentagonThree \\
			\&\& \objpentagonFive \&\& \objpentagonTwelve \\
			\\
			\\
			\\
			\& \objpentagonSeven \\
			\objpentagonTwentySix \&\&\& \objpentagonTwentyFive \\
			\\
			\\
			\\
			\\
			\\
			\objpentagonFourteen \&\& \objpentagonEight \\
			\\
			\\
			\\
			\& \objpentagonSixteen \&\& \objpentagonTwentyFour \\
			\\
			\\
			\\
			\\
			\\
			\&\& \objpentagonTwentyThree \&\& \objpentagonThirteen \\
			\\
			\\
			\\
			\& \objpentagonFifteen \&\& \objpentagonNineteen \\
			\\
			\\
			\&\& \objpentagonTwentyTwo \\
			\\
			\\
			\& \objpentagonTwenty \&\& \objpentagonEighteen \\
			\\
			\\
			\&\&\&\& \objpentagonSeventeen \\
			\&\& \objpentagonTwentyOne
			\arrow["\objpentmapOneToTwo"{description}, curve={height=18pt}, from=1-3, to=6-2]
			\arrow["\objpentmapOneToFour"{description}, from=1-3, to=7-3]
			\arrow["\objpentmapTwoToThree"{description}, from=6-2, to=11-2]
			\arrow["\objpentmapThreeToFive"{description}, curve={height=30pt}, from=11-2, to=12-3]
			\arrow["\objpentmapFourToFive"{description}, from=7-3, to=12-3]
			\arrow["\objpentmapTenToNine"{description}, from=4-4, to=10-4]
			\arrow["\objpentmapFiveToNine"{description}, curve={height=30pt}, from=12-3, to=10-4]
			\arrow["\objpentmapFourToTen"{description}, curve={height=-30pt}, from=7-3, to=4-4]
			\arrow["\objpentmapTenToEleven"{description}, curve={height=-30pt}, from=4-4, to=7-5]
			\arrow["\objpentmapElevenToTwelve"{description}, from=7-5, to=12-5]
			\arrow["\objpentmapNineToTwelve"{description}, curve={height=24pt}, from=10-4, to=12-5]
			\arrow["\objpentmapTwoToSix"{description}, curve={height=30pt}, from=6-2, to=10-1]
			\arrow["\objpentmapThreeToSeven"{description}, from=11-2, to=16-2]
			\arrow["\objpentmapSixToSeven"{description}, curve={height=30pt}, from=10-1, to=16-2]
			\arrow["\objpentmapSixToTwentysix"{description}, from=10-1, to=17-1]
			\arrow["\objpentmapTwentysixToFourteen"{description}, from=17-1, to=23-1]
			\arrow["\objpentmapSevenToFourteen"{description}, curve={height=-30pt}, from=16-2, to=23-1]
			\arrow["\objpentmapNineToTwentyfive"{description}, from=10-4, to=17-4]
			\arrow["\objpentmapFiveToEight"{description}, from=12-3, to=23-3]
			\arrow["\objpentmapEightToTwentyfive"{description}, curve={height=30pt}, from=23-3, to=17-4]
			\arrow["\objpentmapSevenToEight"{description}, curve={height=30pt}, from=16-2, to=23-3]
			\arrow["\objpentmapTwentyfiveToTwentyfour"{description}, from=17-4, to=27-4]
			\arrow["\objpentmapEightToTwentythree"{description}, from=23-3, to=33-3]
			\arrow["\objpentmapTwentythreeToTwentyfour"{description}, curve={height=30pt}, from=33-3, to=27-4]
			\arrow["\objpentmapSevenToSixteen"{description}, from=16-2, to=27-2]
			\arrow["\objpentmapSixteenToTwentythree"{description}, curve={height=30pt}, from=27-2, to=33-3]
			\arrow["\objpentmapSixteenToFifthteen"{description}, from=27-2, to=37-2]
			\arrow["\objpentmapFourteenToFifthteen"{description}, curve={height=30pt}, from=23-1, to=37-2]
			\arrow["\objpentmapTwentythreeToTwentytwo"{description}, from=33-3, to=40-3]
			\arrow["\objpentmapTwentyfourToNineteen"{description}, from=27-4, to=37-4]
			\arrow["\objpentmapTwentytwoToNineteen"{description}, curve={height=30pt}, from=40-3, to=37-4]
			\arrow["\objpentmapFifthteenToTwenty"{description}, from=37-2, to=43-2]
			\arrow["\objpentmapTwentyToTwentyone"{description}, curve={height=30pt}, from=43-2, to=47-3]
			\arrow["\objpentmapTwentytwoToTwentyone"{description}, from=40-3, to=47-3]
			\arrow["\objpentmapNineteenToEighteen"{description}, from=37-4, to=43-4]
			\arrow["\objpentmapTwentyoneToEighteen"{description}, curve={height=30pt}, from=47-3, to=43-4]
			\arrow["\objpentmapTwelveToThirdteen"{description}, from=12-5, to=33-5]
			\arrow["\objpentmapNineteenToThirdteen"{description}, curve={height=30pt}, from=37-4, to=33-5]
			\arrow["\objpentmapEighteenToSeventeen"{description}, curve={height=30pt}, from=43-4, to=46-5]
			\arrow["\objpentmapThirdteenToSeventeen"{description}, from=33-5, to=46-5]
			\arrow["\rm{{(i)}}"{description}, draw=none, from=6-2, to=7-3]
			\arrow["\rm{{(ii)}}"{description}, draw=none, from=7-3, to=10-4]
			\arrow["\rm{{(iii)}}"{description}, draw=none, from=4-4, to=12-5]
			\arrow["\rm{{(iv)}}"{description}, draw=none, from=10-1, to=11-2]
			\arrow["\rm{{(v)}}"{description}, draw=none, from=11-2, to=23-3]
			\arrow["\rm{{(vi)}}"{description}, draw=none, from=12-3, to=17-4]
			\arrow["\rm{{(vii)}}"{description}, draw=none, from=10-4, to=33-5]
			\arrow["\rm{{(viii)}}"{description}, draw=none, from=17-1, to=16-2]
			\arrow["\rm{{(ix)}}"{description}, draw=none, from=23-1, to=27-2]
			\arrow["\rm{{(x)}}"{description}, draw=none, from=16-2, to=33-3]
			\arrow["\rm{{(xi)}}"{description}, draw=none, from=23-3, to=27-4]
			\arrow["\rm{{(xii)}}"{description}, draw=none, from=37-2, to=40-3]
			\arrow["\rm{{(xiii)}}"{description}, draw=none, from=33-3, to=37-4]
			\arrow["\rm{{(xiv)}}"{description}, draw=none, from=40-3, to=43-4]
			\arrow["\rm{{(xv)}}"{description}, draw=none, from=37-4, to=46-5]
		\end{tikzcd}
		\end{adjustbox}
	}
	\caption{}
	\label{fig:pentagono-bicrossed-lhs-objetos}
\end{figure}
\end{proofw}

\subsubsection*{Proof of Proposition \ref{prop: subcategorias}}
\begin{proofw}
	We only check the statement for $\widetilde{\cA}$ as the one for $\widetilde{\cC}$ is similar. We only need to show that $\bF^2$ makes the corresponding diagram of  \cite[Definition 2.4.1]{EGNO-book} commute. The diagram is in Figure \ref{fig:inclusion-A}. The subdiagrams commute for the following reasons:
 \begin{enumerate}[leftmargin=*,label=(\roman*)]
     \item[$(i)$] Definition of $\alpha_{A\bowtie \uno, A'\bowtie \uno, A''\bowtie \uno}$,
     \item[$(ii)$] naturality of $\eta^{\degf{A''}}_{-,\uno}$ applied to $\eta_0^{\degf{A'}}$,
     \item[$(iii)$] because of \eqref{diagram:MP-18},
     \item[$(iv)$] naturality of $\alpha_{-,\uno\fiz\degf{A''},\uno}$ applied to $\eta_0^{\degf{A'}\degf{A''}}$,
     \item[$(v)$] because of \eqref{action-eq2},
     \item[$(vi)$] naturality of  $\gamma_{A',-}^e$  applied to  $\bL^{0}_{A''}$,
     \item[$(vii)$] because of \eqref{diagram:MP-15},
     \item[$(viii)$] because of \cite[Prop. 2.2.4]{EGNO-book},
     \item[$(ix)$] naturality of $\alpha_{A,-,e\trid A''}$ applied to $(\bL^0)_{A'}$,
     \item[$(x)$] because of \eqref{diagrama-e-Sonia-1},
     \item[$(xi)$] because of \eqref{diagram:MP-18},
     \item[$(xii)$] naturality of $\ell_{-}$ applied to $\eta_0^{\degf{A''}}$,
     \item[$(xiii)$] naturality of $\alpha_{A,A', -}$ applied to $(\bL^0)_{A''}$,
     \item[$(xiv)$] because of \eqref{diagram:MP-18}, and
 \end{enumerate}
 diagrams $(xv)$-$(xxv)$ commute trivially as they are just morphisms applied in different slots of the tensor product.

  \begin{figure}[htbp]
	\centering
	\hspace{50pt}\rotatebox{90}{
		\begin{adjustbox}{width=\textheight,totalheight=\textwidth,keepaspectratio}
\begin{tikzcd}[ampersand replacement=\&]
	{((A\ot e\trid A)\ot e\trid A'')\bowtie ((\uno\fiz\degf{A'}\ot \uno)\fiz\degf{A''}\ot\uno)} \&\&\&\&\&\& {(A\ot e\trid(A'\ot e\trid A'') )\bowtie (\uno\fiz \degf{A'}\degf{A''} \ot (\uno\fiz \degf{A''}\ot \uno))} \\
	\&\& {(A\ot (e\trid A'\ot e\trid(e\trid A'')))\bowtie((\uno\fiz\degf{A'}\ot \uno)\fiz\degf{A''}\ot\uno)} \&\& {(A\ot e\trid(A'\ot e\trid A'') )\bowtie (((\uno\fiz \degf{A'})\fiz\degf{A''} \ot \uno\fiz \degf{A''})\ot \uno)} \\
	\& {(A\ot (e\trid A'\ot e\trid A''))\bowtie((\uno\fiz\degf{A'}\ot \uno)\fiz\degf{A''}\ot\uno)} \&\& {(A\ot e\trid(A'\ot e\trid A'') )\bowtie ((\uno\fiz\degf{A'}\ot \uno)\fiz\degf{A''}\ot\uno)} \&\& {(A\ot e\trid(A'\ot e\trid A'') )\bowtie ((\uno\fiz \degf{A'}\degf{A''} \ot \uno\fiz \degf{A''})\ot \uno)} \\
	\\
	\\
	\&\&\&\& {(A\ot e\trid(A'\ot e\trid A'') )\bowtie ((\uno\fiz\degf{A''} \ot \uno\fiz \degf{A''})\ot \uno)} \\
	\& {(A\ot (e\trid A'\ot e\trid A''))\bowtie((\uno\ot \uno)\fiz\degf{A''}\ot\uno)} \&\&\&\& {(A\ot e\trid(A'\ot e\trid A'') )\bowtie ((\uno \ot \uno\fiz \degf{A''})\ot \uno)} \\
	{((A\ot e\trid A')\ot e\trid A'')\bowtie ((\uno\ot \uno)\fiz\degf{A''}\ot\uno)} \&\& {(A\ot (e\trid A'\ot e\trid(e\trid A'')))\bowtie(\uno\fiz\degf{A''}\ot\uno)} \& {(A\ot e\trid(A'\ot e\trid A'') )\bowtie ((\uno\ot \uno)\fiz\degf{A''}\ot\uno)} \\
	\&\&\&\&\&\& {(A\ot e\trid(A'\ot e\trid A'') )\bowtie (\uno \ot (\uno\fiz \degf{A''}\ot \uno))} \\
	\\
	\\
	\&\&\& {(A\ot e\trid(A'\ot e\trid A''))\bowtie(\uno\fiz\degf{A''}\ot\uno)} \\
	\\
	{((A\ot e\trid A')\ot e\trid A'')\bowtie (\uno\fiz\degf{A''}\ot\uno)} \& {(A\ot (e\trid A'\ot e\trid A''))\bowtie(\uno\fiz\degf{A''}\ot\uno)} \&\&\&\&\& {(A\ot e\trid(A'\ot e\trid A'') )\bowtie (\uno\fiz \degf{A'}\degf{A''} \ot (\uno\ot \uno))} \\
	\\
	\&\&\&\& {(A\ot e\trid(A'\ot e\trid A''))\bowtie(\uno\ot\uno)} \& {(A\ot e\trid(A'\ot e\trid A''))\bowtie(\uno\ot(\uno\ot\uno))} \\
	\\
	\\
	\&\& {(A\ot e\trid(A'\ot A''))\bowtie(\uno\fiz\degf{A''}\ot\uno)} \& {(A\ot e\trid(A'\ot e\trid A''))\bowtie((\uno\fiz\degf{A'})\fiz\degf{A''}\ot\uno)} \& {(A\ot e\trid(A'\ot e\trid A'') )\bowtie (\uno\fiz \degf{A'}\degf{A''} \ot \uno)} \\
	\&\&\&\&\& {(A\ot e\trid(A'\ot e\trid A'') )\bowtie (\uno\fiz \degf{A'}\degf{A''} \ot (\uno\ot \uno))} \\
	{((A\ot A')\ot e\trid A'')\bowtie (\uno\fiz\degf{A''}\ot\uno)} \& {(A\ot (A'\ot e\trid A''))\bowtie(\uno\fiz\degf{A''}\ot\uno)} \\
	\\
	\&\&\&\&\&\& {(A\ot e\trid(A'\ot A'') )\bowtie (\uno\fiz \degf{A'}\degf{A''} \ot (\uno\fiz \degf{A''}\ot \uno))} \\
	{((A\ot A')\ot A'')\bowtie (\uno\fiz\degf{A''}\ot\uno)} \\
	\&\&\&\&\& {(A\ot e\trid(A'\ot A'') )\bowtie (\uno\fiz \degf{A'}\degf{A''} \ot (\uno\ot \uno))} \\
	\\
	{((A\ot A')\ot A'')\bowtie (\uno\ot\uno)} \&\&\& {(A\ot e\trid(A'\ot A'') )\bowtie ((\uno\fiz \degf{A'})\fiz\degf{A''} \ot \uno)} \\
	\&\& {(A\ot (A'\ot A''))\bowtie(\uno\fiz\degf{A''}\ot\uno)} \\
	\\
	\\
	\&\&\&\& {(A\ot e\trid(A'\ot A'') )\bowtie (\uno\fiz \degf{A'}\degf{A''} \ot \uno)} \\
	\&\&\& {(A\ot (A'\ot A'') )\bowtie ((\uno\fiz \degf{A'})\fiz\degf{A''} \ot \uno)} \\
	\\
	\&\& {(A\ot (A'\ot A'') )\bowtie (\uno \ot \uno)} \\
	\\
	\\
	{((A\ot A')\ot A'')\bowtie \uno} \\
	\&\& {(A\ot (A'\ot A'') )\bowtie \uno} \& {(A\ot (A'\ot A'') )\bowtie (\uno\fiz \degf{A'}\degf{A''} \ot \uno)}
	\arrow["{\alpha_{A\bowtie\uno,A'\bowtie\uno,A''\bowtie\uno}}", curve={height=-30pt}, from=1-1, to=1-7]
	\arrow["{\rm{(i)}}"{description}, draw=none, from=1-1, to=1-7]
	\arrow["{\alpha_{A,e\trid A',e\trid A''}\bowtie\id}"', curve={height=18pt}, from=1-1, to=3-2]
	\arrow["{\id\bowtie(\eta_0^{\degf{A'}}\ot \uno)\fiz \degf{A''}\ot \id_\uno}"{description}, from=1-1, to=8-1]
	\arrow["{\rm{(xv)}}"{description, pos=0.7}, curve={height=30pt}, draw=none, from=1-1, to=14-2]
	\arrow["{\id\bowtie\eta_0^{\degf{A'}\degf{A''}}\ot\id_{\uno\fiz\degf{A''}}\ot\id_\uno}"{description}, from=1-7, to=9-7]
	\arrow["{\id\bowtie\id_{\uno\fiz\degf{A'}\degf{A''}}\ot \eta_0^{\degf{A''}}\ot \id_\uno}"{description, pos=0.9}, curve={height=-30pt}, from=1-7, to=14-7]
	\arrow["{\id_A \ot e\trid(\id_{A'}\ot(\bL^0)^{-1}_{A''})\bowtie\id}"{description, pos=0.9}, curve={height=-30pt}, from=1-7, to=23-7]
	\arrow["{\id_{A}\ot(\gamma^e_{A',e\trid A''})^{-1}\bowtie \id}"', curve={height=18pt}, from=2-3, to=3-4]
	\arrow["{\id\bowtie(\bR^2_{\degf{A''},\degf{A'}})_\uno \ot \id_{\uno\fiz\degf{A''}}\ot \id_\uno}"', curve={height=18pt}, from=2-5, to=3-6]
	\arrow["{\id\bowtie (\eta^{\degf{A'}}_0\fiz\degf{A''}\ot \id_{\uno\fiz\degf{A''}})\ot \id_\uno}"{description}, from=2-5, to=6-5]
	\arrow["{\rm{(iii)}}"', curve={height=-12pt}, draw=none, from=2-5, to=7-6]
	\arrow["{\id_A \ot \id_{e\trid A'}\ot (\bL^2_{e,e})^{-1}\bowtie \id}"', curve={height=18pt}, from=3-2, to=2-3]
	\arrow["{\id\bowtie(\eta_0^{\degf{A'}}\ot \uno)\fiz \degf{A''}\ot \id_\uno}"{description}, from=3-2, to=7-2]
	\arrow["{\rm{(xvi)}}"{description}, curve={height=-30pt}, draw=none, from=3-2, to=12-4]
	\arrow["{\id\bowtie \eta^{\degf{A''}}_{\uno\fiz \degf{A'},\uno}\ot \id_\uno}"', curve={height=18pt}, from=3-4, to=2-5]
	\arrow["{\rm{(ii)}}"{description}, curve={height=12pt}, draw=none, from=3-4, to=6-5]
	\arrow["{\id\bowtie (\eta^{\degf{A'}}_0\ot \id_\uno)\fiz\degf{A''}\ot \id_\uno}"{description}, from=3-4, to=8-4]
	\arrow["{\id\bowtie\alpha_{\uno\fiz \degf{A'}\degf{A''},\uno\fiz\degf{A''},\uno}}"', curve={height=18pt}, from=3-6, to=1-7]
	\arrow["{\id\bowtie\eta_0^{\degf{A'}\degf{A''}}\ot\id_{\uno\fiz\degf{A''}}\ot\id_\uno}"{description}, from=3-6, to=7-6]
	\arrow["{\rm{(iv)}}"{description}, draw=none, from=3-6, to=9-7]
	\arrow["{\id \bowtie \eta_0^{\degf{A''}}\ot \id_{\uno \fiz \degf{A''}}\ot\id_{\uno}}", curve={height=-18pt}, from=6-5, to=7-6]
	\arrow["{\id \bowtie \ell_\uno\fiz\degf{A''}\ot\id_{\uno}}"{description}, from=7-2, to=14-2]
	\arrow["{\id\bowtie\alpha_{\uno, \uno\fiz\degf{A''},\uno}}"', curve={height=18pt}, from=7-6, to=9-7]
	\arrow["{\id\bowtie\ell_{\uno\fiz \degf{A''}}\ot\id_\uno}"{description}, curve={height=-18pt}, from=7-6, to=12-4]
	\arrow["{\id \bowtie \ell_\uno\fiz\degf{A''}\ot\id_{\uno}}"{description}, from=8-1, to=14-1]
	\arrow["{\,\id_A \ot (\gamma^e_{A',e\trid A''})^{-1}\bowtie \id}", curve={height=24pt}, from=8-3, to=12-4]
	\arrow["{\id\bowtie \eta^{\degf{A''}}_{\uno,\uno}\ot \id_\uno}"{description}, curve={height=18pt}, from=8-4, to=6-5]
	\arrow["{\id\bowtie\ell_{\uno}\fiz\degf{A''}\ot \id_{1}}"{description}, from=8-4, to=12-4]
	\arrow["{\id\bowtie\ell_{\uno\fiz \degf{A''} \ot \uno}}"{description}, curve={height=-30pt}, from=9-7, to=12-4]
	\arrow["{\rm{(viii)}}"{description}, curve={height=6pt}, draw=none, from=9-7, to=12-4]
	\arrow["{\rm{(xix)}}"{description}, curve={height=30pt}, draw=none, from=9-7, to=14-7]
	\arrow["{\id\bowtie\id_\uno \ot \eta_0^{\degf{A''}}\ot\id_\uno}"{description}, from=9-7, to=16-6]
	\arrow["{\rm{(vii)}}"'{pos=0.7}, curve={height=18pt}, draw=none, from=12-4, to=6-5]
	\arrow["{\id\bowtie\eta_0^{\degf{A''}}\ot\id_\uno}"{description}, from=12-4, to=16-5]
	\arrow["{\rm{(xii)}}"{description}, curve={height=12pt}, draw=none, from=12-4, to=16-6]
	\arrow["{\rm{(xi)}}"{description, pos=0.6}, curve={height=24pt}, draw=none, from=12-4, to=19-5]
	\arrow["{\alpha_{A,e\trid A',e\trid A''}\bowtie\id}"{description}, curve={height=18pt}, from=14-1, to=14-2]
	\arrow["{\id_A\ot (\bL^0)^{-1}_{A'}\ot \id_{e\trid A''}\bowtie \id}"{description}, from=14-1, to=21-1]
	\arrow["{\rm{(ix)}}"{description, pos=0.6}, curve={height=6pt}, draw=none, from=14-1, to=21-2]
	\arrow["{\id_A\ot\id_{e\trid A'}\ot(\bL^2_{e,e})^{-1}_{A''}\bowtie \id}", curve={height=-24pt}, from=14-2, to=8-3]
	\arrow["{\id_A\ot\id_{e\trid A'}\ot e\trid(\bL^0)_{A''}\bowtie \id}"', curve={height=24pt}, from=14-2, to=8-3]
	\arrow["{\rm{(v)}}"{marking, allow upside down}, draw=none, from=14-2, to=8-3]
	\arrow["{\rm{(vi)}}"{description}, curve={height=30pt}, draw=none, from=14-2, to=12-4]
	\arrow["{\id_A \ot (\gamma^e_{A',A''})^{-1}\bowtie \id}"{description}, from=14-2, to=19-3]
	\arrow["{\id_A\ot (\bL^0)^{-1}_{A'}\ot \id_{e\trid A''}\bowtie \id}"{description}, from=14-2, to=21-2]
	\arrow["{\rm{(x)}}"{description}, curve={height=-18pt}, draw=none, from=14-2, to=28-3]
	\arrow["\id"{description}, from=14-7, to=20-6]
	\arrow["{\rm{(xxii)}}"{description, pos=0.7}, draw=none, from=14-7, to=25-6]
	\arrow["{\rm{(xviii)}}"{description}, draw=none, from=16-5, to=20-6]
	\arrow["{\id\bowtie\id_\uno \ot \ell_{\uno}}"{description}, curve={height=-18pt}, from=16-6, to=16-5]
	\arrow["{\id_A\ot e\trid(\id_{A'}\ot \bL^0_{A''})\bowtie \id}"{description}, from=19-3, to=12-4]
	\arrow["{\rm{(xvii)}}"{description}, curve={height=24pt}, draw=none, from=19-3, to=19-4]
	\arrow["{\id_A\ot (\bL^0)^{-1}_{A'\ot A''}\bowtie \id}"{description}, from=19-3, to=28-3]
	\arrow["{\id\bowtie\eta_0^{\degf{A'}}\fiz\degf{A''}\ot\id_\uno}"{description}, from=19-4, to=12-4]
	\arrow["{\id\bowtie (\bR^2_{\degf{A'},\degf{A''}})_{\uno}\ot \id_\uno}"{description}, curve={height=-30pt}, from=19-4, to=19-5]
	\arrow["{\rm{(xxi)}}"{description}, draw=none, from=19-4, to=31-5]
	\arrow["{\id\bowtie\eta_0^{\degf{A'}\degf{A''}}\ot\id_\uno}"{description}, from=19-5, to=16-5]
	\arrow["{\id\bowtie\eta_0^{\degf{A'}\degf{A''}}\ot\id_\uno}"{description}, from=20-6, to=16-6]
	\arrow["{\id\bowtie\id_{\uno\fiz\degf{A'}\degf{A''}}\ot \ell_\uno}"{description}, curve={height=24pt}, from=20-6, to=19-5]
	\arrow["{\alpha_{A, A',e\trid A''}\bowtie\id}"{description}, curve={height=18pt}, from=21-1, to=21-2]
	\arrow["{\id_A\ot \id_{A'}\ot (\bL^0)^{-1}_{A''}\bowtie \id}"{description}, from=21-1, to=24-1]
	\arrow["{\rm{(xiii)}}"{description, pos=0.4}, curve={height=30pt}, draw=none, from=21-1, to=28-3]
	\arrow["{\id_A\ot \id_{A'}\ot (\bL^0)^{-1}_{A''}\bowtie \id}"{description}, curve={height=12pt}, from=21-2, to=28-3]
	\arrow["{\id\bowtie\id_{\uno\fiz\degf{A'}\degf{A''}}\ot \eta_0^{\degf{A''}}\ot\id_\uno}"{description}, from=23-7, to=25-6]
	\arrow["{\id\bowtie \eta_0^{\degf{A''}}\ot\id_\uno}"{description}, from=24-1, to=27-1]
	\arrow["{\alpha_{A, A',A''}\bowtie\id}"{description}, curve={height=30pt}, from=24-1, to=28-3]
	\arrow["{\rm{(xxiii)}}"{description}, curve={height=30pt}, draw=none, from=24-1, to=34-3]
	\arrow["{\id\bowtie\id_{\uno\fiz\degf{A'}\degf{A''}}\ot \ell_\uno}"{description}, curve={height=30pt}, from=25-6, to=31-5]
	\arrow["{\alpha_{A, A',A''}\bowtie\id}"{description}, curve={height=30pt}, from=27-1, to=34-3]
	\arrow["{\id \bowtie \ell_\uno}"{description}, from=27-1, to=37-1]
	\arrow["{\rm{(xxv)}}"{description}, curve={height=30pt}, draw=none, from=27-1, to=38-3]
	\arrow["{\id\bowtie (\eta_0^{\degf{A'}})\fiz\degf{A''}\ot\id_\uno}"{description}, from=27-4, to=19-3]
	\arrow["{\id_A\ot e\trid(\id_{A'}\ot \bL^0_{A''})\bowtie \id}"{description}, from=27-4, to=19-4]
	\arrow["{\id\bowtie (\bR^2_{\degf{A'},\degf{A''}})_{\uno}\ot \id_\uno}"', curve={height=30pt}, from=27-4, to=31-5]
	\arrow["{\id\ot (\bL^0_{A'\ot A''})^{-1}\bowtie \id}"{description}, from=27-4, to=32-4]
	\arrow["{\rm{(xxiv)}}"{description, pos=0.6}, curve={height=-30pt}, draw=none, from=27-4, to=38-4]
	\arrow["{\id\bowtie \eta_0^{\degf{A''}}\ot\id_\uno}", from=28-3, to=34-3]
	\arrow["{\id_A \ot e\trid(\id_{A'}\ot \bL^0_{A''})\bowtie\id}"{description}, from=31-5, to=19-5]
	\arrow["{\id\ot (\bL^0_{A'\ot A''})^{-1}\bowtie \id}"{description}, from=31-5, to=38-4]
	\arrow["{\rm{(xx)}}"{description}, draw=none, from=32-4, to=19-3]
	\arrow["{\id\bowtie (\eta_0^{\degf{A'}})\fiz\degf{A''}\ot\id_\uno}"{description}, from=32-4, to=28-3]
	\arrow["{\id\bowtie (\bR^2_{\degf{A'},\degf{A''}})_{\uno}\ot \id_\uno}"{description}, curve={height=30pt}, from=32-4, to=38-4]
	\arrow["{\rm{(xiv)}}"{description}, draw=none, from=34-3, to=32-4]
	\arrow["{\id \bowtie \ell_\uno}"{description}, from=34-3, to=38-3]
	\arrow["{((A\ot A')\ot A'')\bowtie \uno}"{description}, curve={height=30pt}, from=37-1, to=38-3]
	\arrow["{\id\bowtie \eta_0^{\degf{A'}\degf{A''}}\ot\id_\uno}"{description}, curve={height=-30pt}, from=38-4, to=34-3]
\end{tikzcd}
		\end{adjustbox}
	}
	\caption{}
	\label{fig:inclusion-A}
\end{figure}

 \end{proofw}

\subsubsection*{Proof of Proposition \ref{prop:pivotal-bicrossed-product}}
\begin{proofw}
We have to check that $p_{A\bowtie C\ot A'\bowtie C'}=p_{A\bowtie C}\ot p_{A'\bowtie C'}$, for all $A, A'\in \Irr(\cA), C, C'\in \Irr(\cC)$. We have that, 
\begin{equation}
\begin{split}
    p_{A\bowtie C\ot A'\bowtie C'}= & r_{A^{**}\ot |C|\trid A'^{**}}(\id_{A^{**}\ot |C|\trid A'^{**}}\ot (\gamma^e_0)^{-1}) (p^\cA_{A\ot |C|\trid A'}\ot e\trid \id_\uno)\\ \label{eq:pivotal1}
    &(\id_{A\ot |C|\trid A'}\ot \gamma_0^e)r^{-1}_{A\ot |C|\trid A'}\bowtie \ell_{C^{**}\fiz |A'|\ot C'^{**}}((\eta_0^e)^{-1}\ot \id_{C^{**}\fiz |A'|\ot C'^{**}})\\
    & (\id_\uno\fiz e\ot p^\cC_{C\fiz |A'|\ot C'})(\eta_0^e\ot \id_{C\fiz |A'|\ot C'})\ell^{-1}_{C\fiz |A'|\ot C'},
    \end{split}
\end{equation}
    and, 
\begin{equation}
    \begin{split}
        p_{A\bowtie C}\ot p_{A'\bowtie C'} = & r_{A^{**}}(\id_{A^{**}}\ot (\gamma_0^e)^{-1})(p^\cA_A\ot e\trid \id_{\uno})(\id_A\ot \gamma^e_0)r_A^{-1}\ot \\ \label{eq:pivotal2}
        & |C|\trid r_{A'^{**}}(\id_{A'^{**}}\ot (\gamma_0^e)^{-1})(p^\cA_{A'}\ot e\trid \id_{\uno_\cA})(\id_{A'}\ot \gamma_0^e)r^{-1}_{A'}\bowtie \\ 
        & \ell_{C^{**}}((\eta_0^e)^{-1}\ot \id_{C^{**}})(\id_\uno\fiz e\ot p^\cC_C)(\eta^e_0\ot\id_C)\ell_C^{-1}\fiz |A'|\ot \\
        & \ell_{C'^{**}}((\eta_0^e)^{-1}\ot \id_{C'^{**}})(\id_\uno\fiz e\ot p^\cC_{C'})(\eta_0^e\ot \id_{C'})\ell_{C'}^{-1}.
    \end{split}
\end{equation}

By Lemma \ref{lemma:commutativity-both-sides-bowtie}, we can check both sides of $\bowtie$ separately. We explicitly check that the left-hand side of the $\bowtie$ in \eqref{eq:pivotal1} is equal to the left-hand side of the $\bowtie$ in \eqref{eq:pivotal2}. Checking the right-hand side is a similar computation. In Figure \ref{fig:pivotal}, $(i)$ commutes by the naturality of $r$, and $(ii), (iv), (v), (vi)$, and $(vii)$ trivially commute. We rewrite $(iii)$ in Figure \ref{fig:pivotal-2}. Then we have that $(a)$ commutes because $p^\cA$ is the pivotal structure of $\cA$, $(b)$ and $(e)$  commute from the naturality of $r$, and $(c), (d), (f), (g)$, and $(h)$ trivially commutes.

\begin{figure}[htbp]
    \centering
 \hspace{50pt}\rotatebox{90}{
   \begin{adjustbox}{width=580pt}
\begin{tikzcd}[ampersand replacement=\&]
    A\ot |C|\trid A'\ar[rr, "r^{-1}_{A\ot |C|\trid A'}"]\ar[dd,"r_A^{-1}\ot |C|\trid r_{A'}^{-1}"]\ar[ddrr,"p_{A\ot |C|\trid A'}"] \&   \& (A\ot |C|\trid A')\ot \uno\ar[rr,"\id\ot \gamma_0^e"]\ar[rrdd,"p^\cA_{A\ot |C|\trid A'}\ot \id_{\uno}"] \&  \& (A\ot |C|\trid A')\ot e\trid \uno\ar[ddrr,"p^\cA_{A\ot |C|\trid A'}\ot e\trid \id_{\uno}"] \& \& \\
    \& \& (i) \&\& (ii)\& \& \\
   (A\ot \uno)\ot(|C|\trid (A'\ot \uno))\ar[dd,pos=.7,"\id\ot \gamma_0^e\ot |C|\trid (\id\ot \gamma_0^e)"]\ar[rrdd,"p_A^\cA\ot\id\ot|C|\trid (p^\cA_{A'}\ot \id)"] \& \& A^{**}\ot |C|\trid A'^{**}\ar[rr,"r^{-1}_{A^{**}\ot |C|\trid A'^{**}}"]\ar[rrrrdddd,"\id"] \&\& (A^{**}\ot |C|\trid A'^{**})\ot \uno\ar[rr,"\id\ot\gamma_0^e"]\ar[ddrr,"\id"] \&\& (A^{**}\ot |C|\trid A'^{**})\ot e\trid \uno\ar[dd,"\id\ot (\gamma_0^e)^{-1}"]\\
    \&\& (iii) \&\& (iv)\&  \& \hspace{-2cm}(v) \\
    (A\ot e\trid \uno) \ot |C|\trid (A'\ot e\trid \uno)\ar[dd,pos=.4, "p^\cA_A\ot e\trid \id\ot|C|\trid(p^{\cA}_{A'}\ot e\trid \id)"] \& (vi)  \& (A^{**}\ot \uno) \ot |C|\trid (A'^{**}\ot \uno)\ar[ddll,"\id\ot\gamma_0^e\ot |C|\trid (\id\ot \gamma_0^e)"]\ar[rrdd,"\id"] \& \&\& \& (A^{**}\ot |C|\trid A'^{**})\ot\uno\ar[dd,"r_{A^{**}\ot |C|\trid A'^{**}}"]\\
    \& \& (vii)\&\&\& \& \\
    (A^{**}\ot e\trid \uno) \ot |C|\trid (A'^{**}\ot e\trid \uno)\ar[rrrr,"\id\ot(\gamma_0^e)^{-1}\ot|C|\trid (\id\ot(\gamma_0^e)^{-1})"] \&\& \&\& (A^{**}\ot \uno) \ot |C|\trid (A'^{**}\ot \uno)\ar[rr,"r_{A^{**}}\ot |C|\trid r_{A'^{**}}"] \& \& A^{**}\ot |C|\trid A'^{**}
    \end{tikzcd}
\end{adjustbox}}
 \caption{}
\label{fig:pivotal}
\end{figure}

\begin{figure}[htbp]
    \centering
 \hspace{50pt}\rotatebox{90}{
   \begin{adjustbox}{width=560pt}
\begin{tikzcd}[ampersand replacement=\&]
    A\ot |C|\trid A'\ar[ddddr,"r_A^{-1}\ot\id_{|C|\trid A'}"]\ar[dddddddd,"r^{-1}_A\ot |C|\trid r^{-1}_{A'}"]\ar[rrrr,"p^{\cA}_{A\ot |C|\trid A'}"]\ar[rrdd,"p^\cA_A\ot\id_{|C|\trid A'}"] \&\& \&\&  A^{**}\ot |C|\trid A'^{**}\\
    \&\& (a)\&\& \\
    \& (b) \&  A^{**}\ot |C|\trid A'^{**}\ar[rruu,"\id_{A^{**}}\ot |C|\trid p^\cA_{A'}"]\ar[dd,"r^{-1}_{A^{**}}\ot \id_{|C|\trid A'}"] \& (c) \& \\
     \hspace{2cm}(f)\& \&\&\& \hspace{-2cm}(g)\\
    \& A\ot\uno\ot|C|\trid A'\ar[r,"p_A^\cA\ot\id"]\ar[ldddd,"\id_{A\ot \uno}\ot|C|\trid r_{A'}^{-1}"] \& (A^{**}\ot\uno)\ot |C|\trid A'\ar[dd,"\id_{A^{**}\ot\uno}\ot |C|\trid r_{A'}^{-1}"]\ar[r,"\id\ot |C|\trid p^\cA_{A'}"] \& (A^{**}\ot\uno)\ot |C|\trid A'^{**}\ar[ruuuu,"r_{A^{**}\ot\id_{|C|\trid A^{**}}}"] \& \\
    \& (d)\&\& (e)\&\\
    \&\& (A^{**}\ot\uno)\ot |C|\trid (A'\ot \uno)\ar[rrdd,swap, "\id_{A^{**}\ot \uno}\ot |C|\trid (p^\cA_{A'}\ot \id_\uno)"] \&\& \\
    \&\& (h) \&\&\\
   A\ot\uno\ot|C|\trid (A'\ot \uno)\ar[rruu,swap,"p_A^{\cA}\ot\id_\uno\ot\id_{|C|\trid (A'\ot \uno)}"]\ar[rrrr,"p^\cA_A\ot\id_\uno\ot |C|\trid (p^\cA_{A'}\ot \id_\uno)"] \&\&  \&\& A^{**}\ot\uno\ot|C|\trid (A'^{**}\ot \uno)\ar[uuuuuuuu,"r_{A^{**}}\ot |C|\trid r_{A'^{**}}"]\ar[luuuu,"\id_{A^{**}\ot\uno}\ot |C|\trid r_{A'^{**}}"]
\end{tikzcd}
\end{adjustbox}}
\caption{}
\label{fig:pivotal-2}
\end{figure}
\end{proofw}

\subsubsection*{Proof of Proposition \ref{prop:equiv-to-deligne}}
\begin{proofw}
    Let be $\bF: \cA \boxtimes\cC\to \cA\bowtie\cC$, $F(A\boxtimes C)=A\bowtie C$ and 
    \begin{align*}
        \bF^2_{A\boxtimes C, A'\boxtimes C'}&: A\otimes \degf{C}\trid A'\bowtie C\fiz \degf{A'}\otimes C'\to A\otimes A'\bowtie C\otimes C'
    \end{align*} given by $\id_A\otimes (u_{\degf{C}})_{A'}\bowtie (v_{\degf{A'}})_C\otimes \id_{C'}$. To check that $\bF$ is monoidal, the left hand of $\bowtie$ side is the diagram from Figure \ref{fig:equivalence-trivial-action-to-delingue}, where $(i)$ commutes from the naturality of $\alpha_{A,-,|C||C'|\trid A''}$; $(ii)$ commutes from the naturality of $\alpha_{A, A',-}$; $(iii)$ and $(iv)$ trivially commutes; $(v)$ commutes from the naturality of $\gamma^{|C|}_{A',-}$; $(vi)$ commutes because $u_{|C|}$ is a monoidal natural transformation and $(vii)$ commutes since $u$ is a monoidal natural transformation. The right-hand side is analogous.
\begin{figure}[htbp]
    \centering
 \hspace{50pt}\rotatebox{90}{
   \begin{adjustbox}{width=\textheight,totalheight=\textwidth,keepaspectratio}
\begin{tikzcd}[ampersand replacement=\&]
(A\otimes \degf{C}\trid A')\otimes (\degf{C}\fiz \degf{A'})\degf{C'}\trid A'' \ar[rrr, "\alpha_{A, \degf{C}\trid A', \degf{C}\degf{C'}\trid A''}"]\ar[dd,"\id_A\ot (u_{\degf{C}})_{A'}"] \& \& \& A\ot (\degf{C}\trid A'\ot \degf{C}\degf{C'}\trid A'')\ar[rrr, "\id_A\ot\id_{\degf{C}\trid A'}\otimes (\mathbf{L}^2_{\degf{C}\degf{C'}})_{A''}^{-1}"]\ar[dd,"\id_A\ot (u_{\degf{C}})_{A'}\ot \id"] \& \& \& A\ot (\degf{C}\trid A'\ot \degf{C}\trid (\degf{C'}\trid A''))\ar[dd,"\id_A\ot(\gamma^{\degf{C}}_{A', \degf{C'}\trid A''})^{-1}"]\ar[ddddlll,swap, "\id_A\ot (u_{\degf{C}})_{A'}\ot \id"] \ar[ddddddlll,"\id_A\ot\id_{|C|\trid A'}\ot |C|\trid (u_{|C'|})_{A''}"] \\
\&\& (i) \&\&\& (iii)\&\\
(A\ot A' )\ot \degf{C}\degf{C'}\trid A''\ar[rrr,"\alpha_{A, A', \degf{C}\degf{C'}\trid A''}"]\ar[dddddddd,"\id_A\ot (u_{|C||C'|})_{A''}"]\&\&\&  A\ot (A' \ot \degf{C}\degf{C'}\trid A'')\ar[dd,swap,"\id_A\ot\id_{A'}\ot (\mathbf{L}^2_{|C|,|C'|})_{A''}^{-1}"] \ar[dddddddd,bend right=80,swap,"\id_{A\ot A'}\ot (u_{|C||C'|})_{A''}"] \&\&\& A\ot \degf{C}\trid (A'\ot \degf{C'}\trid A'')\ar[dd,"\id_A\ot \degf{C}\trid (\id_{A'}\ot (u_{\degf{C'}})_{A''})"]\\
\&\& \&\&\&\&\\
 \&\&\& A\ot (A'\ot |C|\trid (|C'|\trid A''))\ar[dddd,bend right=75,pos=.1,"\id_A\ot\id_{A'}\ot|C|\trid (u_{|C'|})_{A''}"] \&\& (v) \& A\ot \degf{C}\trid (A' \ot A'')\ar[ddddddlll, "\id_A\ot (u_{|C|})_{A'\ot A''}"]\\
\& (ii) \& \& (iv)\&\&\&\\
\&\&\& A\ot (|C|\trid A'\ot|C|\trid A'')\ar[rrruu,swap,"\id_A\ot (\gamma_{A',A''}^{|C|})^{-1}"]\ar[dd,"\id_A\ot (u_{|C|})_{A'}\ot\id_{|C|\trid A''}"]\&(vi)\&\&\\
\&\&\&\hspace{-7cm}(vii)\&\&\&\\
\&\&\& A\ot (A' \ot |C|\trid A'')\ar[dd, swap,"\id_A\ot\id_{A'}\ot (u_{|C|})_{A''}"]\&\&\&\\
\&\&\&\&\&\&\\
(A\ot A')\ot A''\ar[rrr,"\alpha_{A, A', A''}"] \&\&\& A\ot (A' \ot A'') \&\&\& 
\end{tikzcd}
\end{adjustbox}
}
    \caption{}
    \label{fig:equivalence-trivial-action-to-delingue}
\end{figure}

    \end{proofw}

 \clearpage 

\bibliographystyle{abbrv}
\bibliography{arxiv-version-2-On bicrossed product of fusion categories and exact factorizations}

\end{document}